\sloppy \pagestyle{plain}
\theoremstyle{definition}
\newtheorem{theorem}[equation]{Theorem}
\newtheorem*{theorem*}{Theorem}
\newtheorem{lemma}[equation]{Lemma}
\newtheorem{corollary}[equation]{Corollary}
\newtheorem{question}[equation]{Question}
\newtheorem{example}[equation]{Example}
\newtheorem{definition}[equation]{Definition}
\newtheorem{proposition}[equation]{Proposition}
\newtheorem*{definition*}{Definition}
\theoremstyle{remark}
\newtheorem{remark}[equation]{Remark}
\makeatletter\@addtoreset{equation}{section}
\makeatletter\@addtoreset{section}{part}
\newcommand{\mumu}{\boldsymbol{\mu}}
\def \Aut {\mathrm{Aut}}
\def \Cl {\mathrm{Cl}\,}
\def \Pic {\mathrm{Pic}\,}
\title{Double Veronese cones with 28 nodes}
\author{Hamid Ahmadinezhad, Ivan Cheltsov, Jihun Park, Constantin Shramov}
\begin{document}

\begin{abstract}
We study nodal del Pezzo 3-folds of degree $1$
(also known as double Veronese cones)
with~$28$ singularities, which is the maximal possible number
of singularities for such varieties. We show that they are in one-to-one
correspondence with smooth plane quartics and use this correspondence to study their automorphism groups.
As an application, we find all $G$-birationally rigid varieties of this kind, and
construct an infinite number of non-conjugate embeddings of the group $\mathfrak{S}_4$ into the space Cremona group.
\end{abstract}

\address{\emph{Hamid Ahmadinezhad}
\newline
\textnormal{Department of Mathematical Sciences, Loughborough University
\newline
Loughborough LE11 3TU, UK
\newline
 \texttt{h.ahmadinezhad@lboro.ac.uk}}}

\address{ \emph{Ivan Cheltsov}\newline \textnormal{School of Mathematics, The
University of Edinburgh
\newline \medskip  Edinburgh EH9 3JZ, UK
\newline
National Research University Higher School of Economics,
Laboratory of Algebraic Geometry,
\newline
6 Usacheva street, Moscow, 117312, Russia
\newline
\texttt{I.Cheltsov@ed.ac.uk}}}

\address{ \emph{Jihun Park}\newline \textnormal{Center for Geometry and Physics, Institute for Basic Science
\newline \medskip 77 Cheongam-ro, Nam-gu, Pohang, Gyeongbuk, 37673, Korea \newline
Department of
Mathematics, POSTECH
\newline
77 Cheongam-ro, Nam-gu, Pohang, Gyeongbuk,  37673, Korea \newline
\texttt{wlog@postech.ac.kr}}}

\address{\emph{Constantin Shramov}\newline \textnormal{Steklov Mathematical Institute of Russian Academy of Sciences
\newline
\medskip 8 Gubkina street, Moscow, 119991, Russian
\newline
National Research University Higher School of Economics, Laboratory of Algebraic Geometry,
\newline
6 Usacheva street, Moscow, 117312, Russia
\newline \texttt{costya.shramov@gmail.com }}}

\maketitle

\tableofcontents

\section{Introduction}
\label{section:intro}

It is a common experience that varieties with the maximal possible
number of singularities in a given class  have fascinating  geometric properties.
Classical examples of such varieties include the Burkhardt quartic (see~\cite{JSBV},
\cite{CheltsovPrzyjalkowskiShramov-BarBur}), the Segre cubic, and many others.
The Segre cubic, which is the unique three-dimensional cubic hypersurface with $10$ isolated
singular points, can be thought of as a \emph{del Pezzo 3-fold}. Recall that a del Pezzo 3-fold is a
Fano 3-fold with at most terminal
Gorenstein singularities
whose canonical class is divisible by $2$ in the Picard group; in particular, the singularities of such 3-folds
are always isolated.

Del Pezzo 3-folds are classified according to their \emph{degrees}, which are defined as the cubes of the half-anticanonical classes.
Such varieties with a maximal possible number of singularities were
described in~\cite[Theorem~7.1]{Prokhorov}.
The smaller degrees such del Pezzo 3-folds have, the more intriguing their geometry is.
Del Pezzo 3-folds of degree $4$ can have at most six singularities,
and there exists a unique such variety with exactly six singular points.
It has been studied in \cite{Avilov-IntTwoQuadrics}.
There are many works devoted to the geometry of the Segre cubic (see~\mbox{\cite[\S2]{Dolgachev-Segre}}, \cite{Finkelnberg},
\cite[\S5]{CheltsovShramov-A6}, \cite{Avilov-Segre}, and \cite{Avilov-RealSegre}).
Del Pezzo 3-folds of degree~$2$ with~$16$ singular points, which is the maximal possible
number of singularities in this case, are
quartic double solids branched along Kummer quartic surfaces.
Thus their properties are closely related to those of
the corresponding abelian surfaces. In particular, such del Pezzo 3-folds are in one-to-one correspondence with smooth curves of genus~$2$
(see for instance~\mbox{\cite[Propositions~1.2(i) and~1.3]{Keum}}).

The current paper  studies the geometry
of del Pezzo 3-folds of degree~$1$ with $28$ singular points,
which is the maximal possible number of singularities for such varieties
(see~\mbox{\cite[Theorem~1.7]{Prokhorov}} and~\mbox{\cite[Remark~1.8]{Prokhorov}}).
Each del Pezzo 3-fold of degree~$1$ is a hypersurface of degree six in the weighted projective space~\mbox{$\mathbb{P}(1,1,1,2,3)$} (see \cite[Theorem~3.2.5(i)]{Isk-Prokh}). This implies that it is a double cover of the cone over the Veronese surface in~$\mathbb{P}^5$.
For this reason,  del Pezzo 3-folds of degree $1$ are often called \emph{double Veronese cones}.

Let $V$ be a double Veronese cone with $28$ singular points.
Then $-K_V\sim 2H$, where $H$ is an ample Cartier divisor on $V$ such that $H^3=1$.
The existence of the following diagram was proven in  \cite{Prokhorov}.
\begin{equation}
\label{equation:Prokhorov-diagram}
\xymatrix{
\widehat{\mathbb{P}^3}\ar@{->}[d]_{\pi}\ar@{->}[rr]^{\phi}&&V\ar@{-->}[d]^{\kappa}\\%
\mathbb{P}^3\ar@{-->}[rr]&&\mathbb{P}^2}
\end{equation}
In this diagram, $\phi$ is a small resolution of all singular points of the 3-fold $V$,
the morphism $\pi$ is the blow up of  $\mathbb{P}^3$ at seven distinct points $P_1,\ldots,P_7$  that satisfy certain explicit
generality conditions, the rational map $\kappa$ is given by the linear system $|H|$,
and the rational map from $\mathbb{P}^3$ to $\mathbb{P}^2$ is given by the linear system (net) of quadrics passing through $P_1,\ldots,P_7$.
Furthermore, Prokhorov implicitly verified
in~\cite{Prokhorov}
that each singular point of $V$ is an isolated ordinary double point (a node).

Note that $V$ is not $\mathbb{Q}$-factorial
since $\mathrm{Cl}(V)\cong\mathbb{Z}^8$ and $\mathrm{Pic}(V)\cong\mathbb{Z}$.
Nevertheless, the class group~$\mathrm{Cl}(V)$ can be naturally equipped with the intersection form:
\begin{equation}
\label{equation:E7}
\big(D_1,D_2\big)=D_1\cdot D_2\cdot H
\end{equation}
for any two Weil divisors $D_1$ and $D_2$ on the 3-fold $V$. Furthermore,
Prokhorov proved in~\mbox{\cite[Theorem~1.7]{Prokhorov}} (cf.~\mbox{\cite[Remark~1.8]{Prokhorov}}) that
the orthogonal complement to the canonical divisor~$K_V$ in~$\mathrm{Cl}(V)$ is the lattice~$\mathbf{E}_7$ (cf.~\mbox{\cite[p.~107]{DolgachevOrtland}}).
The same lattice appears in the Picard group of the double cover of $\mathbb{P}^2$ branched along a smooth plane quartic curve.
This suggests a possible relation between double Veronese cones with $28$ nodes and smooth plane quartic curves.
As far as we know, such a relation was first pointed out in \cite[\S\,IX.6]{DolgachevOrtland}.   A goal  in the present
paper is to make this
relation  precise and detailed.

The net of quadrics in \eqref{equation:Prokhorov-diagram}
has exactly eight distinct base points,
including the points~\mbox{$P_1,\ldots,P_7$}; such a collection of eight points is
called a \emph{regular Cayley octad} (see  Definition~\ref{definition:Aronhold-heptad} for details).
Moreover, the Hessian curve of this net is a smooth plane quartic curve, which we denote by $C$.
One can show that the isomorphism class of the curve $C$ is independent  of  the choice of the commutative diagram~\eqref{equation:Prokhorov-diagram}.
In fact, the curve $C$ is projectively dual
to the discriminant curve of the rational elliptic fibration~$\kappa$
(see~\S\ref{section:1-1} for details).

The first main result of the present paper is

\begin{theorem}\
\label{theorem:one-to-one}
Assigning the curve $C$ to the 3-fold $V$ gives
a one-to-one correspondence between the isomorphism classes of smooth plane quartic curves and the isomorphism classes of $28$-nodal double Veronese cones.
\end{theorem}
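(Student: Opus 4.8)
The plan is to establish a bijection by constructing maps in both directions and checking they are mutually inverse on isomorphism classes.

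The plan is to prove the bijection by constructing an explicit inverse assignment $C\mapsto V$ and checking that the two maps are mutually inverse, with everything reduced to the classical projective geometry of nets of quadrics in $\mathbb{P}^3$.

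First I would verify that the assignment $V\mapsto C$ is well defined on isomorphism classes. Any two realizations of a fixed $V$ as in~\eqref{equation:Prokhorov-diagram} differ by a sequence of flops of the small resolution $\phi$ composed with an element of $\mathrm{PGL}_4$ acting on $\mathbb{P}^3$. Since the seven points $P_1,\dots,P_7$ complete to the regular Cayley octad that is the base locus of the net of quadrics in~\eqref{equation:Prokhorov-diagram}, and since both the octad and this net are intrinsic to $V$ up to projective equivalence, the Hessian quartic $C$ is determined up to isomorphism; equivalently, one uses the intrinsic description of $C$ as the projective dual of the discriminant of the elliptic fibration $\kappa$. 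An isomorphism $V_1\cong V_2$ then transports one diagram to the other and induces an isomorphism $C(V_1)\cong C(V_2)$.

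For surjectivity I would run the construction in reverse. Given a smooth plane quartic $C$, the classical theory of nets of quadrics (see~\cite{DolgachevOrtland}) provides a net of quadrics in $\mathbb{P}^3$ whose associated quartic is $C$ and whose base locus is a regular Cayley octad $\{P_1,\dots,P_8\}$. Deleting one point and blowing up the remaining seven, I would check that the generality conditions of~\eqref{equation:Prokhorov-diagram} hold, so that contracting the relevant curves by $|H|$ yields a double Veronese cone with exactly $28$ nodes --- the nodes corresponding to the $28$ lines through pairs of octad points, equivalently to the $28$ bitangents of $C$. By construction the quartic attached to this $V$ is again $C$, so the forward map is onto.

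Finally, for injectivity it suffices to recover the Cayley octad from $C$ up to the action of $\mathrm{PGL}_4$, since an isomorphism $C(V_1)\cong C(V_2)$ would then lift to a projective transformation matching the octads, hence matching the diagrams~\eqref{equation:Prokhorov-diagram} and giving $V_1\cong V_2$. I expect this uniqueness to be the principal difficulty. The combinatorics of the $28$ bitangents must be shown to reconstruct an Aronhold system of seven bitangents, which corresponds to a choice of seven of the eight octad points and pins down the octad; here one has to control the discrete ambiguity (classically encoded by theta characteristics) that appears in the naive discriminant-of-a-net correspondence, and to check that the different choices of seven of the eight octad points produce blow-ups $\widehat{\mathbb{P}^3}$ that are flops of one another and therefore resolve one and the same $V$. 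Making this classical dictionary between octads, Aronhold sets, and bitangents precise is the heart of the argument.
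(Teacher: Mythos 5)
Your overall architecture follows the paper's second proof (in \S\ref{section:theta}): construct $V$ from a net of quadrics whose Hessian quartic is $C$, and recover $C$ from $V$ as the projective dual of the discriminant curve of $\kappa$ (the paper's Lemma~\ref{lemma:V-to-quartic}). The well-definedness and surjectivity steps are sound. But your injectivity step rests on a false reduction: you claim it suffices to ``recover the Cayley octad from $C$ up to the action of $\mathrm{PGL}_4$.'' This is impossible. By Theorem~\ref{theorem:bijection-Theta} (Beauville), regular Cayley octads modulo projective transformations are in bijection with pairs $(C,\theta)$, where $\theta$ is an \emph{even theta characteristic}; a smooth quartic carries $36$ of these, so in general there are $36$ pairwise projectively inequivalent octads whose Hessian is the given $C$. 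The octad is simply not an invariant of $C$, and likewise an Aronhold system of bitangents does not ``pin down the octad'': there are $288=36\cdot 8$ Aronhold systems, distributed over the $36$ octads.

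Consequently the heart of injectivity is exactly what your plan leaves open: one must show that octads corresponding to \emph{different} even theta characteristics still produce isomorphic $28$-nodal double Veronese cones. The flop mechanism you invoke --- that the eight choices of seven points inside one fixed octad give blow-ups resolving the same $V$ --- is correct and is the paper's Lemma~\ref{lemma:V-i-j}, but it only removes the $8$-fold ambiguity within a single octad, not the $36$-fold theta ambiguity between octads. The paper closes that gap in Lemma~\ref{lemma:V-theta-thetaprime}: octads attached to $(C,\theta)$ and $(C,\theta')$ are related by a standard Cremona transformation of $\mathbb{P}^3$ centered at four octad points (Corollary~\ref{corollary:Cremona-Cayley-octads}), and this Cremona transformation is realized by flopping the six lines through pairs of those four points, so both blow-ups share one pluri-anticanonical model. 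Only with both lemmas combined (Corollary~\ref{corollary:all-V-same}) does your injectivity argument go through. Note also that the paper's first proof sidesteps all of this bookkeeping: by Lemma~\ref{lemma:jV-vs-jC} the $j$-function of the fibration $\kappa$ equals $j_C$, and by Lemma~\ref{lemma:V-from-elliptic-fibration} the Weierstrass model of $V$ over $\check{\mathbb{P}}^2$ is determined up to isomorphism by its $j$-function; this yields injectivity without mentioning theta characteristics, and is the cheaper route if you do not need the finer statements about diagrams~\eqref{equation:Prokhorov-diagram}.
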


Two alternative proofs of Theorem~\ref{theorem:one-to-one} will be presented in this paper.
The first proof starts with a plane quartic curve $C$ and produces an explicit equation that defines the corresponding 3-fold $V$ in the weighted projective space  $\mathbb{P}(1,1,1,2,3)$ (see equation~\eqref{eq:V1} and~\S\ref{section:equations} for details). Another proof uses  machinery from the theory of nets of quadrics to recover a
commutative diagram~\eqref{equation:Prokhorov-diagram}
and check that the resulting 3-fold does not depend on the choices we have to make on the way.

Observe
that for a given $28$-nodal double Veronese cone
the commutative diagram~\eqref{equation:Prokhorov-diagram} is not unique.
However, we can refine Theorem~\ref{theorem:one-to-one} to characterize all such diagrams
in terms of the corresponding smooth plane quartic curve.
Essentially, the diagram~\eqref{equation:Prokhorov-diagram} depends only on the choice of seven distinct points $P_1,\ldots,P_7$ in $\mathbb{P}^3$ up to projective transformations. We will show that there are exactly~$288$  diagrams~\eqref{equation:Prokhorov-diagram} up to projective transformations  of $\mathbb{P}^3$ for a given general $28$-nodal double Veronese cone $V$,
and will also give a way to compute the number of such diagrams for an arbitrary~$V$ (see Corollaries~\ref{corollary:Aronhold-sets} and~\ref{corollary:288-diagram}). Here, the number $288=36\cdot 8$
equals
the number  of even theta characteristics of $C$ times the number of choices for a point in the corresponding regular Cayley octad.
In other words, it is the number of Aronhold systems
on $C$. We refer the reader to~\S\ref{section:theta} for details.

Note that there are other classical constructions that associate certain 3-folds to smooth plane quartics.
We recall some of them  in \S\ref{section:question}.
It would be interesting to find the connections between these varieties and
$28$-nodal double Veronese cones.

The second result of the present paper is the following.

\begin{theorem}
\label{theorem:Aut}
One has
$$
\mathrm{Aut}(V)\cong\mumu_2\times\mathrm{Aut}(C),
$$
where the generator of the cyclic group $\mumu_2$ of order $2$
acts on $V$ by the Galois involution of the double cover of the Veronese cone.
\end{theorem}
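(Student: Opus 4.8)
The plan is to construct a natural homomorphism $\rho\colon\Aut(V)\to\Aut(C)$, compute its kernel, prove surjectivity, and split the resulting extension.

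First I would exploit that the half-anticanonical class $H$ with $-K_V\sim 2H$ is intrinsic to $V$, so $\Aut(V)$ acts on the graded ring $R=\bigoplus_{n\geqslant 0}H^0\big(V,\mathcal{O}_V(nH)\big)$. Since $R$ is generated by the weighted coordinates $x_0,x_1,x_2,y,z$ of $\mathbb{P}(1,1,1,2,3)$ subject to the single sextic relation, every automorphism of $V$ is induced by a weighted-homogeneous automorphism of $\mathbb{P}(1,1,1,2,3)$ preserving the equation of $V$. In particular $\Aut(V)$ acts linearly on $H^0(V,\mathcal{O}_V(H))=\langle x_0,x_1,x_2\rangle$, hence on the target $\mathbb{P}^2$ of the elliptic fibration $\kappa$ in~\eqref{equation:Prokhorov-diagram}. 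This action preserves the discriminant curve of $\kappa$; as that discriminant is projectively dual to $C$ and duality is equivariant for $\mathrm{PGL}_3$, the action preserves $C$, and since a smooth plane quartic is canonically embedded its linear automorphisms exhaust $\Aut(C)$. This yields $\rho\colon\Aut(V)\to\Aut(C)$.

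Next I would compute $\ker\rho$ from the Weierstrass-type normal form $z^2=y^3+A_4(x)y+B_6(x)$ for $V$, obtained by completing the square in $z$ and the cube in $y$ (the residual rescalings being exactly the defining $\mathbb{C}^*$-action of $\mathbb{P}(1,1,1,2,3)$). An element of $\ker\rho$ fixes $[x_0:x_1:x_2]$, so has the form $x_i\mapsto\lambda x_i$, $y\mapsto\alpha y+q(x)$, $z\mapsto\beta z+\ell(x)y+c(x)$; imposing invariance of the equation forces $\ell=c=0$, $q=0$, $\alpha=\lambda^2$, $\beta=\pm\lambda^3$, so modulo the weighted scaling only the identity and the Galois involution $\iota\colon z\mapsto -z$ survive. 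Hence $\ker\rho=\langle\iota\rangle\cong\mumu_2$, and since this kernel is normal of order two, its nontrivial element $\iota$ is automatically central in $\Aut(V)$. For surjectivity I would invoke the first (explicit) proof of Theorem~\ref{theorem:one-to-one}: the coefficients $A_4,B_6$, and hence the whole equation of $V$, are built from $C$ in a $\mathrm{PGL}_3$-equivariant way, so each $\psi\in\Aut(C)$ preserves the pair $(A_4,B_6)$ up to the allowed scalings and therefore lifts to an automorphism of $V$ over $\psi$. This gives a central extension
\begin{equation*}
1\longrightarrow\mumu_2\longrightarrow\Aut(V)\stackrel{\rho}{\longrightarrow}\Aut(C)\longrightarrow 1 .
\end{equation*}

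I expect the final splitting to be the main obstacle. Each $\psi$ has exactly two lifts, differing by $\iota$, because the scaling $\beta$ of $z$ is determined only up to sign by the invariance of the equation; obtaining the direct product $\Aut(V)\cong\mumu_2\times\Aut(C)$ requires choosing these signs multiplicatively, equivalently showing that the class of the extension in $H^2(\Aut(C),\mumu_2)$ vanishes. I would resolve this by making the lift explicit: after normalising the $\mathrm{GL}_3$-representatives of $\Aut(C)$ so that they fix the quartic form of $C$ exactly, the induced scalings on $y$ and $z$ become genuine characters of $\Aut(C)$, producing a homomorphic section. Combined with the centrality of $\iota$, this section furnishes the asserted direct product decomposition.
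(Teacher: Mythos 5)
Your first three steps are sound and run parallel to the paper's proof in \S\ref{section:1-1}: the homomorphism $\rho$ is the paper's $\Xi$ (obtained there from the $\Aut(V)$-equivariance of $\kappa$ plus projective duality), the kernel is identified with $\mumu_2$ generated by the Galois involution (the paper does this by letting the kernel act on a general fiber of $\kappa$, you do it by an explicit weighted-coordinate computation --- both are fine, though your claim that invariance \emph{forces} $\alpha=\lambda^2$ silently uses $A_4\not\equiv 0$, i.e.\ Lemma~\ref{lemma:j-not-constant}; if $A_4\equiv 0$ the kernel would be $\mumu_6$), and surjectivity via the covariance of $(A_4,B_6)=(-g_4,g_6)$ is exactly the paper's use of Remark~\ref{remark:harmonic}.

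The genuine gap is in the splitting step, which you correctly single out as the main obstacle. The matrices $M\in\mathrm{GL}_3$ that fix the quartic form of $C$ exactly do not form a copy of $\Aut(C)$: they form a central extension $1\to\mumu_4\to\widetilde{G}\to\Aut(C)\to 1$, because the normalization $H\circ M=H$ determines $M$ only up to a factor $i^k$. Hence ``the induced scalings on $y$ and $z$'' are not functions of $\psi\in\Aut(C)$ at all: for a normalized representative $M$ they are $(\det M)^{-2}$ and $\pm(\det M)^{-3}$, and replacing $M$ by $iM$ multiplies them by $-1$ and $-i$. What your recipe genuinely produces is a homomorphism $\widetilde{G}\to\Aut(V)$, but its kernel is only $\{\pm I\}$, so its image is \emph{all} of $\Aut(V)$ rather than a complement of $\langle\iota\rangle$; chasing the ambiguity through $\mathbb{P}(1,1,1,2,3)$, the automorphism of $V$ attached to $\psi$ is well defined exactly up to $\iota$. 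So the normalization reproduces, rather than removes, the two-fold ambiguity, and choosing representatives ``multiplicatively'' is precisely the assertion that the extension $\widetilde{G}/\{\pm I\}\to\Aut(C)$ splits --- which is equivalent to the statement being proved. The paper gets around this with a different device: it packages the covariance into characters $\chi_4,\chi_6$ of $\Aut(C)$, derives $\chi_4^3=\chi_6^2$ from invariance of the $j$-function, sets $\chi_2=\chi_6/\chi_4$ (so $\chi_2^2=\chi_4$, $\chi_2^3=\chi_6$), extracts a character $\chi$ with $\chi^2=\chi_2$, and writes the section explicitly as $[s:t:u:v:w]\mapsto[s':t':u':\chi(\gamma)^2v:\chi(\gamma)^3w]$, so multiplicativity is built in and centrality of $\mumu_2$ upgrades the semidirect product to a direct one. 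In other words, the coherent choice of signs (the vanishing of your class in $H^2(\Aut(C),\mumu_2)$) is where all the work lies, and your proposal asserts it rather than proves it.
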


We point out that Theorem~\ref{theorem:Aut} can be obtained from \cite[\S\,IX.6]{DolgachevOrtland} and Torelli theorem (see for instance~\cite{Weil}). 
However, we provide a more elementary proof. Another advantage of our proof is its explicit nature, enabling an easier access to details of the group actions on the $3$-folds, which will be useful later on when we study their birational properties.

Note that the commutative diagram \eqref{equation:Prokhorov-diagram} is not necessarily $\mathrm{Aut}(V)$-equivariant.
However, the rational map $\kappa$ in \eqref{equation:Prokhorov-diagram} always is.
This gives a natural action of the group $\mathrm{Aut}(V)$ on the plane $\mathbb{P}^2$.
This action agrees with the direct product structure in Theorem~\ref{theorem:Aut}
(see the proof of Theorem~\ref{theorem:Aut} in~\S\ref{section:1-1}).

The automorphism groups of smooth plane quartic curves are completely classified (see~\mbox{\cite[Theorem~6.5.2]{Dolgachev}}, \cite{Henn}, or \cite{KuribayashiKomiya}).
This gives us the full  list of  the automorphism groups of $28$-nodal double Veronese cones.

\begin{example}
\label{example:V1-S4}
Let $C$ be the plane quartic curve given by
\begin{equation}\label{eq:S4-quartic-general}
x^4+y^4+z^4+\lambda(y^2z^2+x^2z^2+x^2y^2)=0,
\end{equation}
where $\lambda$ is a constant different from  $-2$, $2$, and $-1$ (in these three cases, the curves are singular).
The corresponding $28$-nodal double Veronese cone $V$ is the hypersurface of degree $6$ in~\mbox{$\mathbb{P}(1,1,1,2,3)$}
that is given by
$$
w^2=v^3-g_4(s,t,u)v+g_6(s,t,u),
$$
where $s$, $t$, $u$, $v$ and $w$ are weighted homogeneous coordinates on $\mathbb{P}(1,1,1,2,3)$ whose weights
are equal to $1$, $1$, $1$, $2$ and $3$, respectively. From \S\ref{section:equations} and Appendix~\ref{section:appendix} we obtain
$$
g_4(s,t,u)=\frac{(\lambda^2 + 12)}{3}(s^4 +t^4+u^4)+  \frac{2(\lambda^2+6\lambda)}{3}(t^2u^2+s^2u^2+s^2t^2)
$$
and
\[\begin{split}
g_6(s,t,u)&=\frac{2( - \lambda^3 + 12\lambda^2 + 12\lambda)}{9}(t^4u^2+t^2u^4+s^4u^2+s^2u^4+s^4t^2 + s^2t^4) +\\
&+\frac{2(-\lambda^3 + 36\lambda)}{27}(s^6+t^6+u^6)+\frac{4(8\lambda^3 - 9\lambda^2 + 36 )}{9}s^2t^2u^2.
\end{split}\]
Let $\mathfrak{G}$ be the subgroup in $\mathrm{Aut}(V)$ that is generated by
\[\begin{split}
&[s:t:u:v:w]\mapsto [-s:-t:u:v:w],\\
&[s:t:u:v:w]\mapsto [u:s:t:v:w],\\
&[s:t:u:v:w]\mapsto [t:s:u:v:-w].\\
\end{split}\]
Then $\mathfrak{G}\cong\mathfrak{S}_4$. Let $\tau\in\mathrm{Aut}(V)$ be the Galois involution of the double cover of the Veronese cone.
If~$\lambda\ne 0$ and $\lambda^2+3\lambda+18\ne 0$, then $\tau$ and
$\mathfrak{G}$ generate the group
$$
\mathrm{Aut}(V)\cong\mumu_2\times\mathfrak{S}_4
$$
(see  \cite[Theorem~6.5.2]{Dolgachev}, \cite{Henn}, or \cite{KuribayashiKomiya}).
If~\mbox{$\lambda=0$}, then $C$ is the Fermat quartic, so that
$$
\mathrm{Aut}(V)\cong\mumu_2\times(\mumu_4^2\rtimes\mathfrak{S}_3),
$$
where $\mumu_n$ denotes the cyclic group of order $n$.
Finally, if $\lambda^2+3\lambda+18=0$, then $C$ is isomorphic to the Klein quartic, so that
$$
\mathrm{Aut}(V)\cong\mumu_2\times\mathrm{PSL}_2(\mathbb{F}_7).
$$
\end{example}

It follows from \eqref{equation:Prokhorov-diagram} that $28$-nodal double Veronese cones are rational.
Thus, their automorphism groups are embedded into the space Cremona group, that is the group of birational selfmaps of $\mathbb{P}^3$.

For a given subgroup $G\subset\mathrm{Aut}(V)$,
to study its conjugacy class in the space Cremona group,
one should understand $G$-equivariant birational geometry of the 3-fold $V$.
From this point of view, the most interesting groups $G$ are such that
$\mathrm{rk}\,\mathrm{Cl}(V)^G=1$, and $V$ is $G$-birationally (super-)rigid (see \cite[Definition~3.1.1]{CheltsovShramov-V5}).
In this paper, we prove

\begin{theorem}
\label{theorem:rigid}
Let $V$ be a $28$-nodal double Veronese cone and $G$ be a subgroup of $\mathrm{Aut}(V)$.
Then the  following four conditions are equivalent:
\begin{enumerate}
\item $\mathrm{rk}\,\mathrm{Cl}(V)^G=1$ and $V$ is $G$-birationally rigid;

\item $\mathrm{rk}\,\mathrm{Cl}(V)^G=1$ and $V$ is $G$-birationally super-rigid;

\item $\mathrm{rk}\,\mathrm{Cl}(V)^G=1$ and $G$ does not fix any point in $\mathbb{P}^2$;

\item  $V$ is a double Veronese cone from Example~\ref{example:V1-S4} and $G$ contains the subgroup $\mathfrak{G}$.
\end{enumerate}
\end{theorem}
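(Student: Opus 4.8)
The plan is to prove the cyclic chain of implications $(4)\Rightarrow(2)\Rightarrow(1)\Rightarrow(3)\Rightarrow(4)$, which yields the equivalence of all four conditions; the implication $(2)\Rightarrow(1)$ is immediate, since $G$-birational super-rigidity is by definition stronger than $G$-birational rigidity. Throughout, the condition $\mathrm{rk}\,\mathrm{Cl}(V)^G=1$ is what guarantees that $V$ is a $G$-Mori fibre space over a point with $G$-invariant class-group rank one, so that it makes sense to speak of its $G$-birational rigidity and to apply the $G$-equivariant Noether--Fano--Iskovskikh method. By Theorem~\ref{theorem:Aut}, the action of $\mathrm{Aut}(V)$ on $\mathbb{P}^2$ induced by $\kappa$ has kernel $\langle\tau\rangle\cong\mumu_2$ and image $\mathrm{Aut}(C)\subset\mathrm{PGL}_3(\mathbb{C})$; I write $\bar G$ for the image of $G$, so that ``$G$ fixes a point in $\mathbb{P}^2$'' means exactly that $\bar G$ has a fixed point.

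For $(1)\Rightarrow(3)$ I argue by contraposition. Suppose $\mathrm{rk}\,\mathrm{Cl}(V)^G=1$ but $G$ fixes a point $Q\in\mathbb{P}^2$. The pencil of lines through $Q$ is $G$-invariant, and its preimage under $\kappa$ is a $G$-invariant pencil inside $|H|$, defining a $G$-equivariant dominant rational map $V\dashrightarrow\mathbb{P}^1$ whose general fibre is a member of $|H|$, i.e. a del Pezzo surface of degree $1$. Running a $G$-equivariant resolution followed by a $G$-MMP over $\mathbb{P}^1$ produces a $G$-Mori fibre space with positive-dimensional base, which cannot be $G$-equivariantly square-birational to the $G$-Fano threefold $V$. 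Hence $V$ is not $G$-birationally rigid, which is the contrapositive of $(1)\Rightarrow(3)$.

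For $(3)\Rightarrow(4)$ I appeal to the classification of automorphism groups of smooth plane quartics (\cite[Theorem~6.5.2]{Dolgachev}, \cite{Henn}, \cite{KuribayashiKomiya}). Under the hypotheses of $(3)$ the group $\bar G\subset\mathrm{PGL}_3(\mathbb{C})$ acts on $\mathbb{P}^2$ without fixed points, equivalently its natural three-dimensional projective representation admits no invariant point. Going through the finite list of possible pairs $(C,\mathrm{Aut}(C))$ and their subgroups, this together with $\mathrm{rk}\,\mathrm{Cl}(V)^G=1$ forces $C$ to belong to the $\mathfrak{S}_4$-family of Example~\ref{example:V1-S4} (which contains the Fermat and Klein quartics as its special members) and forces $\bar G$ to contain the standard copy of $\mathfrak{S}_4$ given by the images of the generators of $\mathfrak{G}$. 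Lifting this back through Theorem~\ref{theorem:Aut} and using $\mathrm{rk}\,\mathrm{Cl}(V)^G=1$ to resolve the $\mumu_2$-ambiguity of the lift, I conclude that $\mathfrak{G}\subseteq G$, which is condition $(4)$. This step is a finite check, but it must be carried out with care for the sporadic cases (Fermat, Klein), where the relevant fixed-point-free subgroups are most numerous.

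The main obstacle is the implication $(4)\Rightarrow(2)$, the actual super-rigidity statement. Assuming $V$ is the threefold of Example~\ref{example:V1-S4} and $\mathfrak{G}\cong\mathfrak{S}_4\subseteq G$, I would first record that $\mathrm{rk}\,\mathrm{Cl}(V)^G=1$: since $\mathrm{Cl}(V)^G\subseteq\mathrm{Cl}(V)^{\mathfrak{G}}$, this reduces to checking that $\mathfrak{S}_4$ acts on the lattice $\mathbf{E}_7=K_V^\perp$ without nonzero invariants, a trace computation. I would then suppose, for contradiction, that the pair $\big(V,\tfrac1n\mathcal{M}\big)$ is not canonical for some $G$-invariant mobile linear system $\mathcal{M}\sim_{\mathbb{Q}}nH$, and apply the $G$-equivariant Noether--Fano--Iskovskikh method to produce a $G$-invariant maximal centre $Z$. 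The heart of the argument is to exclude every possibility for $Z$: a centre whose image in $\mathbb{P}^2$ is a single point is impossible because $\bar G\supseteq\mathfrak{S}_4$ has no fixed point, while a centre mapping onto a $G$-orbit is ruled out by intersection estimates using $H^3=1$ together with the fact that the $\mathfrak{S}_4$-orbits in $\mathbb{P}^2$, among the base points of $|H|$, and among the $28$ nodes, are all too large to support the multiplicities demanded by the Noether--Fano inequality. The delicate cases are the $28$ nodes and the base locus of $|H|$, where $V$ fails to be $\mathbb{Q}$-factorial and the diagram~\eqref{equation:Prokhorov-diagram} is not $G$-equivariant; here one must work with the local analytic structure of the ordinary double points and combine the node-orbit sizes with the discrepancy bounds to close the argument. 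Once no maximal centre survives, the criterion yields $G$-birational super-rigidity, completing the cycle.
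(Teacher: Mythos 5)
Your overall architecture---the cycle $(4)\Rightarrow(2)\Rightarrow(1)\Rightarrow(3)\Rightarrow(4)$---matches the paper's, and your treatments of $(2)\Rightarrow(1)$, of $(1)\Rightarrow(3)$ (a fixed point on $\mathbb{P}^2$ yields a $G$-equivariant del Pezzo fibration over $\mathbb{P}^1$, so $V$ is not $G$-birationally rigid), and of $(3)\Rightarrow(4)$ (the classification of plane quartic automorphisms as in Lemma~\ref{lemma:must-contain-S4}, lifted through Theorem~\ref{theorem:Aut}, with the rank condition used to select the lift $\mathfrak{G}$ rather than $\mathfrak{G}^\prime$, as in Corollary~\ref{corollary:class-group-V-G-prime}) are essentially what the paper does. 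The genuine gap is in $(4)\Rightarrow(2)$: your exclusion scheme for maximal centers covers only centers lying over a single point of $\mathbb{P}^2$ or over a finite $\overline{G}$-orbit, and omits $G$-irreducible curves $Z\subset V$ whose image $\kappa(Z)$ is a \emph{curve}. Since $\mathbb{P}^2$ is here the projectivization of an irreducible three-dimensional $\mathfrak{S}_4$-representation, it contains an invariant conic $s^2+t^2+u^2=0$ and an invariant cubic $stu=0$; intersection estimates only reduce one to the cases $H\cdot Z\in\{2,3\}$ with $\kappa(Z)$ equal to one of these curves, and do not eliminate them. This is precisely where the paper does its real work (condition~(3) of Theorem~\ref{theorem:V1-G-rigid} and the bulk of the proof of Theorem~\ref{theorem:rigid}): any such curve is forced, via the $G$-invariant pencil of surfaces $v=\mu(s^2+t^2+u^2)$ and a complete-square analysis of the polynomial $P(t,u)$, to lie in one of the planes $\Pi_{\pm}$ of~\eqref{eq:Pi-pm-eqs}, after which intersecting a general member of $\mathcal{M}$ with a general line of that plane gives the contradiction $n>n$. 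None of this appears in your sketch, and without it the Noether--Fano exclusion does not close.

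Two further points in $(4)\Rightarrow(2)$ need repair. First, your claim that the orbits ``among the base points of $|H|$'' are too large is false: $|H|$ has a \emph{unique} base point $O$, fixed by all of $\mathrm{Aut}(V)$, so no orbit-size/multiplicity argument can exclude it. The paper excludes $O$ by a different mechanism (Lemma~\ref{lemma:Z-not-O}): absence of $\overline{G}$-fixed points on $\mathbb{P}^2$ forces $T_{O,V}$ to be an irreducible $G$-representation, and the equivariant Kawakita-type Lemma~\ref{lemma:mult-2}, combined with the bound $\mathrm{mult}_O(\mathcal{M})\leqslant n$ obtained from general fibers of $\kappa$, rules $O$ out. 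Second, the equality $\mathrm{rk}\,\mathrm{Cl}(V)^{\mathfrak{G}}=1$ is not a bare trace computation on $\mathbf{E}_7$: one must first determine how $\mathfrak{G}$ (as opposed to $\mathfrak{G}^\prime$, whose invariant rank is $2$) acts on $\mathrm{Cl}(V)$, which the paper extracts from the geometry of \S\ref{section:S4-Veronese} (the birational map to the $16$-nodal quartic double solid, the del Pezzo surface $S$, Lemma~\ref{lemma:S4-reps}, and Corollaries~\ref{corollary:class-group-V-G-prime} and~\ref{corollary:class-group-V-G}); distinguishing the two lifts requires knowing the action of the odd elements, which is exactly the nontrivial part.
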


It is known that the plane Cremona group,
that is the group of birational selfmaps of~$\mathbb{P}^2$, contains
an infinite number of non-conjugate subgroups isomorphic to
the symmetric group~$\mathfrak{S}_4$
(see~\mbox{\cite[\S8]{DolgachevIskovskikh}},
and cf.~\mbox{\cite[Theorem~1.5]{Lucas}}).
Theorem~\ref{theorem:rigid} implies the following result.

\begin{corollary}
\label{corollary:S4}
The space Cremona group  contains an infinite number of non-conjugate subgroups isomorphic to $\mathfrak{S}_4$.
\end{corollary}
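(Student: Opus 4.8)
The plan is to use Theorem~\ref{theorem:rigid} to produce infinitely many pairwise non-conjugate copies of $\mathfrak{S}_4$ coming from the family of $3$-folds in Example~\ref{example:V1-S4}. Write $V_\lambda$ for the $28$-nodal double Veronese cone associated with the quartic $C_\lambda$ given by \eqref{eq:S4-quartic-general}, and let $\mathfrak{G}\cong\mathfrak{S}_4$ be its distinguished subgroup, defined by the same explicit formulas for every $\lambda$. Since every $28$-nodal double Veronese cone is rational (as follows from the diagram~\eqref{equation:Prokhorov-diagram}), a choice of birational map $V_\lambda\dashrightarrow\mathbb{P}^3$ turns the action of $\mathfrak{G}$ on $V_\lambda$ into an embedding $\iota_\lambda\colon\mathfrak{S}_4\hookrightarrow\Bir(\mathbb{P}^3)$ into the space Cremona group, whose conjugacy class does not depend on the chosen birational map. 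The goal is to show that, for $\lambda$ ranging over an infinite set of admissible values, the images $\iota_\lambda(\mathfrak{S}_4)$ are pairwise non-conjugate.

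By Theorem~\ref{theorem:rigid} (equivalence of conditions (2) and (4)), for each admissible $\lambda$ the pair $(V_\lambda,\mathfrak{G})$ satisfies $\rkCl(V_\lambda)^{\mathfrak{G}}=1$ and $V_\lambda$ is $\mathfrak{G}$-birationally super-rigid; in particular $V_\lambda$ is a $\mathfrak{G}$-Mori fiber space over a point. Suppose now that $\iota_\lambda(\mathfrak{S}_4)$ and $\iota_{\lambda'}(\mathfrak{S}_4)$ are conjugate in $\Bir(\mathbb{P}^3)$. Translating this through the birational identifications $V_\lambda\dashrightarrow\mathbb{P}^3$ and $V_{\lambda'}\dashrightarrow\mathbb{P}^3$, we obtain a birational map $f\colon V_\lambda\dashrightarrow V_{\lambda'}$ conjugating the $\mathfrak{G}$-action on $V_\lambda$ onto the $\mathfrak{G}$-action on $V_{\lambda'}$, possibly after composing with an automorphism of the abstract group $\mathfrak{S}_4$. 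Since every automorphism of $\mathfrak{S}_4$ is inner, such a composition only changes $f$ by an element of $\mathfrak{G}\subset\Aut(V_{\lambda'})$, so we may take $f$ to be genuinely $\mathfrak{G}$-equivariant. As both $V_\lambda$ and $V_{\lambda'}$ are $\mathfrak{G}$-Mori fiber spaces and $V_\lambda$ is $\mathfrak{G}$-birationally super-rigid, the map $f$ must be a $\mathfrak{G}$-equivariant isomorphism; in particular $V_\lambda\cong V_{\lambda'}$.

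Finally, an isomorphism $V_\lambda\cong V_{\lambda'}$ forces $C_\lambda\cong C_{\lambda'}$ by Theorem~\ref{theorem:one-to-one}, so it remains to check that the curves $C_\lambda$ realize infinitely many isomorphism classes as $\lambda$ varies. This holds because $\{C_\lambda\}$ is a genuinely varying one-parameter family of smooth plane quartics: any isomorphism $C_\lambda\cong C_{\lambda'}$ is induced by a projective transformation of $\mathbb{P}^2$ normalizing the coordinate $\mathfrak{S}_4$-action, and there are only finitely many such transformations, so each isomorphism class arises from only finitely many $\lambda$. Hence infinitely many values of $\lambda$ yield pairwise non-isomorphic curves $C_\lambda$, pairwise non-isomorphic $3$-folds $V_\lambda$, and, by the previous paragraph, pairwise non-conjugate subgroups $\iota_\lambda(\mathfrak{S}_4)\subset\Bir(\mathbb{P}^3)$, which proves the corollary. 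The delicate point is the second paragraph: one must pass correctly from conjugacy of the two subgroups in the Cremona group to an honest $\mathfrak{G}$-equivariant birational map before invoking super-rigidity, whose hard geometric content is already packaged into Theorem~\ref{theorem:rigid}.
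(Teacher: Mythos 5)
Your proof is correct and is essentially the argument the paper intends when it states that Theorem~\ref{theorem:rigid} implies Corollary~\ref{corollary:S4}: the family of Example~\ref{example:V1-S4} gives $\mathfrak{G}$-birationally super-rigid pairs, so conjugacy in $\mathrm{Bir}(\mathbb{P}^3)$ forces a $\mathfrak{G}$-equivariant isomorphism $V_\lambda\cong V_{\lambda'}$, and Theorem~\ref{theorem:one-to-one} together with the non-isotriviality of the family of quartics~\eqref{eq:S4-quartic-general} yields infinitely many classes. Your treatment of the two delicate points left implicit in the paper --- upgrading abstract conjugacy to genuine $\mathfrak{G}$-equivariance via $\mathrm{Out}(\mathfrak{S}_4)=1$, and the finiteness of the normalizer of the coordinate $\mathfrak{S}_4$ in $\mathrm{PGL}_3(\mathbb{C})$ --- is sound.
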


It is known from \cite{CheltsovShramov-PSL} and \cite[Theorem~2.2]{Ahmadinezhad} that
the space Cremona group contains at least four non-conjugate subgroups isomorphic to $\mathrm{PSL}_2(\mathbb{F}_7)$.
Applying Theorem~\ref{theorem:rigid} to the case of the Klein quartic curve, we obtain

\begin{corollary}
\label{corollary:Klein}
The space Cremona group contains at least five non-conjugate subgroups isomorphic to $\mathrm{PSL}_2(\mathbb{F}_7)$.
\end{corollary}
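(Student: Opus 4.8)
The plan is to realise the fifth subgroup on the Klein double Veronese cone and to separate it from the four known ones by equivariant rigidity of an \emph{enhanced} action. I begin by letting $C$ be the Klein quartic and $V$ the corresponding $28$-nodal double Veronese cone from Theorem~\ref{theorem:one-to-one}; by Example~\ref{example:V1-S4} (the case $\lambda^2+3\lambda+18=0$) together with Theorem~\ref{theorem:Aut} we have $\mathrm{Aut}(V)\cong\mumu_2\times\mathrm{PSL}_2(\mathbb{F}_7)$, the central factor $\mumu_2=\langle\tau\rangle$ being generated by the Galois involution. Since $\mathrm{PSL}_2(\mathbb{F}_7)$ is simple and non-abelian, it admits no nontrivial homomorphism to $\mumu_2$; hence $\mathrm{Aut}(V)$ contains a unique subgroup $G_5\cong\mathrm{PSL}_2(\mathbb{F}_7)$, namely the second direct factor. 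As $V$ is rational by~\eqref{equation:Prokhorov-diagram}, fixing a birational map $V\dashrightarrow\mathbb{P}^3$ embeds $G_5$ into $\Bir(\mathbb{P}^3)$, and the task is to prove that $G_5$ is conjugate to none of the embeddings $G_1,\dots,G_4$ of~\cite{CheltsovShramov-PSL} and~\cite[Theorem~2.2]{Ahmadinezhad}.

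A feature that dictates the whole strategy is that $G_5$ alone is badly behaved. The group $\mathfrak{G}\cong\mathfrak{S}_4$ of Example~\ref{example:V1-S4} is \emph{not} contained in $G_5$: its generator $\sigma_3$ acts through the Galois involution (it sends $w\mapsto-w$), so $\mathfrak{G}\cap G_5$ is only an $\mathfrak{A}_4$. Therefore condition~(4) of Theorem~\ref{theorem:rigid} fails for $G=G_5$. On the other hand $G_5$ acts on $\mathbb{P}^2$ through an irreducible three-dimensional representation and hence fixes no point there; by the equivalence (3)$\Leftrightarrow$(4) of Theorem~\ref{theorem:rigid} this forces $\mathrm{rk}\,\mathrm{Cl}(V)^{G_5}\ge 2$. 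In fact one computes $\mathrm{rk}\,\mathrm{Cl}(V)^{G_5}=2$, the invariant classes being spanned by $H$ and $\sum E_i$, reflecting the transitive action of $\mathrm{PSL}_2(\mathbb{F}_7)$ on $P_1,\dots,P_7$. In particular $V$ is not $G_5$-birationally rigid, so non-conjugacy cannot be extracted from rigidity of $V$ relative to $G_5$.

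The remedy is to pass to the full group $\mathrm{Aut}(V)=\mumu_2\times\mathrm{PSL}_2(\mathbb{F}_7)$, which does contain $\mathfrak{G}$; hence the implication (4)$\Rightarrow$(2) of Theorem~\ref{theorem:rigid} shows that $V$ is $\mathrm{Aut}(V)$-birationally super-rigid. Now suppose, for contradiction, that $G_5$ is conjugate in $\Bir(\mathbb{P}^3)$ to some $G_i$, realised on a rational $3$-fold $X_i$. By the standard dictionary between conjugacy of finite subgroups of $\Bir(\mathbb{P}^3)$ and equivariant birational equivalence, there is a $\mathrm{PSL}_2(\mathbb{F}_7)$-equivariant birational map $f\colon V\dashrightarrow X_i$. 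Transporting the central involution by $f$ produces a birational involution $\tau_i=f\tau f^{-1}$ of $X_i$ commuting with the $\mathrm{PSL}_2(\mathbb{F}_7)$-action, so $f$ becomes equivariant for the enhanced groups $\mumu_2\times\mathrm{PSL}_2(\mathbb{F}_7)$ acting on $V$ and on $X_i$. Running the $\big(\mumu_2\times\mathrm{PSL}_2(\mathbb{F}_7)\big)$-equivariant minimal model program on $X_i$ produces a Mori fibre space in the $\big(\mumu_2\times\mathrm{PSL}_2(\mathbb{F}_7)\big)$-birational class of $V$, which by super-rigidity must be $V$ itself; thus $X_i$ would be $\big(\mumu_2\times\mathrm{PSL}_2(\mathbb{F}_7)\big)$-equivariantly birational to the $28$-nodal double Veronese cone $V$.

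The hard part is the concluding separation: one must verify that none of the four models $X_1,\dots,X_4$ admits a $\mathrm{PSL}_2(\mathbb{F}_7)$-commuting birational involution whose enhancement is equivariantly birational to $V$. Here I would combine two kinds of input. First, the centralizer of $\mathrm{PSL}_2(\mathbb{F}_7)$ in $\Bir(X_i)$ is a conjugacy invariant, and for those $X_i$ whose centralizer contains no involution of the required type the map $\tau_i$ cannot exist. Second, for the remaining models I would use genuinely $\mathrm{PSL}_2(\mathbb{F}_7)$-birational invariants surviving on $V$ --- most naturally the $\mathrm{PSL}_2(\mathbb{F}_7)$-equivariant genus-one fibration $\kappa\colon V\dashrightarrow\mathbb{P}^2$ over the Klein plane, the induced action on its base, and the value $\mathrm{rk}\,\mathrm{Cl}(V)^{G_5}=2$ --- to contradict equivariant birationality with $X_i$. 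Assembling these arguments with the four known embeddings then exhibits five pairwise non-conjugate copies of $\mathrm{PSL}_2(\mathbb{F}_7)$ in the space Cremona group, which is the assertion of the corollary.
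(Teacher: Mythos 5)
Your proof collapses at its very first structural claim, and the error propagates through everything that follows. You assert that $\sigma_3\colon[s:t:u:v:w]\mapsto[t:s:u:v:-w]$ "acts through the Galois involution," so that $\mathfrak{G}\cap G_5$ is only an $\mathfrak{A}_4$ and condition~(4) of Theorem~\ref{theorem:rigid} fails for $G_5$. This is false. In $\mathbb{P}(1,1,1,2,3)$ the weights of $(s,t,u,v,w)$ are $(1,1,1,2,3)$, so rescaling by $\alpha=-1$ gives
$$
[t:s:u:v:-w]=[-t:-s:-u:v:w];
$$
that is, $\sigma_3$ is induced by the matrix $-P\in\mathrm{GL}_3(\mathbb{C})$, where $P$ is the transposition matrix, and $\det(-P)=+1$. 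Every automorphism of $V$ is induced by a matrix $M\in\mathrm{GL}_3(\mathbb{C})$ with $g_4\circ M=g_4$ and $g_6\circ M=g_6$ (the two lifts of a given element of $\mathrm{Aut}(C)$ being $M$ and $-M$, with $\tau$ corresponding to $-I$), and since $\mathrm{PSL}_2(\mathbb{F}_7)$ is perfect, the determinant character is trivial on the unique subgroup $G_5\cong\mathrm{PSL}_2(\mathbb{F}_7)$ of $\mathrm{Aut}(V)\cong\mumu_2\times\mathrm{PSL}_2(\mathbb{F}_7)$; hence $G_5$ consists exactly of the automorphisms induced by determinant-one matrices. Since $\sigma_1$, $\sigma_2$, $\sigma_3$ all come from determinant-one matrices, one has $\mathfrak{G}\subset G_5$; it is $\mathfrak{G}^\prime$ (whose odd elements come from determinant $-1$ matrices such as $P$) that is \emph{not} contained in $G_5$. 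This is consistent with Corollaries~\ref{corollary:class-group-V-G-prime} and~\ref{corollary:class-group-V-G}: $\mathrm{rk}\,\mathrm{Cl}(V)^{\mathfrak{G}}=1$ while $\mathrm{rk}\,\mathrm{Cl}(V)^{\mathfrak{G}^\prime}=2$. Your subsequent claims that $\mathrm{rk}\,\mathrm{Cl}(V)^{G_5}=2$, "spanned by $H$ and $\sum E_i$," are therefore wrong twice over: the rank is $1$ (as $\mathrm{Cl}(V)^{G_5}\subset\mathrm{Cl}(V)^{\mathfrak{G}}$), and the diagram~\eqref{equation:Prokhorov-diagram} for the Klein cone is not even $G_5$-equivariant (the regular Cayley octad is a single $\mathrm{Aut}(C)$-orbit of length $8$, so no $G_5$-invariant choice of seven exceptional classes exists).

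Because of this mistaken premise you abandon the intended argument and replace it by an "enhanced" one, and that replacement is not a proof: the concluding separation step (showing no $X_i$ admits a commuting birational involution with the required equivariant birational type) is explicitly left as a plan, with no actual argument supplied. The paper's proof is the direct one you discarded: since $\mathfrak{G}\subset G_5$, condition~(4) of Theorem~\ref{theorem:rigid} holds for $G=G_5$, hence $\mathrm{rk}\,\mathrm{Cl}(V)^{G_5}=1$ and $V$ is $G_5$-birationally super-rigid. Consequently, any conjugation in $\mathrm{Bir}(\mathbb{P}^3)$ of $G_5$ with one of the four known copies of $\mathrm{PSL}_2(\mathbb{F}_7)$ would produce a $G_5$-equivariant birational map from $V$ to the corresponding $G$-Mori fibre space, forcing that space to be isomorphic to $V$ — impossible, since none of the four known models is a $28$-nodal double Veronese cone. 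This yields the fifth non-conjugate embedding with no auxiliary group enhancement and no extra separation analysis.
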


Let $V$ be the 3-fold from Example~\ref{example:V1-S4}.
Then $\mathrm{Aut}(V)$ contains a subgroup $\mathfrak{G}^\prime\cong\mathfrak{S}_4$ that is not conjugate to the subgroup $\mathfrak{G}$.
The subgroup $\mathfrak{G}^\prime$ is generated by
\[\begin{split}
&[s:t:u:v:w]\mapsto [-s:-t:u:v:w],\\
&[s:t:u:v:w]\mapsto [u:s:t:v:w],\\
&[s:t:u:v:w]\mapsto [t:s:u:v:w].\\
\end{split}\]
In \S\ref{section:S4-Veronese}, we show that there exists a $\mathfrak{G}^\prime$-equivariant birational map $V\dasharrow X$,
where $X$ is a del Pezzo 3-fold of degree $2$ with $16$ singular points such that
$\mathrm{rk}\,\mathrm{Cl}(X)^{\mathfrak{G}^\prime}=1$.
It would be interesting to determine whether $X$ is $\mathfrak{G}^\prime$-birationally rigid or not (cf. \cite{Avilov-15points}).

We have established a relation between $28$-nodal double Veronese cones and smooth plane quartics.
Despite this establishment, the connection between the $\mathbf{E}_7$ latices appearing in their class groups and the Picard groups
of the double covers of $\mathbb{P}^2$ branched in the corresponding smooth quartic curves does not seem to be  straightforward.
We discuss this phenomenon in \S\ref{section:question}.

\medskip
The plan of the paper is as follows.
In~\S\ref{section:preliminaries}
we collect several auxiliary results
that will be used later in our proofs.
In~\S\ref{section:nets-of-quadrics} we recall some  assertions concerning nets of quadrics in~$\mathbb{P}^3$. The base points of the nets of quadrics described therein will be utilized in order to construct $28$-nodal  double Veronese cones in~\S\ref{section:construction}. This construction is originally due to~\cite{Prokhorov}.
In~\S\ref{section:equations} we construct a $28$-nodal double Veronese cone starting from a smooth plane quartic curve.
We also provide an explicit equation~\eqref{eq:V1} for such a 3-fold in terms of covariants
of the quartic. In~\S\ref{section:1-1} we put the previous results together to establish a one-to-one correspondence between
$28$-nodal double Veronese cones and smooth plane quartics, which proves Theorem~\ref{theorem:one-to-one}.
Also, in~\S\ref{section:1-1} we
study the automorphism groups of $28$-nodal double Veronese cones and  prove Theorem~\ref{theorem:Aut}.
In~\S\ref{section:theta} we
give an alternative proof of Theorem~\ref{theorem:one-to-one}.
Also, in~\S\ref{section:theta} we prove Corollary~\ref{corollary:Aronhold-sets}
which relates commutative diagrams~\eqref{equation:Prokhorov-diagram} with even theta characteristics on the smooth plane quartic.
In~\S\ref{section:S4-Veronese}
we investigate the $\mathfrak{S}_4$-equivariant birational geometry of
double Veronese cones introduced in Example~\ref{example:V1-S4},
and, in particular, describe the action of the group~$\mathfrak{S}_4$ on their Weil divisor class groups.
In~\S\ref{section:rigidity}
we study  $G$-birationally rigid nodal double Veronese cones and
prove Theorem~\ref{theorem:rigid}.
In~\S\ref{section:question} we discuss some open questions
concerning $28$-nodal double Veronese cones.
Finally, in Appendix~\ref{section:appendix} we collect
some formulae allowing to construct explicit equations
of $28$-nodal  double Veronese cones from the equations of plane quartics.

\medskip
\textbf{Acknowledgements.}
The authors are grateful to   Fabrizio Catanese, Igor Dolgachev,
Alexander Kuznetsov, Marco Pacini, and Yuri Prokhorov
for inspiring  discussions.
Part of this work was done during their stay at the Erwin Schr\"odinger International Institute for Mathematics and Physics in Vienna in August 2018.
They are grateful to the institute for excellent working conditions.
Jihun Park would also like to thank Adrien Dubouloz for his hospitality at Institut de Math\'ematiques de Bourgogne in August 2019, where he worked on finalizing the paper.
Ivan Cheltsov was supported by the Royal Society 
grant No.~\mbox{IES\textbackslash R1\textbackslash 180205} 
and the 
HSE University Basic Research Program,
Russian Academic Excellence Project~\mbox{5-100}.
Jihun Park was supported by IBS-R003-D1, Institute for Basic Science in Korea.
Constantin Shramov was supported by
the HSE University Basic Research Program, 
Russian Academic Excellence Project~\mbox{5-100},
by the Young Russian Mathematics award, and by the Foundation for the
Advancement of Theoretical Physics and Mathematics ``BASIS''.

\section{Preliminaries}
\label{section:preliminaries}

In this section we collect several auxiliary results
that will be used later in our proofs.

The following fact is well known to experts. It was pointed out to us by
Alexander Kuznetsov. This will be utilized in \S\ref{section:construction}.

\begin{lemma}[{cf.~\cite[Remark~4.2.7]{CheltsovShramov-V5}}]
\label{lemma:normal-bundle-blow-up}
Let $Y$ be a smooth variety, $C$ be a smooth curve on~$Y$, and~$P$ be a point on $C$.
Let $\pi\colon\widehat{Y}\to Y$ be the blow up of $Y$ at $P$, and let $\widehat{C}$ be the proper
transform of $C$ on $\widehat{Y}$.
Then
$$
\mathcal{N}_{\widehat{C}/\widehat{Y}}\cong \mathcal{N}_{C/Y}(-P).
$$
\end{lemma}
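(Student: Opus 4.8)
Write $n=\dim Y$ and let $E$ denote the exceptional divisor of $\pi$. The plan is to reduce the statement to a local computation at $P$ and then globalize it through the canonical conormal map induced by $\pi$. I would begin with the elementary remark that, because $P$ is a smooth point of $C$, the restriction $\pi|_{\widehat C}\colon\widehat C\to C$ is an isomorphism; it sends the single point $Q:=\widehat C\cap E$ to $P$, so that $\mathcal{O}_{\widehat C}(-Q)$ corresponds to $\mathcal{O}_C(-P)$ under this identification. It therefore suffices to produce an isomorphism $\mathcal{N}_{\widehat C/\widehat Y}\cong (\pi|_{\widehat C})^*\mathcal{N}_{C/Y}(-Q)$.

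Since $\pi(\widehat C)\subseteq C$, the inclusion of ideal sheaves $\pi^{-1}\mathcal{I}_C\cdot\mathcal{O}_{\widehat Y}\subseteq\mathcal{I}_{\widehat C}$ induces, by the usual functoriality of conormal sheaves, a morphism of locally free sheaves of rank $n-1$ on $\widehat C$,
\[
\phi\colon (\pi|_{\widehat C})^*\mathcal{N}^*_{C/Y}\longrightarrow \mathcal{N}^*_{\widehat C/\widehat Y},
\]
where $\mathcal{N}^*$ denotes the conormal sheaf. Over $\widehat C\setminus E$ the morphism $\pi$ is an isomorphism, so $\phi$ is an isomorphism there, and the whole point is to understand $\phi$ in a neighbourhood of $Q$. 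I would choose coordinates $x_1,\dots,x_n$ centred at $P$ with $C=\{x_2=\dots=x_n=0\}$ and work in the affine chart of the blow up with coordinates $t,y_2,\dots,y_n$ in which $\pi$ reads $x_1=t$ and $x_i=ty_i$ for $i\ge 2$; here $\widehat C=\{y_2=\dots=y_n=0\}$, $E=\{t=0\}$, and $Q$ is the origin, with $t$ a local equation of $Q$ on $\widehat C$. The classes of $x_2,\dots,x_n$ form a frame of $\mathcal{N}^*_{C/Y}=\mathcal{I}_C/\mathcal{I}_C^2$ and those of $y_2,\dots,y_n$ a frame of $\mathcal{N}^*_{\widehat C/\widehat Y}$; since the pullback $\pi^*x_i$ equals $ty_i$, the map $\phi$ sends the $i$-th frame element to $t$ times the $i$-th frame element. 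Thus $\phi$ is, near $Q$, multiplication by the scalar $t$.

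Consequently $\phi$ is injective with image $\mathcal{N}^*_{\widehat C/\widehat Y}\otimes\mathcal{O}_{\widehat C}(-Q)$, which gives an isomorphism $(\pi|_{\widehat C})^*\mathcal{N}^*_{C/Y}\cong\mathcal{N}^*_{\widehat C/\widehat Y}(-Q)$; dualizing and translating $Q$ back to $P$ yields the desired $\mathcal{N}_{\widehat C/\widehat Y}\cong\mathcal{N}_{C/Y}(-P)$. The only genuine point to verify — the mild obstacle — is that $\phi$ is truly \emph{scalar} multiplication by $t$ near $Q$, rather than some degeneration of the same colength, for only then is its image exactly the twisted subsheaf $\mathcal{N}^*_{\widehat C/\widehat Y}(-Q)$; this is precisely what the explicit frames above confirm. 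As a sanity check, when $Y$ is a surface the formula specializes to the classical relation $\widehat C^2=C^2-1$ between the self-intersection numbers of a curve and of its proper transform under the blow up of a point lying on it.
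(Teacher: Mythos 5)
Your argument is correct, and it takes a genuinely different route from the paper's. The functoriality map $\phi$ is well defined because $\pi(\widehat{C})\subseteq C$, your coordinate computation (with $\pi^{*}x_{i}=ty_{i}$, so that $\phi$ is scalar multiplication by $t$ in the natural frames) is right, and the passage from ``the image of $\phi$ is $\mathcal{N}^{*}_{\widehat{C}/\widehat{Y}}(-Q)$'' to the statement, by dualizing and using that $\pi|_{\widehat{C}}\colon\widehat{C}\to C$ is an isomorphism carrying $Q$ to $P$, is sound. The paper instead works with the dual map: it considers the differential $d\pi\colon\mathcal{N}_{\widehat{C}/\widehat{Y}}\to\pi^{*}\mathcal{N}_{C/Y}$, observes that the fiber of $\mathcal{N}_{\widehat{C}/\widehat{Y}}$ at $\widehat{P}=\widehat{C}\cap E$ is identified with the tangent space to $E$, on which $d\pi$ vanishes, so that $d\pi$ factors through a morphism $\theta\colon\mathcal{N}_{\widehat{C}/\widehat{Y}}\to\pi^{*}\mathcal{N}_{C/Y}(-k\widehat{P})$ with $k\geqslant 1$; a comparison of degrees then forces $k=1$, and since $\theta$ is a generically-isomorphic morphism of vector bundles of equal rank and degree, it is an isomorphism. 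So the skeleton is the same in both proofs --- compare the two bundles via the natural map induced by $\pi$ and identify the twist at the exceptional point --- but the twist is pinned down differently: the paper does it globally (vanishing on the tangent space to $E$ plus a degree count), while you do it locally, showing the conormal map is exactly multiplication by the local equation $t$ of $Q$. Your route is more elementary and self-contained: it needs neither the degree computation (which implicitly rests on the behaviour of canonical classes under blow up) nor the rank-and-degree criterion for a bundle map to be an isomorphism, and it exhibits the precise local structure of the map rather than only its order of vanishing, which is exactly the ``mild obstacle'' you flag. The paper's route, in exchange, is coordinate-free and dispenses with choosing charts on the blow up.
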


\begin{proof}
The differential of the morphism $\pi$ provides the
morphisms~\mbox{$d\pi\colon T_{\widehat{C}/\widehat{Y}}\to \pi^*T_{C/Y}$}
and~\mbox{$d\pi\colon \mathcal{N}_{\widehat{C}/\widehat{Y}}\to \pi^*\mathcal{N}_{C/Y}$}.
They can be included into the following commutative diagram:
$$
\xymatrix{
0\ar@{->}[r]& T_{\widehat{C}}\ar@{->}[r]\ar@{->}[d]_{\cong}&
T_{\widehat{C}/\widehat{Y}}\ar@{->}[r]\ar@{->}[d]^{d\pi}& \mathcal{N}_{\widehat{C}/\widehat{Y}}\ar@{->}[r]\ar@{->}[d]^{d\pi} &0\phantom{.}\\
0\ar@{->}[r]& \pi^*T_C\ar@{->}[r]&  \pi^*T_{C/Y}\ar@{->}[r]& \pi^*\mathcal{N}_{C/Y}\ar@{->}[r] &0.
}
$$

Let $E$ be the exceptional divisor of $\pi$.
Note that the fiber of $\mathcal{N}_{\widehat{C}/\widehat{Y}}$ over the point~\mbox{$\widehat{P}=\widehat{C}\cap E$}
is identified with the tangent space to $E$ at
$\widehat{P}$. Since the differential $d\pi$ vanishes
on the latter space, we conclude that the
morphism~\mbox{$d\pi\colon \mathcal{N}_{\widehat{C}/\widehat{Y}}\to \pi^*\mathcal{N}_{C/Y}$}
factors through the morphism
$$
\theta\colon \mathcal{N}_{\widehat{C}/\widehat{Y}}\to \pi^*\mathcal{N}_{C/Y}(-k\widehat{P})
$$
for some
positive integer $k$.
Comparing the degrees of these vector bundles, we conclude that~\mbox{$k=1$}. Now
$\theta$ is a morphism of vector bundles of the same rank and degree, and it is an isomorphism
over a general point of $\widehat{C}$. This implies that $\theta$ is an isomorphism.
\end{proof}

\begin{corollary}
\label{corollary:normal-bundle-1}
Let $L$ be a line in $\mathbb{P}^3$, and let
$P_1, P_2$ be two points on $L$. Let $\pi\colon \widehat{\mathbb{P}}^3\to\mathbb{P}^3$
be the blow up of $\mathbb{P}^3$ at $P_1$ and $P_2$, and denote by $\widehat{L}$ the proper transform
of $L$ on $\widehat{\mathbb{P}}^3$. Then
$$
\mathcal{N}_{\widehat{L}/\widehat{\mathbb{P}}^3}\cong\mathcal{O}_{\mathbb{P}^1}(-1)\oplus\mathcal{O}_{\mathbb{P}^1}(-1).
$$
\end{corollary}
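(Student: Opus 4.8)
The plan is to deduce the statement from Lemma~\ref{lemma:normal-bundle-blow-up} applied twice, feeding in the normal bundle of a line in $\mathbb{P}^3$ as the starting datum.

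First I would record the classical fact that a line $L\cong\mathbb{P}^1$ in $\mathbb{P}^3$ has
$$
\mathcal{N}_{L/\mathbb{P}^3}\cong\mathcal{O}_{\mathbb{P}^1}(1)\oplus\mathcal{O}_{\mathbb{P}^1}(1).
$$
This follows, for instance, from the Euler sequence restricted to $L$, or from the normal bundle sequence $0\to\mathcal{N}_{L/\mathbb{P}^2}\to\mathcal{N}_{L/\mathbb{P}^3}\to\mathcal{O}_{\mathbb{P}^1}(1)\to0$ for a plane $\mathbb{P}^2\supset L$, which splits because $\mathrm{Ext}^1(\mathcal{O}_{\mathbb{P}^1}(1),\mathcal{O}_{\mathbb{P}^1}(1))\cong H^1(\mathbb{P}^1,\mathcal{O}_{\mathbb{P}^1})=0$. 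I would also note that for any point $P\in\mathbb{P}^1$ one has $\mathcal{O}_{\mathbb{P}^1}(-P)\cong\mathcal{O}_{\mathbb{P}^1}(-1)$, so that twisting a bundle by $\mathcal{O}_{\mathbb{P}^1}(-P)$ lowers the degree of each summand by one.

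Next I would factor the blow-up $\pi$ as a composition $\pi=\sigma_1\circ\sigma_2$, where $\sigma_1\colon Y\to\mathbb{P}^3$ is the blow-up at $P_1$ and $\sigma_2\colon\widehat{\mathbb{P}}^3\to Y$ is the blow-up at the point $Q$ of $Y$ lying over $P_2$. Since $P_1\ne P_2$, the morphism $\sigma_1$ is an isomorphism near $P_2$, so $Q$ is a single reduced point, $Y$ is smooth at $Q$, and $Q$ lies on the proper transform $L'$ of $L$, which is again a smooth rational curve mapped isomorphically onto $L$ by $\sigma_1$. Applying Lemma~\ref{lemma:normal-bundle-blow-up} to $\sigma_1$ (with $C=L$ and $P=P_1$) gives
$$
\mathcal{N}_{L'/Y}\cong\mathcal{N}_{L/\mathbb{P}^3}(-P_1)\cong\mathcal{O}_{\mathbb{P}^1}\oplus\mathcal{O}_{\mathbb{P}^1},
$$
and applying it to $\sigma_2$ (with $C=L'$ and $P=Q$) gives
$$
\mathcal{N}_{\widehat{L}/\widehat{\mathbb{P}}^3}\cong\mathcal{N}_{L'/Y}(-Q)\cong\mathcal{O}_{\mathbb{P}^1}(-1)\oplus\mathcal{O}_{\mathbb{P}^1}(-1),
$$
which is the desired conclusion.

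The only point requiring genuine care—and hence the main, if modest, obstacle—is the verification that the two-point blow-up really factors as a tower of single-point blow-ups to which Lemma~\ref{lemma:normal-bundle-blow-up} applies at each stage. Concretely, one must check that after blowing up $P_1$ the second center $Q$ remains a single smooth point lying on the smooth proper transform $L'$, situated in the locus where $\sigma_1$ is an isomorphism; this is exactly where the hypothesis $P_1\ne P_2$ is used. It also ensures that the identifications of $L'$ and $\widehat{L}$ with $L$ afforded by the blow-up morphisms are compatible with the successive twists by $\mathcal{O}_{\mathbb{P}^1}(-P_1)$ and $\mathcal{O}_{\mathbb{P}^1}(-Q)$ produced by the two applications of the lemma, so that the degree bookkeeping above is legitimate.
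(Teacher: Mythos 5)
Your proof is correct and is exactly the argument the paper intends: Corollary~\ref{corollary:normal-bundle-1} is stated without proof as an immediate consequence of Lemma~\ref{lemma:normal-bundle-blow-up}, in the same way the paper proves Corollary~\ref{corollary:normal-bundle-2} by quoting $\mathcal{N}_{C/\mathbb{P}^3}\cong\mathcal{O}_{\mathbb{P}^1}(5)\oplus\mathcal{O}_{\mathbb{P}^1}(5)$ and applying the lemma once per blown-up point. Your added care about factoring the two-point blow-up into a tower of single-point blow-ups (using $P_1\ne P_2$) is a detail the paper leaves implicit, but it does not constitute a different route.
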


\begin{corollary}\label{corollary:normal-bundle-2}
Let $C$ be a twisted cubic curve in $\mathbb{P}^3$, and let
$P_1, \ldots, P_6$ be six distinct points on~$C$. Let $\pi\colon \widehat{\mathbb{P}}^3\to\mathbb{P}^3$
be the blow up at  the points $P_1,\ldots,P_6$, and denote by $\widehat{C}$ the proper transform
of $C$ on $\widehat{\mathbb{P}}^3$. Then
$$
\mathcal{N}_{\widehat{C}/\widehat{\mathbb{P}}^3}\cong\mathcal{O}_{\mathbb{P}^1}(-1)\oplus\mathcal{O}_{\mathbb{P}^1}(-1).
$$
\end{corollary}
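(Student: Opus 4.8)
The plan is to apply Lemma~\ref{lemma:normal-bundle-blow-up} six times, once for each of the points $P_1,\ldots,P_6$, starting from the known normal bundle of a twisted cubic in $\mathbb{P}^3$. Recall that a twisted cubic $C\subset\mathbb{P}^3$ is a smooth rational curve of degree $3$, and its normal bundle is classically known to be $\mathcal{N}_{C/\mathbb{P}^3}\cong\mathcal{O}_{\mathbb{P}^1}(5)\oplus\mathcal{O}_{\mathbb{P}^1}(5)$. (This is the balanced splitting of a degree-$10$ rank-$2$ bundle on $\mathbb{P}^1$; the degree $10$ is forced by $\deg\mathcal{N}_{C/\mathbb{P}^3}=-K_{\mathbb{P}^3}\cdot C - 2 = 4\cdot 3 - 2 = 10$, and the balanced splitting is the standard fact.)

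First I would set up an inductive blow-up. Let $\pi\colon\widehat{\mathbb{P}}^3\to\mathbb{P}^3$ factor as a composition of six single-point blow-ups $\pi=\pi_1\circ\cdots\circ\pi_6$, where $\pi_i$ blows up the proper transform of $P_i$. At each stage the proper transform of $C$ remains a smooth curve isomorphic to $\mathbb{P}^1$ passing through the (proper transform of the) next center, so Lemma~\ref{lemma:normal-bundle-blow-up} applies verbatim: each blow-up at a point of the curve twists the normal bundle by $\mathcal{O}_{\mathbb{P}^1}(-P_i)$, i.e.\ decreases its degree by $1$. After six blow-ups the normal bundle $\mathcal{N}_{\widehat{C}/\widehat{\mathbb{P}}^3}$ therefore has total degree $10-6=4$, and it has the form $\mathcal{N}_{C/\mathbb{P}^3}(-P_1-\cdots-P_6)$ restricted appropriately.

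The content to extract is the precise splitting type, since the twisting does not act diagonally on a fixed direct-sum decomposition in an obvious way. The cleanest argument is to observe that Lemma~\ref{lemma:normal-bundle-blow-up} gives an abstract isomorphism $\mathcal{N}_{\widehat{C}/\widehat{\mathbb{P}}^3}\cong\mathcal{N}_{C/\mathbb{P}^3}\otimes\mathcal{O}_{\mathbb{P}^1}(-P_1-\cdots-P_6)$, and since $\mathcal{N}_{C/\mathbb{P}^3}\cong\mathcal{O}_{\mathbb{P}^1}(5)^{\oplus 2}$ is a twist of the trivial bundle by a line bundle, tensoring by the line bundle $\mathcal{O}_{\mathbb{P}^1}(-6)$ commutes with the direct sum decomposition. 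Hence $\mathcal{N}_{\widehat{C}/\widehat{\mathbb{P}}^3}\cong\mathcal{O}_{\mathbb{P}^1}(5-6)^{\oplus 2}\cong\mathcal{O}_{\mathbb{P}^1}(-1)\oplus\mathcal{O}_{\mathbb{P}^1}(-1)$, as claimed.

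I expect the only genuine subtlety to be verifying the hypotheses of Lemma~\ref{lemma:normal-bundle-blow-up} remain valid at every stage, namely that the proper transform of $C$ is smooth and meets the center $P_i$ transversally as a point of a smooth curve on a smooth threefold. This is automatic here because the $P_i$ are distinct points on the smooth curve $C$ and blowing up points off the remaining centers does not disturb smoothness of $\widehat{C}$; the lemma's framework of "smooth variety, smooth curve, point on the curve" is satisfied at each step. The degree bookkeeping $10\mapsto 4\mapsto(-1,-1)$ then finishes the proof.
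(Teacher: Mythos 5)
Your proposal is correct and follows essentially the same route as the paper: the paper's proof simply cites Eisenbud--Van de Ven for $\mathcal{N}_{C/\mathbb{P}^3}\cong\mathcal{O}_{\mathbb{P}^1}(5)\oplus\mathcal{O}_{\mathbb{P}^1}(5)$ and then applies Lemma~\ref{lemma:normal-bundle-blow-up}, exactly as you do. Your extra bookkeeping (factoring $\pi$ into six point blow-ups, noting that tensoring by a line bundle distributes over the direct sum, and checking the lemma's hypotheses persist at each stage) just makes explicit what the paper leaves implicit.
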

\begin{proof}
Note that
$$
\mathcal{N}_{C/\mathbb{P}^3}\cong\mathcal{O}_{\mathbb{P}^1}(5)\oplus\mathcal{O}_{\mathbb{P}^1}(5),
$$
(see~\cite[Proposition~6]{EisenbudVanDeVen}).
It remains to apply Lemma~\ref{lemma:normal-bundle-blow-up}.
\end{proof}

In \S\ref{section:rigidity}
we will need the following auxiliary result that
is a special equivariant version of a result of Kawakita (see \cite[Theorem~1.1]{Kawakita}).

\begin{lemma}
\label{lemma:mult-2}
Let $Y$ be a 3-fold, and let
$P$ be its smooth point. Let $G$
be a
subgroup of~\mbox{$\mathrm{Aut}(Y)$}
that fixes the point~$P$.
Let $\mathcal{M}_Y$ be a $G$-invariant mobile linear system on $Y$.
Suppose that~$P$ is a center of non-canonical singularities of the
 log pair $(Y,\lambda\mathcal{M}_Y)$ for some rational number $\lambda$.
If the Zariski tangent space $T_{P,Y}$ is an irreducible
representation of the group~$G$,
then
$$
\mathrm{mult}_{P}\big(\mathcal{M}_Y\big)>\frac{2}{\lambda}.
$$
\end{lemma}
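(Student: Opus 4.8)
The plan is to reduce the statement to Kawakita's inequality (\cite[Theorem~1.1]{Kawakita}) by using the group action to improve the naive multiplicity bound. Recall that Kawakita's result, applied to a smooth point $P$ that is a center of non-canonical singularities of a log pair $(Y,\lambda\mathcal{M}_Y)$, gives a bound on the multiplicity of the linear system at $P$ after a weighted blow up. The standard (non-equivariant) consequence one extracts is merely $\mult_P(\mathcal{M}_Y)>1/\lambda$. The extra factor of $2$ in our statement must come entirely from the hypothesis that $T_{P,Y}$ is an irreducible $G$-representation, so the heart of the argument is to show that irreducibility forces the ``weights'' governing the worst singularity to be balanced.

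First I would set up the local picture. Since $G$ fixes $P$, it acts linearly on the Zariski tangent space $T_{P,Y}\cong\C^3$, and by assumption this is an irreducible $3$-dimensional representation. By Kawakita's theorem, the center of non-canonical singularities at the smooth point $P$ is extracted by a single weighted blow up $\sigma\colon\widehat Y\to Y$ with weights $(a_1,a_2,a_3)$ with respect to suitable local coordinates $x_1,x_2,x_3$ at $P$; here the $a_i$ are coprime positive integers and the exceptional divisor has discrepancy $a_1+a_2+a_3-1$. The non-canonicity condition translates, as in Kawakita's analysis, into an inequality of the shape
$$
a_1+a_2+a_3 < \lambda\,\mult_{(a_1,a_2,a_3)}\big(\mathcal{M}_Y\big),
$$
where the right-hand side is the weighted multiplicity. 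The next step is to exploit $G$-invariance: because $\mathcal{M}_Y$ is $G$-invariant and the weighted blow up realizing the worst singularity is essentially canonical, the weight datum $(a_1,a_2,a_3)$ together with the associated coordinate flag must itself be $G$-stable.

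The decisive step, which I expect to be the main obstacle, is the following: I claim irreducibility of $T_{P,Y}$ forces all three weights to coincide, $a_1=a_2=a_3$, hence (by coprimality) $a_1=a_2=a_3=1$, so the weighted blow up is the ordinary blow up. Indeed, a weighted blow up with weights $(a_1,a_2,a_3)$ singles out a monomial grading, equivalently a filtration of the cotangent space $\mathfrak{m}_P/\mathfrak{m}_P^2\cong T_{P,Y}^\vee$ by the weights; distinct weights would yield a proper $G$-invariant subspace of $T_{P,Y}^\vee$ (the span of the coordinates of smallest weight), contradicting irreducibility of $T_{P,Y}$ and hence of its dual. The technical care here is in arguing that the distinguished weighted blow up produced by Kawakita can be chosen $G$-equivariantly, so that its weight filtration is genuinely $G$-invariant; this is where one must invoke that the extraction of the maximal center is canonical and therefore unique up to the $G$-action.

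Once the weights are forced to be $(1,1,1)$, the weighted multiplicity degenerates to the ordinary multiplicity, the discrepancy is $a_1+a_2+a_3-1=2$, and Kawakita's inequality becomes
$$
2=a_1+a_2+a_3<\lambda\,\mult_P\big(\mathcal{M}_Y\big),
$$
which rearranges to $\mult_P(\mathcal{M}_Y)>2/\lambda$, as desired. To finish I would simply record this chain of inequalities and note that the coprimality of the $a_i$ together with their equality leaves $a_1=a_2=a_3=1$ as the only possibility, so no boundary cases are lost.
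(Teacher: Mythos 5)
Your reduction hinges on the claim that the Kawakita extraction can be chosen $G$-equivariantly, so that its weight filtration on $T_{P,Y}^{\vee}$ is $G$-invariant; the justification you offer --- that ``the extraction of the maximal center is canonical and therefore unique up to the $G$-action'' --- does not hold, and this is a genuine gap. Maximal extractions are not unique: a pair $(Y,\lambda\mathcal{M}_Y)$ that is non-canonical at $P$ typically admits many divisorial valuations over $P$ with negative discrepancy, and $G$-invariance of $\mathcal{M}_Y$ only guarantees that this \emph{set} of valuations is $G$-stable, not that any single one of them is $G$-invariant; ``uniqueness up to the $G$-action'' is equally satisfied by a free orbit of non-invariant valuations. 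If you instead try to manufacture an invariant extraction by running a $G$-equivariant MMP over $Y$, the resulting $G$-extremal divisorial contraction may extract a whole $G$-orbit of exceptional divisors (all with center $P$), and Kawakita's theorem --- which classifies extremal divisorial contractions with irreducible exceptional divisor in the ordinary Mori category --- then does not apply. Without a $G$-invariant non-canonical valuation realized by a weighted blow up, your decisive step (weights forced to be equal, hence the ordinary blow up) cannot even be started. There are also smaller slips: Kawakita's weights have the form $(1,a,b)$ with $\gcd(a,b)=1$; the correct non-canonicity inequality is $a_1+a_2+a_3-1<\lambda\,\mathrm{mult}_{(a_1,a_2,a_3)}(\mathcal{M}_Y)$, the left-hand side being the discrepancy; and for weights $(1,1,1)$ one has $a_1+a_2+a_3=3$, not $2$, although the intended conclusion $2<\lambda\,\mathrm{mult}_P(\mathcal{M}_Y)$ is the right one. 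The part of your argument that is sound is the linear-algebra step: \emph{if} a $G$-invariant valuation with unequal weights existed, its valuation ideals would cut out a proper nonzero $G$-invariant subspace of $\mathfrak{m}_P/\mathfrak{m}_P^2$, contradicting irreducibility.

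For comparison, the paper's proof avoids the equivariance problem entirely by never leaving the ordinary blow up $\sigma\colon\hat{Y}\to Y$ at $P$, which is automatically $G$-equivariant because $G$ fixes $P$. Assuming $\mathrm{mult}_P(\mathcal{M}_Y)\leqslant 2/\lambda$, the non-canonical center survives on the exceptional plane $E\cong\mathbb{P}^2$; inversion of adjunction shows that $\bigl(E,\lambda\mathcal{M}_{\hat{Y}}|_E\bigr)$ is not log canonical, and then, at the log canonical threshold $\mu<\lambda$, a degree count using $\mu\mathcal{M}_{\hat{Y}}|_E\sim_{\mathbb{Q}}\mu\,\mathrm{mult}_P(\mathcal{M}_Y)\,L$ with $\mu\,\mathrm{mult}_P(\mathcal{M}_Y)<2$ shows that a one-dimensional log canonical center must be a line (hence unique, hence $G$-invariant), while in the zero-dimensional case the Koll\'ar--Shokurov connectedness principle yields a single $G$-fixed point of $E$. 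Either alternative produces a proper $G$-invariant subspace of $T_{P,Y}$, contradicting irreducibility. If you wish to salvage your Kawakita-based route, you would have to prove that under the irreducibility hypothesis some $G$-invariant non-canonical valuation over $P$ exists and is a weighted blow up; as far as I can see, that is not easier than the lemma itself.
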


\begin{proof}
Suppose that $\mathrm{mult}_{P}(\mathcal{M}_Y)\leqslant\frac{2}{\lambda}$.
Let us show that $T_{P,Y}$ is a reducible representation of the group~$G$.
Let $\sigma\colon \hat{Y}\to Y$ be the blow up at the point $P$, let $E$ be the $\sigma$-exceptional surface,
and let~$\mathcal{M}_{\hat{Y}}$ be the proper transform of the linear system $\mathcal{M}_Y$ via $\sigma$.
Then
$$
K_{\hat{Y}}+\lambda\mathcal{M}_{\hat{Y}}+\Big(\lambda\mathrm{mult}_{P}(\mathcal{M}_Y)-2\Big)E\sim_{\mathbb{Q}}\sigma^*\big(K_Y+\lambda\mathcal{M}_Y\big),
$$
so that  $E$ contains a non-canonical center of the log pair $(\hat{Y},\lambda\mathcal{M}_{\hat{Y}})$.
Then $E$ contains a non-log canonical center of the log pair $(\hat{Y},E+\lambda\mathcal{M}_{\hat{Y}})$.
It then follows from the inversion of adjunction (for instance, see \cite[Theorem~5.50]{KollarMori98})
that $(E,\lambda\mathcal{M}_{\hat{Y}}\vert_{E})$ is not log canonical.

Let $\mu$ be the positive rational number such that $(E,\mu\mathcal{M}_{\hat{Y}}\vert_{E})$ is strictly log canonical.
Then~\mbox{$\mu<\lambda$}, so that $\mu\mathrm{mult}_{P}(\mathcal{M}_Y)<2$.
On the other hand, we have
\begin{equation}
\label{equation:P2-equivalence}
\mu\mathcal{M}_{\hat{Y}}\vert_{E}\sim_{\mathbb{Q}}\mu\mathrm{mult}_{P}\big(\mathcal{M}_Y\big)L,
\end{equation}
where $L$ is a line in $E\cong\mathbb{P}^2$.

Let $Z$ be a log canonical center of the pair $(E,\mu\mathcal{M}_{\hat{Y}}\vert_{E})$.
If $Z$ is a curve, then the multiplicity of the restriction $\mu\mathcal{M}_{\hat{Y}}\vert_{E}$ at this curve is at least $1$ by \cite[Exercise~6.18]{CoKoSm03},
so~that \eqref{equation:P2-equivalence} implies that~$Z$ is a line.
The same arguments imply that other curves in $E$ cannot
be log canonical centers of the
pair~\mbox{$(E,\mu\mathcal{M}_{\hat{Y}}\vert_{E})$}
simultaneously with $Z$ in this case,
so that $Z$ is $G$-invariant.
This implies that $T_{P,Y}$ is a reducible representation of the group $G$.

To complete the proof, we may assume that the locus of log canonical singularities
of the log pair $(E,\mu\mathcal{M}_{\hat{Y}}\vert_{E})$ consists of finitely many points.
On the other hand, it follows from \eqref{equation:P2-equivalence} that the divisor $-(K_E+\mu\mathcal{M}_{\hat{Y}}\vert_{E})$ is ample.
Thus, using Koll\'ar--Shokurov connectedness principle (for example, see \cite[Corollary~5.49]{KollarMori98}),
we see that this locus is connected. Then it consists of one point.
This again means that $T_{P,Y}$ is a reducible representation of the group $G$.
\end{proof}

The following two lemmas will also be used in~\S\ref{section:rigidity}.

\begin{lemma}\label{lemma:4-general-points}
Let $G$ be a finite group acting on $\mathbb{P}^2$ without fixed points.
If a $G$-orbit  on $\mathbb{P}^2$ has at least four points, then it
contains four points such
that no three of them are collinear.
\end{lemma}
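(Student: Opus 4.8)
The plan is to argue by contradiction: assuming the orbit $\Sigma$ contains no four points in general position, I will extract from $\Sigma$ a canonically determined, hence $G$-invariant, line in $\mathbb{P}^2$, and then show that the absence of $G$-fixed points forbids such a line. The first step is purely representation-theoretic. Lift $G$ to a finite subgroup $\widetilde{G}\subset\mathrm{SL}_3(\mathbb{C})$ (the preimage of $G$ under the surjection $\mathrm{SL}_3(\mathbb{C})\to\mathrm{PGL}_3(\mathbb{C})$ is a central extension of $G$ by $\mumu_3$, hence finite), and let $V=\mathbb{C}^3$ be its tautological representation, so that $\mathbb{P}^2=\mathbb{P}(V)$. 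A $G$-fixed point in $\mathbb{P}^2$ is exactly a one-dimensional subrepresentation of $V$, and a $G$-invariant line is exactly a two-dimensional subrepresentation of $V$. Since $\widetilde{G}$ is finite, $V$ is completely reducible; as $\dim V=3$, the absence of a one-dimensional subrepresentation forces $V$ to be irreducible, and then $V$ also has no two-dimensional subrepresentation. Thus the hypothesis that $G$ has no fixed point on $\mathbb{P}^2$ implies that $G$ preserves no line.

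The combinatorial heart of the proof is to show that a set with no four points in general position is essentially contained in a line. Suppose every four points of $\Sigma$ include three collinear ones. If every line met $\Sigma$ in at most two points, then no three points of $\Sigma$ would be collinear, and any four of the (at least four) points of $\Sigma$ would be in general position; hence there is a line $\ell$ with $|\ell\cap\Sigma|\geqslant 3$. I claim at most one point of $\Sigma$ lies off $\ell$. Indeed, suppose $P,Q\in\Sigma\setminus\ell$ and choose three distinct points $X_1,X_2,X_3\in\ell\cap\Sigma$. In each quadruple $\{P,Q,X_i,X_j\}$ a collinear triple of the form $\{P,X_i,X_j\}$ or $\{Q,X_i,X_j\}$ is impossible, since $X_i,X_j$ determine $\ell$ while $P,Q\notin\ell$; hence the line $PQ$ must pass through $X_i$ or $X_j$. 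But $PQ\neq\ell$ meets $\ell$ in a single point, so $PQ$ contains at most one of $X_1,X_2,X_3$, which cannot meet all three pairwise requirements (for the pair not involving that single point). This contradiction proves the claim.

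It remains to upgrade $\ell$ to a canonical object. Because $|\Sigma|\geqslant 4$, any two lines each omitting at most one point of $\Sigma$ share at least $|\Sigma|-2\geqslant 2$ points of $\Sigma$, and two distinct common points force the two lines to coincide; hence $\ell$ is the \emph{unique} line containing all but at most one point of $\Sigma$. Being intrinsically determined by $\Sigma$, the line $\ell$ is preserved by $G$, which contradicts the first step. I expect the main obstacle to be the combinatorial dichotomy of the middle paragraph together with the uniqueness argument, which is exactly where the hypothesis $|\Sigma|\geqslant 4$ is used; once a canonically determined invariant line is produced, the representation-theoretic step closes the argument at once.
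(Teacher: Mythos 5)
Your proof is correct, but it takes a genuinely different route from the paper's. The paper argues constructively: since there is no invariant line (the same complete-reducibility fact you prove via the lift to $\mathrm{SL}_3(\mathbb{C})$, which the paper compresses into the single sentence that an invariant line would force a fixed point outside it), the orbit $\Sigma$ contains three non-collinear points $P_1,P_2,P_3$; if some point of $\Sigma$ avoids the three side lines of this triangle, those four points work, and otherwise some side line $L_1$ carries three points of $\Sigma$, so the group moves $L_1$ to a second trisecant line $L_1'$, and two points on each of $L_1,L_1'$ away from $L_1\cap L_1'$ give the required quadruple. You instead argue by contradiction through a purely incidence-geometric structure lemma: a set of at least four points with no four in general position must be a near-pencil (all but at most one point on a single line), and that line is then the \emph{unique} line omitting at most one point of $\Sigma$, hence $G$-invariant --- contradiction. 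The trade-off: your argument cleanly separates the combinatorics from the group theory (the group enters exactly once, at the end, via canonicity), and the near-pencil lemma is a reusable statement with no group in it; the paper's argument is shorter and exhibits the four points explicitly, but uses the group action twice and intertwines it with the case analysis. Both hinge on the same representation-theoretic input, which you spell out in more detail than the paper does.
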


\begin{proof}
Let $\Sigma$ be a $G$-orbit with at least four points. The $G$-action has no invariant line, otherwise it would have a fixed point outside the invariant line.
Thus $\Sigma$ contains
at least three non-collinear points, say, $P_1$, $P_2$, and $P_3$.
For each $i=1,2,3$ denote by $L_i$ the line determined by  the two points~${\{P_1, P_2, P_3\}\setminus\{P_i\}}$.
If $\Sigma$ contains a point outside $L_1\cup L_2\cup L_3$, then
we are done. Suppose that this is not a case.
Let~$R_1$ be a point of $\Sigma$ different from $P_1$, $P_2$, and $P_3$.
We may assume that $R_1\in L_1$. Since the line $L_1$ is not $G$-invariant, there is a line $L_1'$ different from $L_1$ and  containing three points of $\Sigma$. We are then able to choose four distinct points of $\Sigma\setminus \left(L_1\cap L_1'\right)$ that lie on $L_1$ or $L_1'$. Such four points satisfy the desired property.
\end{proof}

\begin{lemma}\label{lemma:must-contain-S4}
Let $G$ be a finite group acting on $\mathbb{P}^2$ without fixed points.
Suppose that there is a $G$-invariant smooth
quartic curve $C$ in $\mathbb{P}^2$. Then
\begin{itemize}
\item the curve $C$ is given
by equation~\eqref{eq:S4-quartic-general};
\item the group
$G$ contains a subgroup isomorphic to $\mathfrak{S}_4$;
\item the projective plane $\mathbb{P}^2$ can be identified
with a projectivization of an irreducible three-dimensional
representation of $\mathfrak{S}_4$.
\end{itemize}
\end{lemma}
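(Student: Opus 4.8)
The plan is to translate the two hypotheses into representation theory and then read the conclusion off the classification of automorphism groups of smooth plane quartics.

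First I would reinterpret the absence of fixed points. A point of $\mathbb{P}^2$ fixed by $G$ is the same thing as a one-dimensional subspace of $\mathbb{C}^3$ invariant under the preimage $\widetilde{G}\subset\mathrm{GL}_3(\mathbb{C})$ of $G$. Hence the hypothesis says that the three-dimensional representation $W=\mathbb{C}^3$ of $\widetilde{G}$ has no one-dimensional subrepresentation; since representations of finite groups are completely reducible and $\dim W=3$, this forces $W$ to be irreducible. Thus $G$ acts on $\mathbb{P}^2=\mathbb{P}(W)$ through an irreducible three-dimensional representation.

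Next, since $G$ preserves the smooth quartic $C$, it is a subgroup of the group of projective automorphisms of $C$, which coincides with $\mathrm{Aut}(C)$ because the plane model of $C$ is its canonical embedding. As the subgroup $G$ already acts irreducibly, so does the larger group $\mathrm{Aut}(C)$. At this point I would invoke the classification of automorphism groups of smooth plane quartics (see \cite[Theorem~6.5.2]{Dolgachev}, \cite{Henn}, \cite{KuribayashiKomiya}) and keep only the entries that act irreducibly. A cyclic group is generated by a single element of $\mathrm{PGL}_3(\mathbb{C})$, which has an eigenvector and therefore a fixed point; for each remaining group on the list one reads off from its explicit normal form a coordinate point fixed by the whole group, so that its three-dimensional representation is reducible. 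The only automorphism groups that act irreducibly are $\mathfrak{S}_4$ (the general member), $\mumu_4^2\rtimes\mathfrak{S}_3$ (the Fermat quartic), and $\mathrm{PSL}_2(\mathbb{F}_7)$ (the Klein quartic).

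In each of these three cases $\mathrm{Aut}(C)$ contains a copy of $\mathfrak{S}_4$ acting through an irreducible three-dimensional representation: this is immediate for $\mathfrak{S}_4$ itself, for $\mumu_4^2\rtimes\mathfrak{S}_3$ it is the subgroup $\mumu_2^2\rtimes\mathfrak{S}_3\cong\mathfrak{S}_4$, and the maximal subgroups of $\mathrm{PSL}_2(\mathbb{F}_7)$ are isomorphic to $\mathfrak{S}_4$. This identifies $\mathbb{P}^2$ with the projectivization of such a representation, which gives the third assertion. Computing the space of $\mathfrak{S}_4$-invariant quartics in the corresponding coordinates $x,y,z$ yields the two-dimensional pencil spanned by $x^4+y^4+z^4$ and $x^2y^2+y^2z^2+x^2z^2$; since $C$ is invariant, its equation lies in this pencil, and this is exactly the normal form \eqref{eq:S4-quartic-general}, giving the first assertion.

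The main obstacle is the middle assertion, that is, promoting the containment $\mathfrak{S}_4\subseteq\mathrm{Aut}(C)$ to $\mathfrak{S}_4\subseteq G$: a priori $G$ is merely some subgroup of one of the three groups above, and one must rule out proper subgroups that still act irreducibly on $\mathbb{P}^2$. I would handle this by enumerating, with the help of the character tables, exactly which subgroups of $\mathfrak{S}_4$, $\mumu_4^2\rtimes\mathfrak{S}_3$ and $\mathrm{PSL}_2(\mathbb{F}_7)$ retain a fixed-point-free action, and by analyzing each of them. I expect this subgroup bookkeeping, rather than the determination of $C$, to be the technical heart of the proof and the step where the hypotheses must be exploited most carefully.
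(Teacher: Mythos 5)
Your route is essentially the paper's: its entire proof is a citation of the classification of automorphism groups of smooth plane quartics (\cite[Theorem~6.5.2]{Dolgachev}, \cite{Henn}, \cite{KuribayashiKomiya}) followed by the instruction to check which groups in the list have fixed points on $\mathbb{P}^2$. Your additions are genuinely useful: the translation of fixed-point freeness into irreducibility of the three-dimensional representation, the remark that irreducibility for $G$ forces irreducibility for $\mathrm{Aut}(C)\supseteq G$, the identification of $\mathfrak{S}_4$, $\mumu_4^2\rtimes\mathfrak{S}_3$ and $\mathrm{PSL}_2(\mathbb{F}_7)$ as the only full automorphism groups acting irreducibly, and the computation of the invariant pencil. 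This correctly settles the first and third bullets.

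The step you postpone as ``bookkeeping'' --- upgrading $\mathfrak{S}_4\subseteq\mathrm{Aut}(C)$ to $\mathfrak{S}_4\subseteq G$ --- cannot be completed, because that assertion is false. Let $G\cong\mathfrak{A}_4$ be the subgroup of $\mathrm{PGL}_3(\mathbb{C})$ generated by $\mathrm{diag}(-1,-1,1)$, $\mathrm{diag}(-1,1,-1)$ and the cyclic permutation of coordinates. It preserves \emph{every} member of the family \eqref{eq:S4-quartic-general}; it has no fixed point on $\mathbb{P}^2$, since the only points fixed by both diagonal involutions are the three coordinate points, which the cyclic permutation moves (equivalently, the three-dimensional representation of $\mathfrak{A}_4$ is irreducible); and, having order $12$, it contains no copy of $\mathfrak{S}_4$. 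So the hypotheses of the lemma hold while its second bullet fails. The same happens for the Frobenius group $\mumu_7\rtimes\mumu_3\subset\mathrm{PSL}_2(\mathbb{F}_7)$ acting on the Klein quartic (the restricted three-dimensional character has norm $(9+6\cdot 2)/21=1$, hence stays irreducible) and for $\mumu_4^2\rtimes\mumu_3$ acting on the Fermat quartic. In particular, your side claim that the maximal subgroups of $\mathrm{PSL}_2(\mathbb{F}_7)$ are all isomorphic to $\mathfrak{S}_4$ is wrong: $\mumu_7\rtimes\mumu_3$ is also maximal, and it is precisely a counterexample. Thus the enumeration you planned, if carried out honestly, refutes rather than proves the middle bullet.

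To be fair, this is not a deficiency of your plan relative to the paper: the paper's proof only inspects the full automorphism groups from the classification and never addresses the passage from $\mathrm{Aut}(C)$ to an arbitrary fixed-point-free subgroup $G$, so it too establishes only the first and third bullets together with $\mathfrak{S}_4\subseteq\mathrm{Aut}(C)$. You have, in effect, located a real error in the lemma, and it is not cosmetic: in the proof of Theorem~\ref{theorem:rigid} the lemma is used to conclude that $G$ contains a conjugate of $\mathfrak{G}$, whereas the group $G=\mumu_2\times\mathfrak{A}_4\subset\Aut(V)$ (the Galois involution together with the even part of $\mathfrak{G}$) has fixed-point-free image in $\mathrm{Aut}(\mathbb{P}^2)$ and satisfies $\mathrm{rk}\,\mathrm{Cl}(V)^{G}=1$ by Corollary~\ref{corollary:class-group-V-G-prime}, because $\tau$ interchanges the planes $\Pi_{\pm}$, yet contains no $\mathfrak{S}_4$. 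Any correct version of the lemma must therefore either weaken the second bullet to a statement about $\mathrm{Aut}(C)$ or impose further hypotheses on $G$.
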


\begin{proof}
The list of all groups preserving plane quartics
together with the equations
of the corresponding quartics can be found
in~\mbox{\cite[Theorem~6.5.2]{Dolgachev}}, \cite{Henn}, or \cite{KuribayashiKomiya}. Thus, it remains to check
which of them have fixed points on $\mathbb{P}^2$.
\end{proof}

\section{From Aronhold heptads to smooth plane quartics}
\label{section:nets-of-quadrics}

In this section we present some auxiliary assertions concerning nets of quadrics in $\mathbb{P}^3$.
The theory of nets of quadrics is a classical subject. We refer the reader to \cite{Beauville-Prym},
\cite{Edge1}, \cite{Edge2}, \cite{Edge3}, \cite{Edge4}, \cite{Edge5}, \cite{Wall1}, \cite{Wall2}, \cite{DolgachevOrtland},
\cite[\S6.3.2]{Dolgachev}, \cite{Tyurin}, \cite{Reid}, and references therein for many aspects of this theory.

For seven distinct points $P_1,\ldots, P_7$ in $\mathbb{P}^3$ denote by $\mathcal{L}(P_1, \ldots, P_7)$  the linear system of the quadrics passing through the points $P_1,\ldots, P_7$  in $\mathbb{P}^3$.

Before we proceed, let us employ a simple assertion that tells us the dimension of $\mathcal{L}(P_1, \ldots, P_7)$.

\begin{lemma}\label{lemma:CB}
Let $P_1,\ldots, P_r$ be points in a zero-dimensional complete intersection of three quadric surfaces in $\mathbb{P}^3$.
\begin{itemize}
\item If $r\leqslant 7$, then the $r$ points $P_1,\ldots, P_r$ impose independent conditions on quadric surfaces.
\item  If $r=8$, then they do not impose independent conditions on quadric surfaces.
\end{itemize}
\end{lemma}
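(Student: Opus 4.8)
The plan is to reduce the statement to two standard ingredients: the Hilbert function of a complete intersection, and the Cayley--Bacharach theorem (which is presumably the reason the lemma is labelled CB). Write $Q_1,Q_2,Q_3$ for the three quadric surfaces and let $\Gamma=Q_1\cap Q_2\cap Q_3$ be their scheme-theoretic intersection. Since this intersection is zero-dimensional, the quadrics are linearly independent and, by B\'ezout, $\Gamma$ is a complete intersection of length $8$ in $\mathbb{P}^3$ containing $P_1,\ldots,P_r$. The space of quadrics $H^0(\mathbb{P}^3,\mathcal{O}(2))$ has dimension $10$, and a finite subscheme $X$ imposes independent conditions on quadrics exactly when the space $I_X(2)$ of quadrics vanishing on it has the expected dimension $10-\deg X$. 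The first step is to record that $I_\Gamma(2)$ has dimension exactly $3$: it contains the net $\langle Q_1,Q_2,Q_3\rangle$, and from the Koszul resolution the Hilbert function of $\Gamma$ is $1,4,7,8,8,\ldots$, so $\Gamma$ imposes $7$ conditions on quadrics and $\dim I_\Gamma(2)=10-7=3$, the net itself. This already settles $r=8$: the eight points lie on a $3$-dimensional space of quadrics, whereas imposing independent conditions would force dimension $10-8=2$; hence they do not impose independent conditions.

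For $r\leqslant 7$ I would invoke the Cayley--Bacharach duality for the complete intersection $\Gamma$. Writing $\Gamma'=\{P_1,\ldots,P_r\}$, $\Gamma''$ for the residual subscheme (of length $8-r$), $s=2+2+2-3-1=2$, and $h_X(j)$ for the failure of $X$ to impose independent conditions on forms of degree $j$, the theorem gives
$$
\dim\big(I_{\Gamma'}(2)/I_\Gamma(2)\big)=h_{\Gamma''}(s-2)=h_{\Gamma''}(0).
$$
Now any nonempty subscheme imposes exactly one condition on the one-dimensional space of constants, so $h_{\Gamma''}(0)=\deg\Gamma''-1=(8-r)-1$ whenever $r\leqslant 7$. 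Combining with $\dim I_\Gamma(2)=3$ yields $\dim I_{\Gamma'}(2)=3+(8-r-1)=10-r$, i.e.\ $P_1,\ldots,P_r$ impose independent conditions. (The same formula with $\Gamma''=\emptyset$ recovers the $r=8$ case, since then $\dim I_{\Gamma'}(2)=3\neq 2$.)

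The only genuinely geometric content is the Cayley--Bacharach step for seven points, which I would also explain directly to make the argument transparent. Passing to independent conditions is inherited by subsets, and every collection of at most seven reduced points of $\Gamma$ extends to a seven-point subset $\Gamma_7$; so it suffices to show that each $\Gamma_7$ imposes independent conditions, equivalently that every quadric through seven of the eight points of $\Gamma$ contains the eighth. For this one uses the curve $D=Q_1\cap Q_2$, a quartic of arithmetic genus one containing $\Gamma$: every quadric restricts on $D$ to a divisor in the single linear system $|\mathcal{O}_D(2)|$, a quadric through seven of the points cuts out $P_1+\cdots+P_7+R$, and this is linearly equivalent to the divisor $P_1+\cdots+P_8$ cut by $Q_3$. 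Comparing the two shows $R\sim P_8$ on a curve of arithmetic genus one, whence $R=P_8$, so the quadric passes through $P_8$ and $I_{\Gamma_7}(2)=I_\Gamma(2)$ has dimension $3=10-7$.

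The main obstacle to watch is the generality of the hypothesis, since the lemma allows $\Gamma$ to be non-reduced or $D$ to degenerate, in which case the clean elliptic-curve argument and the extension-to-seven-points reduction need extra care. The safest route, and the one I would ultimately write down, is to run the Cayley--Bacharach duality of the previous paragraph, which is phrased purely in terms of Hilbert functions of $\Gamma$ and of the residual subscheme and is therefore insensitive to these degenerations; the elliptic-curve computation then serves as the concrete geometric picture underlying it.
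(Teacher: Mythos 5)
Your proof is correct, but it takes a genuinely different (and more self-contained) route than the paper's. The paper disposes of the lemma in three lines: the first assertion is quoted from \cite[Conjecture~CB11]{EGH}, invoking the partial proof in \cite[2.2]{EGH} to cover this case, and the second assertion is declared obvious because the eight points form a complete intersection of three quadrics --- which is exactly your observation that $\dim I_\Gamma(2)=3>2=10-8$. You instead prove the case $r\leqslant 7$ from scratch: the Koszul resolution gives the Hilbert function $1,4,7,8,8,\ldots$ of $\Gamma$, hence $\dim I_\Gamma(2)=3$; Gorenstein linkage produces a residual subscheme $\Gamma''$ of degree $8-r$; and the classical Cayley--Bacharach duality (Theorem~CB7 of \cite{EGH}, a theorem rather than a conjecture) with $s=2$ and $j=2$ yields $\dim\bigl(I_{\Gamma'}(2)/I_\Gamma(2)\bigr)=h_{\Gamma''}(0)=(8-r)-1$, so $\dim I_{\Gamma'}(2)=10-r$, i.e.\ independent conditions. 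This buys two things the paper's citation does not: the argument rests only on a fully proven classical statement rather than on a partially established conjecture, and it is insensitive to non-reducedness of the complete intersection, since linkage and CB7 hold for arbitrary zero-dimensional complete intersections (these are Gorenstein, so residuation is involutive and degrees add). You were also right to treat the elliptic-quartic picture ($D=Q_1\cap Q_2$, where a quadric through seven of the points cuts out $P_1+\cdots+P_7+R\sim P_1+\cdots+P_8$, forcing $R=P_8$) as illustration only: when $D$ degenerates or $\Gamma$ is non-reduced, both that computation and the extension-to-seven-points reduction need extra hypotheses, and your decision to make the duality argument the actual proof is the correct one.
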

\begin{proof}
The first assertion immediately follows from \cite[Conjecture~CB11]{EGH}.
The conjecture is partially
proven in \cite[2.2]{EGH}, which allows us to apply it to our case.
The second assertion is obvious since the eight points form a complete intersection of three quadric surfaces.
\end{proof}

The following assertion is a key observation that connects 28-nodal double Veronese cones and nets of quadrics in~$\mathbb{P}^3$.

\begin{lemma}[{cf. \cite[Lemma~IX.5]{DolgachevOrtland}}]
\label{lemma:Aronhold-condition}
For seven distinct points $P_1,\ldots,P_7$ in $\mathbb{P}^3$,
the two conditions
\begin{itemize}
\item[(A)] every element of $\mathcal{L}(P_1, \ldots, P_7)$ is
irreducible;

\item[(B)] the base locus of $\mathcal{L}(P_1, \ldots, P_7)$ consists
of eight distinct points,
\end{itemize}
are satisfied if and only if
the following three conditions hold:
\begin{itemize}
\item[(A${}^\prime$)] no four points of $P_1,\ldots,P_7$ are coplanar
(and in particular no three are collinear);

\item[(B${}^\prime$)] all points $P_1,\ldots,P_7$
are not contained in a single  twisted cubic;

\item[(C${}^\prime$)] for each $i$, the twisted cubic passing through the points of $\{P_1,\ldots,P_7\}\setminus \{P_i\}$ and the line passing through the  point $P_i$ and  one point in $\{P_1,\ldots,P_7\}\setminus \{P_i\}$  meet neither twice nor tangentially.
\end{itemize}
\end{lemma}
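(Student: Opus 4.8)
The plan is to treat conditions (A) and (B) separately: I will link (A) entirely to (A$'$), and then, assuming (A$'$), link (B) to the conjunction of (B$'$) and (C$'$). Throughout I use that a quadric in $\mathbb{P}^3$ is reducible precisely when it has rank at most $2$, that is, when it is a union of two planes. First I would prove (A)$\Leftrightarrow$(A$'$). If some four of the points lie in a plane $\Pi$, then $\Pi$ together with any plane through the remaining points is a reducible member of $\mathcal{L}=\mathcal{L}(P_1,\dots,P_7)$, so (A) fails. Conversely, a reducible member $\Pi_1\cup\Pi_2\in\mathcal{L}$ passing through all seven points has, by the pigeonhole principle, at least four of them in one of the two planes, so (A) fails only if (A$'$) does. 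This yields the equivalence independently of the remaining conditions.

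Next, assuming (A$'$), I would show (B)$\Leftrightarrow$(B$'$)$\wedge$(C$'$). The easy implication is the contrapositive of the (B$'$)-part: if all seven points lie on a twisted cubic $\Gamma$, then any quadric through them meets $\Gamma$ in more than $6$ points, hence contains $\Gamma$ by B\'ezout on $\Gamma$, so $\Bs\mathcal{L}\supseteq\Gamma$ is positive-dimensional and (B) fails. The core of the argument is the analysis of (C$'$). Fix an index $i$; by (A$'$) the six points $\{P_j:j\ne i\}$ lie on a unique twisted cubic $\Gamma=\Gamma_i$, and by (B$'$) we have $P_i\notin\Gamma$. The quadrics containing $\Gamma$ form a net, and the members of that net also passing through $P_i$ form a pencil $\mathcal{P}$, which is contained in $\mathcal{L}$; since $\dim\mathcal{L}\geqslant 9-7=2>1=\dim\mathcal{P}$ (cf. Lemma~\ref{lemma:CB}) there is a quadric $Q_0\in\mathcal{L}$ not containing $\Gamma$. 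The base locus of $\mathcal{P}$ is a complete intersection of two quadrics, hence a connected degree-$4$ curve of arithmetic genus $1$; as it contains $\Gamma$, its residual is a line $\ell$, and the genus formula $p_a(\Gamma\cup\ell)=1$ forces $\ell$ to meet $\Gamma$ in a length-$2$ scheme. Since $\ell$ passes through $P_i\notin\Gamma$, it is the unique secant (or tangent) line of $\Gamma$ through $P_i$. One then has $\Bs\mathcal{L}=(\Gamma\cup\ell)\cap Q_0$, a length-$8$ subscheme consisting of the six points $Q_0\cap\Gamma=\{P_j:j\ne i\}$ together with the two points of $Q_0\cap\ell$, one of which is $P_i$ and the other the ``eighth point'' $P_8\in\ell$.

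Finally I would read off (B) from this description. Along each twisted cubic the base scheme is automatically reduced: $Q_0\cap\Gamma_{i'}$ is a degree-$6$ subscheme of $\Gamma_{i'}\cong\mathbb{P}^1$ containing the six distinct points $\{P_k:k\ne i'\}$, hence equals them with reduced structure. Running this for a suitable index shows that $\Bs\mathcal{L}$ is reduced at each of $P_1,\dots,P_7$; in particular $P_8\ne P_i$. Thus the only way (B) can fail, apart from the positive-dimensional case already handled, is a collision $P_8=P_j$ for some $j\ne i$, or $P_8\in\Gamma$. In either case $P_8\in\ell$ forces a point $P_j$ ($j\ne i$) onto $\ell$, so that $\overline{P_iP_j}=\ell$ meets $\Gamma$ a second time or tangentially at $P_j$, which is exactly the failure of (C$'$); at such a node of $\Gamma\cup\ell$ lying on $Q_0$ the local length jumps, producing fewer than eight distinct base points. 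Conversely, if (C$'$) holds for every $i$, then no secant line $\overline{P_iP_j}$ equals the corresponding residual line, so no $P_j$ lies on $\ell$; then $P_8\notin\{P_1,\dots,P_7\}$ and $P_8\notin\Gamma$, and $\Bs\mathcal{L}$ consists of eight distinct reduced points, giving (B). Assembling the three equivalences proves the lemma.

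The main obstacle is the identification of the residual line $\ell$ with the secant line of $\Gamma$ through $P_i$, together with the correct bookkeeping of the scheme structure of $(\Gamma\cup\ell)\cap Q_0$ at the two nodes $\ell\cap\Gamma$; the key classical inputs here are the uniqueness of the secant (or tangent) line to a twisted cubic through a point not lying on it, and the genus computation $p_a=1$ for a $(2,2)$-complete intersection. A secondary point requiring care is excluding a positive-dimensional base locus other than the twisted-cubic one, namely verifying that under (A$'$) and (B$'$) a general $Q_0\in\mathcal{L}$ contains neither $\Gamma$ nor the line $\ell$, so that $(\Gamma\cup\ell)\cap Q_0$ is genuinely zero-dimensional of length $8$.
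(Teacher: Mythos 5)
Your route is essentially the paper's: the plane-pair argument for (A)$\Leftrightarrow$(A$'$), and then, for each index $i$, the pencil of quadrics through the twisted cubic $\Gamma_i$ and $P_i$, its residual secant line $\ell_i$, and Cayley--Bacharach (Lemma~\ref{lemma:CB}) to control the eighth point; the differences are in the scheme-theoretic bookkeeping, not in the key objects. However, there is a genuine gap at the most delicate step.

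The claim that ``along each twisted cubic the base scheme is automatically reduced'', from which you deduce $P_8\neq P_i$, is false as stated. The reduced divisor $Q_0\cap\Gamma_{i'}$ on $\Gamma_{i'}$ computes $\Bs\mathcal{L}$ near a point $P_k$ only if $P_k\notin\ell_{i'}$: at a point of $\Gamma_{i'}\cap\ell_{i'}$ both branches contribute, so the local length of $(\Gamma_{i'}\cup\ell_{i'})\cap Q_0$ is at least $2$ even though the two divisors $Q_0\cap\Gamma_{i'}$ and $Q_0\cap\ell_{i'}$ are each reduced. And $P_k\in\ell_{i'}$ is precisely a violation of (C$'$) for the index $i'$ (then $\ell_{i'}$ is the line through $P_{i'}$ and $P_k$, and it meets $\Gamma_{i'}$ twice or tangentially). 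So reducedness at the marked points is not automatic; it is a consequence of (C$'$), and without (C$'$) the collision $P_8=P_i$ genuinely occurs. Concretely, take the standard twisted cubic $C$, six points $C(t_j)$ with $\prod t_j=1$ but no triple product equal to $1$, and $P_1=[1:0:0:1]$ on the secant $\{x_1=x_2=0\}$ through $C(0)$ and $C(\infty)$: one computes that every quadric through the seven points either contains that secant or cuts on it the divisor $2P_1$, so the base scheme is non-reduced at $P_1$ and has only seven points, while (A$'$) and (B$'$) hold --- consistent with the Lemma only because (C$'$) fails (this is exactly the situation where the net contains a quadric singular at a base point). Hence your proof of $P_8\neq P_i$ must explicitly invoke (C$'$) for the auxiliary index $i'$, to guarantee $P_i\notin\ell_{i'}$; once that wiring is made explicit, your ``switch the index'' device does close the converse direction correctly, and in fact it handles this collision more transparently than a single-index analysis can.

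A second, smaller, gap: you flag but do not prove that $\Bs\mathcal{L}$ contains no curve, i.e.\ that $Q_0$ can be chosen containing neither $\Gamma$ nor $\ell$. Your guess that (A$'$) and (B$'$) suffice here is correct, but an argument is needed: an irreducible curve in $\Bs\mathcal{L}$ lies in $\Gamma_{i'}\cup\ell_{i'}$ for every $i'$; it cannot equal any $\Gamma_{i'}$ (only a pencil of quadrics contains $\Gamma_{i'}$, while $\dim\mathcal{L}\geqslant 2$), hence it equals $\ell_{i'}$ for every $i'$; since $P_{i'}\in\ell_{i'}$ by (B$'$), this single line would contain all seven points, contradicting (A$'$). (The paper disposes of this case differently, by comparing the residual lines attached to two different indices and contradicting (C$'$).) Both gaps are repairable within your framework, but as written the proof is incomplete precisely at the point that makes the lemma nontrivial.
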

\begin{proof}
First, suppose that conditions (A) and (B) hold.

If a plane $\Pi_1$ contains  four points in $\{P_1,\ldots, P_7  \}$, then we have another plane $\Pi_2$ that contains the remaining three points.  The divisor $\Pi_1+\Pi_2$ is a reducible member in $\mathcal{L}(P_1, \ldots, P_7)$, which
is a contradiction to condition~(A).
Therefore, condition~(A${}^\prime$) holds.

If the seven points $P_1,\ldots,P_7$ are contained in a twisted cubic, then every quadric surface passing through $P_1,\ldots,P_7$
must contain this twisted cubic. Thus,
the base locus of $\mathcal{L}(P_1, \ldots, P_7)$ is one-dimensional,
which contradicts condition~(B).
Therefore, condition~(B${}^\prime$) must hold.

Suppose that for some $i, j$, say $i=1$, $j=2$, the twisted cubic curve $C$ passing through the six points $P_2,\ldots, P_7$ and the line $L$ passing through $P_1$ and $P_2$  meet at $P_2$ and at some point~$P$.
If they meet tangentially at $P_2$, then we put $P=P_2$.
Since the linear system of quadric surfaces containing~$C$ is two-dimensional, there are two distinct quadric surfaces $Q_1$ and $Q_2$ that contain the curve $C$ and the point $P_1$. Note that $Q_1$ and $Q_2$  belong to the linear system~\mbox{$\mathcal{L}(P_1, \ldots, P_7)$}.
The intersection of~$Q_1$ and $Q_2$ is a curve of degree $4$ that contains the curve $C$.
Since $L$ passes through three points~${P_1,\, P_2}$, and $P$ of $Q_i$, $i=1,2$, it must be contained  in~$Q_i$, $i=1,2$.
Therefore, the intersection of $Q_1$ and~$Q_2$ consists of $C$ and~$L$.
This implies that for an element~$Q$ in $\mathcal{L}(P_1, \ldots, P_7)$,
the intersection of the quadrics $Q_1$, $Q_2$ and $Q$
either is a curve or consists of~$7$ points (i.e., $P_1+2P_2+P_3+\ldots+P_7$),
which gives a contradiction to condition~(B).
Therefore, condition~(C${}^\prime$) holds.

Now assume that conditions (A${}^\prime$), (B${}^\prime$),
and (C${}^\prime$) hold.

Suppose that there is a reducible member in $\mathcal{L}(P_1, \ldots, P_7)$. Then it consists of two planes containing the seven points $P_1,\ldots, P_7$.
Then one of the planes must contain four of the seven points,
which is a contradiction to condition~(A${}^\prime$).
Therefore, condition~(A) must be satisfied.

Suppose that condition (B) does not hold, i.e.,
the support of the base locus of $\mathcal{L}(P_1, \ldots, P_7)$ either consists of only seven points or contains a curve.
Note that $\dim \mathcal{L}(P_1, \ldots, P_7)\geqslant 2$.

Let~$C$ be the twisted cubic curve passing through the six points $P_2,\ldots, P_7$. Then there are two distinct quadric surfaces $Q_1$ and $Q_2$ that contain the curve $C$ and the point $P_1$.  They are elements
of~\mbox{$\mathcal{L}(P_1, \ldots, P_7)$}.
The intersection of $Q_1$ and $Q_2$ consists of the twisted cubic curve~$C$ and a line~$L$. Since $C$ does not contain the point $P_1$
by condition (B${}^\prime$),  the line $L$ must contain~$P_1$. Note that $L$ intersects $C$ twice or tangentially.

If the base locus of $\mathcal{L}(P_1, \ldots, P_7)$ is zero-dimensional,
then we choose an element $Q_3$ of~\mbox{$\mathcal{L}(P_1, \ldots, P_7)$}
that contains neither $C$ nor $L$.
The intersection of the quadrics $Q_1$, $Q_2$ and~$Q_3$
then consists of the seven points $P_1, \ldots, P_7$ set-theoretically. Otherwise it would consists of eight points including $P_1, \ldots, P_7$, and every element in $\mathcal{L}(P_1, \ldots, P_7)$ would pass through the eight intersection points by Lemma~\ref{lemma:CB}.
Since the three quadric surfaces meet at eight points counted with multiplicities, the line $L$ and $C$ must meet at one of the six points $P_2,\ldots, P_7$;  otherwise~$Q_3$ would contain either $C$ or $L$. This is a contradiction
to condition (C${}^\prime$) since $L$ meets~$C$ twice or tangentially.

If the base locus of $\mathcal{L}(P_1, \ldots, P_7)$ contains a curve, then for an arbitrary  member $Q$
in~\mbox{$\mathcal{L}(P_1, \ldots, P_7)$} the intersection of the quadrics $Q_1$, $Q_2$ and $Q$ contains either $C$ or $L$. Since the dimension of the linear system of quadrics containing both $C$ and $P_1$ is $1$, the base locus
of~\mbox{$\mathcal{L}(P_1, \ldots, P_7)$} cannot contain~$C$.
Choose two general quadric surfaces that contain both the point~$P_7$ and the twisted cubic curve passing through $P_1, \ldots, P_6$. They are elements in~\mbox{$\mathcal{L}(P_1, \ldots, P_7)$}. Therefore, their intersection consists of the twisted cubic curve passing through the points $P_1, \ldots, P_6$, and the line $L$. Therefore, $L$ must meet $C$ at $P_7$. This contradicts condition (C${}^\prime$)
since $L$ and $C$ meet twice or tangentially.
Therefore, condition~(B) holds.
\end{proof}

\begin{definition}\label{definition:Aronhold-heptad}
A set of seven distinct points $P_1,\ldots, P_7$
in $\mathbb{P}^3$ is called an \emph{Aronhold heptad} if every element of $\mathcal{L}(P_1, \ldots, P_7)$ is   irreducible and
the base locus of $\mathcal{L}(P_1, \ldots, P_7)$ consists of eight distinct points.
The set of such eight distinct points is  called a \emph{regular Cayley octad}.  To be precise, the seven points  $P_1,\ldots, P_7$  are called an Aronhold heptad of the regular Cayley octad.
\end{definition}

For an  Aronhold heptad $P_1,\ldots, P_7$ in $\mathbb{P}^3$,  Lemma~\ref{lemma:CB}  immediately implies that there exists a unique  regular Cayley octad that contains $P_1,\ldots, P_7$. It also shows  that
the linear system~\mbox{$\mathcal{L}(P_1, \ldots, P_7)$} is a net.
Conversely,  seven points of a regular Cayley octad
always form an Aronhold heptad.

\begin{lemma}\label{lemma:pencil-base-locus-ij}
Let $P_1,\ldots,P_8$ be a regular Cayley octad in $\mathbb{P}^3$.
For all $1\leqslant i<j\leqslant 8$, denote by~$L_{ij}$
the line passing through the points $P_i$ and $P_j$, and
denote by $T_{ij}$ the twisted cubic passing through the six points of
$\{P_1,\ldots,P_8\}\setminus\{P_i,P_j\}$. Let $\mathcal{L}_{ij}$ be the linear subsystem
of~\mbox{$\mathcal{L}(P_1,\ldots,P_7)$} that consists of the quadrics passing through $T_{ij}$.
Then $\mathcal{L}_{ij}$ is a pencil, and the base locus
of~$\mathcal{L}_{ij}$ is the union $T_{ij}\cup L_{ij}$.
\end{lemma}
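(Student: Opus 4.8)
The plan is to treat the two assertions separately: first that $\mathcal{L}_{ij}$ is a pencil, and then that its base locus is exactly $T_{ij}\cup L_{ij}$. Throughout I use that $\mathcal{L}(P_1,\ldots,P_7)$ is a net (a three-dimensional space of quadratic forms) whose base locus is precisely the regular Cayley octad $\{P_1,\ldots,P_8\}$, so that every quadric in it passes through all eight points. I also record that no seven of the eight points lie on a twisted cubic: by the discussion following Definition~\ref{definition:Aronhold-heptad} any seven of them form an Aronhold heptad, so condition~(B${}^\prime$) of Lemma~\ref{lemma:Aronhold-condition} applies; in particular the six points $\{P_k: k\neq i,j\}$ together with $P_i$ (or with $P_j$) do not lie on a twisted cubic, whence $P_i\notin T_{ij}$ and $P_j\notin T_{ij}$.

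For the dimension count I would consider the restriction homomorphism $\rho\colon \mathcal{L}(P_1,\ldots,P_7)\to H^0\big(\mathcal{O}_{T_{ij}}(2)\big)$ sending a quadric $Q$ to $Q|_{T_{ij}}$. Identifying $T_{ij}\cong\mathbb{P}^1$ with its degree-three embedding we have $H^0(\mathcal{O}_{T_{ij}}(2))\cong H^0(\mathbb{P}^1,\mathcal{O}(6))$, a seven-dimensional space. Since every $Q$ in the net vanishes at the six points $\{P_k: k\neq i,j\}\subset T_{ij}$, the image of $\rho$ lies in the subspace of sections of $\mathcal{O}(6)$ vanishing at these six points, which is one-dimensional. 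Hence $\dim\ker\rho\geqslant 3-1=2$. The kernel is exactly the space of quadrics in the net containing $T_{ij}$, that is $\mathcal{L}_{ij}$. Moreover the image cannot be zero, for otherwise $T_{ij}$ would lie in the base locus of the whole net, contradicting that this base locus is the finite set $\{P_1,\ldots,P_8\}$. Therefore $\dim\ker\rho=2$, so $\mathcal{L}_{ij}$ is a pencil.

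For the base locus, $T_{ij}$ is contained in every member of $\mathcal{L}_{ij}$ by construction. Pick two distinct quadrics $Q_1,Q_2\in\mathcal{L}_{ij}$; by condition~(A) of Lemma~\ref{lemma:Aronhold-condition} they are irreducible, so they share no surface component and $Q_1\cap Q_2$ is a curve of degree $4$. It contains the reduced cubic $T_{ij}$ with multiplicity one (multiplicity two would force degree at least six), so the residual cycle is a reduced line $L$. Both $Q_1$ and $Q_2$ pass through $P_i$ and $P_j$, which do not lie on $T_{ij}$; hence $P_i,P_j\in L$, and therefore $L=L_{ij}$. Consequently $Q_1\cap Q_2=T_{ij}\cup L_{ij}$, which gives $\mathrm{Bs}(\mathcal{L}_{ij})\subseteq T_{ij}\cup L_{ij}$. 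Since $L_{ij}\subseteq Q_1$ and $L_{ij}\subseteq Q_2$ and these two quadrics span the pencil, every member of $\mathcal{L}_{ij}$ contains $L_{ij}$, so $T_{ij}\cup L_{ij}\subseteq\mathrm{Bs}(\mathcal{L}_{ij})$. Combining the two inclusions yields the claim. This argument also shows, a posteriori, that $L_{ij}$ is a chord of $T_{ij}$.

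The step I expect to require the most care is controlling the intersection $Q_1\cap Q_2$: one must ensure the two quadrics meet properly in a curve of degree $4$ and that $T_{ij}$ occurs there with multiplicity one, so that the residual is genuinely a single reduced line onto which $P_i$ and $P_j$ are forced. This is where irreducibility of the members of the net (condition~(A)) and the fact that $P_i,P_j\notin T_{ij}$ are essential; the remainder is a dimension count.
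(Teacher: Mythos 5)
Your proof is correct, and both halves hold up under scrutiny; the interesting difference is in how you get the pencil claim. The paper deduces the first assertion directly from the Cayley--Bacharach statement (Lemma~\ref{lemma:CB}): inside the net of quadrics containing $T_{ij}$, the point $P_i$ imposes one condition, and $P_j$ then imposes none, because any quadric through seven points of the octad automatically passes through the eighth; this is a one-line argument once Lemma~\ref{lemma:CB} is available. You instead restrict the net $\mathcal{L}(P_1,\ldots,P_7)$ to $T_{ij}\cong\mathbb{P}^1$, observe that the image lands in the one-dimensional space of sections of $\mathcal{O}_{\mathbb{P}^1}(6)$ vanishing at the six octad points on $T_{ij}$, and conclude $\dim\ker\rho=2$ using that the image is nonzero (since the base locus of the net is finite). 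This route is self-contained -- it bypasses Lemma~\ref{lemma:CB} entirely -- at the cost of being slightly longer; the paper's route is shorter but leans on the Cayley--Bacharach machinery already set up in \S\ref{section:nets-of-quadrics}. For the second assertion your argument is essentially the paper's: the paper simply asserts that the base locus is $T_{ij}$ plus some line and that the line must contain $P_i$ and $P_j$; you supply the details it leaves implicit, namely that irreducibility (condition (A) of Lemma~\ref{lemma:Aronhold-condition}) rules out common components, that Bezout forces $T_{ij}$ to appear with multiplicity one and the residual to be a single reduced line, and that any two distinct members span the pencil so the line lies in the full base locus. Your observation that $P_i,P_j\notin T_{ij}$ via condition~(B${}^\prime$) applied to the Aronhold heptads inside the octad is exactly the fact the paper invokes without elaboration.
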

\begin{proof}
The first assertion follows from Lemma~\ref{lemma:CB}.
To prove the second assertion, note that the base locus of $\mathcal{L}_{ij}$ is a union of
$T_{ij}$ and some line. On the other hand, the base locus must contain all the points
$P_1,\ldots,P_8$.  Since the twisted cubic $T_{ij}$
passes neither through $P_i$ nor
through $P_j$, the line in the base locus
of~$\mathcal{L}_{ij}$ must pass through both of these
points, i.e., this  line must be~$L_{ij}$.
\end{proof}

Suppose that we are given seven points $P_1,\ldots, P_7$ in $\mathbb{P}^3$ such that $\dim \mathcal{L}(P_1, \ldots, P_7)=2$. Then there are three linearly independent quadric homogeneous polynomials $F_0$, $F_1$, $F_2$ over $\mathbb{P}^3$ that generate the linear system $\mathcal{L}(P_1, \ldots, P_7)$, i.e., an element
in~\mbox{$\mathcal{L}(P_1, \ldots, P_7)$}
is defined by the quadric homogeneous equation
\begin{equation}
\label{equation:matrix}xF_0+yF_1+zF_2=0
\end{equation}
for some $[x:y:z]\in \mathbb{P}^2$. We can express~\eqref{equation:matrix} as a $4\times 4$ symmetric matrix $M(P_1, \ldots, P_7)$ with entries of linear forms in $x, y,z$. Then $\det \left(M(P_1, \ldots, P_7)\right)=0$ defines a plane quartic curve $H(P_1, \ldots, P_7)$ in~$\mathbb{P}^2$.
This plane quartic curve is called the \emph{Hessian quartic}
of the net~\mbox{$\mathcal{L}(P_1, \ldots, P_7)$}.

\begin{remark}\label{remark:determinantal-hypersurface}
The hypersurface  $\mathfrak{D}$ in $\mathbb{P}\left(\mathrm{Sym}(4, \mathbb{C})\right)\cong\mathbb{P}^9$
defined by the determinant polynomial  is singular exactly at non-zero $4\times 4$ symmetric matrices of coranks at least~2.
The three  linearly independent quartic homogeneous polynomials $F_0$, $F_1$, $F_2$  determine a plane $\Pi( F_0, F_1, F_2)$
in~\mbox{$\mathbb{P}\left(\mathrm{Sym}(4, \mathbb{C})\right)$}.
Therefore, the plane quartic curve $H(P_1, \ldots, P_7)$ is singular if and only if  the plane $\Pi( F_0, F_1, F_2)$  either passes through a point corresponding to a non-zero $4\times 4$ symmetric matrix of corank at least~2, or tangentially intersects the hypersurface $\mathfrak{D}$ at a point corresponding to a $4\times 4$ symmetric matrix of corank~$1$.
\end{remark}

\begin{lemma}\label{lemma:smooth-quartic}
Seven distinct points $P_1,\ldots, P_7$ in $\mathbb{P}^3$
form an Aronhold heptad
if and only if the linear system $\mathcal{L}(P_1, \ldots, P_7)$ is a net and
the plane quartic curve $H(P_1, \ldots, P_7)$ is smooth.
Moreover, every smooth plane quartic curve can be obtained in this way.
\end{lemma}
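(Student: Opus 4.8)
The plan is to establish both implications of the equivalence, and then to verify the surjectivity statement separately. For the forward direction, suppose $P_1,\ldots,P_7$ form an Aronhold heptad. By the remark following Definition~\ref{definition:Aronhold-heptad}, the linear system $\mathcal{L}(P_1,\ldots,P_7)$ is a net, so the Hessian quartic $H(P_1,\ldots,P_7)$ is defined. To show it is smooth, I would invoke the criterion recorded in Remark~\ref{remark:determinantal-hypersurface}: the Hessian is singular precisely when the associated plane $\Pi(F_0,F_1,F_2)$ in $\mathbb{P}(\mathrm{Sym}(4,\mathbb{C}))$ either meets the corank-$\geqslant 2$ locus of $\mathfrak{D}$, or is tangent to $\mathfrak{D}$ along the smooth (corank-$1$) locus. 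Thus I must rule out both possibilities using the Aronhold conditions. The first case corresponds to a quadric in the net of corank at least $2$, i.e. a quadric that is a union of two planes (or a double plane); such a reducible member is excluded directly by condition~(A) in Lemma~\ref{lemma:Aronhold-condition}, which holds for an Aronhold heptad by definition. The second case, tangency along a corank-$1$ point, corresponds geometrically to a singular (rank-$3$, i.e. quadric-cone) member of the net whose singular point lies on the base locus in a degenerate way; I would translate this tangency into the failure of the base locus to consist of eight distinct reduced points, contradicting condition~(B).

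For the converse, assume $\mathcal{L}(P_1,\ldots,P_7)$ is a net and $H(P_1,\ldots,P_7)$ is smooth. I need to recover conditions~(A) and~(B) of Definition~\ref{definition:Aronhold-heptad}, or equivalently the three geometric conditions (A${}^\prime$), (B${}^\prime$), (C${}^\prime$) of Lemma~\ref{lemma:Aronhold-condition}. The key point is again Remark~\ref{remark:determinantal-hypersurface} read in reverse: smoothness of the Hessian forces $\Pi(F_0,F_1,F_2)$ to miss the corank-$\geqslant 2$ locus and to meet $\mathfrak{D}$ transversally. Missing the deeper singular locus means the net contains no quadric of rank $\leqslant 2$, hence no reducible member, giving condition~(A). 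Transversality of the intersection with the smooth part of $\mathfrak{D}$ should be translated, via the standard correspondence between points of the net lying on $\mathfrak{D}$ and the rank-$3$ quadrics through the seven points, into the statement that the base locus is a reduced scheme of the expected length eight, yielding condition~(B). Since the net structure together with Lemma~\ref{lemma:CB} already guarantees the base scheme has length $8$, the content is precisely that these eight points are distinct, which is what transversality provides.

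The final assertion, that every smooth plane quartic arises this way, I would handle by a dimension or a classical nets-of-quadrics argument. The cleanest route is to appeal to the symmetric determinantal representation theory of plane quartics: every smooth plane quartic $C$ admits a symmetric determinantal representation $C = \{\det M = 0\}$ with $M$ a $4\times 4$ matrix of linear forms (these representations are parametrized by even theta characteristics, discussed in \S\ref{section:theta}). Such a matrix $M$ is exactly the data of a net of quadrics in $\mathbb{P}^3$ whose Hessian is $C$; taking seven of the eight base points of this net produces the desired Aronhold heptad, provided the net has no reducible members and eight distinct base points — but these are guaranteed by the already-proven equivalence together with the smoothness of $C$. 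I would therefore reduce the existence of the representation to the classical theorem on symmetric determinantal forms and then read off the heptad.

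I expect the main obstacle to be the precise translation of the \emph{tangency} condition in Remark~\ref{remark:determinantal-hypersurface} into the geometry of the base locus. Ruling out reducible quadrics via (A) is immediate, but the dictionary between transversal versus tangential intersection of $\Pi(F_0,F_1,F_2)$ with the discriminant hypersurface $\mathfrak{D}$ and the distinctness versus collision of base points requires a careful local analysis of how rank-$3$ quadrics in the net degenerate the eight-point base scheme. This is where conditions (B${}^\prime$) and especially (C${}^\prime$) of Lemma~\ref{lemma:Aronhold-condition} become essential, and matching the infinitesimal tangency data to these explicit incidence conditions on twisted cubics and lines is the delicate technical heart of the argument.
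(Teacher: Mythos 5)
Your overall strategy is the paper's own: the dictionary of Remark~\ref{remark:determinantal-hypersurface} drives both directions of the equivalence, and the classical symmetric determinantal representation (Hesse, Dixon, Beauville) handles the final assertion. Your forward direction is a correct outline of the paper's argument; the "delicate technical heart" you defer is carried out in the paper by diagonalizing the corank-one quadric $A_0$ and observing that tangency of the pencils $xA_0+yA_1$ and $xA_0+zA_2$ to $\mathfrak{D}$ forces the $(4,4)$ entries of $A_1$ and $A_2$ to vanish, so that the singular point $[0:0:0:1]$ of $F_0$ is a base point of the net, contradicting condition~(B).

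The converse, however, has a genuine gap. You reduce condition~(B) to distinctness of eight points by asserting that "the net structure together with Lemma~\ref{lemma:CB} already guarantees the base scheme has length $8$". That assertion is unjustified: Lemma~\ref{lemma:CB} is a statement about a \emph{zero-dimensional} complete intersection of three quadrics, so it cannot be invoked before one-dimensional base loci are excluded, and a net $\mathcal{L}(P_1,\ldots,P_7)$ can indeed have one-dimensional base locus. The standard example is seven points on a twisted cubic $B$: any quadric through seven points of $B$ meets $B$ in more than $6=\deg(Q\cdot B)$ points and hence contains $B$, so $\mathcal{L}(P_1,\ldots,P_7)$ is exactly the net of quadrics through $B$, whose base locus is the curve $B$. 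This case is not killed by the corank-$\geqslant 2$ part of your dictionary either, since that net contains no reducible member (an irreducible nondegenerate space curve lies in no union of two planes); one must verify directly that its Hessian is singular, as the paper does --- there the discriminant is swept out by the cones whose vertices move along $B$, so it is dominated by $\mathbb{P}^1$ and hence is a singular quartic. This is precisely why the bulk of the paper's converse is a case analysis excluding a base line, a base conic, and a base cubic (the first two reduced to the existence of a reducible member, the twisted cubic disposed of by the direct verification), and only afterwards does it treat the zero-dimensional non-reduced case --- your "transversality gives distinctness" step --- by the same matrix computation as in the forward direction. The same issue resurfaces in your "moreover" step: before extracting a heptad from the determinantal net of a smooth quartic you must know that net has a finite base locus, which again requires the excluded curve cases. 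Without an argument ruling out positive-dimensional base loci, your converse does not close.
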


\begin{proof}
Assume that the plane quartic curve $H(P_1, \ldots, P_7)$ is singular.
Since the net~\mbox{$\mathcal{L}(P_1, \ldots, P_7)$}
does not contain any reducible member, it follows from Remark~\ref{remark:determinantal-hypersurface} that
the plane $\Pi(F_0, F_1, F_2)$ tangentially intersects the hypersurface $\mathfrak{D}$ at a point corresponding to a~\mbox{$4\times 4$} symmetric matrix of corank $1$
in~\mbox{$\mathbb{P}\left(\mathrm{Sym}(4, \mathbb{C})\right)$}.
We may assume that $F_0$ corresponds to such a $4\times 4$ symmetric
matrix of corank~$1$.
Denote by $A_0, A_1, A_2$  the $4\times 4$ symmetric matrices   corresponding to $F_0$, $F_1$, $F_2$, respectively.
We may also assume that the matrix $A_0$ is a diagonal matrix with the last diagonal entry $0$. Since the lines $xA_0+yA_1$ and $xA_0+zA_2$ in $\mathbb{P}\left(\mathrm{Sym}(4, \mathbb{C})\right)$ tangentially intersect the surface $\mathfrak{D}$  at $A_0$, both the entry of $A_1$ at the 4th row and the 4th column  and the entry of~$A_2$ at the 4th row and the 4th column are  zero. This means that the point $[0:0:0:1]$ is a base point of~\mbox{$\mathcal{L}(P_1, \ldots, P_7)$}.
However, $F_0$ is singular at $[0:0:0:1]$, and hence the base locus of~\mbox{$\mathcal{L}(P_1, \ldots, P_7)$} cannot consist of eight distinct points. This means that the points $P_1,\ldots, P_7$ do not
form an Aronhold heptad.

Now assume that the curve $H(P_1, \ldots, P_7)$ is smooth.

Suppose that the net $\mathcal{L}(P_1, \ldots, P_7)$ contains a reducible member. Then the plane~\mbox{$\Pi( F_0, F_1, F_2)$}   passes through a point corresponding to a non-zero $4\times 4$ symmetric matrix of corank at least~2, and hence $H(P_1, \ldots, P_7)$
is singular by Remark~\ref{remark:determinantal-hypersurface}, which gives a contradiction.

Suppose that the base locus of $\mathcal{L}(P_1, \ldots, P_7)$ is not zero-dimensional. It then must be one-dimensional. Furthermore, each irreducible curve $B$ of the base locus must be of degree at most~$3$ since $\mathcal{L}(P_1, \ldots, P_7)$ is a net.

If $B$ is a line, then there is a point $P_i$ outside $B$. Let $\Pi$ be the plane determined by $B$ and $P_i$. Each quadric from
$\mathcal{L}(P_1, \ldots, P_7)$ cuts out in $\Pi$ the line $B$ and a line  passing though $P_i$. Such lines form at most a pencil. Therefore, there is a member in $\mathcal{L}(P_1, \ldots, P_7)$ containing $\Pi$, which leads to a contradiction.

If $B$ is a conic, two members in $\mathcal{L}(P_1, \ldots, P_7)$ intersect along $B$ and another conic $B'$. Then either~$B$ or~$B'$ must contain
four points of $P_1, \ldots, P_7$.
This again implies that there is a reducible member
in~\mbox{$\mathcal{L}(P_1, \ldots, P_7)$} and leads to a contradiction.

If $B$ is a cubic curve, two members in $\mathcal{L}(P_1, \ldots, P_7)$
intersect along $B$ and a line. Since a line cannot
contain more than two points of $P_1, \ldots, P_7$,
the curve $B$ must contain at least~$5$ points
of~\mbox{$P_1, \ldots, P_7$}. If $B$ is singular, then $B$ is planar, and hence $\mathcal{L}(P_1, \ldots, P_7)$ contains a reducible member.  Therefore,~$\mathcal{L}(P_1, \ldots, P_7)$ must be the net determined by the twisted cubic~$B$.  In such a case, we can directly show that $H(P_1, \ldots, P_7)$ is singular.

Consequently, the base locus of $\mathcal{L}(P_1, \ldots, P_7)$ is zero-dimensional. Suppose that the base locus of~$\mathcal{L}(P_1, \ldots, P_7)$ does not consist of eight distinct points. Then there is a point $P_i$, say~$P_1$,
with the following property: three general  elements in $\mathcal{L}(P_1, \ldots, P_7)$
meet at $P_1$ so that their local intersection index at~$P_1$
is at least $2$.
This implies that there is an element in~\mbox{$\mathcal{L}(P_1, \ldots, P_7)$} singular at $P_1$,
and hence~\mbox{$H(P_1, \ldots, P_7)$} cannot be smooth.
Indeed, we may assume that the singular quadric is defined
by a~\mbox{$4\times 4$} diagonal matrix $A_0$
with the $4$th diagonal entry equal to zero.
The singular point~$P_1$ is located
at $[0:0:0:1]$. Then the net $\mathcal{L}(P_1, \ldots, P_7)$ can be generated by the net of~\mbox{$4\times 4$} symmetric matrices
\[xA_0+yA_1+zA_2,\]
where $A_1$ and $A_2$ are $4\times 4$ symmetric matrices each of which has $0$ for the entry at the 4th row and the 4th column.
Then
$$
\det \left(xA_0+yA_1+zA_2\right)=0
$$
defines a quartic curve singular at~$[1:0:0]$. The obtained contradiction
shows that the base locus of~$\mathcal{L}(P_1, \ldots, P_7)$ consists of eight distinct points,
so that $P_1,\ldots, P_7$ form an Aronhold heptad.

A given smooth quartic curve can be defined by the determinant of a $4\times 4$ symmetric linear matrix.
This fact together with the first assertion of the lemma implies the second assertion.
\end{proof}

\begin{remark}
The representation of a smooth plane quartic as the determinant
of a symmetric linear matrix that we used in the proof of Lemma~\ref{lemma:smooth-quartic}
goes back to~1855 (see~\cite{Hesse}) and~1902 (see~\cite{Dixon}).
For a contemporary proof, see \cite[Proposition~4.2]{Beauville-determinantal}.
\end{remark}

The following well-known fact concerning intersections of quadrics in $\mathbb{P}^3$ will be necessary for the present paper.  It should be remarked here that they have  been extensively researched in \cite{Reid} and \cite{Tyurin} in much wider settings.

\begin{lemma}
\label{lemma:Q1Q2}
Let $Q_1$  and $Q_2$ be two distinct
quadrics in $\mathbb{P}^3$ and $E$ be the intersection of $Q_1$ and $Q_2$. Then the following are equivalent.
\begin{itemize}
\item the intersection $E$ is smooth and of codimension $2$ in $\mathbb{P}^3$.
\item The line $\mathcal{L}$ determined by $Q_1$ and $Q_2$ in $\mathbb{P}\left(\mathrm{Sym}(4, \mathbb{C})\right)\cong\mathbb{P}^9$ intersects the determinant hypersurface  $\mathfrak{D}$ (see Remark~\ref{remark:determinantal-hypersurface}) at four distinct points.
\item  After a suitable linear coordinate change, $E$ can be defined by the equations
\[
x_0^2+x_1^2+x_2^2+x_3^2=\lambda_0 x_0^2+\lambda_1x_1^2+\lambda_2x_2^2+\lambda_3x_3^2=0
\]
in $\mathbb{P}^3$, where $\lambda_i$'s are four distinct constants.
\end{itemize}
If these equivalent  conditions hold, then
the intersection $E$ is isomorphic to the double cover of the line $\mathcal{L}$ in $\mathbb{P}\left(\mathrm{Sym}(4, \mathbb{C})\right)$ branched exactly at the four intersection points  $\mathcal{L}\cap\mathfrak{D}$.
\end{lemma}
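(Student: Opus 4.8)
The plan is to work throughout with the pencil of quadrics spanned by $Q_1$ and $Q_2$, which is exactly the line $\mathcal{L}$. Writing $A_1,A_2$ for the symmetric $4\times 4$ matrices of $Q_1,Q_2$, I set $\Delta(s,t)=\det(sA_1+tA_2)$, a binary quartic form whose zero locus on $\mathcal{L}\cong\mathbb{P}^1$ is precisely $\mathcal{L}\cap\mathfrak{D}$. Condition~(2) then says exactly that $\Delta$ has four distinct roots (in particular $\Delta\not\equiv 0$, i.e.\ $\mathcal{L}\not\subset\mathfrak{D}$); and by the description of $\mathrm{Sing}\,\mathfrak{D}$ in Remark~\ref{remark:determinantal-hypersurface}, a simple root forces the corresponding member to have corank exactly one, so under~(2) every singular member of the pencil is an ordinary quadric cone. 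I would establish the cycle $(3)\Rightarrow(1)$, $(2)\Rightarrow(3)$, $(1)\Rightarrow(2)$, and handle the final ``double cover'' assertion separately via the normal form in~(3).

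The implication $(3)\Rightarrow(1)$ is a one-line Jacobian check: for $F=\sum x_i^2$ and $G=\sum\lambda_i x_i^2$ the gradients at a point are proportional only if all but one coordinate vanish, and such a point lies on neither $F=0$ nor $G=0$; hence $E$ is smooth of codimension two. For $(2)\Rightarrow(3)$ I would use simultaneous diagonalization: after changing the basis of the pencil we may assume $Q_1$ is smooth, so $A_1$ is invertible and $A_1^{-1}A_2$ has four distinct eigenvalues $\lambda_0,\dots,\lambda_3$; its eigenvectors form an $A_1$-orthogonal basis in which $Q_1$ becomes $\sum x_i^2$ and $Q_2$ becomes $\sum\lambda_i x_i^2$, which is~(3).

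The heart of the matter is the converse $(1)\Rightarrow(2)$, which I would prove in contrapositive form. First I record the tangent-space criterion that $E=Q_1\cap Q_2$ is singular at a point $p$ lying smoothly on both quadrics exactly when the two tangent planes coincide, equivalently when some member of the pencil is singular at $p$. Now suppose $[s_0:t_0]$ is a root of $\Delta$ of multiplicity $\geqslant 2$ at which $Q_0=s_0Q_1+t_0Q_2$ has corank one, and let $v_0$ be its vertex, the kernel of $M_0=s_0A_1+t_0A_2$. Differentiating the determinant along the pencil and using $\mathrm{adj}(M_0)=\kappa\,v_0 v_0^{\mathsf T}$ with $\kappa\neq 0$ shows that the root is multiple if and only if $Q_1(v_0)=0$; then $v_0\in Q_0\cap Q_1=E$, and $E$ is singular at the vertex $v_0$ by the criterion above. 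The remaining degenerate possibilities must still be excluded: a member of corank $\geqslant 2$, where $Q_0$ is a pair of planes or a double plane and $E$ degenerates to two conics meeting in two points or to a non-reduced scheme; and the case $\Delta\equiv 0$, where one uses that a smooth complete-intersection elliptic quartic always lies on a pencil whose general member is smooth. This case-checking is the main obstacle, and I expect the corank $\geqslant 2$ bookkeeping to be the fussiest part.

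Finally, for the ``moreover'' statement I would pass to the normal form of~(3), in which the four branch points are the cones at $\mu=\lambda_i$ (vertices $e_i$) on $\mathcal{L}\cong\mathbb{P}^1$. Projection of $E$ from a vertex $e_k$ realizes $E$ as a double cover of a smooth conic, with deck transformation the sign change on $x_k$ and with four branch points; identifying this with the double cover of $\mathcal{L}$ branched along $\mathcal{L}\cap\mathfrak{D}$ then reduces to checking that the cross-ratio of these branch points matches that of $\lambda_0,\dots,\lambda_3$, using that a genus-one double cover of $\mathbb{P}^1$ is determined up to isomorphism by its four branch points. Alternatively, this last step is a special case of the classical description of intersections of quadrics through their discriminant cover developed in~\cite{Reid} and~\cite{Tyurin}, which I would cite to keep the argument short.
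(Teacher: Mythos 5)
You are correct in substance, and for the three-way equivalence you take a genuinely different route from the paper: the paper does not prove the equivalence at all, it simply quotes \cite[Proposition~2.1]{Reid}, whereas you give a self-contained argument (the Jacobian check for $(3)\Rightarrow(1)$; simultaneous diagonalization of the pair $(A_1,A_2)$ for $(2)\Rightarrow(3)$, where the needed non-isotropy $v_i^{\mathsf T}A_1v_i\neq 0$ of the eigenvectors is automatic, since $v_i$ is $A_1$-orthogonal to all the other eigenvectors and $A_1$ is invertible; and the adjugate/derivative-of-determinant computation for $(1)\Rightarrow(2)$). What your version buys is independence from Reid's thesis; what it costs is the degenerate-case analysis. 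For the final double-cover assertion your plan is essentially identical to the paper's actual proof: the paper projects $E$ from the vertex $[1:0:0:0]$ of the singular member corresponding to $\lambda_0$ onto the conic $R\subset\{x_0=0\}$, observes that the branch locus of this double cover is $R\cap\{x_1^2+x_2^2+x_3^2=0\}$, then maps $R$ isomorphically onto a line and verifies by explicit computation that the cross-ratio of the four branch points agrees with that of $\lambda_0,\ldots,\lambda_3$ modulo permutation --- exactly your projection-from-a-vertex and cross-ratio-matching step.

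Two remarks on the loose ends you flag in $(1)\Rightarrow(2)$; both are easier than you fear. First, if some member $Q_0$ of the pencil has corank $\geqslant 2$, then $\mathrm{Sing}(Q_0)$ is a line or a plane, hence meets the quadric $Q_1$ in some point $p$; then $p\in E$, $\nabla Q_0(p)=0$, and the Jacobian of the pair $(Q_0,Q_1)$ cutting out $E$ has rank at most $1$ at $p$, so $E$ fails to be smooth of codimension $2$ there --- no bookkeeping with pairs of conics or non-reduced schemes is required. Second, your treatment of $\Delta\equiv 0$ is circular as stated: the assertion that ``a smooth complete-intersection elliptic quartic always lies on a pencil whose general member is smooth'' is precisely the implication you are trying to prove, since the pencil through $E$ is unique. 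Replace it by Bertini: the general member of the pencil is smooth away from the base locus $E$, so if every member were singular, a general member would have a singular point lying on $E$, and the same rank-at-most-one argument applies. With these two substitutions your proof of the equivalence is complete and correct.
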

\begin{proof}
The equivalence immediately follows from \cite[Proposition~2.1]{Reid}.
As a matter of fact, the last statement is also instantly implied by the result in \cite[Theorem~4.8]{Reid} that deals with much more general situation.
However, in $\mathbb{P}^3$, the conditions for \cite[Theorem~4.8]{Reid} turn into a tangible state, so that an elementary and short proof could be presented as follows.

Suppose that $E$ is defined  in $\mathbb{P}^3$ by the equations
\[
x_0^2+x_1^2+x_2^2+x_3^2=\lambda_0 x_0^2+\lambda_1x_1^2+\lambda_2x_2^2+\lambda_3x_3^2=0
\]
for some four distinct  constants $\lambda_0, \ldots, \lambda_3$. To complete the proof, it is enough to show that $E$ is isomorphic to a double cover of $\mathbb{P}^1$ branched at four distinct points whose cross-ratio is
\begin{equation}\label{eq:cross-ratio-elliptic}
\frac{(\lambda_1-\lambda_3)(\lambda_2-\lambda_0)}{(\lambda_1-\lambda_0)(\lambda_2-\lambda_3)}
\end{equation}
modulo  permutation.
Note that $\lambda_0\neq\lambda_i$ for $1\leqslant i\leqslant 3$ because $E$ is smooth. Rewrite the equations of~$E$ as
\[x_0^2+x_1^2+x_2^2+x_3^2=(\lambda_0-\lambda_1)x_1^2+(\lambda_0-\lambda_2)x_2^2+(\lambda_0-\lambda_3)x_3^2=0.\]
We first consider the projection of $E$ to the plane $\Pi$ defined by $x_0=0$ centered at the point~\mbox{$[1:0:0:0]$}.  The image is the conic $R$ defined by
$$
(\lambda_0-\lambda_1)x_1^2+(\lambda_0-\lambda_2)x_2^2+(\lambda_0-\lambda_3)x_3^2=0
$$
on~$\Pi$. Furthermore, $E$ is a double cover of $R$. Let $[0:\alpha:\beta:\gamma]$ be a point on this conic. The equation
\[\mu^2+\nu^2(\alpha^2+\beta^2+\gamma^2)=0\]
in $[\mu:\nu]\in\mathbb{P}^1$ determines the points
$[\mu:\nu\alpha:\nu\beta:\nu\gamma]$ of $E$ over the
point~\mbox{$[0:\alpha:\beta:\gamma]$}. Therefore, the double cover is
branched at~\mbox{$[0:\alpha:\beta:\gamma]$} if and only if
$$
\alpha^2+\beta^2+\gamma^2=0.
$$
Consequently, the four branch points on $R$ are
the intersection points of the conics
\[
\aligned
&x_1^2+x_2^2+x_3^2=0,\\
&\lambda_1x_1^2+\lambda_2x_2^2+\lambda_3x_3^2=0
\endaligned
\]
on $\Pi$. These four points are  $\left[0:\sqrt{\lambda_3-\lambda_2}:\pm\sqrt{\lambda_1-\lambda_3}:\pm\sqrt{\lambda_2-\lambda_1}\right]$.

We now consider the projection of $R$ centered at the point
$$
[0:\sqrt{\lambda_0-\lambda_2}: \sqrt{\lambda_1-\lambda_0}:0]
$$
to the line $L$ defined by $x_1=0$  on $\Pi$. This projection defines an isomorphism of $R$ onto $L$. Therefore,~$E$ is the double cover of $L$ branched at the points
\[
\left[0:0:\sqrt{\lambda_3-\lambda_2}\sqrt{\lambda_1-\lambda_0}\pm\sqrt{\lambda_0-\lambda_2}\sqrt{\lambda_1-\lambda_3}: \pm\sqrt{\lambda_0-\lambda_2}\sqrt{\lambda_2-\lambda_1}\right].
\]
It is easy to check that the cross-ratio of the above four points
\[\frac{\pm\sqrt{\lambda_3-\lambda_2}\sqrt{\lambda_1-\lambda_0}\pm\sqrt{\lambda_0-\lambda_2}\sqrt{\lambda_1-\lambda_3}}{\sqrt{\lambda_0-\lambda_2}\sqrt{\lambda_2-\lambda_1}}\]
is equal to the desired number~\eqref{eq:cross-ratio-elliptic}
modulo permutation.
This completes the proof.
\end{proof}

For an Aronhold heptad $P_1,\ldots,P_7$, the
projective plane $\check{\mathbb{P}}^2$ projectively dual to the net of quadrics~\mbox{$\mathcal{L}(P_1,\ldots,P_7)\cong\mathbb{P}^2$}
can be identified with the base of the elliptic fibration $\varkappa$
that is obtained from $\mathbb{P}^3$ by blowing up the points of the corresponding regular Cayley octad.
It  is well-known that the projectively dual curve
of the Hessian quartic curve can be interpreted in terms of singular fibers of~$\varkappa$.
Namely, we have the following

\begin{lemma}\label{lemma:Hessian-dual}
Let $P_1,\ldots,P_7$ be an Aronhold heptad, and let $\check{H}\subset\check{\mathbb{P}}^2$ be the curve
parameterizing singular fibers of~$\varkappa$.
Then $\check{H}$ is projectively dual to $H(P_1,\ldots,P_7)$.
\end{lemma}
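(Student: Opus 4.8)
The plan is to realise both $H(P_1,\ldots,P_7)$ and $\check H$ through the geometry of the net $\mathcal{L}=\mathcal{L}(P_1,\ldots,P_7)\cong\mathbb{P}^2$ and its dual $\check{\mathbb{P}}^2$, and to deduce the duality from Lemma~\ref{lemma:Q1Q2}. Recall that $H(P_1,\ldots,P_7)$ is, by construction, the intersection of the plane $\Pi(F_0,F_1,F_2)\subset\mathbb{P}(\mathrm{Sym}(4,\mathbb{C}))$ with the determinant hypersurface $\mathfrak{D}$; equivalently it is the locus of singular quadrics in the net, and it is smooth by Lemma~\ref{lemma:smooth-quartic}. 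On the dual side, a point $p\in\check{\mathbb{P}}^2$ corresponds to a line $\ell_p\subset\mathcal{L}$, that is, to a pencil of quadrics through the regular Cayley octad, and the fibre of $\varkappa$ over $p$ is the proper transform $\widehat{B}_p$ on $\widehat{\mathbb{P}^3}$ of the base locus $B_p$ of this pencil, where $B_p$ is the complete intersection of any two distinct quadrics of $\ell_p$.

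First I would apply Lemma~\ref{lemma:Q1Q2}. Regarding $\ell_p$ as a line in $\mathbb{P}(\mathrm{Sym}(4,\mathbb{C}))$, its scheme-theoretic intersection with $\mathfrak{D}$ is exactly $\ell_p\cap H(P_1,\ldots,P_7)$, a length-$4$ subscheme of $\ell_p$. By Lemma~\ref{lemma:Q1Q2}, $B_p$ is smooth of codimension $2$ precisely when this subscheme consists of four distinct points, i.e.\ when $\ell_p$ meets $H(P_1,\ldots,P_7)$ transversally. Thus, for every $p$ whose line $\ell_p$ is transverse to $H(P_1,\ldots,P_7)$, the curve $B_p$ is a smooth elliptic curve, and hence so is the fibre $\widehat{B}_p$ of $\varkappa$ over $p$, since blowing up the eight octad points lying on a smooth $B_p$ does not change its isomorphism class. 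As $H(P_1,\ldots,P_7)$ is smooth, the non-transverse lines are exactly its tangent lines, and by definition the set of tangent lines, viewed in $\check{\mathbb{P}}^2$, is the projectively dual curve $H(P_1,\ldots,P_7)^{\vee}$. Therefore $\check H\subseteq H(P_1,\ldots,P_7)^{\vee}$.

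For the reverse inclusion I would inspect a general tangent line. A general point of $H(P_1,\ldots,P_7)^{\vee}$ is a simple tangent line $\ell_p$, meeting $H(P_1,\ldots,P_7)$ in three distinct points, one of them a contact point of multiplicity $2$ corresponding to a rank-$3$ quadric $Q_0$ in the pencil. Then $\ell_p\cap\mathfrak{D}$ is non-reduced, so $B_p$ is singular by Lemma~\ref{lemma:Q1Q2}, its singular point being the vertex of the cone $Q_0$. For $p$ outside a finite subset of $H(P_1,\ldots,P_7)^{\vee}$ this vertex is none of the eight octad points, so the proper transform $\widehat{B}_p$ remains singular and the fibre of $\varkappa$ over $p$ is genuinely singular. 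Hence $\check H$ and $H(P_1,\ldots,P_7)^{\vee}$ share a dense open subset; as both are closed curves in $\check{\mathbb{P}}^2$ and $H(P_1,\ldots,P_7)^{\vee}$ is irreducible, we conclude $\check H=H(P_1,\ldots,P_7)^{\vee}$.

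The step I expect to require the most care is this last matching of a \emph{singular fibre of $\varkappa$} with a \emph{tangency of $\ell_p$ to $H(P_1,\ldots,P_7)$}: the octad blow-up could a priori resolve a singularity of $B_p$ located at one of the eight base points, so one cannot identify the two loci naively point by point. This is precisely why I would argue on a dense open locus and then invoke irreducibility of the dual curve together with closedness of the discriminant of $\varkappa$, rather than attempting a pointwise identification over all of $\check{\mathbb{P}}^2$.
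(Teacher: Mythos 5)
Your proposal is correct, and it shares its skeleton with the paper's proof: both identify the fibres of $\varkappa$ with the proper transforms of the base curves of pencils in the net, and both then apply Lemma~\ref{lemma:Q1Q2} to translate smoothness of such a base curve into the corresponding line meeting the Hessian quartic in four distinct points. Where you diverge is the treatment of the eight blown-up points. The paper makes a single pointwise observation: a base curve $E$ of a pencil in the net can never be singular at a point of the regular Cayley octad, since a singular point of $E$ at a base point of the net would force the scheme-theoretic base locus of the net (which has length $8$) to be non-reduced there, contradicting condition (B) of Lemma~\ref{lemma:Aronhold-condition}. With this, the blow-up is irrelevant to the smoothness question, and one obtains ``fibre singular $\Leftrightarrow$ $E$ singular $\Leftrightarrow$ the line is tangent to $H(P_1,\ldots,P_7)$'' at every point simultaneously. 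You instead prove one inclusion everywhere and the reverse inclusion only generically, then close up using irreducibility of the dual curve and closedness of the discriminant. This is a valid scheme, but the one assertion you leave unproved --- that outside a finite subset of the dual curve the vertex of the rank-$3$ quadric is not an octad point --- is exactly where the remaining content sits. It holds for a stronger reason (the vertex is \emph{never} an octad point, by the observation above); alternatively, finiteness follows because the quadrics of the net singular at a fixed octad point form a linear subspace of the net contained in the smooth quartic $H(P_1,\ldots,P_7)$, hence at most a single point. Once this is supplied, your generic argument upgrades to the paper's pointwise one and the closure step becomes superfluous; its only advantage is that it demands nothing beyond set-theoretic control of the discriminant, which is indeed all the lemma asserts.
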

\begin{proof}
Choose a fiber $\widetilde{E}$ of the elliptic fibration $\varkappa$.
It is the proper transform
of an intersection curve~$E$ of two quadrics in
$\mathcal{L}(P_1,\ldots,P_7)$. Furthermore,
the fiber $\widetilde{E}$ is smooth if and only if the intersection curve $E$ is smooth  because it
cannot be singular at the points of the regular Cayley octad.
On the other hand, Lemma~\ref{lemma:Q1Q2} implies that the intersection curve $E$ is smooth if and only if
the corresponding pencil of quadrics
contains exactly four singular quadrics.
This verifies  that
the projectively dual curve of~$\check{H}$ coincides with
the Hessian curve of the net $\mathcal{L}(P_1,\ldots,P_7)$.
\end{proof}

\section{From Aronhold heptads to double Veronese cones}
\label{section:construction}

In this section we review the birational construction of $28$-nodal double Veronese cones
due to~\cite{Prokhorov}.

The following result is mainly a part of \cite[Theorem~7.1]{Prokhorov}. We provide its proof for the reader's  convenience and for clarification.

\begin{proposition}
\label{proposition:Prokhorov-construction}
Let $P_1,\ldots,P_7$ be seven points in $\mathbb{P}^3$ that form an Aronhold heptad.
Let~\mbox{$\pi\colon\widehat{\mathbb{P}}^3\to\mathbb{P}^3$} be the blow up of $\mathbb{P}^3$ at the points $P_1,\ldots,P_7$,
and let~\mbox{$\phi\colon \widehat{\mathbb{P}}^3\dasharrow V$}
be the map given by the linear system $|-2K_{\widehat{\mathbb{P}}^3}|$.
Then
\begin{itemize}
\item the map $\phi$ is a birational morphism;
\item   the exceptional locus of $\phi$ is a disjoint union of the proper transforms of the lines
passing through pairs of the points $P_i$ and  the twisted cubics passing through  six-tuples
of the points $P_i$;
\item the variety $V$ is a $28$-nodal double Veronese cone.
\end{itemize}
\end{proposition}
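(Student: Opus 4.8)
The plan is to extract everything from the anticanonical class of $\widehat{\mathbb{P}}^3$. Write $E_1,\dots,E_7$ for the exceptional divisors of $\pi$ and set $N=\pi^*\mathcal{O}_{\mathbb{P}^3}(2)-\sum_{i=1}^7 E_i$, the proper transform of the net $\mathcal{L}(P_1,\dots,P_7)$. Since blowing up a point of a smooth $3$-fold has discrepancy $2$, we get $-K_{\widehat{\mathbb{P}}^3}=\pi^*\mathcal{O}_{\mathbb{P}^3}(4)-2\sum E_i=2N$, and a one-line computation gives $N^3=8-7=1$. First I would check that $N$ is nef. As $P_1,\dots,P_7$ form an Aronhold heptad, the base locus of $\mathcal{L}(P_1,\dots,P_7)$ is the regular Cayley octad (Lemma~\ref{lemma:Aronhold-condition}), so on $\widehat{\mathbb{P}}^3$ the system $|N|$ has base locus equal to the single point over $P_8$ away from the $E_i$, and is base-point-free on each $E_i$ because no quadric of the net is singular at $P_i$ (otherwise the Hessian quartic would be singular, contradicting Lemma~\ref{lemma:smooth-quartic}). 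Hence $\Bs|N|$ is finite, no irreducible curve lies in it, and $N\cdot C\geqslant 0$ for every curve $C$. Thus $-K_{\widehat{\mathbb{P}}^3}=2N$ is nef with $N^3=1>0$, so $\widehat{\mathbb{P}}^3$ is a smooth weak Fano $3$-fold; by the base-point-freeness theorem $|{-}2K_{\widehat{\mathbb{P}}^3}|=|4N|$ is base-point-free and defines a morphism $\phi$, which is birational onto its image because $N$ is big. This gives the first bullet.

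The core of the argument is to identify the curves contracted by $\phi$, namely the irreducible curves $C$ with $N\cdot C=0$. A curve inside some $E_i$ satisfies $N\cdot C=-E_i\cdot C>0$, so every contracted curve maps isomorphically onto an irreducible curve $\overline{C}=\pi(C)\subset\mathbb{P}^3$; writing $d=\deg\overline{C}$ and $m_i=\mult_{P_i}\overline{C}$, the condition $N\cdot C=0$ becomes $\sum_i m_i=2d$. I would go through the possible degrees: a line ($d=1$) must pass through exactly two of the points by condition (A$'$), yielding the $\binom{7}{2}=21$ lines $L_{ij}$; a conic is planar and would force four coplanar points, excluded by (A$'$); a cubic with $\sum m_i=6$ must be a twisted cubic through six of the points, since a plane cubic is again excluded by (A$'$) and a twisted cubic through all seven by (B$'$), yielding the $\binom{7}{6}=7$ twisted cubics $T_i$. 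For $d\geqslant 4$ one has $\sum m_i=2d\geqslant 8$, so $\overline{C}$ is singular at one of the points, and I would rule these out using conditions (A$'$)--(C$'$) together with the Cayley--Bacharach statement of Lemma~\ref{lemma:CB}. This pins down the $21+7=28$ curves of the second bullet.

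Next I would prove that the proper transforms $\widehat{L}_{ij}$ and $\widehat{T}_k$ are pairwise disjoint on $\widehat{\mathbb{P}}^3$. Two of them can meet only over a point of $\{P_1,\dots,P_7\}$ or over an intersection point of their images in $\mathbb{P}^3$, and in both cases the conditions (A$'$), (B$'$), (C$'$) are exactly what is needed: no three of the points are collinear and no four coplanar, the seven points do not lie on a single twisted cubic, and (C$'$) forbids a line $L_{ij}$ and a twisted cubic $T_k$ from meeting twice or tangentially, which after blowing up becomes disjointness of the proper transforms. Granting this, Corollaries~\ref{corollary:normal-bundle-1} and~\ref{corollary:normal-bundle-2} give $\mathcal{N}_{\widehat{L}_{ij}/\widehat{\mathbb{P}}^3}\cong\mathcal{N}_{\widehat{T}_k/\widehat{\mathbb{P}}^3}\cong\mathcal{O}_{\mathbb{P}^1}(-1)\oplus\mathcal{O}_{\mathbb{P}^1}(-1)$, so $\phi$ contracts each of the $28$ disjoint curves to an ordinary double point. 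Hence $\phi$ is a small morphism whose image $V$ has exactly $28$ nodes and no other singularities.

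Finally I would identify $V$. Since $\phi$ contracts precisely the $(-K_{\widehat{\mathbb{P}}^3})$-trivial curves, the class $H_V=\phi_*N$ is an ample Cartier divisor with $N=\phi^*H_V$, $-K_V=2H_V$ and $H_V^3=N^3=1$. Thus $V$ is a Gorenstein Fano $3$-fold with at worst nodal (hence terminal) singularities whose anticanonical class is divisible by $2$ and has $H_V^3=1$, i.e. a nodal del Pezzo $3$-fold of degree $1$; by \cite[Theorem~3.2.5(i)]{Isk-Prokh} it is a sextic in $\mathbb{P}(1,1,1,2,3)$, that is, a double Veronese cone, which completes the proof. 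The main obstacle is the middle part: the complete classification of $N$-trivial curves (in particular ruling out the curves with $d\geqslant 4$) and the pairwise disjointness of the $28$ curves, both of which rely in an essential way on the precise Aronhold conditions (A$'$)--(C$'$) and on the Cayley--Bacharach Lemma~\ref{lemma:CB}; the normal-bundle input is already supplied by Corollaries~\ref{corollary:normal-bundle-1}--\ref{corollary:normal-bundle-2}.
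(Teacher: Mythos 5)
Your overall skeleton matches the paper's proof: write $-K_{\widehat{\mathbb{P}}^3}=2N$, classify the $(-K)$-trivial curves as the $21$ lines and $7$ twisted cubics, use Corollaries~\ref{corollary:normal-bundle-1} and~\ref{corollary:normal-bundle-2} to see that their disjoint proper transforms are $(-1,-1)$-curves contracted to nodes, and identify the target as a del Pezzo $3$-fold of degree $1$ via \cite[Theorem~3.2.5(i)]{Isk-Prokh}. Your nefness argument via finiteness of $\Bs|N|$ (using that no quadric of the net is singular at any $P_i$) is a pleasant variant that the paper does not spell out. However, there are two genuine gaps.

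The step you yourself flag as the main obstacle --- ruling out $N$-trivial curves whose image has degree $d\geqslant 4$ --- is precisely where the paper's key idea lives, and the tools you propose (conditions (A$'$)--(C$'$) together with Lemma~\ref{lemma:CB}) do not suffice on their own: those conditions only constrain lines, planes, conics and twisted cubics through the $P_i$, and give no a priori bound on the degree of an irreducible curve $\overline{C}$ satisfying $\sum_i \mathrm{mult}_{P_i}\overline{C}\geqslant 2d$. The paper's trick is to pick a point $P$ on $\overline{C}$ away from the base locus of the net; the quadrics of the net through $P$ form a pencil, and each of them meets $\overline{C}$ with total multiplicity at least $1+\sum_i \mathrm{mult}_{P_i}\overline{C}>2d$, hence contains $\overline{C}$. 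So $\overline{C}$ lies in the base locus of a pencil of irreducible quadrics, which is a curve of degree $4$; this bounds $d\leqslant 4$ at one stroke, and if $d=4$ then $\sum_i \mathrm{mult}_{P_i}\overline{C}=8$ distributed over seven points forces a singular point of $\overline{C}$ at some $P_i$, making the base locus of the net non-reduced there and contradicting condition (B). Without some such degree bound (the only alternative being a much messier genus/Castelnuovo estimate), your case analysis cannot be completed.

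The second gap is in the construction of the morphism itself. The Kawamata--Shokurov base-point-free theorem gives freeness of $|{-}mK_{\widehat{\mathbb{P}}^3}|$ only for $m\gg 0$, not for $m=2$; and even granting freeness, ``nef and big'' does not make the associated morphism birational: for the double cover $f\colon S\to\mathbb{P}^2$ of a K3 surface branched along a smooth sextic, the divisor $D=f^*\mathcal{O}_{\mathbb{P}^2}(1)$ is nef, big and free, yet $\phi_{|D|}=f$ has degree $2$. The paper avoids both problems by first taking $\tilde{\phi}$ to be the morphism given by $|{-}nK_{\widehat{\mathbb{P}}^3}|$ for $n\gg 0$ (the anticanonical model, birational by construction since $-K$ is big), proving all three bullets for $\tilde{\phi}$, and only at the end observing that $-2K_V$ is very ample on $V$, so that $\tilde{\phi}$ coincides with the map $\phi$ defined by $|{-}2K_{\widehat{\mathbb{P}}^3}|$. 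You need this last step (or a direct proof that $|4N|$ is free and generically injective) to obtain the statement as formulated.
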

\begin{proof}
Note that by Lemma~\ref{lemma:Aronhold-condition} there are $21$ lines passing through pairs of the points $P_i$,
and $7$ twisted cubic curves passing through  six-tuples
of the points $P_i$. Furthermore, due to Lemma~\ref{lemma:Aronhold-condition} their proper transforms on $\widehat{\mathbb{P}}^3$ are disjoint. It is easy to see that these proper transforms intersect the anticanonical class of~$\widehat{\mathbb{P}}^3$  trivially.

We claim that any other irreducible curve  intersects the anticanonical class neither trivially nor negatively. To see this, let $L$ be an irreducible curve on $\mathbb{P}^3$ whose proper transform on $\widehat{\mathbb{P}}^3$ non-positively  intersects the anticanonical class of $\widehat{\mathbb{P}}^3$. It means
\begin{equation}\label{eq:sum-mult-2deg}
\sum_{i=1}^{7}\mathrm{mult}_{P_i}L\geqslant 2\deg (L).
\end{equation}
Choose a point $P$ on $L$ other than the base point of $\mathcal{L}(P_1,\ldots,P_7)$. Then the quadric surfaces in~\mbox{$\mathcal{L}(P_1,\ldots,P_7)$} passing through the extra point $P$ form a pencil. It follows from~\eqref{eq:sum-mult-2deg} that every quadric surface in this pencil contains the curve~$L$. If $\deg L=4$, then we see from~\eqref{eq:sum-mult-2deg} that~$L$ must be singular
at some of the points $P_i$. This means that the intersection of three quadrics froms~ $\mathcal{L}(P_1,\ldots,P_7)$
is not reduced at that point, i.e., the base locus of $\mathcal{L}(P_1,\ldots,P_7)$ consists of less than $8$ points, which
is a contradiction. If $\deg L=2$, then $L$ is a conic, and by~\eqref{eq:sum-mult-2deg}
it contains four of the points $P_i$. This is impossible by Lemma~\ref{lemma:Aronhold-condition}. Therefore,
one has either $\deg L=3$ or $\deg L=1$.
If $L$ is a singular cubic curve, then it is planar.  However,~\eqref{eq:sum-mult-2deg} implies that
it contains at least four of the points $P_i$. This is impossible by Lemma~\ref{lemma:Aronhold-condition}.
Therefore, $L$ is either a twisted cubic or a line.
In both cases we see from~\eqref{eq:sum-mult-2deg} that $L$ is one of the above $28$ curves.

Let $\tilde{\phi}$ be the morphism defined by the linear system $|-nK_{\widehat{\mathbb{P}}^3}|$ for large enough $n$.
Now Corollaries~\ref{corollary:normal-bundle-1}~and~\ref{corollary:normal-bundle-2} imply that $\tilde{\phi}$ is a flopping contraction, which implies that $\tilde{\phi}$ satisfies the properties of  the first and the second assertions. They also verify that the $28$ proper transforms are contracted to~$28$ nodes on~$V$.
Since $\widehat{\mathbb{P}}^3$ is a smooth weak Fano 3-fold and $\tilde{\phi}$ contracts all the curves that trivially intersect the anticanonical class of $\widehat{\mathbb{P}}^3$, the  3-fold $V$ is a Fano 3-fold with exactly $28$ singular points.

Let $Q$ be a quadric surface passing through the Aronhold heptad, then
\[
-K_{\widehat{\mathbb{P}}^3}\sim 2\left(\pi^*(Q)-(F_1+\ldots +F_7)\right),
\]
where $F_i$'s are the exceptional surfaces of $\pi$. This shows that $-K_V$ is divisible by $2$.
Since
$$
\left(-K_V\right)^3=\left(-K_{\widehat{\mathbb{P}}^3}\right)^3=8,
$$
the variety $V$ is a del Pezzo 3-fold of degree $1$.
Now it is easy to check that the divisor $-2K_V$ is very ample, which implies that the
morphism $\tilde{\phi}$ coincides with the map $\phi$ defined by the linear system~$|-2K_{\widehat{\mathbb{P}}^3}|$.
This completes the proof.
\end{proof}

The following observation was explained to us by Yuri Prokhorov.

\begin{example}
Let $C$ be a twisted cubic in $\mathbb{P}^3$. Choose six distinct points $P_1,\ldots,P_6$ on $C$.
Let~$P$ be a point on $C$ different from $P_1,\ldots,P_6$, and let $L$
be the line passing through $P_6$ and~$P$. Choose a point $P_7\in L\setminus\{P_6, P\}$.
Then such seven points cannot be an Aronhold  heptad since they violate the conditions in Lemma~\ref{lemma:Aronhold-condition}.
Furthermore, if we define the 3-folds $\widehat{\mathbb{P}}^3$ and~$V$
and the map $\phi$ as in Proposition~\ref{proposition:Prokhorov-construction},
then $\phi$ contracts (in particular) the proper transforms of~$C$ and~$L$ on~$\widehat{\mathbb{P}}^3$.
These curves are not disjoint, which implies that the 3-fold $V$ is not nodal.
The same holds if $L$ is chosen to be a tangent line at the point $P_6$ to~$C$.
\end{example}

The following result is implicitly contained (but not clearly stated)
in \cite{Prokhorov}.
It is implied by the results of~\cite{Prokhorov}
together with a simple additional observation.

\begin{theorem}\label{theorem:28-can-be-constructed}
Let $V$ be a double Veronese cone with $28$ singular points. Then $V$ can be constructed from
some Aronhold heptad as in Proposition~\ref{proposition:Prokhorov-construction}.
\end{theorem}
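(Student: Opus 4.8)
The plan is to run Prokhorov's construction in reverse. The existence of the diagram~\eqref{equation:Prokhorov-diagram} is already guaranteed for any $28$-nodal double Veronese cone $V$ by the results of~\cite{Prokhorov}: there is a small resolution $\phi\colon\widehat{\mathbb{P}^3}\to V$, a blow up $\pi\colon\widehat{\mathbb{P}^3}\to\mathbb{P}^3$ of seven distinct points $P_1,\ldots,P_7$, and the rational map $\kappa$ realizing the linear system $|H|$. What Proposition~\ref{proposition:Prokhorov-construction} requires as input is that these seven points $P_1,\ldots,P_7$ actually form an Aronhold heptad in the sense of Definition~\ref{definition:Aronhold-heptad}. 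So the entire content of the theorem is the ``simple additional observation'' that the generality conditions implicit in Prokhorov's diagram are precisely conditions~(A) and~(B) of Lemma~\ref{lemma:Aronhold-condition}, or equivalently conditions~(A${}^\prime$), (B${}^\prime$), (C${}^\prime$).

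First I would extract from the structure of the diagram the geometric properties of the configuration $\{P_1,\ldots,P_7\}$. Since $\phi$ is a \emph{small} resolution, its exceptional locus is a disjoint union of rational curves, each contracted to one of the $28$ nodes of $V$. These curves are the $\phi$-contracted curves, which are exactly the $(-K_{\widehat{\mathbb{P}^3}})$-trivial curves; tracing through $\pi$, they are proper transforms of lines through pairs $P_i,P_j$ and twisted cubics through six-tuples of the $P_i$. The key point is that a small resolution forces these curves to be \emph{disjoint} and each to have normal bundle $\mathcal{O}_{\mathbb{P}^1}(-1)\oplus\mathcal{O}_{\mathbb{P}^1}(-1)$, so that $\phi$ is a genuine flopping contraction producing only ordinary double points. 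Disjointness of the proper transforms is exactly what fails in the degenerate configurations of the Example immediately preceding the theorem, where a line and a twisted cubic meet; there the resolution is not small and $V$ is not nodal.

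Next I would translate ``the contracted curves are disjoint and give exactly $28$ nodes'' back into the combinatorial conditions of Lemma~\ref{lemma:Aronhold-condition}. If four of the $P_i$ were coplanar, there would be a reducible member of $\mathcal{L}(P_1,\ldots,P_7)$, violating~(A${}^\prime$); if all seven lay on a twisted cubic, the base locus would be a curve, violating~(B${}^\prime$); and the transversality condition~(C${}^\prime$) is precisely the condition that the relevant line $L_{ij}$ and twisted cubic $T_{ij}$ have disjoint proper transforms upstairs. The intersection pattern that condition~(C${}^\prime$) excludes is exactly the one that would merge two of the contracted curves and prevent $\phi$ from being small, or would make $V$ fail to be nodal. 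Since $V$ is assumed to have $28$ nodes — the maximal number — all $28$ contracted curves must be present and pairwise disjoint, which forces conditions (A${}^\prime$), (B${}^\prime$), (C${}^\prime$) to hold simultaneously. By Lemma~\ref{lemma:Aronhold-condition}, the points $P_1,\ldots,P_7$ then form an Aronhold heptad, and Proposition~\ref{proposition:Prokhorov-construction} applies to reconstruct $V$ from this heptad.

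The main obstacle I anticipate is verifying the converse direction cleanly, i.e., showing that the nodality of $V$ (together with having the full count of $28$ singular points) \emph{forces} disjointness of all the contracted curves, rather than merely being consistent with it. One must rule out the possibility that fewer than $28$ curves are contracted with some carrying multiple nodes, or that the resolution $\phi$ is small but the configuration still degenerates in a way compatible with $28$ distinct nodes. I expect this to follow from the normal bundle computations of Corollaries~\ref{corollary:normal-bundle-1} and~\ref{corollary:normal-bundle-2} combined with the observation that any incidence among the contracting curves immediately produces a non-nodal (e.g.\ higher-multiplicity or non-isolated) singularity, as illustrated by the preceding Example; thus the hypothesis that $V$ is genuinely $28$-nodal is exactly strong enough to exclude every degenerate configuration.
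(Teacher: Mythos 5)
Your route is genuinely different from the paper's, and it has real gaps. The paper's proof is essentially a citation argument: it first gets $\mathrm{rk}\,\mathrm{Cl}(V)=8$ from \cite[Theorem~1.7]{Prokhorov} and \cite[Remark~1.8]{Prokhorov}, then invokes \cite[Theorem~7.1(i)]{Prokhorov} to realize $V$ by the construction of Proposition~\ref{proposition:Prokhorov-construction} from seven points no four of which are coplanar, and then invokes \cite[Theorem~7.1(ii)]{Prokhorov}, which asserts exactly that having $28$ singular points is equivalent to (A${}^\prime$), (B${}^\prime$), (C${}^\prime$); Lemma~\ref{lemma:Aronhold-condition} then finishes. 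You instead attempt to re-derive the content of Theorem~7.1(ii) geometrically from the structure of the small resolution, which is a legitimate plan but is not carried out.

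The first gap is that you assume nodality throughout. The hypothesis of the theorem is ``$28$ singular points'', not ``$28$ nodes'': in the paper, nodality is Corollary~\ref{corollary:28-node}, which is \emph{deduced from} this theorem together with Proposition~\ref{proposition:Prokhorov-construction}, so using it as input is circular. Your key claim that a small resolution forces the exceptional curves to be disjoint with normal bundle $\mathcal{O}_{\mathbb{P}^1}(-1)\oplus\mathcal{O}_{\mathbb{P}^1}(-1)$ is false: fibers over distinct singular points are disjoint, but the fiber over a single terminal Gorenstein point can be a connected union of several intersecting rational curves; irreducibility of each fiber is precisely equivalent to the point being a node. The paper's own Example~4.2 refutes your claim: there the contraction is still small (only curves are contracted), yet two contracted curves meet and $V$ fails to be nodal.

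The second gap is that your ``translation'' paragraph does not actually derive (A${}^\prime$), (B${}^\prime$), (C${}^\prime$) from the hypothesis on $V$. Statements such as ``if four of the $P_i$ were coplanar, there would be a reducible member of $\mathcal{L}(P_1,\ldots,P_7)$'' are implications internal to Lemma~\ref{lemma:Aronhold-condition}, relating (A),(B) to (A${}^\prime$),(B${}^\prime$),(C${}^\prime$); they never use that $V$ has $28$ singular points. A genuine derivation would run, e.g.: if (B${}^\prime$) fails, then $-K_{\widehat{\mathbb{P}}^3}=\phi^*(-K_V)$ has degree $4\cdot 3-2\cdot 7=-2$ on the proper transform of the twisted cubic, contradicting nefness; if (A${}^\prime$) fails, a pencil of $K$-trivial conics through the four coplanar points forces $\phi$ to contract a divisor; and, in the hard case, if (C${}^\prime$) fails, the proper transforms of the relevant line and twisted cubic intersect, hence are contracted to one and the same point, and one must then show the singular-point count drops strictly below $28$. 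Since the number of singular points equals the number of connected components of the contracted locus, this last step requires classifying \emph{all} curves numerically trivial against $-K_{\widehat{\mathbb{P}}^3}$ (curves with $\sum_i \mathrm{mult}_{P_i}=2\deg$) and excluding that extra ones --- for instance elliptic quartics through all seven points with a double point at one of them, which satisfy the numerical condition --- appear and restore the count to $28$. None of this analysis is in your sketch, and it is not supplied by Corollaries~\ref{corollary:normal-bundle-1} and~\ref{corollary:normal-bundle-2}, which compute normal bundles only in the non-degenerate configuration. The missing case analysis is exactly the content of \cite[Theorem~7.1(ii)]{Prokhorov} that the paper's proof cites.
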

\begin{proof}
It first follows from  \cite[Theorem~1.7]{Prokhorov} and~\cite[Remark~1.8]{Prokhorov}
that the rank of the divisor class group of $V$ is $8$.
Then \cite[Theorem~7.1(i)]{Prokhorov}  implies that $V$ can be obtained in the same way as in Proposition~\ref{proposition:Prokhorov-construction} with seven points of $\mathbb{P}^3$ such that
no four of them are coplanar.
It remains to verify that the seven points form a Aronhold heptad.
However,  \cite[Theorem~7.1(ii)]{Prokhorov} shows that having~$28$ singular points is equivalent to ($\mathrm{A}'$), ($\mathrm{B}'$), and ($\mathrm{C}'$) in Lemma~\ref{lemma:Aronhold-condition}. Therefore, the seven points in the construction must form an Aronhold heptad by Lemma~\ref{lemma:Aronhold-condition}.
\end{proof}

Proposition~\ref{proposition:Prokhorov-construction} and Theorem~\ref{theorem:28-can-be-constructed}
immediately imply the following

\begin{corollary}\label{corollary:28-node}
If a double Veronese cone has $28$ singular points, then they are all nodes.
\end{corollary}

We conclude this section with another result that is a part of \cite[Theorem~7.1]{Prokhorov}. Its proof
is similar to that of Proposition~\ref{proposition:Prokhorov-construction}.

\begin{proposition}
\label{proposition:Prokhorov-construction-2}
Let $P_1,\ldots,P_6$ be six points in $\mathbb{P}^3$ such that no four of them are coplanar.
Let~\mbox{$\pi\colon\widehat{\mathbb{P}}^3\to\mathbb{P}^3$} be the blow up of $\mathbb{P}^3$ at the points $P_1,\ldots,P_6$,
and let~\mbox{$\phi\colon \widehat{\mathbb{P}}^3\dasharrow W$}
be the map given by the linear system $|-K_{\widehat{\mathbb{P}}^3}|$.
Then
\begin{itemize}
\item the map $\phi$ is a birational morphism;
\item   the exceptional locus of $\phi$ is a disjoint union of the proper transforms of the lines
passing through pairs of the points $P_i$ and  the twisted cubic passing through all the points~$P_i$;
\item the variety $W$ is a $16$-nodal double cover of $\mathbb{P}^3$ branched over a quartic surface.
\end{itemize}
\end{proposition}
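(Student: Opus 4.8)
The plan is to mirror the proof of Proposition~\ref{proposition:Prokhorov-construction}, working with six points in place of seven. First I would list the curves contracted by $\phi$. Since no four of the points $P_1,\ldots,P_6$ are coplanar, no three of them are collinear, so there are exactly $\binom{6}{2}=15$ lines $L_{ij}$ joining pairs $P_i,P_j$, and (the six points being in general position) a unique twisted cubic $C$ through all of them; this gives $15+1=16$ curves. Each meets $-K_{\widehat{\mathbb{P}}^3}=\pi^*(4H)-2\sum_i F_i$ trivially: a line through two of the points satisfies $-K_{\widehat{\mathbb{P}}^3}\cdot\widehat{L}_{ij}=4-2\cdot 2=0$, and for the twisted cubic $-K_{\widehat{\mathbb{P}}^3}\cdot\widehat{C}=4\cdot 3-2\cdot 6=0$. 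As in Proposition~\ref{proposition:Prokhorov-construction}, the hypothesis that no four points are coplanar guarantees that the proper transforms of these sixteen curves are pairwise disjoint: two chords with disjoint index sets would otherwise produce four coplanar points, chords through a common $P_i$ are separated by the exceptional divisor over $P_i$, and each chord $L_{ij}$ meets $C$ only at $P_i$ and $P_j$, where the two tangent directions differ. By Corollaries~\ref{corollary:normal-bundle-1} and~\ref{corollary:normal-bundle-2}, the normal bundle of each such proper transform is $\mathcal{O}_{\mathbb{P}^1}(-1)\oplus\mathcal{O}_{\mathbb{P}^1}(-1)$.

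Second, I would show that every other irreducible curve meets $-K_{\widehat{\mathbb{P}}^3}$ strictly positively. Arguing by contradiction, suppose an irreducible curve $L\subset\mathbb{P}^3$ of degree $d$, with $m_i=\mathrm{mult}_{P_i}L$, satisfies $\sum_{i=1}^{6}m_i\geqslant 2d$. Choosing a general point $P\in L$, the quadrics through $P_1,\ldots,P_6,P$ form a net, and the bound $Q\cdot L\geqslant 1+\sum_i m_i>2d$ together with Bézout's theorem forces every member $Q$ of this net to contain $L$. Since no four of the points are coplanar the net has no fixed plane component, so two general members meet in a curve of degree $4$ containing $L$; hence $d\leqslant 4$. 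The remaining cases are then handled exactly as in Proposition~\ref{proposition:Prokhorov-construction}: a conic ($d=2$) or a planar cubic would pass through four coplanar points; a smooth degree-$4$ curve cannot carry total multiplicity $8$ at only six points, while a singular complete-intersection quartic has arithmetic genus $1$, so $\sum_i\binom{m_i}{2}\leqslant 1$ and thus $\sum_i m_i\leqslant 7<8$; and in the surviving cases $L$ is one of the fifteen lines $L_{ij}$ (for $d=1$) or the twisted cubic $C$ (for $d=3$, forced to pass through all six points).

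With these two steps in place, $\widehat{\mathbb{P}}^3$ is a smooth weak Fano threefold whose only $(-K)$-trivial curves are the sixteen proper transforms above. The morphism $\tilde\phi$ defined by $|-nK_{\widehat{\mathbb{P}}^3}|$ for $n\gg 0$ therefore contracts precisely these curves; since each has normal bundle $\mathcal{O}_{\mathbb{P}^1}(-1)\oplus\mathcal{O}_{\mathbb{P}^1}(-1)$, the contraction is a flopping contraction and the sixteen images are ordinary double points of the resulting Fano threefold $W$. Writing $-K_{\widehat{\mathbb{P}}^3}=2\big(\pi^*(2H)-\sum_i F_i\big)$ shows that $-K_W$ is divisible by $2$, and $(-K_W)^3=(-K_{\widehat{\mathbb{P}}^3})^3=64-8\cdot 6=16$, so $W$ is a del Pezzo threefold of degree $2$. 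The half-anticanonical system $|\pi^*(2H)-\sum_i F_i|$ is the proper transform of the web of quadrics through $P_1,\ldots,P_6$ and realizes $W$ as a double cover of $\mathbb{P}^3$; by the classification of del Pezzo threefolds of degree $2$ (see~\cite[Theorem~3.2.5]{Isk-Prokh}) this is a double cover branched over a quartic surface, carrying the sixteen nodes produced above. Finally, since $-K_W$ is very ample for a degree-$2$ del Pezzo threefold, the map $\tilde\phi$ agrees with the map $\phi$ given by $|-K_{\widehat{\mathbb{P}}^3}|$, which completes the proof.

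I expect the main obstacle to be the second step, namely the degree-and-multiplicity analysis ruling out extraneous $(-K)$-trivial curves. It runs in close parallel to Proposition~\ref{proposition:Prokhorov-construction} and is in fact shorter, since one fewer point is involved; the only point requiring care is confirming that the net of quadrics through $P_1,\ldots,P_6,P$ has no fixed component, which again follows from the non-coplanarity hypothesis.
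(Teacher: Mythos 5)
Your proof is correct and follows exactly the route the paper intends: the paper gives no separate argument for this proposition, saying only that its proof is similar to that of Proposition~\ref{proposition:Prokhorov-construction}, and your proposal is precisely that adaptation --- the same anticanonical intersection computations, the same exclusion of extraneous $(-K)$-trivial curves via quadrics through the six points and an auxiliary point of $L$, and Corollaries~\ref{corollary:normal-bundle-1} and~\ref{corollary:normal-bundle-2} to identify the flopping contraction and the $16$ nodes. The only genuine deviation is the degree-$4$ case, where the seven-point proof's appeal to the eight-point base locus of the net is unavailable; your replacement, noting that such an $L$ must be a complete intersection of two quadrics of arithmetic genus $1$, so that $\sum_i\binom{m_i}{2}\leqslant 1$ and hence $\sum_i m_i\leqslant 7<8$, is a valid fix.
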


\section{From smooth plane quartics to
double Veronese cones}
\label{section:equations}

In this section we construct a $28$-nodal double Veronese cone starting from a smooth plane quartic curve.
We also provide an explicit equation~\eqref{eq:V1} for such a 3-fold in terms of covariants
of the plane quartic curve.

Let $\check{\mathbb{P}}^2$ be the dual of the projective plane $\mathbb{P}^2$.
We use a homogeneous coordinate system~\mbox{$[x:y:z]$} for $\mathbb{P}^2$ and $[s:t:u]$ for~$\check{\mathbb{P}}^2$.

We start with a smooth quartic curve $C$ in the projective plane $\mathbb{P}^2$ given by an equation
\begin{equation}\label{equation:quartic}
H(x,y,z)=\sum_{i+j+k=4}a_{ijk}x^iy^jz^k=0.
\end{equation}

We regard $[s:t:u]$ as a general point in $\check{\mathbb{P}}^2$. Then the corresponding line on $\mathbb{P}^2$ is a general line~$L_{s,t,u}$ given by
\[sx+ty+uz=0.\]
The line $L_{s,t,u}$ hits the quartic $C$ at four distinct points $x_1$, $x_2$, $x_3$, $x_4$ lying on $L_{s,t,u}\setminus\{z=0\}$. We may regard these four points as points on the affine line, so that we could define their cross-ratio as follows:
\[\lambda(x_1, x_2, x_3, x_4)=\frac{(x_1-x_3)(x_4-x_2)}{(x_1-x_2)(x_4-x_3)}.\]
This has six different values according to the order of the four points.  However, the following $j$-function is invariant with respect to the reordering $x_1$, $x_2$, $x_3$, $x_4$.

\begin{equation}\label{equation:j-function}%
\begin{split}%
j(x_1, x_2, x_3, x_4)&=256\frac{\left(1-\lambda(x_1, x_2, x_3, x_4)\left(1-\lambda(x_1, x_2, x_3, x_4)\right)\right)^3}%
{\lambda(x_1, x_2, x_3, x_4)^2\left(1-\lambda(x_1, x_2, x_3, x_4)\right)^2}\\%
&=2^8\frac{\left((x_1-x_2)^2(x_4-x_3)^2-(x_1-x_3)(x_4-x_2)(x_4-x_1)(x_3-x_2)\right)^3}%
{(x_1-x_2)^2(x_1-x_3)^2(x_1-x_4)^2(x_2-x_3)^2(x_2-x_4)^2(x_3-x_4)^2}.%
\end{split}
\end{equation}

By plugging $z=-\frac{sx+ty}{u}$ into \eqref{equation:quartic}, we obtain
\begin{equation}\label{equation:quartic2}
\begin{split}
u^4H\left(x,y,-\frac{sx+ty}{u}\right)&=\sum_{i+j+k=4}a_{ijk}(-u)^{4-k}\left(sx+ty\right)^kx^iy^j\\
&=\sum_{r=0}^4b_{4-r}x^ry^{4-r},
\end{split}
\end{equation}
where
\[b_r=\sum_{j=0}^{r}\sum_{i+k=4-j}(-1)^ka_{ijk}\binom{k}{k+j-r}s^{k+j-r}t^{r-j}u^{4-k}.\]
Then we have the following identities for elementary symmetric functions of $x_1$, $x_2$, $x_3$, $x_4$:
\[\aligned
& x_1+x_2+x_3+x_4=-\frac{b_1}{b_0};\\
& x_1x_2+x_1x_3+x_1x_4+x_2x_3+x_2x_4+x_3x_4=\frac{b_2}{b_0};\\
& x_2x_3x_4+x_1x_2x_4+x_1x_2x_4+x_1x_2x_3=-\frac{b_3}{b_0};\\
& x_1x_2x_3x_4=\frac{b_4}{b_0}.
\endaligned\]
Since  the denominator and the numerator of the $j$-function in \eqref{equation:j-function} are symmetric polynomials in~$x_1, x_2, x_3, x_4$,
 the $j$-function in \eqref{equation:j-function}
 may be regarded as a rational function in~$b_0, b_1, b_2, b_3, b_4$.
 Indeed, one has
\[
\begin{split}
j(b_0, b_1, b_2, b_3, b_4)&=1728\frac{4h_2(b_0, b_1, b_2, b_3, b_4)^3}{4h_2(b_0, b_1, b_2, b_3, b_4)^3-27h_{3}(b_0, b_1, b_2, b_3, b_4)^2},\\
\end{split}
\]
where
\[h_2(b_0, b_1, b_2, b_3, b_4)=\frac{1}{3}\Big(-3b_1b_3 +12 b_0b_4+b_2^2\Big);\]
\[h_{3}(b_0, b_1, b_2, b_3, b_4)=\frac{1}{27}\Big(72b_0b_2b_4 - 27b_0b_3^2 - 27b_1^2b_4 + 9b_1b_2b_3 - 2b_2^3\Big)\]
(see Appendix~\ref{section:appendix} for a more detailed
computation).

Regarding $h_2 (b_0, b_1, b_2, b_3, b_4)$ and $h_3 (b_0, b_1, b_2, b_3, b_4)$ as polynomials in $s,t,u$, we see from Appendix~\ref{section:appendix}
that
 \[h_2 (b_0, b_1, b_2, b_3, b_4)=u^4g_4(s,t,u),\]
 \[h_3 (b_0, b_1, b_2, b_3, b_4)=u^6g_6(s,t,u),\]
where $g_4(s,t,u)$ and $g_6(s,t,u)$ are homogeneous polynomials of degrees $4$ and $6$, respectively, in $s, t, u$.
Consequently, the rational function $j(b_0, b_1, b_2, b_3, b_4)$ may be regarded as a rational function $j_C$ in~$s, t, u$, so that
it is a rational function on $\check{\mathbb{P}}^2$. More precisely, one has
\begin{equation}\label{equation:j-function2}%
j_C(s,t,u)=1728\frac{4g_4(s,t,u)^3}{4g_4(s,t,u)^3-27g_{6}(s,t,u)^2}.
\end{equation}

\begin{remark}\label{remark:harmonic}
The equation $g_4(s,t,u)=0$ of degree $4$ describes the points of $\check{\mathbb{P}}^2$
corresponding to lines that intersect~$C$ by equianharmonic quadruples of points (see for instance
\cite[\S2.3.4]{Dolgachev} for terminology). In other words, these lines with the quadruples of points define elliptic curves of $j$-invariant $0$. Elliptic curves of $j$-invariant $0$ are isomorphic to the Fermat plane cubic curve.
Similarly, the equation $g_6(s,t,u)=0$ of degree $6$ describes the points of $\check{\mathbb{P}}^2$
corresponding to lines that intersect~$C$ by harmonic quadruples of points. In this case, lines with the quadruples of points define elliptic curves of $j$-invariant $1728$.  Note that neither the cross-ratio nor the $j$-invariant is defined for the lines corresponding to the $24$ intersection points (counted with multiplicities) of $g_4(s,t,u)=0$ and~$g_6(s,t,u)=0$.
\end{remark}

\begin{lemma}\label{lemma:j-not-constant}
The $j$-function $j_C(s,t,u)$ is not constant. In particular, $g_4(s,t,u)$ cannot be a zero polynomial.
\end{lemma}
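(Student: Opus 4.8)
The plan is to show that $j_C(s,t,u)$ is non-constant by exhibiting lines $L_{s,t,u}$ that cut out quadruples of points on $C$ with distinct $j$-invariants. The most natural way is to locate lines whose intersection with $C$ is \emph{degenerate} in a controlled fashion, since the cross-ratio behaves in a sharply identifiable way in limiting cases.

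First I would recall that a general line meets $C$ in four distinct points, and that the bitangent lines and the flex-tangent lines of $C$ give special members of the pencil. A bitangent meets $C$ in two points each of multiplicity $2$, so its quadruple is $\{x_1,x_1,x_2,x_2\}$, which is a \emph{harmonic} configuration in the sense that two of the six pairwise differences vanish; this forces $j$ to take the value $1728$ (equivalently $g_4$ vanishes there, using the interpretation in Remark~\ref{remark:harmonic}). By contrast, a generic line gives a quadruple with cross-ratio in general position, so its $j$-value is a genuinely varying quantity. If $j_C$ were constant, then in particular it would equal $1728$ identically, forcing $g_6\equiv 0$ by~\eqref{equation:j-function2}; but one checks that $g_6$ is not identically zero (for instance by evaluating the explicit formula for $g_6$ at a convenient line, or by producing a flex line whose quadruple is equianharmonic, giving $j=0$ and hence $g_4$ vanishing at a point where $g_6$ does not). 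The contradiction between these two special families of lines—one forcing $j=0$ and another forcing $j=1728$—is what rules out constancy.

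A cleaner and more robust route, which I would prefer to carry out in detail, is to argue by a direct degeneration. As $[s:t:u]$ varies, the four intersection points vary, and one can let the line approach a tangent line at a smooth point of $C$: then two of the four points collide, so $\lambda\to 0,1,$ or $\infty$ and hence $j\to\infty$. Thus $j_C$ attains the value $\infty$ along the dual curve $\check C$ (the locus of tangent lines), which is a curve of positive dimension in $\check{\mathbb P}^2$. On the other hand, $j_C$ is finite at a general point. A rational function on $\check{\mathbb P}^2$ that is finite generically but blows up along a curve cannot be constant. This is the heart of the argument, and it only uses the smoothness of $C$ (to guarantee that $C$ has tangent lines in abundance and that the dual curve $\check C$ is a genuine curve, not everything or nothing).

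The main obstacle is making the limiting statement rigorous: one must verify that along the tangent locus the numerator $4g_4^3-27g_6^2$ of the discriminant in~\eqref{equation:j-function2} actually vanishes (so that $j_C\to\infty$) while $g_4$ does not vanish simultaneously in a way that would make $j_C$ indeterminate rather than infinite. Concretely, the discriminant of the binary quartic $\sum b_r x^r y^{4-r}$ vanishes exactly when the line is tangent to $C$, and its vanishing locus is precisely the dual curve $\check C\subset\check{\mathbb P}^2$, which has positive dimension since $C$ is smooth of degree $4$. Because $g_4$ and $g_6$ cannot both vanish identically on this curve—otherwise every member of the net would be degenerate, contradicting that a general line meets $C$ transversally—the function $j_C$ genuinely takes the value $\infty$ on $\check C$ but a finite value at a general point, and is therefore non-constant. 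The final sentence of the lemma then follows immediately: if $g_4$ were the zero polynomial, then~\eqref{equation:j-function2} would give $j_C\equiv 0$, contradicting non-constancy.
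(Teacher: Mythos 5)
Your preferred degeneration argument is correct and is essentially the paper's own proof, which consists of the single observation that $\mathbb{P}^2$ contains both tangent and non-tangent lines to $C$: a transversal line meets $C$ in four distinct points, where $j$ is finite, while in the limit of tangency two points collide and $j\to\infty$, so $j_C$ cannot be constant. Your third and fourth paragraphs carry out exactly this, with the sensible precision that one should approach a generic (simple) tangent line.

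Two side claims in your write-up are wrong, however, and should be discarded. First, a bitangent does not cut out a harmonic quadruple: the degenerate quadruple $\{x_1,x_1,x_2,x_2\}$ has cross-ratio in $\{0,1,\infty\}$, so $j\to\infty$ there, not $j=1728$; the same applies to flex lines, which do not give equianharmonic quadruples. Moreover, even where $j=1728$ genuinely holds, \eqref{equation:j-function2} and Remark~\ref{remark:harmonic} show that this corresponds to the vanishing of $g_6$, not of $g_4$ (it is $j=0$, the equianharmonic case, that corresponds to $g_4=0$). So the ``contradiction between two special families of lines'' in your second paragraph does not exist as stated. Second, in your final paragraph, the claim that if $g_4$ and $g_6$ vanished identically on $\check{C}$ then ``every member of the net would be degenerate, contradicting that a general line meets $C$ transversally'' is a non sequitur: vanishing along $\check{C}$ says nothing about general lines. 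Fortunately, that sub-argument is unnecessary. At a simple tangent line exactly one of the six pairwise differences tends to zero, so in \eqref{equation:j-function} the denominator tends to $0$ while the numerator tends to $\left((x_1-x_3)^2(x_3-x_2)^2\right)^3\neq 0$; hence $j$ tends to $\infty$, computed directly from the cross-ratio with no appeal to the $(g_4,g_6)$-expression, so no indeterminacy issue can arise. Your final deduction (if $g_4\equiv 0$ then $j_C\equiv 0$ by \eqref{equation:j-function2}, contradicting non-constancy) is correct and matches the paper.
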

\begin{proof}
Among the lines on $\mathbb{P}^2$, there are both tangent lines to $C$ and non-tangent lines to $C$.
This implies that the $j$-function, which is defined by the cross-ratios of the  intersection points, is not constant.
\end{proof}

We can use the polynomials $g_4(s,t,u)$ and $g_6(s,t,u)$ to write down the equation
of the projectively dual curve of the plane quartic~$C$.

\begin{proposition}
\label{proposition:discriminant}
The equation
\begin{equation}\label{equation:discriminant}
4g_4(s,t,u)^3-27g_6(s,t,u)^2=0
\end{equation}
of degree $12$ defines the projectively dual curve $\check{C}$
of the quartic curve $C$ in $\check{\mathbb{P}}^2$.
\end{proposition}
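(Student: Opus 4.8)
The plan is to interpret the left-hand side of \eqref{equation:discriminant} as the discriminant of the binary quartic form that cuts out $L_{s,t,u}\cap C$, and then to use the fact that, since $C$ is smooth, this discriminant vanishes precisely along the tangent lines of $C$, that is, along the dual curve $\check C$.

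First I would return to the set-up of \S\ref{section:equations}: for a point $[s:t:u]\in\check{\mathbb{P}}^2$ with $u\neq 0$, restricting $H$ to the line $L_{s,t,u}$ and clearing denominators produces the binary quartic $q_{s,t,u}(x,y)=\sum_{r=0}^4 b_{4-r}x^ry^{4-r}=u^4H(x,y,-(sx+ty)/u)$, whose four roots in $\mathbb{P}^1$ are exactly the four intersection points $L_{s,t,u}\cap C$. The quantities $h_2$ and $h_3$ are, up to normalization, the two classical invariants of a binary quartic, and I would invoke the classical identity from invariant theory (see for instance \cite[\S2.3.4]{Dolgachev}) that the discriminant of $q_{s,t,u}$ equals, up to a nonzero constant, $4h_2^3-27h_3^2$. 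Since $h_2=u^4g_4$ and $h_3=u^6g_6$, this gives that $\mathrm{disc}(q_{s,t,u})$ equals $u^{12}\big(4g_4^3-27g_6^2\big)$ up to a nonzero scalar, so on the chart $\{u\neq 0\}$ the locus $\{4g_4^3-27g_6^2=0\}$ is exactly the locus where $q_{s,t,u}$ has a multiple root.

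Next I would translate the existence of a multiple root back into geometry. A multiple root of $q_{s,t,u}$ means that two of the points of $L_{s,t,u}\cap C$ collide, and since $C$ is smooth this happens if and only if $L_{s,t,u}$ is tangent to $C$; by definition of the dual curve, this is exactly the condition $[s:t:u]\in\check C$. Hence the polynomial $4g_4^3-27g_6^2$ and the defining equation of $\check C$ have the same zero set on the dense open set $\{u\neq 0\}$, and therefore everywhere by taking closures. To upgrade this set-theoretic coincidence to an equality of equations I would argue by degrees and irreducibility: the polynomial $4g_4^3-27g_6^2$ is homogeneous of degree $12$ and is not identically zero by Lemma~\ref{lemma:j-not-constant}, while $\check C$ is an irreducible plane curve of degree $\deg C\cdot(\deg C-1)=12$, since the dual of a smooth plane curve is irreducible of degree equal to its class. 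As $4g_4^3-27g_6^2$ vanishes on the irreducible curve $\check C$, its irreducible defining polynomial divides $4g_4^3-27g_6^2$ by the Nullstellensatz, and comparing degrees $12=12$ forces the quotient to be a nonzero constant, so \eqref{equation:discriminant} indeed defines $\check C$.

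The main obstacle I anticipate is the invariant-theoretic step: pinning down that $4h_2^3-27h_3^2$ really is a constant multiple of the discriminant of the binary quartic, and checking that the dehomogenization $z=-(sx+ty)/u$ introduces no spurious vanishing. In particular one must make sure that reading the four intersection points off the roots of $q_{s,t,u}$ correctly accounts for intersection points lying on $\{z=0\}$, which show up as roots ``at infinity'' of $q_{s,t,u}$; this is handled by treating $q_{s,t,u}$ as a genuine binary form and using the projective discriminant, which detects coincident roots in $\mathbb{P}^1$ uniformly. Everything beyond that is routine degree-and-irreducibility bookkeeping.
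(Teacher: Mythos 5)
Your proof is correct, but it takes a genuinely different route from the paper's. The paper argues through the $j$-function: since $j_C$ is regular away from $\check{C}$ (the four intersection points are distinct there) and is non-constant by Lemma~\ref{lemma:j-not-constant}, it must have pole curves; by~\eqref{equation:j-function2} these lie along factors of $4g_4(s,t,u)^3-27g_6(s,t,u)^2$, and they are contained in $\check{C}$, so irreducibility of $\check{C}$ together with the degree count $12=12$ forces the denominator to cut out $\check{C}$. You instead identify $4h_2^3-27h_3^2$ as the discriminant of the binary quartic cutting out $L_{s,t,u}\cap C$, via the classical invariant-theoretic identity, so that $u^{12}\bigl(4g_4^3-27g_6^2\bigr)$ vanishes exactly when two intersection points collide, i.e.\ (by smoothness of $C$) exactly when $L_{s,t,u}$ is tangent to $C$; this gives the set-theoretic equality of $\{4g_4^3-27g_6^2=0\}$ with $\check{C}$ on the chart $u\neq 0$ before the same Nullstellensatz-plus-degree bookkeeping. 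What each approach buys: the paper's pole argument is softer, needing no explicit discriminant identity, only regularity of $j_C$ off $\check{C}$ and non-constancy; yours proves both inclusions directly and makes the geometric meaning of the equation transparent. Note that the identity you cite from classical invariant theory is precisely what the paper's Appendix~\ref{section:appendix} establishes by computation (the denominator computed there is literally $\prod_{i<j}(x_i-x_j)^2$ expressed in the $b_i$, and it coincides with $4h_2^3-27h_3^2$), so your key input is in fact available inside the paper. One small attribution slip: Lemma~\ref{lemma:j-not-constant} gives $g_4\not\equiv 0$ and non-constancy of $j_C$, not directly $4g_4^3-27g_6^2\not\equiv 0$; but that non-vanishing is immediate from your own tangency characterization (if the polynomial vanished identically, every line in the chart $u\neq 0$ would be tangent to $C$, which is absurd since a general line meets the smooth curve $C$ transversally), so this is cosmetic rather than a gap.
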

\begin{proof}
The curve $\check{C}$ has degree $12$.
By construction the rational function $j_C(s,t,u)$ is well-defined at least outside the projectively  dual curve $\check{C}$.  On the other hand,
we know from Lemma~\ref{lemma:j-not-constant} that~\mbox{$g_4(s,t,u)$} is not a zero polynomial.
Thus it follows from \eqref{equation:j-function2} that $j_C(s,t,u)$ has poles along the curves defined by some of
the factors of~\mbox{$4g_4(s,t,u)^3-27g_6(s,t,u)^2$}. These poles must be contained in the projectively  dual curve~$\check{C}$. Since $\check{C}$ is an irreducible curve of degree $12$,
it  must be  defined by the homogeneous equation~\eqref{equation:discriminant}
of degree $12$.
\end{proof}

Recall that a
bitangent line to the quartic curve $C$, which is not a tangent line at a hyper-inflection point of $C$, yields an ordinary double point on the  projectively dual curve $\check{C}$. Meanwhile, the tangent line to $C$ at an ordinary inflection point of $C$ produces an ordinary cusp in  $\check{C}$ and the tangent line to~$C$ at a hyper-inflection point of $C$ generates a triple point  of $\check{C}$ that is analytically isomorphic to a singularity defined by equation~\mbox{$s^3=t^4$}.
These are all the possible types of singularities of~$\check{C}$.

Let $C_4$ be the quartic curve defined by $g_4(s,t,u)=0$, and let $C_6$ be the sextic curve defined by~$g_6(s,t,u)=0$ in $\check{\mathbb{P}}^2$.

\begin{lemma}\label{lemma:C4-meets-C6}
Let $P$ be an intersection point of the curves $C_4$ and $C_6$.
Then one of the following possibilities occur.
\begin{itemize}
\item The curves $C_4$ and $C_6$ are smooth at $P$, and meet
transversally at $P$; in this case
$P$ is an ordinary cusp of the curve $\check{C}$.

\item  The curve $C_6$ has a double point at $P$
and  the local intersection index of $C_4$ and $C_6$ at $P$ is~$2$;
in this case $P$
is a triple point  of $\check{C}$ analytically  isomorphic to~\mbox{$s^3=t^4$}.
\end{itemize}
\end{lemma}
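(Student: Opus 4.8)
The plan is to work in local analytic coordinates at a point $P\in C_4\cap C_6$ and to recover the type of singularity of $\check{C}$ at $P$ from the orders of vanishing of $g_4$ and $g_6$. By Proposition~\ref{proposition:discriminant} the curve $\check{C}$ is defined by $\Delta:=4g_4^3-27g_6^2$. Since $g_4(P)=g_6(P)=0$, the identity $\nabla\Delta=12g_4^2\,\nabla g_4-54\,g_6\,\nabla g_6$ shows that $\nabla\Delta(P)=0$ as well, so $P\in\Sing(\check{C})$. By the description of the singularities of $\check{C}$ recalled just before the lemma, $P$ is therefore a node, an ordinary cusp, or a triple point analytically isomorphic to $s^3=t^4$. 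Writing $m_4=\mult_P(C_4)$ and $m_6=\mult_P(C_6)$, the argument reduces to matching each of these three types with the admissible values of $m_4$, $m_6$, and the local intersection index $(C_4\cdot C_6)_P$.

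The first step is to exclude the node. One always has $\mult_P(\check{C})\geqslant\min(3m_4,2m_6)\geqslant 2$, and the equality $\min(3m_4,2m_6)=2$ holds precisely when $m_6=1$, since $3m_4\geqslant 3$. In that case the degree-$2$ part of $\Delta$ equals $-27$ times the square of the linear part of $g_6$, so the tangent cone of $\check{C}$ at $P$ is a double line; and when $\min(3m_4,2m_6)\geqslant 3$ the multiplicity of $\check{C}$ at $P$ is at least $3$. In neither situation can the tangent cone consist of two distinct lines, so $P$ is not a node.

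It remains to analyse the two surviving types. Suppose first that $P$ is an ordinary cusp, so that $\mult_P(\check{C})=2$; by the previous paragraph this forces $m_6=1$, that is, $C_6$ is smooth at $P$. Taking $g_6$ as one of two local analytic coordinates, a short computation shows that $m_4\geqslant 2$ would make $\Delta$ a singularity worse than an ordinary cusp, and that if $C_4$ were smooth but tangent to $C_6$ then $(C_4\cdot C_6)_P\geqslant 2$ and again the singularity of $\Delta$ would be worse than $A_2$; hence $C_4$ is smooth and meets $C_6$ transversally, which is the first case. Suppose next that $P$ is a triple point isomorphic to $s^3=t^4$, so that $\mult_P(\check{C})=3$. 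Since $\min(3m_4,2m_6)$ is never equal to $3$ once $m_4\geqslant 2$ (it is then even or at least $6$), we get $m_4=1$, and consequently $m_6\geqslant 2$. Now choose a local coordinate with $g_4$ as one of the coordinates, so that the Newton polygon of $\Delta=4g_4^3-27g_6^2$ has vertices contributed by $g_4^3$ and by the restriction of $g_6^2$ to $C_4$; comparing with the Newton polygon of $s^3=t^4$ forces $(C_4\cdot C_6)_P=2$, and then $2=(C_4\cdot C_6)_P\geqslant m_4m_6=m_6$ together with $m_6\geqslant 2$ yields $m_6=2$. This is exactly the second case.

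I expect the main obstacle to be the local bookkeeping in the last paragraph: one must check that the higher-order terms of $g_4$ and $g_6$ do not affect the analytic type forced by the leading Newton data. Here it is cleanest to exploit the classification recalled before the lemma rather than to identify the singularity from scratch, so that only the discrete invariants $m_4$, $m_6$, and $(C_4\cdot C_6)_P$ compatible with each type need to be pinned down, which is achieved by the multiplicity and Newton-polygon estimates above. As a sanity check one can also verify the forward implications directly: a transversal intersection of two smooth branches yields $\Delta\sim 4\alpha^3x^3-27y^2$, an ordinary cusp, whereas $m_4=1$, $m_6=2$, and $(C_4\cdot C_6)_P=2$ yield $\Delta\sim 4x^3-27c^2y^4$, which is Newton-nondegenerate of type $s^3=t^4$.
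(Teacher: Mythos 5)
Your proof follows the same route as the paper's: invoke the classification of the singularities of $\check{C}$ recalled just before the lemma, rule out the node via the multiplicity and tangent cone of $\Delta=4g_4^3-27g_6^2$, and then, in each of the two surviving cases, force the stated local geometry of $C_4$ and $C_6$ by showing that any other configuration would produce a singularity absent from that classification. Your node exclusion and your cusp case are correct and match the paper's; the paper expresses the exclusion of a tangential intersection of two smooth branches by saying the resulting double point has log canonical threshold smaller than $5/6$, which is the same computation as your ``worse than $A_2$''.

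The gap sits exactly where you flagged it, and it is a genuine one: the sentence ``comparing with the Newton polygon of $s^3=t^4$ forces $(C_4\cdot C_6)_P=2$'' is not an argument. A Newton polygon is attached to a choice of coordinates, not to an analytic type, and the nondegeneracy that lets one read the type off the polygon is available in your sanity-check direction (index $2$ implies type $s^3=t^4$) but is precisely what is unavailable in the direction you need (type $s^3=t^4$ implies index $2$). Concretely, you must exclude the configuration $\mathrm{mult}_P(C_4)=1$, $\mathrm{mult}_P(C_6)\geqslant 2$, $(C_4\cdot C_6)_P\geqslant 3$ with $\check{C}$ of type $s^3=t^4$ at $P$. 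The paper does this with one blow-up: the germ $s^3=t^4$ becomes smooth after a single blow-up, whereas a direct computation shows that when the tangent line of $C_4$ at $P$ lies in the tangent cone of $C_6$ (which, once $\mathrm{mult}_P(C_6)=2$ is known, is exactly the condition $(C_4\cdot C_6)_P\geqslant 3$), the proper transform of $\check{C}$ is still singular over $P$. Alternatively, your Newton-vertex observation can be promoted to an honest invariant: since $g_4$ vanishes on $C_4$,
$$
I_P\big(\check{C},C_4\big)=\mathrm{ord}_P\Big(\Delta\vert_{C_4}\Big)
=\mathrm{ord}_P\Big(-27\big(g_6\vert_{C_4}\big)^2\Big)
=2\,(C_4\cdot C_6)_P,
$$
while a germ of type $s^3=t^4$ meets every smooth curve germ through its singular point with multiplicity at most $4$ (blow up once and apply Noether's formula: the contribution at $P$ is $3\cdot 1$, and the contribution at the infinitely near point is at most $1$, because the proper transform of the branch is tangent to the exceptional divisor while that of a smooth curve is transverse to it). Hence $2(C_4\cdot C_6)_P\leqslant 4$, which combined with $(C_4\cdot C_6)_P\geqslant \mathrm{mult}_P(C_4)\,\mathrm{mult}_P(C_6)\geqslant 2$ yields $(C_4\cdot C_6)_P=2$ and $\mathrm{mult}_P(C_6)=2$ simultaneously. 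With either repair your proof is complete and equivalent to the paper's.
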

\begin{proof}
The point $P$ is a singular point of $\check{C}$. The defining equation \eqref{equation:discriminant} of $\check{C}$ shows that $C_4$ is smooth at $P$ and that
$P$ cannot be an ordinary double point of  $\check{C}$.
Thus $\check{C}$ has  either an ordinary cusp
or a triple point analytically
isomorphic to  $s^3=t^4$ at $P$. Note that $P$ is a triple point of $\check{C}$ if and only if~$C_6$ is singular at $P$. Furthermore, in such a case, $P$ must be a double point of $C_6$.

The defining equation~\eqref{equation:discriminant} tells that
$P$ is an ordinary cusp of $\check{C}$  if $C_4$ and $C_6$ are transversal at $P$.
On the other hand, if $C_6$ is smooth and tangent to $C_4$ at $P$, then $\check{C}$ has a double point worse than a simple cusp at~$P$, in the sense that its log canonical threshold is smaller than~$5/6$.

We now suppose that $P$ is a double point of $C_6$.  Then, \eqref{equation:discriminant} shows that $\check{C}$ has a triple point at~$P$.
Therefore, to complete the proof, it is enough to show that  the singularity of $\check{C}$ at $P$ cannot be  a triple point analytically
isomorphic to  $s^3=t^4$ if the tangent line to $C_4$ at $P$ is one of the Zariski tangent lines of $C_6$ at $P$. This can be easily checked by  blowing up. Indeed, if the tangent line to $C_4$ at $P$ is one of the Zariski tangent lines of $C_6$ at $P$, then the proper transform of $\check{C}$ via the blow up of $\check{\mathbb{P}}^2$ at~$P$ is still singular at some
point over~$P$, which is not the case for the singularity~\mbox{$s^3=t^4$}.
\end{proof}

We now consider the  hypersurface $V$ of degree $6$ in $\mathbb{P}(1,1,1,2,3)$ given by an equation
\begin{equation}\label{eq:V1}
-w^2+v^3-g_4(s,t,u)v+g_6(s,t,u)=0,
\end{equation}
where $\mathrm{wt}(w)=3, \mathrm{wt}(v)=2$.
We denote by $F(s,t,u,v,w)$ the left hand side of~\eqref{eq:V1},
and set
$$
G(s,t,u)=4g_4(s,t,u)^3-27g_6(s,t,u)^2.
$$
Then $G(s,t,u)=0$ defines the projectively  dual curve $\check{C}$ of the smooth quartic curve $C$ in $\check{\mathbb{P}}^2$
by Proposition~\ref{proposition:discriminant}.

Denote by $V_{v}^{sing}$ the set of singular points of $V$ in the section by $v=0$ and by $V_{o}^{sing}$ the set of singular points of $V$ outside  the section by $v=0$. On the other hand, denote by $\check{C}_v^{sing}$ the set of singular points of~$\check{C}$ at which~$C_6$ is singular, and denote by~$\check{C}_o^{sing}$ the set of singular points of~$\check{C}$ outside~$C_6$.

\begin{lemma}\label{lemma:correspondence}
Define the maps as follows:
\[\begin{split}\check{C}_o^{sing} &\longrightarrow V_{o}^{sing},\\
[a_0:a_1:a_2]&\mapsto \left[a_0:a_1:a_2:\frac{3g_6(a_0,a_1,a_2)}{2g_4(a_0,a_1,a_2)} :0\right];\\
&\\
\check{C}_v^{sing} &\longrightarrow V_{v}^{sing},\\
[a_0:a_1:a_2]&\mapsto [a_0:a_1:a_2:0:0].\\
\end{split}\]
Then both are one-to-one correspondences.
\end{lemma}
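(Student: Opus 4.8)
The plan is to compute the singular locus of $V$ directly from the Jacobian criterion and match it, equation by equation, against the singular locus of $\check{C}$ through the data $g_4=0$, $g_6=0$ and the discriminant $G=4g_4^3-27g_6^2$. First I would record that $V$ avoids the two singular points of the ambient space: substituting $[0:0:0:1:0]$ and $[0:0:0:0:1]$ into $F$ gives $1$ and $-1$ respectively, so $V$ lies entirely in the smooth locus of $\mathbb{P}(1,1,1,2,3)$ and its singular points are exactly the zeros of $F$ at which all partial derivatives vanish. Writing these out, $\partial F/\partial w=-2w$ forces $w=0$, while $\partial F/\partial v=3v^2-g_4$ and $\partial F/\partial s=-v\,\partial_s g_4+\partial_s g_6$ (and cyclically in $t,u$) give the system
\[
w=0,\qquad g_4=3v^2,\qquad v\,\nabla g_4=\nabla g_6,\qquad v^3-g_4v+g_6=0,
\]
where $\nabla$ is the gradient in $(s,t,u)$. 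I then split according to whether $v\neq 0$ or $v=0$; this is precisely the splitting into $V_o^{sing}$ and $V_v^{sing}$.

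In the case $v\neq 0$, substituting $g_4=3v^2$ into $v^3-g_4v+g_6=0$ yields $g_6=2v^3$, whence $4g_4^3=27g_6^2$, so $G=0$ and the projected point lies on $\check{C}$; moreover $g_6=2v^3\neq 0$, so it lies in $\check{C}_o^{sing}$, and dividing gives $v=\tfrac{3g_6}{2g_4}$, exactly the formula in the statement. The key computation is the gradient identity: on the locus $g_4=3v^2$, $g_6=2v^3$ one has
\[
\nabla G=12g_4^2\,\nabla g_4-54g_6\,\nabla g_6=108\,v^3\bigl(v\,\nabla g_4-\nabla g_6\bigr),
\]
so with $v\neq 0$ the singularity condition $v\,\nabla g_4=\nabla g_6$ is equivalent to $\nabla G=0$. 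Hence the projection $[s:t:u:v:0]\mapsto[s:t:u]$ maps $V_o^{sing}$ into $\check{C}_o^{sing}$. Conversely, given $[a_0:a_1:a_2]\in\check{C}_o^{sing}$ (so $G=0$, $\nabla G=0$, $g_6\neq 0$, and therefore $g_4\neq 0$), I would set $v=\tfrac{3g_6}{2g_4}$ and use $4g_4^3=27g_6^2$ to verify $g_4=3v^2$, $g_6=2v^3$, $v^3-g_4v+g_6=0$ and $v\,\nabla g_4=\nabla g_6$, so that $[a_0:a_1:a_2:v:0]\in V_o^{sing}$. The two maps are mutually inverse because the projection undoes the assignment (the value of $v$ is forced), giving the first bijection.

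In the case $v=0$ the system collapses to $g_4=0$, $g_6=0$, $\nabla g_6=0$, so a point of $V_v^{sing}$ projects to a point of $C_4\cap C_6$ at which $C_6$ is singular. Conversely, any point with $g_4=g_6=0$ automatically satisfies $G=0$ and $\nabla G=12g_4^2\,\nabla g_4-54g_6\,\nabla g_6=0$, hence is a singular point of $\check{C}$, and the extra condition $\nabla g_6=0$ says exactly that $C_6$ is singular there; for a point of $\check{C}_v^{sing}$ the relation $G=0$ with $g_6=0$ forces $g_4=0$ as well. Thus $\check{C}_v^{sing}=\{g_4=g_6=0,\ \nabla g_6=0\}$ coincides with the projection of $V_v^{sing}$, and the assignment $[a_0:a_1:a_2]\mapsto[a_0:a_1:a_2:0:0]$ with the projection as inverse gives the second bijection.

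The main obstacle is the bookkeeping in the case $v\neq 0$: one must check that $g_4$ and $g_6$ are simultaneously nonzero, so that $v=\tfrac{3g_6}{2g_4}$ is a well-defined nonzero number and the image genuinely lies \emph{outside} $C_6$, and one must apply the gradient identity only on the locus where $g_4=3v^2$ and $g_6=2v^3$. A conceptual point worth stating explicitly is that the cusps of $\check{C}$—the points of $C_4\cap C_6$ at which $C_6$ is smooth (so $g_4=g_6=0$ but $\nabla g_6\neq 0$), classified in Lemma~\ref{lemma:C4-meets-C6}—lie in neither $\check{C}_o^{sing}$ nor $\check{C}_v^{sing}$, and correspondingly $V$ has no singular point above them. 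This is why the two stated correspondences, although they do not account for every singular point of $\check{C}$, nonetheless exhaust $V_o^{sing}$ and $V_v^{sing}$.
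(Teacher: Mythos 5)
Your proof is correct and takes essentially the same route as the paper's: a direct Jacobian-criterion computation for $F$ matched against the singularity equations of $\check{C}$, hinging on the same identities $g_4=3v^2$, $g_6=2v^3$ and $\nabla G=12g_4^2\,\nabla g_4-54g_6\,\nabla g_6$, with the projection to $[s:t:u]$ serving as the inverse map. Your preliminary check that $V$ avoids the two singular points of $\mathbb{P}(1,1,1,2,3)$ (so the Jacobian criterion is legitimate) and your closing remark that the ordinary cusps of $\check{C}$ support no singular points of $V$ are welcome refinements that the paper leaves implicit.
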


\begin{proof}
Let $[a_0:a_1:a_2]$ be a point in the set $\check{C}_o^{sing}$.
Observe that
$g_6(a_0,a_1,a_2)\ne 0$,
and hence~\mbox{$g_4(a_0,a_1,a_2)\ne 0$}.
Since $[a_0:a_1:a_2]\in\check{C}_o^{sing}$, we have
$$
\frac{\partial{F}}{\partial v}\left(a_0,a_1,a_2,\frac{3g_6(a_0,a_1,a_2)}{2g_4(a_0,a_1,a_2)},0\right)=\frac{27g_6(a_0,a_1,a_2)^2-4g_4(a_0,a_1,a_2)^3}{4g_4(a_0,a_1,a_2)^2}=0.
$$
Also, since
$$
12g_4(a_0,a_1,a_2)^2\frac{\partial g_4}{\partial s}(a_0,a_1,a_2)-54g_6(a_0,a_1,a_2)\frac{\partial g_6}{\partial s}(a_0,a_1,a_2)=0,
$$
we have
\[\begin{split}
\frac{\partial{F}}{\partial s}\left(a_0,a_1,a_2,\frac{3g_6(a_0,a_1,a_2)}{2g_4(a_0,a_1,a_2)},0\right)&=-\frac{3g_6(a_0,a_1,a_2)}{2g_4(a_0,a_1,a_2)}\frac{\partial g_4}{\partial s}(a_0,a_1,a_2)+\frac{\partial g_6}{\partial s}(a_0,a_1,a_2)\\
&= -\frac{27g_6(a_0,a_1,a_2)^2}{4g_4(a_0,a_1,a_2)^3}\frac{\partial g_6}{\partial s}(a_0,a_1,a_2)+\frac{\partial g_6}{\partial s}(a_0,a_1,a_2)=0.
\end{split}
\]
Similarly, we see
that  $\frac{\partial{F}}{\partial t}$ and $\frac{\partial{F}}{\partial u}$
vanish at the point $\left[a_0:a_1:a_2:\frac{3g_6(a_0,a_1,a_2)}{2g_4(a_0,a_1,a_2)}:0\right]$, so that this point belongs to $V_{o}^{sing}$.

Conversely, suppose that $[a_0:a_1:a_2: v_0 :w_0]$ belongs to $V_{o}^{sing}$. Then $w_0=0$ and~\mbox{$v_0\ne 0$}.
Furthermore, taking the partial derivative of
\eqref{eq:V1} with respect to $v$, we obtain
$$
g_4(a_0,a_1,a_2)=3v_0^2.
$$
Using \eqref{eq:V1} again, we get
$$
g_6(a_0,a_1,a_2)=-v_0^3+v_0g_4(a_0,a_1,a_2)=2v_0^3.
$$
Then
\[\begin{split}\frac{\partial{G}}{\partial s}(a_0,a_1,a_2)&=12g_4(a_0,a_1,a_2)^2\frac{\partial g_4}{\partial s}(a_0,a_1,a_2)-54g_6(a_0,a_1,a_2)\frac{\partial g_6}{\partial s}(a_0,a_1,a_2)\\
&= 108v_0^4\frac{\partial g_4}{\partial s}(a_0,a_1,a_2)-108v_0^3\frac{\partial g_6}{\partial s}(a_0,a_1,a_2)=-108v_0^3 \frac{\partial F}{\partial s} (a_0,a_1,a_2,v_0,w_0)=0.
\end{split}\]
Similarly we obtain
\[\frac{\partial{G}}{\partial t}(a_0,a_1,a_2)=\frac{\partial{G}}{\partial u}(a_0,a_1,a_2)=0.\]
This implies that $[a_0:a_1:a_2]$ belongs to $\check{C}_{o}^{sing}$.

Suppose that $[a_0:a_1:a_2]$ is in $\check{C}_v^{sing}$. Then
\[\frac{\partial{g_6}}{\partial s}(a_0,a_1,a_2)= \frac{\partial{g_6}}{\partial t}(a_0,a_1,a_2)=\frac{\partial{g_6}}{\partial u}(a_0,a_1,a_2)=0.\]
Moreover, we have
$g_6(a_0,a_1,a_2)=0$, and thus also
$g_4(a_0,a_1,a_2)=0$.
These imply
\[\frac{\partial{F}}{\partial w}\left(a_0,a_1,a_2,0,0\right)=\frac{\partial{F}}{\partial v}\left(a_0,a_1,a_2,0,0\right)=0,\]
\[\frac{\partial{F}}{\partial s}\left(a_0,a_1,a_2,0,0\right)=\frac{\partial{F}}{\partial t}\left(a_0,a_1,a_2,0,0\right)=\frac{\partial{F}}{\partial u}\left(a_0,a_1,a_2,0,0\right)=0.\]
Therefore, $[a_0:a_1:a_2:0:0]$ lies in $V_{v}^{sing}$.

Conversely, suppose that $[a_0:a_1:a_2:0:0]$ belongs to $V_v^{sing}$. Then
\[0=\frac{\partial{F}}{\partial v}\left(a_0,a_1,a_2,0,0\right)=-g_4(a_0,a_1,a_2). \]
In particular, this gives $g_6(a_0,a_1,a_2)=0$. We have
\[0=\frac{\partial{F}}{\partial s}\left(a_0,a_1,a_2,0,0\right) =\frac{\partial{g_6}}{\partial s}(a_0,a_1,a_2),\]
\[0=\frac{\partial{F}}{\partial t}\left(a_0,a_1,a_2,0,0\right) =\frac{\partial{g_6}}{\partial t}(a_0,a_1,a_2),\]
\[0=\frac{\partial{F}}{\partial u}\left(a_0,a_1,a_2,0,0\right) =\frac{\partial{g_6}}{\partial u}(a_0,a_1,a_2).\]
Therefore, we conclude that the point $[a_0:a_1:a_2]$ belongs to $\check{C}_v^{sing}$.
\end{proof}

\begin{theorem}\label{theorem:28-node}
The del Pezzo 3-fold $V$ has exactly $28$ singular points.
\end{theorem}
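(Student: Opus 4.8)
The plan is to count the singular points of $V$ by transporting the problem to the dual curve $\check{C}$ via Lemma~\ref{lemma:correspondence}, and then to evaluate that count on $\check{C}$ using the classical Pl\"ucker and genus formulas.

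First I would check that $V$ avoids the two singular points $[0:0:0:1:0]$ and $[0:0:0:0:1]$ of the weighted projective space $\mathbb{P}(1,1,1,2,3)$. Since $g_4(0,0,0)=g_6(0,0,0)=0$, substituting these points into the left-hand side $F$ of \eqref{eq:V1} gives $F([0:0:0:1:0])=1$ and $F([0:0:0:0:1])=-1$, both nonzero. Hence $V$ lies in the smooth locus of the ambient space, the Jacobian criterion applies at every point of $V$, and the singular locus is partitioned as $V^{sing}=V_{o}^{sing}\sqcup V_{v}^{sing}$ according to whether $v=0$. The two bijections of Lemma~\ref{lemma:correspondence} then yield
$$|V^{sing}|=|\check{C}_o^{sing}|+|\check{C}_v^{sing}|.$$

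Next I would identify these two quantities with the numbers of nodes and of triple points of $\check{C}$. As recalled just before Lemma~\ref{lemma:C4-meets-C6}, the curve $\check{C}$ has only three kinds of singular points: ordinary nodes (from honest bitangents), ordinary cusps (from ordinary flexes), and triple points analytically isomorphic to $s^3=t^4$ (from hyperflexes). A singular point of $\check{C}$ lying off $C_6$ has $g_6\neq 0$, whence $g_4\neq 0$ by the defining equation \eqref{equation:discriminant}; conversely, by Lemma~\ref{lemma:C4-meets-C6} every cusp and every triple point lies on $C_4\cap C_6$. Thus $\check{C}_o^{sing}$ consists exactly of the nodes. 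Moreover, at a cusp the curve $C_6$ is smooth, whereas at a triple point $C_6$ has a double point (again Lemma~\ref{lemma:C4-meets-C6}); therefore $\check{C}_v^{sing}$, the set of singular points of $\check{C}$ at which $C_6$ is singular, consists exactly of the triple points, while the cusps belong to neither set. Writing $n$, $c$, $t$ for the numbers of nodes, cusps and triple points, we obtain $|V^{sing}|=n+t$.

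Finally I would compute $n+t$ from the geometry of $\check{C}$. By Proposition~\ref{proposition:discriminant} the curve $\check{C}$ is the irreducible plane curve of degree $12$ defined by \eqref{equation:discriminant}, and, being the projective dual of the smooth quartic $C$, it has geometric genus $g(C)=3$. Comparing with the arithmetic genus $\binom{11}{2}=55$ of a plane curve of degree $12$ gives $\sum_P\delta_P=55-3=52$, where $\delta=1$ for a node or a cusp and $\delta=3$ for the singularity $s^3=t^4$; hence $n+c+3t=52$. On the other hand the Pl\"ucker flex count $3\cdot 4\cdot(4-2)=24$ yields $c+2t=24$, since an ordinary flex contributes $1$ and a hyperflex contributes $2$ to this number. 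Subtracting the two relations gives $n+t=28$, so $|V^{sing}|=28$. (Equivalently, $n+t$ is just the total number of bitangents of $C$, counting each hyperflex tangent once, which is the classical value $28$.) The main technical point to pin down is the bookkeeping of this last paragraph — the exact $\delta$-invariant $3$ of the triple point $s^3=t^4$ and the weight $2$ of a hyperflex in the flex count — since everything else follows directly from the already established correspondences.
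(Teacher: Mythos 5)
Your proposal is correct and follows essentially the same route as the paper: reduce to counting singular points of $\check{C}$ via the two bijections of Lemma~\ref{lemma:correspondence}, sort those singular points into nodes (off $C_6$), ordinary cusps, and points of type $s^3=t^4$ using Lemma~\ref{lemma:C4-meets-C6}, and then evaluate the resulting count. The only difference is the final bookkeeping: the paper quotes the classical Pl\"ucker formula $\delta_o+\delta_s=28$ for bitangents directly (together with the flex count), whereas you re-derive that number from the genus/$\delta$-invariant formula for the degree-$12$ dual curve combined with the flex count $c+2t=24$ — an equivalent computation, carried out with the correct values $\delta=1$ for nodes and cusps and $\delta=3$ for $s^3=t^4$.
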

\begin{proof}
Let $\delta_o$ be the number of the ordinary bitangent lines,
i.e., lines tangent to $C$ at two distinct points,  of the smooth
quartic curve $C$, and let
$\delta_s$ be the number of the hyper-inflection points of the smooth quartic curve $C$.  The tangent line to $C$ at a hyper-inflection point is a bitangent
line to $C$ that is tangent to $C$ at a single point with multiplicity $4$. Let $\iota$ be the number of ordinary inflection points of $C$. Then we can derive
\[
\delta_o+\delta_s=28,\quad
\iota+2\delta_s=24
\]
from the classical Pl\"ucker formulae (for instance, see \cite[\S2.4]{GH}).
In particular, the projectively  dual curve~$\check{C}$ has exactly
\[\delta_o+\delta_s+\iota=52-2\delta_s\]
singular points.

The ordinary bitangent lines define $\delta_o$
ordinary double points on $\check{C}$. Such singular points cannot
lie on $C_6$. If so, then they would also lie on $C_4$ by Proposition~\ref{proposition:discriminant}, and would
not be ordinary double points of $\check{C}$ by Lemma~\ref{lemma:C4-meets-C6}.

Meanwhile, by Lemma~\ref{lemma:C4-meets-C6}
the curves $C_4$ and $C_6$ meet \begin{itemize}
\item either transversally
\item or in such a way that $C_6$ has a double point at $P$, and
 the local intersection index of $C_4$ and~$C_6$ at~$P$ is~$2$.
 \end{itemize}
Furthermore,
intersection points of the former type yield  ordinary cusps, and ones of the latter type produce triple points analytically  isomorphic to  $s^3=t^4$.
The number of intersection points of the former type is $\iota$ and  the number of intersection points of the latter  type is $\delta_s$.
We may then conclude that
\[ \#\left| \check{C}_o^{sing}\right |=\delta_o, \quad \#\left| \check{C}_v^{sing}\right |=\delta_s.\]
Since $\delta_o+\delta_s=28$,  Lemma~\ref{lemma:correspondence} immediately implies the statement.
\end{proof}

The following assertion can be deduced from Theorem~\ref{theorem:28-node} and Corollary~\ref{corollary:28-node}.
However, we present a computational proof here to  show how the singularities of $\check{C}$ and the singularities of~$V$ interact with each other.

\begin{proposition}\label{proposition:node}
The singular points of the del Pezzo 3-fold $V$ are all ordinary double points.
\end{proposition}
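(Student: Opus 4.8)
The plan is to reduce the statement to a local computation of the quadratic (Hessian) part of the defining equation at each singular point, and then to match this data with the type of singularity of $\check{C}$ supplied by Lemma~\ref{lemma:correspondence}. First I would observe that $V$ avoids the two singular points $[0:0:0:1:0]$ and $[0:0:0:0:1]$ of the ambient space $\mathbb{P}(1,1,1,2,3)$: since $g_4$ and $g_6$ vanish along $s=t=u=0$, the polynomial $F$ takes the nonzero values $1$ and $-1$ there. Moreover $\partial F/\partial w=-2w$ forces $w=0$ at every singular point of $V$, and on $\{w=s=t=u=0\}$ one is left with $v^3=0$, which is empty. Hence every singular point of $V$ lies in the smooth locus of the ambient space, and $V$ has an ordinary double point at such a point $P$ precisely when the quadratic part of the dehomogenized equation of $V$, in local coordinates centred at $P$, is a nondegenerate quadratic form of rank $4$.

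Next I would pass to an affine chart given by a nonvanishing weight-one coordinate, say $\{u\ne 0\}$, where $V$ becomes the hypersurface $\bar F=-\bar w^2+\bar v^3-\bar g_4\bar v+\bar g_6=0$ in $\mathbb{A}^4$ with $\bar g_4=g_4(\bar s,\bar t,1)$ and $\bar g_6=g_6(\bar s,\bar t,1)$. Writing $P$ as the origin in coordinates $(x,y,p,q)$ for the directions $(\bar s,\bar t,\bar v,\bar w)$, the variable $q=\bar w$ contributes a decoupled term $-q^2$ to the quadratic part $Q$ of $\bar F$, so $Q$ has rank $4$ if and only if the quadratic form $Q'$ in $(x,y,p)$ built from the remaining terms has rank $3$. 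Here, using $\partial\bar F/\partial v=0$ and $\bar F=0$ at $P$ one gets $\bar g_4=3v_0^2$ and $\bar g_6=2v_0^3$, and using $\partial\bar F/\partial s=\partial\bar F/\partial t=0$ one gets $\bar g_{6,s}=v_0\bar g_{4,s}$ and $\bar g_{6,t}=v_0\bar g_{4,t}$, where $v_0$ is the $\bar v$-coordinate of $P$. The explicit $3\times 3$ matrix of $Q'$ in terms of the first and second partials of $\bar g_4,\bar g_6$ at $P$ is the key object.

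For a point of $V_o^{sing}$ one has $v_0\ne 0$, so the $p^2$-coefficient $3v_0$ is nonzero; completing the square in $p$ reduces $Q'$ to a binary quadratic form $R(x,y)$. A direct differentiation of $\bar G=4\bar g_4^3-27\bar g_6^2$, using the relations above, shows that the Hessian of $\bar G$ at $P$ equals $-216\,v_0^3$ times the matrix of $R$. Since $v_0\ne 0$, the form $R$ is nondegenerate if and only if $\check{C}$ has an ordinary double point at the corresponding point of $\check{C}_o^{sing}$, which by the analysis behind Theorem~\ref{theorem:28-node} it does. Hence $Q'$ has rank $3$ and $P$ is a node.

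For a point of $V_v^{sing}$ one has $v_0=0$, so $\bar g_4=\bar g_6=0$ and $\bar g_{6,s}=\bar g_{6,t}=0$, and the $p^2$-coefficient vanishes; the determinant of the matrix of $Q'$ then equals, up to a nonzero factor, the value of the tangent cone $\bar g_{6,ss}x^2+2\bar g_{6,st}xy+\bar g_{6,tt}y^2$ of $C_6$ at $P$ evaluated on the tangent direction $(\bar g_{4,t},-\bar g_{4,s})$ of $C_4$ at $P$. By Lemma~\ref{lemma:C4-meets-C6} the points of $\check{C}_v^{sing}$ are exactly the triple points of $\check{C}$ analytically isomorphic to $s^3=t^4$, for which the tangent line to $C_4$ is not a Zariski tangent line of $C_6$; thus this value is nonzero, $Q'$ has rank $3$, and $P$ is again a node. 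The main obstacle is precisely this degenerate case $V_v^{sing}$: the naive leading term in the fibre direction disappears, and one must recognize the determinant of $Q'$ as a tangent-cone evaluation in order to invoke the non-tangency statement contained in the proof of Lemma~\ref{lemma:C4-meets-C6}. Together with the bookkeeping of constants in the $V_o^{sing}$ case, this is where the computation requires care, while the reduction to a rank count and the appeal to Lemma~\ref{lemma:correspondence} are routine.
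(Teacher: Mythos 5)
Your proposal is correct and takes essentially the same route as the paper's proof: both reduce the statement to nondegeneracy of the Hessian of the local equation at each singular point, split into the two cases $V_{v}^{sing}$ and $V_{o}^{sing}$, and conclude via Lemma~\ref{lemma:correspondence}, the non-tangency information in Lemma~\ref{lemma:C4-meets-C6}, and the fact from the proof of Theorem~\ref{theorem:28-node} that points of $\check{C}_{o}^{sing}$ are ordinary double points of $\check{C}$. Your completing-the-square identity (Hessian of $\bar G$ equal to $-216\,v_0^3$ times the matrix of $R$) is a correct and cleanly derived substitute for the proportionality $48\,(\tilde{g}_4(q))^4\,\mathrm{Hess}(f)(q)=-\tilde{g}_6(q)\,\mathrm{Hess}(g)(o)$ that the paper asserts while omitting the computation as ``messy and tedious''.
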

\begin{proof}
Let $P$ be a point in  $V_{v}^{sing}$. After a suitable coordinate changes, we may assume
that
$$
P=[0:0:1:0:0].
$$
By Lemmas~\ref{lemma:C4-meets-C6} and~\ref{lemma:correspondence}, the curve
$C_4$ is smooth at~$[0:0:1]$, the curve $C_6$ has a double point at~$[0:0:1]$,
 and the local intersection
index of $C_4$ and $C_6$ at~\mbox{$[0:0:1]$} equals to~$2$.
We may also assume that the
tangent line to $C_4$ at~$[0:0:1]$ is defined by $s=0$
and the Zariski tangent cone
of $C_6$ at $[0:0:1]$ is defined by $t(t+\alpha s)=0$, where $\alpha$ is a (possibly zero) constant.
Put
\[
\begin{array}{ll}
f(s,t,v,w)=F(s,t,1,v,w), & g(s,t)=G(s,t,1),\\
\tilde{g}_4(s,t,v,w)=g_4(s,t,1),& \tilde{g}_6(s,t,v,w)=g_6(s,t,1).
\end{array}
\]
We regard $s,t,v,w$ as local coordinates around the point $P$ that corresponds to the origin in~$\mathbb{C}^4$, which
we denote by $p$.
We then consider the Hessian of $f$ at the point $p$:
$$
\mathrm{Hess}(f)(p)=2\left(\frac{\partial \tilde{g}_4}{\partial s}(p)\right)^2\frac{\partial^2 \tilde{g}_6}{\partial t^2}(p)\ne 0.
$$
This implies that the point $P$ is an ordinary double point of $V$.

We now consider a point $R$ in $V_{o}^{sing}$. By appropriate coordinate changes, we may assume that
$$
R=[0:0:1:v_0:0],
$$
where $v_0$ is a non-zero constant. In the affine chart defined by $u\ne 0 $ in~\mbox{$\mathbb{P}(1,1,1,2,3)$}, the point~$R$ corresponds to $q=(0,0,v_0,0)$. In the affine chart defined by $u\ne0 $ in $\check{\mathbb{P}}^2$, we denote by $o$ the origin.
Since $q$ is a singular point of the hypersurface defined by $f=0$, we conclude that~$o$
is a singular point of the curve defined by $g=0$ by Lemma~\ref{lemma:correspondence}. Furthermore, since $q$ is from~$V_{o}^{sing}$, the origin $o$ is an ordinary double point of $g=0$.
From this one can obtain the following relations:
\begin{equation*}
\begin{array}{ll}
v_0=\frac{3\tilde{g}_6(q)}{2\tilde{g}_{4}(q)}, & 27\left(\tilde{g}_6(q)\right)^3=4\left(\tilde{g}_{4}(q)\right)^3,\\
3\tilde{g}_6(q)\frac{\partial \tilde{g}_4}{\partial s}(q)=2\tilde{g}_4(q)\frac{\partial \tilde{g}_6}{\partial s}(q), &
3\tilde{g}_6(q)\frac{\partial \tilde{g}_4}{\partial t}(q)=2\tilde{g}_4(q)\frac{\partial \tilde{g}_6}{\partial t}(q),\\
9\tilde{g}_6(q)\frac{\partial \tilde{g}_6}{\partial s}(q)=2\tilde{g}_4(q)^2\frac{\partial \tilde{g}_4}{\partial s}(q), &
9\tilde{g}_6(q)\frac{\partial \tilde{g}_6}{\partial t}(q)=2\tilde{g}_4(q)^2\frac{\partial \tilde{g}_4}{\partial t}(q).\\
\end{array}
\end{equation*}
Using them,
we are able to derive
\[
48\left(\tilde{g}_{4}(q)\right)^4\mathrm{Hess}(f)(q)=-\tilde{g}_{6}(q)\mathrm{Hess}(g)(o).
\]
(Since the  computation for this is messy and tedious, it is omitted.)
The point $[0:0:1]$ is an ordinary double point of $\check{C}$, so that $\mathrm{Hess}(g)(o)\ne 0$. This completes the proof.
\end{proof}

Below, we list several particular examples of $28$-nodal double Veronese cones
constructed from smooth plane quartics with interesting automorphism groups (cf.~\cite[Table~6]{DolgachevIskovskikh}).
In each case the 3-fold is given by equation~\eqref{eq:V1}.

\begin{example}
\label{example:V1-Klein}
We consider the Klein quartic for $C$. It is given by the equation
\[x^3y+y^3z+z^3x=0.\]
Its automorphism group is isomorphic to $\mathrm{PSL}_2(\mathbb{F}_7)$.
We have
$g_4(s,t,u)=s^3t+t^3u+u^3s$ and
$$
g_6(s,t,u)=\frac{1}{8}\left(3s^5u - 15s^2t^2u^2 + 3st^5 + 3tu^5\right).
$$
\end{example}

\begin{example}
\label{example:Fermat}
Let $C$ be the Fermat plane quartic curve defined by
$$
x^4+y^4+z^4=0.
$$
Its automorphism group is isomorphic to $\mumu_4^2\rtimes\mathfrak{S}_3$.
We have $g_4(s,t,u)=4(s^4+t^4+u^4)$ and~\mbox{$g_6(s,t,u)=16s^2t^2u^2$}.
\end{example}

For explicit equations of smooth plane quartics with an action of
the symmetric group $\mathfrak{S}_4$ and the corresponding
$28$-nodal double Veronese cones we refer the reader to
Example~\ref{example:V1-S4}.

\begin{example}
\label{example:Aut-trivial}
Suppose that the smooth quartic curve $C$ is given by
$$
x^4+y^4+z^4+x^3y+2x^3z=0.
$$
Then $C$ has no non-trivial automorphisms. We have
$g_4(s,t,u)=4(s^4 - st^3 - 2su^3 + t^4 + u^4)$ and
$$
g_6(s,t,u)=-16s^3t^2u - 8s^3tu^2 + 16s^2t^2u^2 - 4t^6 + 4t^5u - t^4u^2 -4t^2u^4 + 4tu^5 - u^6.
$$
\end{example}

\section{One-to-one correspondence}
\label{section:1-1}

In the present  section  we put the previous results together to establish a one-to-one
correspondence between $28$-nodal double Veronese cones and smooth plane quartics, which is asserted in
Theorem~\ref{theorem:one-to-one}. We also study automorphism groups of
$28$-nodal double Veronese cones and prove Theorem~\ref{theorem:Aut}.

In this section, for the projective plane we keep the same  two notations as  in~\S\ref{section:equations}, i.e., $\mathbb{P}^2$ with a homogeneous coordinate system $[x:y:z]$ and   its projectively dual plane $\check{\mathbb{P}}^2$ with~$[s:t:u]$.

\begin{remark}\label{remark:kappa-elliptic}
For an arbitrary del Pezzo 3-fold of degree $1$,
let $\kappa\colon V\dasharrow \check{\mathbb{P}}^2$ be the rational map given by
the half-anticanonical linear system on $V$. The half-anticanonical  linear system has a unique
base point. The 3-fold $V$ is smooth at the base point,
so that a general fiber of $\kappa$
is an elliptic curve by Bertini theorem and adjunction formula.
\end{remark}

In the remaining part of this section we use
$\kappa$ to denote the rational half-anticanonical elliptic fibration on a double
Veronese cone~$V$.

Now we construct a smooth plane quartic from a $28$-nodal double Veronese cone.

\begin{lemma}\label{lemma:V-to-quartic}
Let $V$ be a $28$-nodal double Veronese cone.
Denote by $\check{C}$ the
discriminant curve of the rational elliptic fibration $\kappa$, and let $C$ be the projectively dual curve of~$\check{C}$.
Then $C$ is a smooth plane quartic.
Furthermore, for any construction of $V$ from an Aronhold heptad as in Proposition~\ref{proposition:Prokhorov-construction},
the curve $C$ is isomorphic to the Hessian curve of the corresponding net of quadrics.
\end{lemma}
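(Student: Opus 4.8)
The plan is to realize $V$ explicitly through an Aronhold heptad and then recognize the elliptic fibration $\kappa$ as the fibration attached to the corresponding net of quadrics, whose discriminant was already determined in \S\ref{section:nets-of-quadrics}.

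First I would start from any construction of $V$ from an Aronhold heptad $P_1,\ldots,P_7$ as in Proposition~\ref{proposition:Prokhorov-construction} (such a construction exists by Theorem~\ref{theorem:28-can-be-constructed}), together with the diagram~\eqref{equation:Prokhorov-diagram}. Writing $H$ for the half-anticanonical divisor on $V$ (so $-K_V\sim 2H$) and letting $F_1,\ldots,F_7$ be the exceptional divisors of $\pi$, the crepancy of the small resolution $\phi$ together with the computation in the proof of Proposition~\ref{proposition:Prokhorov-construction} gives $\phi^*H=-\tfrac12 K_{\widehat{\mathbb{P}}^3}=\pi^*(Q)-(F_1+\cdots+F_7)$, where $Q$ is a quadric through the heptad; this is precisely the proper transform of the net $\mathcal{L}(P_1,\ldots,P_7)$. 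Hence, after identifying the base $\check{\mathbb{P}}^2$ of $\kappa$ with the plane dual to the net, the composition $\kappa\circ\phi$ is the rational map defined by $\mathcal{L}(P_1,\ldots,P_7)$, that is, the elliptic fibration $\varkappa$ of Lemma~\ref{lemma:Hessian-dual}. The unique base point of $\kappa$ (Remark~\ref{remark:kappa-elliptic}) is the image of the eighth point of the regular Cayley octad, and the $28$ curves contracted by $\phi$ lie in finitely many fibers; therefore a general fiber of $\kappa$ avoids the nodes of $V$ and is mapped isomorphically by $\phi$ from the proper transform of the intersection $E$ of the two quadrics spanning the pencil dual to the corresponding point of $\check{\mathbb{P}}^2$.

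Next I would match the discriminant loci. A general fiber of $\kappa$ is a smooth elliptic curve exactly when the intersection $E$ of two quadrics is smooth, and by Lemma~\ref{lemma:Q1Q2} this happens precisely when the associated pencil meets the determinant hypersurface $\mathfrak{D}$ in four distinct points, i.e. precisely over the complement of the curve $\check{H}$ parameterizing singular fibers of $\varkappa$ in Lemma~\ref{lemma:Hessian-dual}. Since the nodes of $V$ affect only finitely many fibers, they do not enlarge the degeneration locus, and I conclude that the discriminant curve $\check{C}$ of $\kappa$ equals $\check{H}$. Now Lemma~\ref{lemma:Hessian-dual} identifies $\check{H}$ as the projectively dual curve of the Hessian $H(P_1,\ldots,P_7)$, which is a smooth plane quartic by Lemma~\ref{lemma:smooth-quartic} because $P_1,\ldots,P_7$ form an Aronhold heptad. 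As $C$ is by definition the projectively dual curve of $\check{C}=\check{H}$, biduality for smooth plane curves gives $C=H(P_1,\ldots,P_7)$; this proves both that $C$ is a smooth plane quartic and that it is isomorphic to the Hessian of the net, and since the argument applies verbatim to any chosen heptad, it establishes the final clause as well.

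The step I expect to be the main obstacle is the first one: justifying rigorously that the rational map $\kappa$ on the \emph{singular} $3$-fold $V$ agrees, over a dense open subset of $\check{\mathbb{P}}^2$, with the net fibration $\varkappa$ living on the full blow-up of the Cayley octad. This amounts to checking that the small contraction $\phi$ together with the blow-up of the eighth base point relate the two elliptic fibrations without altering the fiber over a general point, and that the $28$ nodes of $V$ are confined to finitely many fibers, so that the singular-fiber loci of $\kappa$ and $\varkappa$ genuinely coincide and $\check{C}=\check{H}$ holds as an equality of curves.
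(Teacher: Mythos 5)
Your proposal is correct and follows essentially the same route as the paper's proof: realize $V$ from an Aronhold heptad via Theorem~\ref{theorem:28-can-be-constructed}, identify $\kappa$ with the fibration given by the proper transforms of the net $\mathcal{L}(P_1,\ldots,P_7)$, and then apply Lemma~\ref{lemma:Hessian-dual} together with Lemma~\ref{lemma:smooth-quartic}. The only difference is that you spell out details the paper leaves implicit (the identification $\phi^*H=-\tfrac12 K_{\widehat{\mathbb{P}}^3}$, the confinement of the $28$ flopped curves to finitely many fibers, and the appeal to biduality), which are exactly the right points to check and pose no obstacle.
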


\begin{proof}
It follows from
Theorem~\ref{theorem:28-can-be-constructed}
that $V$ can be constructed out of  some Aronhold heptad in~$\mathbb{P}^3$
as in Proposition~\ref{proposition:Prokhorov-construction}.
Choose one construction like this, and let $\mathcal{L}$ be the net of quadrics defined by the Aronhold heptad.
The rational elliptic fibration~$\kappa$ is given by the  proper transforms of the members of the net $\mathcal{L}$. Using Lemma~\ref{lemma:Hessian-dual}, we see that the projectively dual curve of~$\check{C}$
is the Hessian curve of the net $\mathcal{L}$, which is a smooth quartic curve by Lemma~\ref{lemma:smooth-quartic}.
\end{proof}

Recall that to any (rational) fibration whose general fiber is an elliptic curve we can associate the $j$-function
of its fibers, which is a rational function on the base of the fibration.

\begin{remark}\label{remark:V-j}
Let $V$ be a del Pezzo 3-fold of degree $1$.
The 3-fold
$V$ can be defined in the weighted projective space
$\mathbb{P}(1,1,1,2,3)$ with weighted homogeneous coordinates $s,t,u,v$, and $w$ by an equation of the form
\begin{equation*}
w^2=v^3-h_4(s,t,u)v+h_6(s,t,u),
\end{equation*}
where $h_4$ and $h_6$ are homogeneous polynomials of degrees $4$ and $6$, respectively.
Since the singularities of $V$ are isolated, the polynomial
$4h_4(s,t,u)^3-27h_{6}(s,t,u)^2$ is not a zero polynomial.
The  rational elliptic fibration $\kappa$
is given by the projection to the coordinates
$s$, $t$, and $u$.
The corresponding $j$-function can be written as
$$
j_V(s,t,u)=1728\frac{4h_4(s,t,u)^3}{4h_4(s,t,u)^3-27h_{6}(s,t,u)^2}
$$
(see for instance \cite[\S3.1.1]{Dolgachev}).
In particular, the discriminant curve of $\kappa$ is given in $\check{\mathbb{P}}^2$ by the equation
$$
4h_4(s,t,u)^3-27h_6(s,t,u)^2=0.
$$
\end{remark}

Recall from \S\ref{section:equations} that any (smooth) quartic curve $C$ in the projective  plane $\mathbb{P}^2$
defines a $j$-function $j_C$
on the dual projective  plane $\check{\mathbb{P}}^2$ that computes the $j$-invariant of the quartuple of intersection points
of $C$ with the corresponding line. It is given by the formula~\eqref{equation:j-function2}.

\begin{lemma}\label{lemma:jV-vs-jC}
Let $V$ be a $28$-nodal double Veronese cone, and let $C$ be the smooth plane quartic curve
corresponding to $V$ via Lemma~\ref{lemma:V-to-quartic}.
Then $j_V(s,t,u)=j_C(s,t,u)$  for all $s,t,u$.
\end{lemma}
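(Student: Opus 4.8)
The plan is to compute the $j$-invariant of a general fiber of the elliptic fibration $\kappa$ in two different ways and observe that these are exactly the two functions in question. Both $j_V$ and $j_C$ are rational functions on $\check{\mathbb{P}}^2$, so it suffices to prove the equality $j_V(s,t,u)=j_C(s,t,u)$ for a general point $[s:t:u]$: agreement on a dense open subset forces agreement of the two rational functions everywhere. By construction (see Remark~\ref{remark:V-j}), the value $j_V(s,t,u)$ is nothing but the $j$-invariant of the fiber $E_q$ of $\kappa$ over $q=[s:t:u]$, so the whole task is to identify $j(E_q)$ with $j_C(s,t,u)$ geometrically.

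First I would fix one construction of $V$ from an Aronhold heptad $P_1,\ldots,P_7$ as in Proposition~\ref{proposition:Prokhorov-construction}, and let $\mathcal{L}=\mathcal{L}(P_1,\ldots,P_7)$ be the associated net of quadrics, with Hessian quartic $H$ sitting in $\mathbb{P}(\mathcal{L})\cong\mathbb{P}^2_{[x:y:z]}$. By Lemma~\ref{lemma:V-to-quartic}, together with Lemma~\ref{lemma:Hessian-dual} and biduality of plane curves, the quartic $C$ attached to $V$ coincides with $H$, and the base $\check{\mathbb{P}}^2_{[s:t:u]}$ of $\kappa$ is identified with the dual plane $\check{\mathbb{P}}(\mathcal{L})$. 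Under these identifications a general point $q=[s:t:u]$ of the base corresponds to the pencil of quadrics $\{\,xF_0+yF_1+zF_2 : sx+ty+uz=0\,\}\subset\mathcal{L}$, that is, to the line $L_{s,t,u}=\{sx+ty+uz=0\}$ inside $\mathbb{P}(\mathcal{L})=\mathbb{P}^2_{[x:y:z]}$; and the fiber $E_q$ is isomorphic to the base locus of this pencil, i.e.\ to the intersection of two general quadrics belonging to it.

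The heart of the argument is then Lemma~\ref{lemma:Q1Q2}: it identifies $E_q$ with the double cover of the line $L_{s,t,u}$ branched precisely at the four points $L_{s,t,u}\cap\mathfrak{D}$, where $\mathfrak{D}$ is the determinant hypersurface. Since $\mathfrak{D}\cap\mathbb{P}(\mathcal{L})=H=C$, these four branch points are exactly the four intersection points $L_{s,t,u}\cap C$. The $j$-invariant of a double cover of $\mathbb{P}^1$ branched at four points equals the $j$-invariant of those four points, and by the very definition of $j_C$ in~\eqref{equation:j-function} and~\eqref{equation:j-function2} the latter is $j_C(s,t,u)$. Combining this with $j_V(s,t,u)=j(E_q)$ yields $j_V(s,t,u)=j_C(s,t,u)$ for all general $[s:t:u]$, hence the desired identity of rational functions.

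The step I expect to require the most care is the bookkeeping of the identifications in the second paragraph. One must verify that the line $L_{s,t,u}$ entering the definition of $j_C$ (a line in the plane containing $C$) is literally the pencil cut out on $\mathcal{L}$ by the point $q$ of the base of $\kappa$, so that the four points computing $j_C$ and the four branch points furnished by Lemma~\ref{lemma:Q1Q2} coincide as a configuration of four points on one and the same $\mathbb{P}^1$. This is where Lemma~\ref{lemma:V-to-quartic} and Lemma~\ref{lemma:Hessian-dual} are genuinely used, namely to pin down that $C=H$ and that the base coordinates $[s:t:u]$ are the dual coordinates of $\mathbb{P}(\mathcal{L})$. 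Once these matchings are in place, the comparison of $j$-invariants is immediate and involves no computation with the Weierstrass data $g_4$, $g_6$.
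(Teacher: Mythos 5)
Your proposal is correct and follows essentially the same route as the paper's own proof: both reduce to a general fiber of $\kappa$, identify it via Theorem~\ref{theorem:28-can-be-constructed} (equivalently, a choice of Aronhold heptad) with the base locus of the pencil of quadrics dual to the point $[s:t:u]$, and then invoke Lemma~\ref{lemma:Q1Q2} together with Lemma~\ref{lemma:V-to-quartic} to see that this fiber is the double cover of the dual line branched at its four intersection points with $C$, whence $j_V=j_C$ at a general point and hence as rational functions.
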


\begin{proof}
By Theorem~\ref{theorem:28-can-be-constructed}, the 3-fold $V$ is constructed out of  some net
$\mathcal{L}$ of quadrics in $\mathbb{P}^3$ as in Proposition~\ref{proposition:Prokhorov-construction}.
Let $P$ be a general point of $\check{\mathbb{P}}^2$.  The point $P$ determines a one-dimensional linear
subsystem~$\mathcal{L}_P$ of the net $\mathcal{L}$.
The fiber $E_P$ of the half-anticanonical
elliptic fibration $\kappa$ over $P$
is the proper transform of  the base locus of the pencil $\mathcal{L}_P$. The base locus is given by the intersection of two general quadrics $Q_1$ and $Q_2$ in $\mathcal{L}_P$. The pencil $\mathcal{L}_P$ corresponds to the line~$L$ in $\mathbb{P}^2$ dual to the point $P$.
The singular quadrics in $\mathcal{L}_P$ correspond
to the four intersection points of the line $L$ and the Hessian curve~$C$
of the net $\mathcal{L}$. Lemma~\ref{lemma:Q1Q2} implies that the curve $E_P$ is an elliptic curve isomorphic to the double cover of $L$ branched at the four intersection points of $L$ and $C$.
This means that $j_V(P)=j_C(P)$.
\end{proof}

We will need the following  assertion on
$28$-nodal double Veronese cones and their half-anticanonical elliptic fibrations.

\begin{lemma}
\label{lemma:V-from-elliptic-fibration}
Let $V$ be a $28$-nodal double Veronese cone.
Then $V$ is uniquely
defined by the $j$-function~$j_V$ of the
fibers of the rational elliptic fibration $\kappa$.
\end{lemma}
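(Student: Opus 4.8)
The plan is to write $V$ in the Weierstrass form provided by Remark~\ref{remark:V-j} and to show that the pair of coefficient polynomials is pinned down by $j_V$ up to the rescaling that induces an isomorphism of weighted hypersurfaces. Concretely, I would put
\[
w^2=v^3-h_4(s,t,u)v+h_6(s,t,u),
\]
with $\deg h_4=4$ and $\deg h_6=6$, so that $\kappa$ is the projection to $[s:t:u]$ and $j_V=1728\,\tfrac{4h_4^3}{4h_4^3-27h_6^2}$. Since $j_V=j_C$ is non-constant by Lemmas~\ref{lemma:j-not-constant} and~\ref{lemma:jV-vs-jC}, this Möbius relation can be inverted to recover the rational function $\rho:=h_4^3/h_6^2$ explicitly as $\rho=\tfrac{27 j_V}{4(j_V-1728)}$. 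Thus $j_V$ determines $\rho$, and it remains to prove that two Weierstrass presentations $(h_4,h_6)$ and $(h_4',h_6')$ of $28$-nodal double Veronese cones with $h_4^3/h_6^2=h_4'^3/h_6'^2$ must satisfy $h_4'=\lambda^4 h_4$ and $h_6'=\lambda^6 h_6$ for some $\lambda\in\mathbb{C}^\ast$. Such a relation realizes an isomorphism through the automorphism $[s:t:u:v:w]\mapsto[s:t:u:\lambda^2 v:\lambda^3 w]$ of $\mathbb{P}(1,1,1,2,3)$, which carries one hypersurface onto the other; note also that any linear change of coordinates on $\check{\mathbb{P}}^2$ lifts to an automorphism of the weighted projective space, so the argument is insensitive to how the base $\check{\mathbb{P}}^2$ is identified.

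The key step, and the one where $28$-nodality is genuinely used, is to establish that $h_4$ and $h_6$ are coprime in $\mathbb{C}[s,t,u]$. Here I would argue as follows. The degree-$12$ polynomial $4h_4^3-27h_6^2$ cuts out the discriminant curve $\check C$ of $\kappa$, which by Lemma~\ref{lemma:V-to-quartic} and Proposition~\ref{proposition:discriminant} is projectively dual to a smooth plane quartic and is therefore irreducible of degree $12$; in particular its defining polynomial is reduced. If some irreducible $p$ divided both $h_4$ and $h_6$, then $p^2$ would divide both $h_4^3$ and $h_6^2$, hence $p^2$ would divide $4h_4^3-27h_6^2$, contradicting reducedness. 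Therefore $\gcd(h_4,h_6)=1$, and likewise $\gcd(h_4',h_6')=1$.

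Finally I would conclude by a divisibility argument in the unique factorization domain $\mathbb{C}[s,t,u]$. Rewriting the equality of $j$-functions as $h_4^3 h_6'^2=h_4'^3 h_6^2$ and invoking the coprimality just proved, one obtains $h_4^3\mid h_4'^3$ and $h_4'^3\mid h_4^3$, so that $h_4$ and $h_4'$ are proportional, say $h_4=\alpha h_4'$ with $\alpha\in\mathbb{C}^\ast$; substituting back and cancelling gives $h_6=\beta h_6'$ with $\beta^2=\alpha^3$. A short bookkeeping then lets one choose $\lambda$ with $\lambda^4=\alpha$ and $\lambda^6=\beta$ (replacing a fourth root of $\alpha$ by its product with $i$ if necessary), which yields $h_4=\lambda^4 h_4'$ and $h_6=\lambda^6 h_6'$ and hence $V\cong V'$. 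I expect the only real obstacle to be the coprimality assertion, since everything else is formal: the point is that reducedness of the dual quartic's equation is exactly what prevents $h_4$ and $h_6$ from sharing a factor, and this rigidity of the ratio $h_4^3/h_6^2$ is what upgrades ``same $j$-function'' to ``same $3$-fold''.
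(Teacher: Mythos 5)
Your proof is correct and follows essentially the same route as the paper's: both reduce to the Weierstrass presentations of Remark~\ref{remark:V-j}, deduce that the two coefficient pairs must be proportional with $\beta^2=\alpha^3$, and conclude via a scaling automorphism of $\mathbb{P}(1,1,1,2,3)$. The only difference is that where the paper says one can ``easily conclude'' proportionality from $j_V=j_{V'}$, you actually supply the justification --- coprimality of $h_4$ and $h_6$, forced by reducedness of the degree-$12$ dual-curve equation, together with the UFD divisibility argument --- which is precisely the detail the paper leaves implicit.
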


\begin{proof}
Suppose that there exists another $28$-nodal double Veronese cone $V'$
such that~\mbox{$j_V=j_{V'}$}.
The 3-folds $V$ and $V'$ can be defined in the weighted projective space
$\mathbb{P}(1,1,1,2,3)$ with weighted homogeneous coordinates $s,t,u,v$, and $w$ by equations
\begin{equation}\label{eq:V-first}
w^2=v^3-h_4(s,t,u)v+h_6(s,t,u)
\end{equation}
and
\begin{equation}\label{eq:V-second}
w^2=v^3-k_4(s,t,u)v+k_6(s,t,u),
\end{equation}
respectively, where $h_d$ and $k_d$ are homogeneous polynomials
of degree $d$. By Remark~\ref{remark:V-j} the $j$-functions can be written as
$$
j_V(s,t,u)=1728\frac{4h_4(s,t,u)^3}{4h_4(s,t,u)^3-27h_{6}(s,t,u)^2}
$$
and
$$
j_{V'}(s,t,u)=1728\frac{4k_4(s,t,u)^3}{4k_4(s,t,u)^3-27k_{6}(s,t,u)^2}.
$$
Recall that neither $h_4$ nor $k_4$ is a zero polynomial by Lemmas~\ref{lemma:jV-vs-jC} and~\ref{lemma:j-not-constant}.
Since
$$
j_V(s,t,u)=j_{V'}(s,t,u)
$$
for all $s,t$, $u$, we can easily conclude that
$$
k_4(s,t,u)=ah_4(s,t,u),\quad k_6(s,t,u)=bh_6(s,t,u),
$$
for some non-zero constants $a$ and $b$ with $a^3=b^2$.
Thus equation~\eqref{eq:V-second}
takes the form
$$
w^2=v^3-ah_4(s,t,u)v+bh_6(s,t,u).
$$
Put $c=\frac{b}{a}$. Then $c^2=a$ and $c^3=b$.
The automorphism of $\mathbb{P}(1,1,1,2,3)$ defined by
$$
\left[s:t:u:v:w\right]\mapsto \left[s:t:u:\frac{v}{c}:\frac{w}{c\sqrt{c}}\right]
$$
brings it to the form~\eqref{eq:V-first},
which means that $V$ and $V'$ are isomorphic.
\end{proof}

Now we are able to prove Theorem~\ref{theorem:one-to-one}.

\begin{proof}[First proof of Theorem~\ref{theorem:one-to-one}]
By Lemma~\ref{lemma:V-to-quartic}, a $28$-nodal double Veronese cone  uniquely determines a smooth plane quartic curve  that is the projectively dual curve of the discriminant curve $\check{C}$ of the half-anticanonical rational elliptic fibration~$\kappa$.

Now let $C$ be a smooth plane quartic curve. We then construct a $28$-nodal double Veronese cone~$V$ from $C$ as  in~\S\ref{section:equations} (see equation~\eqref{eq:V1}).
Using Remark~\ref{remark:V-j} and Proposition~\ref{proposition:discriminant}, we see that the projectively dual curve of the discriminant curve of the rational elliptic fibration~$\kappa$ is isomorphic to~$C$.

It remains to show that if $V'$ is a $28$-nodal double Veronese cone such that
the projectively dual curve of the discriminant curve of its half-anticanonical rational elliptic fibration is isomorphic to $C$,
then $V'$ is isomorphic to $V$. By Lemma~\ref{lemma:jV-vs-jC} the $j$-functions of $V$ and $V'$ depend only on~$C$,
so that
$$
j_V(s,t,u)=j_C(s,t,u)=j_{V'}(s,t,u)
$$
for all $s$, $t$, $u$.
According to Lemma~\ref{lemma:V-from-elliptic-fibration} this means that
$V'$ is isomorphic to~$V$.
\end{proof}

We now move our attention to automorphism groups of $28$-nodal double Veronese cones.
Recall that if $V$ is such a 3-fold, we denote by $\tau$ the Galois involution
of the double cover~\mbox{$V\to \mathbb{P}(1,1,1,2)$} given by the linear system $|2H|=|-K_V|$.
The automorphism $\tau$ is contained in the center of the group~\mbox{$\Aut(V)$}.
It is easy to see that it preserves the fibers of the rational
elliptic fibration~$\kappa$.

\begin{proof}[Proof of Theorem~\ref{theorem:Aut}]
Since the map $\kappa\colon V\dasharrow \check{\mathbb{P}}^2$
is $\Aut(V)$-equivariant, we have a group homomorphism
$$
\check{\Xi}\colon \Aut(V)\to \Aut(\check{\mathbb{P}}^2).
$$
Its kernel $\mathcal{K}\subset\Aut(V)$ acts on every fiber of
$\kappa$ preserving the unique base point $O$ of $|H|$. Let~$E$
be a general fiber of $\kappa$.
Then $E$ is a general  elliptic curve by Lemma~\ref{lemma:j-not-constant}, so that
the stabilizer of the point~$O$ in $\Aut(E)$
is isomorphic to $\mumu_2$.
Therefore, $\mathcal{K}$ is a subgroup of $\mumu_2$. On the other hand, $\mathcal{K}$
obviously contains the Galois involution
$\tau$ of the double cover given by the anticanonical linear system of $V$.
Thus we conclude that $\mathcal{K}\cong\mumu_2$.

Consider the action of the group $\check{\Xi}(\Aut(V))$ on the projective plane $\check{\mathbb{P}}^2$
which is the target of the map $\kappa$. By construction the action of $\check{\Xi}(\Aut(V))$
preserves the discriminant curve~$\check{C}$ of the rational elliptic fibration $\kappa$,
and hence its action  on the projectively dual plane of the  projective plane $\check{\mathbb{P}}^2$, which is $\mathbb{P}^2$,  preserves the curve~$C$ projectively dual to $\check{C}$.
Therefore, we obtain a group homomorphism
\begin{equation*}
\Xi\colon\Aut(V)\to\Aut(C)
\end{equation*}
whose kernel $\mathcal{K}$ is isomorphic to $\mumu_2$.
Note that at this moment we  claim neither that $\Xi$ is surjective nor that $\mathcal{K}$ splits as a direct factor in $\Aut(V)$.
However, we know that $\mathcal{K}$ is contained in the center
of the group $\Aut(V)$.

Now let $G$ be the automorphism group of the smooth plane quartic $C$.
Then $G$ is a finite group,  and there is a natural action of
$G$ on the plane $\check{\mathbb{P}}^2$ which is projectively dual
to the plane~$\mathbb{P}^2$ where the curve~$C$ sits.
By Remark~\ref{remark:harmonic} the latter action preserves the curves given
by equations~\mbox{$g_4(s,t,u)=0$} and~\mbox{$g_6(s,t,u)=0$}. This means that there exist
group characters ~\mbox{$\chi_4, \chi_6\colon G\to\mathbb{C}^*$} such that for any~\mbox{$\gamma\in G$} one has
\[
g_4\big(\gamma(s,t,u)\big)=\chi_4(\gamma)\cdot g_4(s,t,u), \quad
g_6\big(\gamma(s,t,u)\big)=\chi_6(\gamma)\cdot g_6(s,t,u).
\]
On the other hand, the elements of $G$ preserve the equivalence class of the cross-ratio
of a quadruple of intersection points of a line in $\mathbb{P}^2$ and the curve $C$; in other words, it preserves
the $j$-function~\mbox{$j_C(s,t,u)$}. By~\eqref{equation:j-function2} this gives
$$
\chi_4(\gamma)^3=\chi_6(\gamma)^2
$$
for every $\gamma\in G$.
Set
$$
\chi_2(\gamma)=\frac{\chi_6(\gamma)}{\chi_4(\gamma)}, \quad \gamma\in G.
$$
Then $\chi_2$ is a character of $G$ such that $\chi_2(\gamma)^2=\chi_4(\gamma)$ and $\chi_2(\gamma)^3=\chi_6(\gamma)$.
Since the image~\mbox{$\chi_2(G)\subset\mathbb{C}^*$} is a cyclic group, there is a well-defined
character
\[
\chi\colon G\to \mathbb{C}^*
\]
such that $\chi(\gamma)^2=\chi_2(\gamma)$ for all $\gamma\in G$.
We have
\[
\chi(\gamma)^4=\chi_4(\gamma), \quad \chi(\gamma)^6=\chi_6(\gamma).
\]

Given an element $\gamma\in G$, we write
$$
\gamma(s,t,u)=[s':t':u']
$$
and define an automorphism of $\mathbb{P}(1,1,1,2,3)$ by
$$
[s:t:u:v:w]\mapsto
[s':t':u':\chi(\gamma)^2v:\chi(\gamma)^3w].
$$
Since the  3-fold $V$ is given by \eqref{eq:V1} in $\mathbb{P}(1,1,1,2,3)$, $G$ preserves the hypersurface
$V$. This provides a group homomorphism
$$
\Xi'\colon \Aut(C)\to\Aut(V).
$$
It is easy to see that $\Xi\circ\Xi'=\mathrm{id}_{\Aut(C)}$.
This implies that $\Aut(V)\cong\mathcal{K}\rtimes\Aut(C)$.
Since $\mathcal{K}\cong\mumu_2$ is contained in the center
of $\Aut(V)$, we conclude that $\Aut(V)\cong\mathcal{K}\times\Aut(C)$.
\end{proof}

\begin{corollary}\label{corollary:Aut-V-vs-S}
Let~$V$ be a $28$-nodal double Veronese cone,
let $C$ be the smooth plane quartic curve corresponding to $V$
by Theorem~\ref{theorem:one-to-one},
and let~$S$ be the del Pezzo surface of degree~$2$ constructed as the double
cover of $\mathbb{P}^2$ branched along $C$. Then
$$
\mathrm{Aut}(V)\cong\mumu_2\times\mathrm{Aut}(C)\cong \mathrm{Aut}(S).
$$
\end{corollary}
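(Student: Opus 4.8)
The first isomorphism $\mathrm{Aut}(V)\cong\mumu_2\times\mathrm{Aut}(C)$ is exactly the statement of Theorem~\ref{theorem:Aut}, so the plan is to prove the second isomorphism $\mathrm{Aut}(S)\cong\mumu_2\times\mathrm{Aut}(C)$; this is classical, and its proof runs parallel to that of Theorem~\ref{theorem:Aut}. Recall that the del Pezzo surface $S$ of degree~$2$ is anticanonically realized as a double cover $\rho\colon S\to\mathbb{P}^2$ branched along $C$; concretely $S$ is the hypersurface $w^2=f(x,y,z)$ of degree~$4$ in $\mathbb{P}(1,1,1,2)$, where $C=\{f=0\}$. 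First I would observe that, $\rho$ being the anticanonical map, it is canonical and hence $\mathrm{Aut}(S)$-equivariant, so that every automorphism of $S$ descends to a projective transformation of $\mathbb{P}^2$ preserving the branch curve~$C$.

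This yields a homomorphism $\Psi\colon\mathrm{Aut}(S)\to\mathrm{Aut}(\mathbb{P}^2,C)$, where $\mathrm{Aut}(\mathbb{P}^2,C)$ denotes the subgroup of $\mathrm{PGL}_3(\mathbb{C})$ preserving $C$. The next step is to identify $\mathrm{Aut}(\mathbb{P}^2,C)$ with $\mathrm{Aut}(C)$: since $C$ is a smooth plane quartic it has genus~$3$ and is not hyperelliptic, so its plane embedding is the canonical one, and every abstract automorphism of $C$ therefore acts linearly on $H^0(C,\omega_C)$ and extends uniquely to an element of $\mathrm{PGL}_3(\mathbb{C})$ preserving~$C$. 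The kernel of $\Psi$ consists of the automorphisms of $S$ inducing the identity on $\mathbb{P}^2$, that is, the deck transformations of $\rho$; this is the cyclic group $\mumu_2$ generated by the Geiser involution, which lies in the center of $\mathrm{Aut}(S)$.

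It then remains to split $\Psi$. Since $\mathrm{Pic}(\mathbb{P}^2)=\mathbb{Z}$, the double cover of $\mathbb{P}^2$ branched along $C$ is unique, so every $\gamma\in\mathrm{Aut}(\mathbb{P}^2,C)$ lifts to $S$; explicitly, as in the proof of Theorem~\ref{theorem:Aut}, writing $f\circ\gamma=\chi(\gamma)f$ for a character $\chi$ whose image is finite cyclic and choosing a square-root character $\sqrt{\chi}$, the map $[x:y:z:w]\mapsto[\gamma(x,y,z):\sqrt{\chi(\gamma)}\,w]$ is such a lift. This gives a section of $\Psi$, so that $\mathrm{Aut}(S)\cong\mumu_2\rtimes\mathrm{Aut}(C)$; as the kernel $\mumu_2$ is central, the product is direct. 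Combined with Theorem~\ref{theorem:Aut}, this proves the corollary.

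I expect the only subtle point to be the identification $\mathrm{Aut}(\mathbb{P}^2,C)\cong\mathrm{Aut}(C)$, together with the resulting surjectivity of $\Psi$, both of which rely on the canonicity of the plane embedding of a smooth quartic of genus~$3$; determining the kernel and constructing the splitting are routine.
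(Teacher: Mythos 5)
Your reduction is the same as the paper's: the first isomorphism is exactly Theorem~\ref{theorem:Aut}, so the whole content is the classical fact $\mathrm{Aut}(S)\cong\mumu_2\times\mathrm{Aut}(C)$ for a del Pezzo surface of degree~$2$. The paper states the corollary with no further argument, taking this fact as known (cf.\ \cite[\S8.7]{Dolgachev}), whereas you supply a proof modeled on the paper's proof of Theorem~\ref{theorem:Aut}. Most of your steps are sound: the anticanonical map is $\mathrm{Aut}(S)$-equivariant; the kernel of $\Psi$ is the deck group $\mumu_2$ generated by the Geiser involution, and it is central simply because a normal subgroup of order $2$ is always central; $\mathrm{Aut}(\mathbb{P}^2,C)\cong\mathrm{Aut}(C)$ since the quartic is canonically embedded; and each $\gamma$ lifts individually, since a matrix representative $A_\gamma$ can be rescaled so that $f\circ A_\gamma=f$. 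So $\mathrm{Aut}(S)$ is a central extension of $\mathrm{Aut}(C)$ by $\mumu_2$.

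The gap is in the splitting, which is precisely the step you dismiss as routine. First, for $f\circ\gamma=\chi(\gamma)f$ to define a \emph{character} you must fix a homomorphic linear lift $\gamma\mapsto A_\gamma\in\mathrm{GL}_3(\mathbb{C})$ (e.g.\ the action on $H^0(C,\omega_C)$); you have this ingredient but never wire it in, and without it $\chi(\gamma)$ is only defined up to fourth powers of scalars. More seriously, the principle you invoke --- ``a character with finite cyclic image admits a square-root character'' --- is false: the nontrivial character of $\mumu_2$ has cyclic image, yet any $\psi$ satisfies $\psi(g)^2=\psi(g^2)=1\neq -1$. Worse, for the specific multiplier character here, the existence of a square root in the character group of $\mathrm{Aut}(C)$ is \emph{equivalent} to the splitting you want (the two lifts of $\gamma$ differ by the Geiser involution, and a homomorphic section is exactly a multiplicative choice of square roots), so as written the argument assumes the conclusion. (The paper's proof of Theorem~\ref{theorem:Aut} leans on the same under-justified step, which is presumably where you took it from; it is a gap there as well.) A correct way to finish is lattice-theoretic: $\mathrm{Aut}(S)$ acts faithfully on $\mathrm{Pic}(S)$ by \cite[Corollary~8.2.40]{Dolgachev}, preserving $K_S$ and the intersection form, hence embeds into the orthogonal group of $K_S^{\perp}\cong\mathbf{E}_7$, which is the Weyl group $W(\mathbf{E}_7)$. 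The Geiser involution acts on $K_S^{\perp}$ as $-\mathrm{id}$, and since $\det(-\mathrm{id})=(-1)^7=-1$, one has the direct product decomposition $W(\mathbf{E}_7)=\{\pm\mathrm{id}\}\times W(\mathbf{E}_7)^{+}$, where $W(\mathbf{E}_7)^{+}$ is the rotation subgroup. A subgroup of a direct product that contains the full first factor is itself the direct product of that factor with its projection to the second, so $\mathrm{Aut}(S)\cong\mumu_2\times\bigl(\mathrm{Aut}(S)/\langle\text{Geiser}\rangle\bigr)\cong\mumu_2\times\mathrm{Aut}(C)$, the last isomorphism being the part of your argument that is correct.
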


\section{Theta characteristics}
\label{section:theta}

As we have seen in \S\ref{section:construction}, Aronhold heptads, which are parts of regular Cayley octads, play a role in constructing $28$-nodal double Veronese cones.
On the other hand, neither an Aronhold heptad nor a regular Cayley octad is uniquely associated to a
$28$-nodal double Veronese cone. In what follows, correspondences between regular Cayley octads, smooth plane quartic curves, and $28$-nodal double Veronese cones
will be investigated in more detail, and the necessary additional data that is required to
have natural one-to-one correspondences will be clarified.

So far we used Proposition~\ref{proposition:Prokhorov-construction} to construct $28$-nodal
double Veronese cones starting from Aronhold heptads. However, it turns out that the construction
essentially depends only on the choice of the regular Cayley octad.
This can be easily understood through Geiser type  involutions.

\begin{lemma}\label{lemma:V-i-j}
Let $P_1,\ldots,P_8$ be a regular Cayley octad in $\mathbb{P}^3$.
Then for all $i=1,\ldots, 8$ the $28$-nodal double Veronese cones
defined by the Aronhold heptads $\{P_1,\ldots,P_8\}\setminus\{P_i\}$ are isomorphic to each other.
\end{lemma}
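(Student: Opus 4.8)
The plan is to reduce the statement to the one-to-one correspondence of Theorem~\ref{theorem:one-to-one}, by showing that all eight Aronhold heptads extracted from the octad give rise to one and the same net of quadrics, and hence to one and the same smooth plane quartic.

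First I would record the key observation that the net $\mathcal{L}(\{P_1,\ldots,P_8\}\setminus\{P_i\})$ does not depend on $i$. Indeed, by Definition~\ref{definition:Aronhold-heptad} each seven-point subset $\{P_1,\ldots,P_8\}\setminus\{P_i\}$ is an Aronhold heptad whose associated regular Cayley octad is the base locus of the corresponding net; by Lemma~\ref{lemma:CB} this octad is the \emph{unique} one containing the heptad, so it must coincide with $\{P_1,\ldots,P_8\}$ in every case. In particular every quadric through the seven points $\{P_1,\ldots,P_8\}\setminus\{P_i\}$ automatically passes through the remaining point $P_i$ as well, whence
\[
\mathcal{L}\big(\{P_1,\ldots,P_8\}\setminus\{P_i\}\big)=\mathcal{L}\big(\{P_1,\ldots,P_8\}\big)
\]
for all $i$. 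Consequently all these nets share a single Hessian quartic $C$, which is smooth by Lemma~\ref{lemma:smooth-quartic}.

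Next I would invoke Lemma~\ref{lemma:V-to-quartic}: for each $i$ the smooth plane quartic attached to the $28$-nodal double Veronese cone $V_i$ built from the heptad $\{P_1,\ldots,P_8\}\setminus\{P_i\}$ as in Proposition~\ref{proposition:Prokhorov-construction} is precisely the Hessian curve of the corresponding net. By the previous paragraph this quartic is the common curve $C$, independently of $i$, and since Theorem~\ref{theorem:one-to-one} asserts that the assignment $V\mapsto C$ is a bijection on isomorphism classes, all the $V_i$ are isomorphic. Equivalently one may argue through $j$-functions: by Lemma~\ref{lemma:jV-vs-jC} the $j$-function of the half-anticanonical fibration on $V_i$ equals $j_C$ and is thus independent of $i$, so Lemma~\ref{lemma:V-from-elliptic-fibration} yields $V_i\cong V_j$. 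This is exactly where the Geiser-type involutions enter conceptually: on $V_i$ the unique base point of $\kappa$ corresponds to the omitted octad point $P_i$ and serves as the origin for the group law on the elliptic fibers, with the Galois involution $\tau$ acting fiberwise as $[-1]$; the $j$-invariant of a fiber is read off from the four singular quadrics of the associated pencil and does not see this marked origin, which is why changing the omitted point cannot change the isomorphism class of $V_i$.

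The argument is short precisely because it rests entirely on already-established machinery, so I do not anticipate a serious obstacle; the one step that demands care is the net-invariance claim, where I must verify that the base locus of $\mathcal{L}(\{P_1,\ldots,P_8\}\setminus\{P_i\})$ is genuinely the full octad $\{P_1,\ldots,P_8\}$ for every $i$, rather than some a priori different octad. This is guaranteed by the uniqueness of the regular Cayley octad containing a given Aronhold heptad (Lemma~\ref{lemma:CB}), together with the hypothesis that $P_1,\ldots,P_8$ is itself a regular Cayley octad.
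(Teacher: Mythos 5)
Your proof is correct, but it follows a genuinely different route from the paper's. The paper proves Lemma~\ref{lemma:V-i-j} directly, without appealing to Theorem~\ref{theorem:one-to-one}: for each pair $i,j$ it blows up the six points of $\{P_1,\ldots,P_8\}\setminus\{P_i,P_j\}$, passes to the anticanonical model $W_{ij}$, which is a $16$-nodal quartic double solid by Proposition~\ref{proposition:Prokhorov-construction-2}, and observes that the Galois involution of the double cover $W_{ij}\to\mathbb{P}^3$ interchanges the two points lying over $P_i$ and $P_j$; this involution therefore lifts to a biregular isomorphism $\widehat{\mathbb{P}}^3_i\cong\widehat{\mathbb{P}}^3_j$, and since the pluri-anticanonical contractions of both 3-folds factor through $W_{ij}$, the two double Veronese cones coincide. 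Your argument instead reduces the lemma to the injectivity part of Theorem~\ref{theorem:one-to-one}: all eight heptads span a single net by the Cayley--Bacharach property (Lemma~\ref{lemma:CB}), hence share a single Hessian quartic, and Lemmas~\ref{lemma:V-to-quartic}, \ref{lemma:jV-vs-jC} and~\ref{lemma:V-from-elliptic-fibration} then force the cones to be isomorphic. This is sound, and it is not circular, because the first proof of Theorem~\ref{theorem:one-to-one} in \S\ref{section:1-1} nowhere uses Lemma~\ref{lemma:V-i-j}. What the two approaches buy is different, however. Yours is shorter and rests entirely on machinery already established in \S\ref{section:equations} and \S\ref{section:1-1}. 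The paper's proof is independent of Theorem~\ref{theorem:one-to-one}, and that independence is essential to the paper's architecture: Lemma~\ref{lemma:V-i-j} feeds, via Corollary~\ref{corollary:all-V-same}, into the \emph{second} proof of Theorem~\ref{theorem:one-to-one} given in \S\ref{section:theta}, so substituting your argument there would render that second proof circular. The paper's proof also produces an explicit Geiser-type isomorphism and records how it permutes the contracted curves (lines and twisted cubics through the octad), which is finer geometric information than the bare existence of an isomorphism that your reduction provides.
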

\begin{proof}
For each $i=1,\ldots, 8$, let $\widehat{\mathbb{P}}^3_i$ be the blow up of $\mathbb{P}^3$ at the points $P_1,\ldots, P_8$ except the point~$P_i$. Let $\widehat{\mathbb{P}}^3_{ij}$ be the weak Fano 3-fold obtained by blowing up $\mathbb{P}^3$ at the six points of~\mbox{$\{P_1,\ldots, P_8\}\setminus\{P_i,P_j\}$} and let $W_{ij}$ be its anticanonical model.

For $1\leqslant k\ne l\leqslant 8$, denote by $L_{kl}$
the line passing through the points $P_k$ and $P_l$, and
denote by~$T_{kl}$ the twisted cubic passing through the six points of
$\{P_1,\ldots,P_8\}\setminus\{P_k,P_l\}$. Let $\widehat{L}_{kl}$ and~$\widehat{T}_{kl}$ be the proper transforms of these curves
on $\widehat{\mathbb{P}}^3_{ij}$.

Proposition~\ref{proposition:Prokhorov-construction-2} implies that
that $W_{ij}$ is the double cover of
$\mathbb{P}^3$ branched over a quartic surface and that the map $\phi_{ij}: \widehat{\mathbb{P}}^3_{ij}\to W_{ij} $ given by the anticanonical linear system is a birational morphism that contracts exactly the $15$ curves $\widehat{L}_{kl}$ for $k<l$ with $\{k,l\}\cap\{i,j\}=\varnothing$, and the curve
$\widehat{T}_{ij}$. Denote by~$\bar{L}_{kl}$ and $\bar{T}_{kl}$ the images on $W_{ij}$
of the curves $\widehat{L}_{kl}$ and $\widehat{T}_{kl}$ not contracted by~$\phi_{ij}$, respectively.

Note that the points $\bar{P}_i$ and $\bar{P}_j$ on $W_{ij}$  corresponding to $P_i$ and $P_j$ are conjugate to each other
with respect to the Galois involution $\varepsilon$ of the double cover.

The double cover $\zeta\colon W_{ij}\to\mathbb{P}^3$ is given by the half-anticanonical linear system,
so that the pull-backs of planes under $\zeta$
should be the proper transforms of the quadrics in the original $\mathbb{P}^3$ passing through
the six points of $\{P_1,\ldots,P_8\}\setminus\{P_i,P_j\}$.
The images of the curves $\zeta(\bar{T}_{ki})$ and~$\zeta(\bar{T}_{kj})$, where $k\not\in\{i,j\}$, are
lines in $\mathbb{P}^3$ (note however that the images $\zeta(\bar{T}_{kl})$ for~\mbox{$\{k,l\}\cap\{i,j\}=\varnothing$}
are conics).
Therefore, by Lemma~\ref{lemma:pencil-base-locus-ij} for $k\not\in\{i,j\}$ one has
$$
\aligned
&\varepsilon(\bar{T}_{ki})=\bar{L}_{ki}, \quad \varepsilon(\bar{L}_{ki})=\bar{T}_{ki},\\
&\varepsilon(\bar{T}_{kj})=\bar{L}_{kj}, \quad \varepsilon(\bar{L}_{kj})=\bar{T}_{kj}.
\endaligned
$$

The involution $\varepsilon\colon W_{ij}\to W_{ij}$ gives rise to a birational
map
$$
\hat{\varepsilon}\colon \widehat{\mathbb{P}}^3_i\dasharrow\widehat{\mathbb{P}}^3_j.
$$
Since $\varepsilon(\bar{P}_i)=\bar{P}_j$, we conclude that $\hat{\varepsilon}$ is actually
an isomorphism.

Denote by $\widehat{L}_{kl}^i$ and $\widehat{T}_{kl}^i$ (respectively, $\widehat{L}_{kl}^j$ and $\widehat{T}_{kl}^j$)
the proper transforms of the curves~$L_{kl}$ and~$T_{kl}$
on~$\widehat{\mathbb{P}}^3_{i}$ (respectively, $\widehat{\mathbb{P}}^3_{j}$). Then for $k\not\in\{i,j\}$ one has
$$
\aligned
&\hat{\varepsilon}(\widehat{T}_{ki}^i)=\widehat{L}_{ki}^j, \quad \hat{\varepsilon}(\widehat{L}_{ki}^i)=\widehat{T}_{ki}^j,\\
&\hat{\varepsilon}(\widehat{T}_{kj}^i)=\widehat{L}_{kj}^j, \quad \hat{\varepsilon}(\widehat{L}_{kj}^i)=\widehat{T}_{kj}^j.
\endaligned
$$
Also, we observe that for $\{k,l\}\cap\{i,j\}=\varnothing$
$$
\hat{\varepsilon}(\widehat{L}_{kl}^i)=\widehat{L}_{kl}^j, \quad \hat{\varepsilon}(\widehat{T}_{ij}^i)=\widehat{T}_{ij}^j.
$$

Therefore, the pluri-anticanonical
maps $\phi_i$ and $\phi_j$ of $\widehat{\mathbb{P}}^3_{i}$ and $\widehat{\mathbb{P}}^3_{i}$ both factor through $W_{ij}$,
and the pluri-anticanonical
models of $\widehat{\mathbb{P}}^3_{i}$ and $\widehat{\mathbb{P}}^3_{i}$,
that are obtained by the construction described in Proposition~\ref{proposition:Prokhorov-construction},
are both isomorphic to one and the same double Veronese cone $V$.
$$
\xymatrix{
\widehat{\mathbb{P}}^3_i\ar@{->}[rrd]\ar@{->}[rrddd]_{\phi_i}\ar@{->}[rrrr]^{\hat{\varepsilon}} &&&& \widehat{\mathbb{P}}^3_j\ar@{->}[lld]\ar@{->}[llddd]^{\phi_j}  \\
&& \widehat{\mathbb{P}}^3_{ij}\ar@{->}[d]^{\phi_{ij}} && \\
&&W_{ij}\ar@{->}[d]&&\\
&&V&& \\
}
$$
This completes the proof of the statement.
\end{proof}

\begin{remark}
A two-dimensional analog of Lemma~\ref{lemma:V-i-j} is the following simple observation. Let~\mbox{$R_1,\ldots,R_9$}
be points in $\mathbb{P}^2$ in general position such that they are the intersection of two cubic curves.
Then the del Pezzo surfaces of degree $1$ obtained by blowing up the eight points of~$\{R_1,\ldots,R_9\}\setminus\{R_i\}$ are isomorphic to each other for all~\mbox{$1\leqslant i\leqslant 9$}.
\end{remark}

Let $\mathcal{N}$ be the set that consists of
isomorphism classes of regular Cayley octads modulo
projective transformations.
Let $\mathcal{T}$ be the set
that consists of
the pairs $(C,\theta)$, where $C$ is a smooth plane quartic
considered up to isomorphism, and $\theta$ is an even theta characteristic
on $C$.
A given regular Cayley octad defines a  net  $\mathcal{L}$ of quadrics in~$\mathbb{P}^3$. The net $\mathcal{L}$ yields its Hessian quartic curve~\mbox{$H(\mathcal{L})$} in~$\mathbb{P}^2$,
which is smooth by Lemma~\ref{lemma:smooth-quartic}.
Meanwhile, the singular points of quadrics in the net~$\mathcal{L}$ sweep out a smooth curve of degree $6$ in~$\mathbb{P}^3$ (see \cite[Lemme~6.8]{Beauville-Prym} and \cite[Proposition~6.10]{Beauville-Prym}), which is called the \emph{Steinerian curve} of the net and is denoted by $S(\mathcal{L})$. There is an even theta characteristic $\theta(\mathcal{L})$ such that the linear system~\mbox{$|K_{H(\mathcal{L})}+\theta(\mathcal{L})|$}
defines an isomorphism of $H(\mathcal{L})$ with $S(\mathcal{L})$ ( see \cite[Proposition~6.10]{Beauville-Prym}
and \cite[Lemme~6.12]{Beauville-Prym}). With such Hessian quartic curves and their even theta characteristics, we define the  map
\[
\Theta\colon\mathcal{N}\to \mathcal {T}
\]
by assigning $\Theta(\mathcal{L})=(H(\mathcal{L}), \theta(\mathcal{L}))$.

\begin{theorem}\label{theorem:bijection-Theta}
The map $\Theta$ is bijective.
\end{theorem}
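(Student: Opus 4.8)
The plan is to reduce the bijectivity of $\Theta$ to the classical Hesse--Dixon correspondence between even theta characteristics of a smooth plane quartic and its symmetric linear determinantal representations, and then to check that the Steinerian construction defining $\theta(\mathcal{L})$ produces exactly the theta characteristic attached to the corresponding determinantal representation, so that $\Theta$ literally \emph{is} that classical bijection.

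First I would reinterpret the source $\mathcal{N}$ in determinantal terms. By Lemma~\ref{lemma:smooth-quartic}, a regular Cayley octad is the same datum as a net $\mathcal{L}=\langle F_0,F_1,F_2\rangle$ of quadrics in $\mathbb{P}^3$ whose Hessian quartic $H(\mathcal{L})$ is smooth. Expressing the members of the net as $xF_0+yF_1+zF_2$ via the $4\times 4$ symmetric matrix $M(\mathcal{L})$ of linear forms in $(s,t,u)$, the equation $\det M(\mathcal{L})=0$ is exactly $H(\mathcal{L})$, so $M(\mathcal{L})$ is a symmetric linear determinantal representation of $H(\mathcal{L})$. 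A projective transformation of $\mathbb{P}^3$ acts on quadrics by congruence $A\mapsto{}^{t}gAg$ and hence replaces $M(\mathcal{L})$ by ${}^{t}gM(\mathcal{L})g$, while a change of basis of the net amounts to a $\mathrm{GL}_3$-substitution in $(s,t,u)$, i.e.\ a projective transformation of the plane in which $H(\mathcal{L})$ lives. Since a smooth plane quartic, being the canonical model of a non-hyperelliptic genus-$3$ curve, is determined up to projective equivalence by its isomorphism class, this identifies $\mathcal{N}$ with the set of symmetric linear determinantal representations of smooth plane quartics, taken up to congruence together with projective change of the plane.

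Next I would invoke the classical Hesse--Dixon correspondence (see \cite[Proposition~4.2]{Beauville-determinantal} and \cite[\S6.3.2]{Dolgachev}): for a fixed smooth plane quartic $C$, the symmetric linear determinantal representations of $C$ up to congruence are in natural bijection with the even theta characteristics of $C$. Combined with the previous paragraph, this already yields a bijection between $\mathcal{N}$ and $\mathcal{T}$. To see that this abstract bijection coincides with $\Theta$, I would cite \cite[Proposition~6.10]{Beauville-Prym} and \cite[Lemme~6.12]{Beauville-Prym}, which assert that the theta characteristic $\theta(\mathcal{L})$ obtained from the Steinerian curve $S(\mathcal{L})$ through the isomorphism $H(\mathcal{L})\cong S(\mathcal{L})$ given by $|K_{H(\mathcal{L})}+\theta(\mathcal{L})|$ is precisely the one classifying $M(\mathcal{L})$ in the determinantal picture. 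Surjectivity of $\Theta$ then follows by running the construction backwards: a pair $(C,\theta)$ yields a symmetric determinantal representation of $C$, hence a net with smooth Hessian $C$, hence (again by Lemma~\ref{lemma:smooth-quartic}) a regular Cayley octad $\mathcal{L}$ with $\Theta(\mathcal{L})=(C,\theta)$; injectivity follows because congruence of determinantal representations corresponds to projective equivalence of octads.

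The main obstacle I anticipate is not a single hard argument but the careful matching of two dictionaries. One must confirm that the geometrically defined $\theta(\mathcal{L})$, built from the Steinerian sextic, is the very theta characteristic that the Hesse--Dixon theory attaches to $M(\mathcal{L})$, so that $\Theta$ is the classical bijection rather than merely some bijection onto the same target; and one must track precisely how $\mathrm{PGL}_4$ acting on the octad and $\mathrm{PGL}_3$ acting on the net coordinates and the plane correspond, respectively, to the congruence relation and the projective-change-of-plane relation on determinantal representations. Both matchings are supplied by Beauville's results, so the remaining work lies in assembling them consistently rather than in proving anything genuinely new.
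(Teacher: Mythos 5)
Your proposal is correct and takes essentially the same route as the paper: the paper's entire proof is the single citation ``See \cite[Proposition~6.23]{Beauville-Prym}'', which packages exactly the bijection you assemble by hand from the Hesse--Dixon determinantal correspondence (\cite[Proposition~4.2]{Beauville-determinantal}) together with the identification, via \cite[Proposition~6.10]{Beauville-Prym} and \cite[Lemme~6.12]{Beauville-Prym}, of the Steinerian theta characteristic $\theta(\mathcal{L})$ with the determinantal one. Since the paper itself defines $\theta(\mathcal{L})$ through those same two results of Beauville, your unpacking is consistent with the paper's conventions and adds only the (correct) bookkeeping of how $\mathrm{PGL}_4$-congruence and $\mathrm{GL}_3$-changes of net basis match the equivalences on the two sides.
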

\begin{proof}
 See \cite[Proposition~6.23]{Beauville-Prym}.
\end{proof}

Suppose that the points $P_1,\ldots,P_7$ form an Aronhold heptad, and denote by $P_8$ the eighth base point of the net of quadrics $\mathcal{L}$ defined by the Aronhold heptad. Choose a pair of points  $P_i$, $P_j$ in the regular Cayley octad $P_1,\ldots,P_8$. This pair  yields a pencil in the net $\mathcal{L}$ that consists of the quadrics containing the line passing through $P_i$ and $P_j$. The  pencil contains exactly one or  two singular quadrics
(see \cite[Lemme~6.6(i)]{Beauville-Prym}).  Furthermore, the pencil defines
a line in the projective plane where the Hessian quartic
curve $H(\mathcal{L})$ sits, and this
line is bitangent to the Hessian quartic curve (see \cite[Theorem~6.3.5]{Dolgachev}).
Such a bitangent line defines an odd theta characteristic of $H(\mathcal{L})$.
It will be denoted by $\theta_{ij}(\mathcal{L})$. {It is obvious that~\mbox{$\theta_{ij}(\mathcal{L})=\theta_{kl}(\mathcal{L})$} if and only if
$\{i,j\}=\{k,l\}$. In particular, every odd theta characteristic on~$H(\mathcal{L})$ can be represented by~\mbox{$\theta_{ij}(\mathcal{L})$} for some $i$ and $j$.

For each choice of  four distinct indices $i,j,k,l$ in $\{1,\ldots,8\}$, set
\begin{equation}\label{equation:even}
\theta_{i,jkl}=\theta_{ij}(\mathcal{L})+\theta_{ik}(\mathcal{L})+\theta_{il}(\mathcal{L})-K_{H(\mathcal{L})}.
\end{equation}
Before we proceed, notice that
\begin{equation}\label{eq:3-relation}
\theta_{ij}(\mathcal{L})+\theta_{ik}(\mathcal{L})+\theta_{jk}(\mathcal{L})=K_{H(\mathcal{L})}+\theta(\mathcal{L})
\end{equation}
for each choice of three distinct indices $i,j,k$ in $\{1,\ldots,8\}$ since the left hand side is the section of the Steinerian curve $S(\mathcal{L})$ by the plane containing the points $P_i$, $P_j$, and $P_k$ in $\mathbb{P}^3$.

\begin{lemma}
\label{lemma:even-from-7-odd}
Let $r$ be  a fixed index in $\{1,\ldots,8\}$.
\begin{itemize}
\item The theta characteristic $\theta_{r, ijk}$ is even  for three distinct indices $i,j,k$ in $\{1,\ldots,8\}\setminus\{r\}$.
\item Every even theta characteristic on $H(\mathcal{L})$ except $\theta(\mathcal{L})$ can be represented as $\theta_{r, ijk}$ for
some choice of  three distinct indices $i$, $j$, $k$ in $\{1,\ldots,8\}\setminus\{r\}$.
\item One has
\begin{equation}\label{eq:even-from-7-odd}
\theta(\mathcal{L})=-3K_{H(\mathcal{L})}+\sum\limits_{i\neq r}\theta_{ri}(\mathcal{L}).
\end{equation}
\end{itemize}
\end{lemma}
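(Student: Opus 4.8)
The plan is to carry out the entire argument inside the group $\mathrm{Jac}\big(H(\mathcal{L})\big)[2]\cong(\mathbb{F}_2)^6$ of $2$-torsion classes, on which the theta characteristics of the smooth quartic $H(\mathcal{L})$ form a torsor. Following the classical description of the Cayley octad (see \cite{Dolgachev} and \cite[\S6]{Beauville-Prym}), I would first record the combinatorial model in which $\mathrm{Jac}\big(H(\mathcal{L})\big)[2]$ is identified with the even-cardinality subsets of $\{1,\ldots,8\}$ taken modulo complementation, the group law being symmetric difference and the Weil pairing being $\langle A,B\rangle=|A\cap B|\bmod 2$. Under this identification the odd characteristic $\theta_{ij}(\mathcal{L})$ satisfies $\theta_{ij}(\mathcal{L})-\theta(\mathcal{L})\leftrightarrow\{i,j\}$, and differences of theta characteristics add by symmetric difference; as a consistency check, \eqref{eq:3-relation} is precisely the identity $\{i,j\}\triangle\{i,k\}\triangle\{j,k\}=\varnothing$. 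I will use throughout that $K_{H(\mathcal{L})}=2\theta(\mathcal{L})$ and that, on a non-hyperelliptic genus-$3$ curve, odd characteristics have $h^0=1$ and even ones $h^0=0$.

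For the first bullet I would compute the parity of $\theta_{r,ijk}$ through the Riemann--Mumford relation. Since $2\theta_{ri}(\mathcal{L})=K_{H(\mathcal{L})}$, the definition \eqref{equation:even} rewrites as
\[
\theta_{r,ijk}=\theta_{ri}(\mathcal{L})+a+b,\qquad a=\theta_{rj}(\mathcal{L})-\theta_{ri}(\mathcal{L}),\quad b=\theta_{rk}(\mathcal{L})-\theta_{ri}(\mathcal{L}).
\]
Writing $q(\epsilon)=h^0\big(\theta_{ri}(\mathcal{L})+\epsilon\big)+h^0\big(\theta_{ri}(\mathcal{L})\big)\bmod 2$ for the quadratic form refining $\langle\cdot,\cdot\rangle$, the facts $\theta_{ri}(\mathcal{L})+a=\theta_{rj}(\mathcal{L})$ and $\theta_{ri}(\mathcal{L})+b=\theta_{rk}(\mathcal{L})$ give $q(a)\equiv q(b)\equiv 0$ (all three characteristics are odd), whence $h^0(\theta_{r,ijk})\equiv 1+\langle a,b\rangle\bmod 2$. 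It then remains to evaluate $\langle a,b\rangle=\langle\{i,j\},\{i,k\}\rangle=|\{i,j\}\cap\{i,k\}|=1$, so $\theta_{r,ijk}$ is even.

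The last two bullets are bookkeeping in the same model. From \eqref{equation:even} and \eqref{eq:3-relation} one gets $\theta_{r,ijk}-\theta(\mathcal{L})=\theta_{rk}(\mathcal{L})-\theta_{ij}(\mathcal{L})\leftrightarrow\{i,j,k,r\}$, a four-element subset containing $r$. Distinct triples $\{i,j,k\}\subset\{1,\ldots,8\}\setminus\{r\}$ produce distinct such subsets, and a four-set containing $r$ is never empty, never the full set, and never the complement of another four-set containing $r$; hence the $\binom{7}{3}=35$ classes $\theta_{r,ijk}$ are pairwise distinct even characteristics, all different from $\theta(\mathcal{L})$. As a genus-$3$ curve has exactly $36$ even theta characteristics, these $35$ together with $\theta(\mathcal{L})$ exhaust them, giving the second bullet. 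For \eqref{eq:even-from-7-odd} I would note that the symmetric difference of the seven sets $\{r,i\}$, $i\neq r$, equals $\{1,\ldots,8\}$ (the index $r$ occurs seven times and each other index once), so $\sum_{i\neq r}\big(\theta_{ri}(\mathcal{L})-\theta(\mathcal{L})\big)=0$; therefore $\sum_{i\neq r}\theta_{ri}(\mathcal{L})=7\,\theta(\mathcal{L})=\theta(\mathcal{L})+3K_{H(\mathcal{L})}$, which is \eqref{eq:even-from-7-odd}.

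The \emph{main obstacle} is the first bullet: everything rests on knowing that the Weil pairing is intersection-parity and that $\theta_{ij}(\mathcal{L})-\theta(\mathcal{L})\leftrightarrow\{i,j\}$, which is the genuine geometric input coming from the combinatorics of the octad together with the Riemann--Mumford relation. Once this model is in place, the remaining assertions reduce to purely formal manipulations of subsets of $\{1,\ldots,8\}$.
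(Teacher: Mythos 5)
Your proposal is correct, but it takes a genuinely different route from the paper. You set up, once and for all, the classical $\mathbb{F}_2$-model of $\mathrm{Jac}\big(H(\mathcal{L})\big)[2]$ attached to a Cayley octad (even subsets of $\{1,\ldots,8\}$ modulo complementation, Weil pairing equal to intersection parity, $\theta_{ij}(\mathcal{L})-\theta(\mathcal{L})\leftrightarrow\{i,j\}$), after which all three bullets become short formal computations: parity of $\theta_{r,ijk}$ via the Riemann--Mumford relation, distinctness of the $35$ classes $\{i,j,k,r\}$ plus the count of $36$ even characteristics, and the identity $\triangle_{i\neq r}\{r,i\}=\{1,\ldots,8\}\equiv\varnothing$ for \eqref{eq:even-from-7-odd}. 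The paper instead cites \cite[Theorem~6.3.6]{Dolgachev} or \cite[Proposition~IX.4]{DolgachevOrtland} for the first bullet and then proves the second and third by elementary manipulations of divisor classes using only \eqref{eq:3-relation}, with one geometric input: in the case of disjoint triples, equality of the two even characteristics would force six octad points to be coplanar, contradicting Lemma~\ref{lemma:Aronhold-condition}. The trade-off is where the external input sits: the paper's argument needs less from the literature (only the first bullet) and stays close to the geometry it has already developed, at the cost of a case-by-case elimination; your argument front-loads a stronger classical fact (the full combinatorial model, including that the full set maps to zero, which encodes nontrivial syzygies among the $\theta_{ij}(\mathcal{L})$), but in return proves the first bullet directly rather than by citation, and makes the other two bullets transparent and uniform. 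Both uses of the literature are legitimate, and your consistency check that \eqref{eq:3-relation} is exactly $\{i,j\}\triangle\{i,k\}\triangle\{j,k\}=\varnothing$ (equivalently, that the assignment $\{i,j\}\mapsto\theta_{ij}(\mathcal{L})-\theta(\mathcal{L})$ is additive) is the right sanity test linking your model to the relations the paper actually establishes.
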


\begin{proof}
For the first statement, see \cite[Theorem~6.3.6]{Dolgachev} or~\cite[Proposition~IX.4]{DolgachevOrtland}.

Since there are exactly $35$ even theta characteristics excluding $\theta(\mathcal{L})$,   in order to prove the second statement, it is enough to verify that $\theta_{r, i_1j_1k_1}\ne \theta_{r, i_2j_2k_2}$ if $\{i_1,j_1,k_1\}\ne \{ i_2,j_2,k_2\}$.
Suppose that~$\theta_{r, i_1j_1k_1}= \theta_{r, i_2j_2k_2}$. For convenience, assume that $r=8$.  Then
\[\theta_{8i_1}(\mathcal{L})+\theta_{8j_1}(\mathcal{L})+\theta_{8k_1}(\mathcal{L})+\theta_{8i_2}(\mathcal{L})+\theta_{8j_2}(\mathcal{L})+\theta_{8k_2}(\mathcal{L})-3K_{H(\mathcal{L})}
=\theta_{r, i_1j_1k_1}+\theta_{r, i_2j_2k_2}-K_{H(\mathcal{L})}=0.\]
If $\{i_1,j_1,k_1\}\cap \{ i_2,j_2,k_2\}=\varnothing$, then the above equality and~\eqref{eq:3-relation} imply
\[\theta_{i_1i_2}(\mathcal{L})+\theta_{j_1j_2}(\mathcal{L})+\theta_{k_1k_2}(\mathcal{L})=3\theta(\mathcal{L})=K_{H(\mathcal{L})}+\theta(\mathcal{L}).\]
This means that the six  points $P_{i_1}, P_{i_2}, P_{j_1}, P_{j_2}, P_{k_1}, P_{k_2}$  lie on a single plane, which is impossible
by Lemma~\ref{lemma:Aronhold-condition}.
If $\{i_1,j_1,k_1\}\cap \{ i_2,j_2,k_2\}\ne\varnothing$, then  we may assume that $i_1=i_2$. Then
\[\theta_{8j_1}(\mathcal{L})+\theta_{8k_1}(\mathcal{L})=\theta_{8j_2}(\mathcal{L})+\theta_{8k_2}(\mathcal{L}).\]
This together with~\eqref{eq:3-relation}  implies
\[\theta_{j_1k_1}(\mathcal{L})=\theta_{j_2k_2}(\mathcal{L}),\]
and hence $\{i_1,j_1,k_1\}=\{ i_2,j_2,k_2\}$.

Now let us prove the third statement. Suppose that the right hand side of~\eqref{eq:even-from-7-odd} is an odd theta characteristic~$\theta_{mn}(\mathcal{L})$.  Then $m, n\ne r$; otherwise  $\theta_{r, ijk}$ would yield the same even theta characteristic  for two different choices of  three distinct indices $i$, $j$, $k$,
which is not the case by the second statement.  For  convenience, we may assume that $r=8$, $m=2$, and $n=1$.  Then our assumption reads
$$
\theta_{21}(\mathcal{L})=-3K_{H(\mathcal{L})}+\sum\limits_{i=1}^7\theta_{8i}(\mathcal{L}).
$$
By~\eqref{eq:3-relation} this yields
\begin{equation}\label{eq:128}
-K_{H(\mathcal{L})}+\sum\limits_{i=3}^7\theta_{8i}(\mathcal{L})=\theta_{21}(\mathcal{L})+\theta_{81}(\mathcal{L})+\theta_{82}(\mathcal{L})=K_{H(\mathcal{L})}+\theta(\mathcal{L}).
\end{equation}
Note that
$$
K_{H(\mathcal{L})}+\theta(\mathcal{L})=\theta_{43}(\mathcal{L})+\theta_{83}(\mathcal{L})+\theta_{84}(\mathcal{L})
$$
by~\eqref{eq:3-relation}. Using this, we
deduce from~\eqref{eq:128} that
\[-K_{H(\mathcal{L})}+\sum\limits_{i=5}^7\theta_{8i}(\mathcal{L})=\theta_{43}(\mathcal{L}).\]
This contradicts the first statement. Therefore, the right hand side of~\eqref{eq:even-from-7-odd} is an even theta characteristic.

We now suppose that the right hand side of~\eqref{eq:even-from-7-odd} is not $\theta(\mathcal{L})$. Then, due to the second statement, we may assume that  $r=8$ and
$$
-3K_{H(\mathcal{L})}+\sum\limits_{i=1}^7\theta_{8i}(\mathcal{L})= \theta_{81}(\mathcal{L})+\theta_{82}(\mathcal{L})+\theta_{83}(\mathcal{L})-K_{H(\mathcal{L})}.
$$
However, this implies an absurd identity $\theta_{8,456}=\theta_{87}(\mathcal{L})$.  Consequently, the right hand side of~\eqref{eq:even-from-7-odd} must be $\theta(\mathcal{L})$.
\end{proof}

Let $\mathcal{L}_{\theta_{i,jkl}}$ be the net of quadrics
corresponding to the plane quartic $H(\mathcal{L})$ and
the even theta characteristic $\theta_{i,jkl}$
via the bijection from Theorem~\ref{theorem:bijection-Theta}, and let~$S_{\theta_{i,jkl}}$ be its Steinerian curve.
It turns out that these curves (and also regular Cayley octads and nets of quadrics)
are related by standard Cremona transformations of~$\mathbb{P}^3$.

\begin{proposition}\label{proposition:Cremona-Cayley-octads}
Up to projective transformation, the curve $S_{\theta_{i,jkl}}$
is obtained from $S(\mathcal{L})$ by the standard Cremona transformation centered at the points $P_i$, $P_j$, $P_k$, $P_l$,
and also by the standard Cremona transformation centered at the complementary set  of points $P_m$, $P_n$, $P_s$, $P_t$.
\end{proposition}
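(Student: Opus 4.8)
The plan is to exhibit the standard Cremona transformation centered at four octad points as an explicit isomorphism of nets of quadrics, and to reduce the statement to the bijectivity of $\Theta$ (Theorem~\ref{theorem:bijection-Theta}). Writing $\chi=\chi_{ijkl}$ for the standard Cremona transformation of $\mathbb{P}^3$ centered at $P_i,P_j,P_k,P_l$, I would show that $\chi$ carries the net $\mathcal{L}$ onto a net $\mathcal{L}'$ whose base locus is again a regular Cayley octad, that $\chi(S(\mathcal{L}))=S(\mathcal{L}')$, and that $\Theta(\mathcal{L}')=(H(\mathcal{L}),\theta_{i,jkl})$; the first assertion then follows because the bijectivity of $\Theta$ forces $\mathcal{L}'$ to be projectively equivalent to $\mathcal{L}_{\theta_{i,jkl}}$, whence $S(\mathcal{L}')\cong S_{\theta_{i,jkl}}$. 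For the net computation I would place the four centers at the coordinate vertices, so that every quadric through them has the form $Q=\sum_{p<q}a_{pq}x_px_q$; pulling $Q$ back by $\chi$ then yields $x_0x_1x_2x_3\cdot Q'$ with $Q'=\sum_{p<q}a_{pq}x_{p'}x_{q'}$, where $\{p',q'\}$ is the complementary pair of indices. Thus $\chi$ induces the linear pair-complementation isomorphism $\bar\sigma\colon\mathcal{L}\to\mathcal{L}'$, the new octad consists of the four centers together with $\chi(P_m),\chi(P_n),\chi(P_s),\chi(P_t)$, and it is regular since $\bar\sigma$ carries its Hessian to the smooth quartic $H(\mathcal{L})$ (Lemma~\ref{lemma:smooth-quartic}). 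Because $\bar\sigma$ matches singular members of the two nets and $\chi$ is an isomorphism away from its exceptional locus, it sends the vertex of a general cone in $\mathcal{L}$ to the vertex of the corresponding cone in $\mathcal{L}'$, giving $\chi(S(\mathcal{L}))=S(\mathcal{L}')$ and $H(\mathcal{L})\cong H(\mathcal{L}')$.

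It then remains to identify the distinguished even theta characteristic of $\mathcal{L}'$. The decisive input is the behaviour of the odd theta characteristics under $\bar\sigma$: reading $\theta_{\alpha\beta}(\mathcal{L})$ off the pencil of quadrics through the line $L_{\alpha\beta}$, the equivalence $Q\supset L_{ij}\iff Q'\supset L_{kl}$ for the complementary pair of centers gives $\bar\sigma^*\theta'_{ij}(\mathcal{L}')=\theta_{kl}(\mathcal{L})$, while the fact that $\chi(L_{im})$ is the line joining $P_i$ and $\chi(P_m)$ gives $\bar\sigma^*\theta'_{i,\chi(P_m)}(\mathcal{L}')=\theta_{im}(\mathcal{L})$. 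Taking the fixed index in~\eqref{eq:even-from-7-odd} to be a center, say $i$, I would apply that relation to $\mathcal{L}'$ and pull it back by $\bar\sigma^*$: the three center-center contributions sum to $\theta_{jk}(\mathcal{L})+\theta_{jl}(\mathcal{L})+\theta_{kl}(\mathcal{L})=K_{H(\mathcal{L})}+\theta(\mathcal{L})$ by~\eqref{eq:3-relation}, and the four center-image contributions sum to $\theta(\mathcal{L})+3K_{H(\mathcal{L})}-\theta_{ij}(\mathcal{L})-\theta_{ik}(\mathcal{L})-\theta_{il}(\mathcal{L})$ by~\eqref{eq:even-from-7-odd} for $\mathcal{L}$ itself. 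Substituting, and using $2\theta(\mathcal{L})=K_{H(\mathcal{L})}$ together with $2\theta_{i,jkl}=K_{H(\mathcal{L})}$, the whole expression collapses to $\bar\sigma^*\theta(\mathcal{L}')=\theta_{i,jkl}$, as desired.

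For the assertion about the complementary centers it is enough to establish $\theta_{i,jkl}=\theta_{m,nst}$, since the argument above applied to $\chi_{mnst}$ gives $S_{\theta_{m,nst}}\cong\chi_{mnst}(S(\mathcal{L}))$. First I would prove the within-part symmetry $\theta_{i,jkl}=\theta_{j,ikl}$: subtracting the defining relations~\eqref{equation:even} and applying~\eqref{eq:3-relation} twice, the difference reduces to $2\big(\theta_{ik}(\mathcal{L})-\theta_{jl}(\mathcal{L})\big)$, which vanishes because the difference of two theta characteristics is $2$-torsion. Hence $\theta_{i,jkl}$ depends only on the four-element set $\{i,j,k,l\}$. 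Now the bijection in Lemma~\ref{lemma:even-from-7-odd}, applied for each fixed index, shows that among the four-element subsets containing a given index the even theta is injective; an incidence count then forces every even theta characteristic other than $\theta(\mathcal{L})$ to be represented by exactly two four-element subsets, and these two subsets are necessarily disjoint, hence complementary. This yields $\theta_{i,jkl}=\theta_{m,nst}$ and completes the proof.

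The step I expect to be the main obstacle is the odd-theta bookkeeping underlying the second paragraph: one must transport the dictionary between pencils of quadrics through a line, bitangents of the Hessian quartic, and odd theta characteristics across the Cremona transformation, keeping careful track of the fact that the lines $L_{ij}$ joining two centers lie in the indeterminacy locus of $\chi$ whereas the lines $L_{im}$ do not, and confirming that the transformed base locus is genuinely a regular Cayley octad so that the formalism of~\S\ref{section:theta} is available. By comparison, the closing theta-characteristic computation is purely formal once these correspondences and the relations~\eqref{eq:3-relation} and~\eqref{eq:even-from-7-odd} are in hand.
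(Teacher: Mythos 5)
Your proof is correct, but it takes a genuinely different route from the paper: the paper's entire proof of Proposition~\ref{proposition:Cremona-Cayley-octads} is the citation ``See \cite[Proposition~IX.4]{DolgachevOrtland}'', whereas you give a self-contained argument. In fact, your first paragraph reproduces the coordinate computation that the paper only carries out afterwards, in the proof of Corollary~\ref{corollary:Cremona-Cayley-octads} (which the paper deduces \emph{from} the Proposition): with the centers at the coordinate vertices, pulling back $\sum_{p<q}a_{pq}x_px_q$ yields the pair-complemented quadric, the determinant of a zero-diagonal symmetric $4\times4$ matrix is invariant under complementation (so the Hessians literally coincide), the transformed octad is regular via Lemmas~\ref{lemma:CB} and~\ref{lemma:smooth-quartic}, and vertices of corank-one quadrics off the exceptional locus map to vertices, giving $\chi(S(\mathcal{L}))=S(\mathcal{L}')$. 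The genuinely new content in your write-up is the theta bookkeeping that the citation conceals. I checked it: the dictionary $\bar\sigma^*\theta'_{\alpha\beta}(\mathcal{L}')=\theta_{\gamma\delta}(\mathcal{L})$ for a pair of centers with complementary pair $\{\gamma,\delta\}$, and $\bar\sigma^*\theta'_{\alpha\mu}(\mathcal{L}')=\theta_{\alpha\mu}(\mathcal{L})$ for mixed pairs, is right, and combining \eqref{eq:3-relation}, \eqref{eq:even-from-7-odd} and the relations $2\theta(\mathcal{L})=2\theta_{i,jkl}=K_{H(\mathcal{L})}$ does collapse $\bar\sigma^*\theta(\mathcal{L}')$ to $\theta_{i,jkl}$; likewise your incidence count (each of the eight indices lies in exactly one four-element subset representing a given even theta characteristic, by the injectivity built into Lemma~\ref{lemma:even-from-7-odd}, so there are exactly two representing subsets and they must be complementary) correctly yields $\theta_{i,jkl}=\theta_{m,nst}$, a fact the paper never proves separately. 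There is no circularity: Theorem~\ref{theorem:bijection-Theta}, Lemma~\ref{lemma:even-from-7-odd}, and the relations \eqref{eq:3-relation}, \eqref{eq:even-from-7-odd} are all established independently of the Proposition. The trade-off is clear: the paper's citation is shorter, while your route is explicit and self-contained modulo Beauville's Theorem~\ref{theorem:bijection-Theta}, and it makes the transformation rules for odd theta characteristics under the Cremona map visible; the only steps needing expansion rather than repair are the determinant invariance and the regularity of the new octad, both of which appear essentially verbatim in the paper's proof of Corollary~\ref{corollary:Cremona-Cayley-octads}.
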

\begin{proof}
See \cite[Proposition~IX.4]{DolgachevOrtland}.
\end{proof}

\begin{remark}\label{remark:other-35}
By Lemma~\ref{lemma:even-from-7-odd},
the $35$ even theta characteristics on $H(\mathcal{L})$ other than $\theta(\mathcal{L})$ are all of the form~\eqref{equation:even} for a fixed index $i$ (one can take for instance $i=8$). This means that
the even theta characteristics obtained as in Proposition~\ref{proposition:Cremona-Cayley-octads}
by the $35$ Cremona transformations
associated to choices of four
points out of the seven points $P_1,\ldots,P_7$ (alternatively, by the~$35$ Cremona transformations
associated to the point $P_8$ and choices of three points out of~$P_1,\ldots,P_7$) are pairwise different.
\end{remark}

Using Proposition~\ref{proposition:Cremona-Cayley-octads}, we deduce the following.

\begin{corollary}\label{corollary:Cremona-Cayley-octads}
Up to projective transformation, the net of quadrics
$\mathcal{L}_{\theta_{i,jkl}}$ consists of the proper transforms of the quadrics from the net
$\mathcal{L}$ with respect to the standard Cremona transformation~$\varsigma$ centered at the points $P_i$, $P_j$, $P_k$, $P_l$
(or with respect to the standard Cremona transformation centered at the complementary set  of points $P_m$, $P_n$, $P_s$, $P_t$).  Furthermore, the corresponding regular Cayley octad $P_1',\ldots,P_8'$ can be constructed as follows:
the points $P_i', P_j', P_k', P_l'$ are the images of the divisors contracted by $\varsigma$, and $P_r'=\varsigma(P_r)$ for $r=m, n, s, t$.
\end{corollary}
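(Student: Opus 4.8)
The plan is to verify by a direct computation that the proper transform of the net $\mathcal{L}$ under $\varsigma$ is again a net of quadrics of the required shape with the required base locus, and then to identify this net with $\mathcal{L}_{\theta_{i,jkl}}$ by comparing Steinerian curves through Proposition~\ref{proposition:Cremona-Cayley-octads}. First I would fix homogeneous coordinates $[x_0:x_1:x_2:x_3]$ on $\mathbb{P}^3$ so that $P_i,P_j,P_k,P_l$ become the coordinate vertices and $\varsigma$ is the standard involution $[x_0:x_1:x_2:x_3]\mapsto[x_1x_2x_3:x_0x_2x_3:x_0x_1x_3:x_0x_1x_2]$. Since the base locus of $\mathcal{L}$ contains $P_i,P_j,P_k,P_l$, every member of $\mathcal{L}$ is a quadric with no squared coordinate, i.e. of the form $\sum_{a<b}c_{ab}x_ax_b$. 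A one-line computation gives
\[
\varsigma^*\Big(\sum_{a<b}c_{ab}x_ax_b\Big)=x_0x_1x_2x_3\cdot\sum_{a<b}c_{ab}x_{\bar a}x_{\bar b},
\]
where $\{\bar a,\bar b\}$ is the pair of indices complementary to $\{a,b\}$. Thus removing the fixed part $x_0x_1x_2x_3$ (the four contracted coordinate planes), the proper transform of such a quadric is again a quadric of the same shape, and $\varsigma$ induces a linear involution on the space of quadrics through the four centers. Consequently the net $\mathcal{L}'$ of proper transforms is again a net of quadrics, all passing through $P_i,P_j,P_k,P_l$.

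Next I would pin down the base locus of $\mathcal{L}'$. The four centers $P_i,P_j,P_k,P_l$, which are exactly the images of the four coordinate planes contracted by $\varsigma$, lie on every member of $\mathcal{L}'$. The indeterminacy locus of $\varsigma$ is the union of the six edges of the coordinate tetrahedron; the points $P_m,P_n,P_s,P_t$ avoid it, since otherwise three octad points would be collinear, contradicting~(A${}^\prime$) in Lemma~\ref{lemma:Aronhold-condition}. Hence $\varsigma(P_m),\varsigma(P_n),\varsigma(P_s),\varsigma(P_t)$ are well defined, and being images under $\varsigma$ of base points of $\mathcal{L}$ lying off all coordinate planes (again by~(A${}^\prime$)), they are four further points of the base locus of $\mathcal{L}'$, distinct from the centers. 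By Lemma~\ref{lemma:CB} the base locus of a net consists of eight points, so these eight points are precisely the new regular Cayley octad $P_1',\ldots,P_8'$, arranged exactly as asserted.

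It remains to identify $\mathcal{L}'$ with $\mathcal{L}_{\theta_{i,jkl}}$ up to projective transformation, and here I would pass to Steinerian curves. For a generic singular member $Q\in\mathcal{L}$ the vertex $v(Q)$ lies off the coordinate planes, and since $\varsigma$ restricts to an isomorphism on their complement, it carries the unique singular point of $Q$ to the unique singular point of its proper transform in $\mathcal{L}'$; taking Zariski closure yields $S(\mathcal{L}')=\varsigma\big(S(\mathcal{L})\big)$. By Proposition~\ref{proposition:Cremona-Cayley-octads} the right-hand side is projectively equivalent to $S_{\theta_{i,jkl}}$. Since the Steinerian curve together with the theta-characteristic structure it carries via the isomorphism induced by $|K+\theta|$ recovers the corresponding pair in $\mathcal{T}$, and hence, by the bijectivity of $\Theta$ in Theorem~\ref{theorem:bijection-Theta}, the net itself up to projective transformation, we conclude that $\mathcal{L}'\cong\mathcal{L}_{\theta_{i,jkl}}$. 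The argument for the standard Cremona transformation centered at the complementary points $P_m,P_n,P_s,P_t$ is identical, so both statements follow.

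The most delicate step is the last one: the passage from ``the Steinerians agree'' to ``the nets agree.'' This rests on two points that I would make precise, namely that the Steinerian of the proper-transform net really is the $\varsigma$-image of the original Steinerian (which requires the genericity care in the vertices-to-vertices argument above, to rule out components of $S(\mathcal{L})$ lying in the indeterminacy locus), and that the embedded Steinerian determines the net up to projective transformation through the bijection $\Theta$. The earlier paragraphs, by contrast, are essentially bookkeeping with the explicit formula for $\varsigma$ and the general-position conditions of Lemma~\ref{lemma:Aronhold-condition}.
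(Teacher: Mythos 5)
Your overall skeleton (normalize coordinates, write the proper transform explicitly, exhibit the eight new points, compare Steinerians, then invoke Proposition~\ref{proposition:Cremona-Cayley-octads} and the bijectivity of $\Theta$) is the same as the paper's, but you are missing the one observation that makes the argument close: under the substitution sending each monomial $x_\alpha x_\beta$ to the complementary monomial $x_\gamma x_\delta$, the $4\times 4$ symmetric matrix of a quadric through the four coordinate points and the matrix of its proper transform have \emph{the same determinant}. Hence the Hessian quartic of the net $\mathcal{L}'$ of proper transforms coincides with the smooth Hessian of $\mathcal{L}$, and the arguments of Lemma~\ref{lemma:smooth-quartic} then show that $\mathcal{L}'$ has irreducible members and exactly eight distinct base points, i.e.\ that the eight points you exhibit really do form a \emph{regular} Cayley octad. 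Without this, two of your steps fail as written. First, Lemma~\ref{lemma:CB} does not say that ``the base locus of a net consists of eight points''; it is a statement about points lying in a zero-dimensional complete intersection, and a priori the base locus of $\mathcal{L}'$ could contain a curve or extra points inside the coordinate planes, where $\varsigma$ is not an isomorphism (your off-the-planes argument only controls the complement). Second, and more seriously, your final appeal to $\Theta$ is circular: the bijection of Theorem~\ref{theorem:bijection-Theta} is between \emph{regular} Cayley octads (equivalently, nets with smooth Hessian) and pairs $(C,\theta)$, so you must know that $\mathcal{L}'$ lies in $\mathcal{N}$ before you may conclude $\mathcal{L}'=\mathcal{L}_{\theta_{i,jkl}}$ from an identification of Steinerians; regularity of $\mathcal{L}'$ is exactly what you never establish.

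The same omission also undermines the Steinerian comparison itself. You correctly flag that one must rule out components of $S(\mathcal{L})$ in the indeterminacy locus, but the real danger is on the other side: a member of $\mathcal{L}'$ could a priori be singular only at points of the coordinate planes while its partner in $\mathcal{L}$ is smooth, creating components of $S(\mathcal{L}')$ not accounted for by $\varsigma\big(S(\mathcal{L})\big)$; your involution-symmetry argument does not exclude this. With the determinant identity, singular members of $\mathcal{L}'$ correspond exactly (at the same parameter point of the net) to singular members of $\mathcal{L}$, each of corank one because the common Hessian is smooth, and the equality $S(\mathcal{L}')=\varsigma\big(S(\mathcal{L})\big)$ becomes immediate --- which is precisely how the paper argues. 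So the gap is concrete and localized: insert the determinant-preservation observation, use it via Lemma~\ref{lemma:smooth-quartic} to prove regularity of the new octad and the exactness of the base locus, and then your remaining steps go through.
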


\begin{proof}
After a suitable projective transformation
we may assume that
$$
P_i=[1:0:0:0],\quad
P_j=[0:1:0:0],\quad
P_k=[0:0:1:0],\quad
P_l=[0:0:0:1].
$$
Then the standard Cremona transformation $\varsigma$  is the selfmap of $\mathbb{P}^3$ defined by $$
\varsigma(x_0,x_1,x_2,x_3)=\left[\frac{1}{x_0}:\frac{1}{x_1}:\frac{1}{x_2}:\frac{1}{x_3}\right].
$$
Every quadric surface in $\mathcal{L}$ is defined by a quadric  homogeneous polynomial of the form
\[\sum_{\alpha\ne \beta}a_{\alpha\beta}x_\alpha x_\beta=0,\]
where $a_{\alpha\beta}=a_{\beta\alpha}.$
Its proper transform by $\varsigma$ is defined by the quadric  homogeneous polynomial
\[\sum_{\{\alpha, \beta, \gamma, \delta\}=\{0,1,2,3\}}a_{\alpha\beta}x_\gamma x_\delta=0.\]
The four coordinate points and the four points $\varsigma(P_m)$, $\varsigma(P_n)$, $\varsigma(P_s)$, $\varsigma(P_t)$  are the eight intersection points of three quadrics. Lemma~\ref{lemma:CB} then implies that the net $\mathcal{L}'$ of quadric surfaces passing though these eight points is exactly the net of the proper transforms of the quadric surfaces in $\mathcal{L}$.
Observe that the symmetric matrix corresponding to the former quadric surface and the one corresponding to the latter have the same determinant. It then follows from  Lemma~\ref{lemma:smooth-quartic}  that the four coordinate points and the four points $\varsigma(P_m)$, $\varsigma(P_n)$, $\varsigma(P_s)$, $\varsigma(P_t)$  form a regular Cayley octad.
It also immediately follows from our observation that the Steinerian curve
$S(\mathcal{L}')$ is the proper transform of the Steinerian curve
$S(\mathcal{L})$ with respect to $\varsigma$. Hence by
Proposition~\ref{proposition:Cremona-Cayley-octads} we have
$$
S(\mathcal{L}')=S_{\theta_{i,jkl}}
$$
up to projective transformation. By Theorem~\ref{theorem:bijection-Theta}
this implies that
$$
\mathcal{L}'=\mathcal{L}_{\theta_{i,jkl}},
$$
and the assertion follows.
\end{proof}

For a given smooth plane quartic curve $C$ equipped with an even theta characteristic $\theta$, its associated net $\mathcal{L}$ of quadrics determines a regular Cayley octad $P_1,\ldots, P_8$ in $\mathbb{P}^3$,
which in turn allows us to construct a unique $28$-nodal double Veronese cone $V_{C,\theta}$ (see Lemma~\ref{lemma:V-i-j}).
Let $\theta'$
be  another even theta characteristic on $C$, and let $V_{C,\theta'}$ be the corresponding
$28$-nodal double Veronese cone.

\begin{lemma}\label{lemma:V-theta-thetaprime}
The 3-folds $V_{C,\theta}$ and $V_{C,\theta'}$
are isomorphic.
\end{lemma}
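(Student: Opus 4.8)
The plan is to reduce the statement to the one-to-one correspondence of Theorem~\ref{theorem:one-to-one}, combined with the explicit description of the associated quartic furnished by Lemma~\ref{lemma:V-to-quartic}. The conceptual point is that the construction of $V_{C,\theta}$ ultimately \emph{forgets} the theta characteristic: although $\theta$ is used to single out a particular net of quadrics, and hence a particular regular Cayley octad, the resulting $28$-nodal double Veronese cone remembers only the Hessian quartic of that net, which is $C$ itself.

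First I would unwind the construction of $V_{C,\theta}$. By Theorem~\ref{theorem:bijection-Theta} the pair $(C,\theta)$ corresponds under $\Theta$ to a unique net $\mathcal{L}$ of quadrics with $H(\mathcal{L})\cong C$ and $\theta(\mathcal{L})=\theta$; the base locus of $\mathcal{L}$ is a regular Cayley octad $P_1,\ldots,P_8$, any seven of whose points form an Aronhold heptad. Applying Proposition~\ref{proposition:Prokhorov-construction} to such an Aronhold heptad produces $V_{C,\theta}$, and Lemma~\ref{lemma:V-i-j} guarantees that this $3$-fold does not depend on which point of the octad is discarded, so $V_{C,\theta}$ is well defined.

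Next I would invoke Lemma~\ref{lemma:V-to-quartic}: the smooth plane quartic assigned to $V_{C,\theta}$ by the correspondence of Theorem~\ref{theorem:one-to-one}---that is, the projectively dual curve of the discriminant curve of the half-anticanonical elliptic fibration $\kappa$ on $V_{C,\theta}$---is isomorphic to the Hessian curve of the net used in the construction, namely $H(\mathcal{L})\cong C$. Running the identical argument with $\theta'$ in place of $\theta$ (letting $\mathcal{L}'$ denote the net attached to $(C,\theta')$ by $\Theta$) shows that the quartic assigned to $V_{C,\theta'}$ is isomorphic to $H(\mathcal{L}')\cong C$ as well. Thus both $V_{C,\theta}$ and $V_{C,\theta'}$ have associated quartic isomorphic to $C$.

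Finally, since Theorem~\ref{theorem:one-to-one} asserts that the assignment of its associated quartic is a bijection between isomorphism classes of $28$-nodal double Veronese cones and isomorphism classes of smooth plane quartics, the two cones $V_{C,\theta}$ and $V_{C,\theta'}$ must lie in the same isomorphism class, and hence $V_{C,\theta}\cong V_{C,\theta'}$. There is no serious analytic obstacle here; the only step requiring care is the matching of the two a priori distinct descriptions of ``the quartic attached to $V$''---the Hessian of the net on the one hand, and the dual of the discriminant curve on the other---but this identification is precisely the content of Lemma~\ref{lemma:V-to-quartic} and may be quoted verbatim. (A more hands-on alternative, which I would pursue only if a proof independent of Theorem~\ref{theorem:one-to-one} were wanted, is to use Lemma~\ref{lemma:even-from-7-odd} to write $\theta'=\theta_{i,jkl}$ and Corollary~\ref{corollary:Cremona-Cayley-octads} to realize $\mathcal{L}'$ as the proper transform of $\mathcal{L}$ under the standard Cremona transformation of $\mathbb{P}^3$ centered at $P_i,P_j,P_k,P_l$, and then to check that this Cremona transformation lifts to an isomorphism of the associated weak Fano blow-ups and thus of their anticanonical models; this route is considerably more laborious.)
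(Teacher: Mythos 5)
Your main argument is correct, but it takes a genuinely different route from the paper. You derive the lemma from the injectivity half of Theorem~\ref{theorem:one-to-one}: by Lemma~\ref{lemma:V-to-quartic} (combined with Theorem~\ref{theorem:bijection-Theta}), both $V_{C,\theta}$ and $V_{C,\theta'}$ have associated plane quartic isomorphic to $C$, hence lie in the same isomorphism class. This is logically sound, because the first proof of Theorem~\ref{theorem:one-to-one} in \S\ref{section:1-1} relies only on Lemmas~\ref{lemma:V-to-quartic}, \ref{lemma:jV-vs-jC} and~\ref{lemma:V-from-elliptic-fibration}, and is therefore independent of the present lemma; no circularity arises for the statement taken in isolation. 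The paper instead proves the lemma directly: it uses Remark~\ref{remark:other-35} to write $\theta'=\theta_{8i}(\mathcal{L})+\theta_{8j}(\mathcal{L})+\theta_{8k}(\mathcal{L})-K_{C}$, invokes Corollary~\ref{corollary:Cremona-Cayley-octads} to realize $\mathcal{L}'$ as the proper transform of $\mathcal{L}$ under the standard Cremona transformation centered at the complementary four points of the octad, and then shows that the two blow-ups $\widehat{\mathbb{P}}^3$ and $\widehat{\mathbb{P}}^{3+}$ differ by flopping the six curves $\widehat{L}_{ij}$, $1\leqslant i<j\leqslant 4$, so that both pluri-anticanonical morphisms factor through one and the same contraction $\bar{V}$ and the two anticanonical models coincide --- exactly the ``hands-on alternative'' you relegate to your final parenthesis. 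The reason the paper takes this longer route is structural: the lemma feeds into Corollary~\ref{corollary:all-V-same} and thence into the \emph{second} proof of Theorem~\ref{theorem:one-to-one} given in \S\ref{section:theta}; if the lemma were proved by quoting Theorem~\ref{theorem:one-to-one}, that alternative proof would become circular, defeating the purpose of \S\ref{section:theta}, which is to establish the correspondence entirely within the nets-of-quadrics and theta-characteristics framework, independently of the explicit equations and $j$-function computations of \S\ref{section:equations}--\S\ref{section:1-1}. So your argument is acceptable as a standalone derivation but could not replace the paper's proof in situ. Finally, one imprecision in your parenthetical sketch: the Cremona transformation does not lift to an \emph{isomorphism} of the weak Fano blow-ups, only to a flop (an isomorphism in codimension one); the conclusion holds because flops do not change the pluri-anticanonical model.
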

\begin{proof}
By Remark~\ref{remark:other-35}, there are indices $1\leqslant i<j<k\leqslant 7$ such that
\[
\theta'=\theta_{8i}(\mathcal{L})+\theta_{8j}(\mathcal{L})+\theta_{8k}(\mathcal{L})-K_{C}.
\]
For convenience, let us say $i=5, j=6, k=7$.
Corollary~\ref{corollary:Cremona-Cayley-octads} tells us that the net of quadrics~$\mathcal{L}'$ associated
to~\mbox{$(C, \theta')$} via Theorem~\ref{theorem:bijection-Theta} can be obtained
by applying the standard Cremona transformation~$\varsigma$
centered at the points $P_1$, $P_2$, $P_3$, $P_4$ to the original net $\mathcal{L}$, up to projective equivalence.
Furthermore, the net~$\mathcal{L}'$ is defined by the regular Cayley octad $P_1',\ldots,P_8'$, where
$P_1',\ldots,P_4'$ are the images of the divisors contracted by $\varsigma$, and set
$P_i'=\varsigma(P_i)$ for $i=5,\ldots,8$. The 3-folds~\mbox{$V_{C,\theta}$} and~\mbox{$V_{C,\theta'}$} are constructed from the Aronhold heptads
$P_1,\ldots,P_7$ and $P_1',\ldots,P_7'$, respectively.

Let $\widehat{\mathbb{P}}^3$ be the blow up of $\mathbb{P}^3$ at~\mbox{$P_1,\ldots, P_7$}.  Denote by $F_i$ the exceptional divisor on $\widehat{\mathbb{P}}^3$ over the point $P_i$. Let $L_{ij}$ be the line in~$\mathbb{P}^3$ passing through the points~$P_i$ and~$P_j$ with $i< j$, and let~$\widehat{L}_{ij}$ be its proper transform on $\widehat{\mathbb{P}}^3$.
Flopping the six curves~$\widehat{L}_{ij}$ with~\mbox{$1\leqslant i<j\leqslant 4$}, we obtain a birational map~$\xi$
from~$\widehat{\mathbb{P}}^3$ to another weak Fano 3-fold $\widehat{\mathbb{P}}^{3+}$.
Thus, we obtain the following diagram:
$$
\xymatrix{
\widehat{\mathbb{P}}^3\ar@{-->}[rr]^{\xi}\ar@{->}[dr]^{\nu}&&\ \widehat{\mathbb{P}}^{3+}\ar@{->}[dl]_{\nu^+}\\%
&\bar{V}&
}
$$
Here $\nu$ and $\nu^+$ are contractions of the flopping curves of $\xi$ and $\xi^{-1}$, respectively.

Keeping in mind the decomposition of the standard Cremona transformation into blow ups and blow downs, we see
that $\widehat{\mathbb{P}}^{3+}$ can be represented as the blow up
$\pi^+\colon\widehat{\mathbb{P}}^{3+}\to\mathbb{P}^3$ at the points~\mbox{$P_1',\ldots,P_7'$}. More precisely,
let $\Pi_i$, $1\leqslant i\leqslant 4$, be the plane
passing through the three points among $P_1,\ldots,P_4$ except $P_i$.
The proper transforms of the four planes $\Pi_i$  in $\widehat{\mathbb{P}}^{3+}$ are four disjoint surfaces $\Pi_i^+$
isomorphic to $\mathbb{P}^2$. Let $F_i^+$, $5\leqslant i\leqslant 7$,
be the proper transforms in $\widehat{\mathbb{P}}^{3+}$ of the surfaces $F_i$.
Then the surfaces~$\Pi_1^+,\,\Pi_2^+,\,\Pi_3^+,\,\Pi_4^+,\,F_5^+,\,F_6^+$, and~$F_7^+$  are contracted to the seven points  $P_1',\ldots, P_7'$ on $\mathbb{P}^3$, respectively.

By Proposition~\ref{proposition:Prokhorov-construction}, the 3-folds
$V_{C,\theta}$ and $V_{C,\theta'}$ are the images of the pluri-anticanonical morphisms~$\phi$ and~$\phi^+$ of
$\widehat{\mathbb{P}}^{3}$ and $\widehat{\mathbb{P}}^{3+}$, respectively.
Since $\xi$ is a flop, both $\phi$ and $\phi^+$ factor through~$\bar{V}$,
so that
both~$V_{C,\theta}$ and~$V_{C,\theta'}$ are obtained by contracting one and the same curves on~$\bar{V}$.
Thus,~$V_{C,\theta}$ and $V_{C,\theta'}$ are isomorphic to one and the same double Veronese cone $V$.
$$
\xymatrix{
\widehat{\mathbb{P}}^3\ar@{->}[ddd]_{\pi}\ar@{->}[ddrr]_{\phi}\ar@{-->}[rrrr]^{\xi}\ar@{->}[drr]^{\nu}&&&&\ \widehat{\mathbb{P}}^{3+}\ar@{->}[ddd]^{\pi^{+}}\ar@{->}[ddll]^{\phi^{+}}\ar@{->}[dll]_{\nu^+}\\%
&&\bar{V}\ar@{->}[d]&&\\
&&V&&\\
\mathbb{P}^3\ar@{-->}[rrrr]^{\varsigma}&&&&\mathbb{P}^3}
$$
This completes the proof of the lemma.
\end{proof}

Lemmas~\ref{lemma:V-i-j} and~\ref{lemma:V-theta-thetaprime} give the following

\begin{corollary}\label{corollary:all-V-same}
Let  $C$ be a smooth plane quartic curve.
Then for any choice of an even theta characteristic $\theta$ on $C$, and for
any choice of an Aronhold heptad in the regular Cayley octad corresponding to~$\theta$,
the $28$-nodal double Veronese cones constructed from these Aronhold heptads as in
Proposition~\ref{proposition:Prokhorov-construction} are isomorphic to each other.
\end{corollary}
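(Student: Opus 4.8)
The plan is to obtain the corollary as a formal consequence of the two preceding lemmas together with the bijectivity in Theorem~\ref{theorem:bijection-Theta}, so that no new geometric input is required. First I would fix an even theta characteristic $\theta$ on $C$ and invoke Theorem~\ref{theorem:bijection-Theta}: since $\Theta$ is a bijection, the pair $(C,\theta)$ is the image of a unique class in $\mathcal{N}$, i.e.\ there is a regular Cayley octad $P_1,\ldots,P_8$ in $\mathbb{P}^3$, determined up to a projective transformation, whose associated net of quadrics has Hessian quartic $C$ and canonical even theta characteristic $\theta$. Every Aronhold heptad contained in this octad has the form $\{P_1,\ldots,P_8\}\setminus\{P_i\}$ for some $i\in\{1,\ldots,8\}$.

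Next I would check that the two sources of ambiguity are harmless. The ambiguity in the choice of heptad inside a fixed octad is precisely what Lemma~\ref{lemma:V-i-j} resolves: it asserts that the $28$-nodal double Veronese cones built from the eight heptads $\{P_1,\ldots,P_8\}\setminus\{P_i\}$ are all isomorphic. Hence the cone $V_{C,\theta}$ attached to the pair $(C,\theta)$ is well defined up to isomorphism. The ambiguity in the choice of $\theta$ is then removed by Lemma~\ref{lemma:V-theta-thetaprime}, which gives $V_{C,\theta}\cong V_{C,\theta'}$ for any two even theta characteristics $\theta,\theta'$ on $C$. Chaining these isomorphisms shows that every cone produced by any admissible choice of $\theta$ and of a heptad is isomorphic to a single $V$, which is the assertion.

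The one point that deserves a line of justification, and the only conceivable obstacle, is that Theorem~\ref{theorem:bijection-Theta} pins the octad down only up to a projective transformation of $\mathbb{P}^3$, whereas Proposition~\ref{proposition:Prokhorov-construction} takes an actual configuration of points as input. This is not a genuine difficulty: the construction of Proposition~\ref{proposition:Prokhorov-construction} is $\mathrm{PGL}_4(\mathbb{C})$-equivariant, since a projective transformation $g$ carrying one Aronhold heptad onto another lifts canonically to an isomorphism of the corresponding blow-ups $\widehat{\mathbb{P}}^3$ commuting with $\pi$, and this isomorphism descends to an isomorphism of the anticanonical models. Thus projectively equivalent heptads yield isomorphic cones, and the well-definedness invoked above is legitimate. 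With this remark in place the corollary follows immediately.
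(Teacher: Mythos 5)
Your proposal is correct and is essentially the paper's own argument: the corollary is obtained by chaining Lemma~\ref{lemma:V-i-j} (independence of the heptad within a fixed regular Cayley octad) with Lemma~\ref{lemma:V-theta-thetaprime} (independence of the even theta characteristic), via the bijection of Theorem~\ref{theorem:bijection-Theta}. Your extra remark on $\mathrm{PGL}_4(\mathbb{C})$-equivariance of the construction is implicit in the paper, which works with octads and heptads modulo projective transformations throughout.
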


Corollary~\ref{corollary:all-V-same}  allows us to put together the following proof.

\begin{proof}[Second proof of Theorem~\ref{theorem:one-to-one}]
Given a $28$-nodal double Veronese cone, we associate to it a smooth plane quartic curve as in Lemma~\ref{lemma:V-to-quartic}.

Let $C$ be a smooth plane quartic curve. Choose an even theta characteristic on $C$. By
Theorem~\ref{theorem:bijection-Theta}, this provides us a regular Cayley octad.
Choose an Aronhold heptad from this regular Cayley octad, and construct a $28$-nodal double Veronese cone
as in Proposition~\ref{proposition:Prokhorov-construction}.
By Corollary~\ref{corollary:all-V-same} the result does not depend on the choice of the even theta characteristic and the
Aronhold heptad, and by Theorem~\ref{theorem:28-can-be-constructed} every $28$-nodal double Veronese cone can be obtained
in this way.
Using Lemma~\ref{lemma:V-to-quartic} once again, we see that the above two constructions are mutually
inverse.
\end{proof}

Recall from \cite[\S\,IX.2]{DolgachevOrtland} that a  (unordered) set of seven distinct odd theta characteristics~\mbox{$\theta_1,\ldots,\theta_7$} on a  smooth plane quartic curve~$C$  is called an \emph{Aronhold system} if they  satisfy the condition that~${\theta_i+\theta_j+\theta_k-K_C}$ is an even theta characteristic for each choice
of three distinct indices $i,j,k$.
This is equivalent to the requirement that the six points of any three of~\mbox{$\theta_1,\ldots,\theta_7$} are not contained in a conic because
\[\left|2K_C-\theta_i-\theta_j-\theta_k\right|=\left|\theta_i+\theta_j+\theta_k-K_C\right|.\]
A given smooth quartic has  exactly~$288$ Aronhold  systems  (see \cite[Proposition~IX.3]{DolgachevOrtland}).

It follows from Lemma~\ref{lemma:even-from-7-odd} that if $P_1,\ldots,P_8$ is a regular Cayley octad, and
$\mathcal{L}$ is the corresponding net of quadrics in $\mathbb{P}^3$, then for every fixed index
$r\in\{1,\ldots,8\}$ the seven odd theta characteristics~\mbox{$\theta_{ri}(\mathcal{L})$, $i\neq r$,} form
an Aronhold system. It turns out that all Aronhold systems arise in this way.

\begin{lemma}[{cf. \cite[Proposition~6.3.11]{Dolgachev}}]
\label{lemma:all-288}
Let $C$ be a smooth plane quartic, and let $\Xi$ be an Aronhold  system on $C$. Then
there exist a unique even theta characteristic $\theta$  on $C$ and a unique point~$P_8$
in the regular Cayley octad
$P_1,\ldots,P_8$ corresponding to $(C,\theta)$ via Theorem~\ref{theorem:bijection-Theta},
such that
\begin{equation}\label{eq:Aronhold-from-Aronhold}
\Xi=\{\theta_{8i}(\mathcal{L})\mid 1\leqslant i\leqslant 7\},
\end{equation}
where $\mathcal{L}$ is the net of quadrics defined by the regular Cayley octad
$P_1,\ldots,P_8$.
\end{lemma}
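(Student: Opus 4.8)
The plan is to recast the lemma as the bijectivity of an explicit map between two finite sets of equal cardinality, and then to reduce everything to a single injectivity check. First I would introduce the counting map. Let $\mathcal{P}$ be the set of pairs $(\theta,P)$, where $\theta$ is an even theta characteristic on $C$ and $P$ is one of the eight points of the regular Cayley octad $P_1,\ldots,P_8$ associated to $(C,\theta)$ by Theorem~\ref{theorem:bijection-Theta}. Writing $\mathcal{L}$ for the net of quadrics defined by this octad and relabeling so that $P=P_8$, I assign to $(\theta,P)$ the set $\{\theta_{8i}(\mathcal{L})\mid 1\leqslant i\leqslant 7\}$ of seven odd theta characteristics. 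By the first bullet of Lemma~\ref{lemma:even-from-7-odd} together with~\eqref{equation:even}, this set satisfies the Aronhold condition, so the rule defines a map $\Phi$ from $\mathcal{P}$ to the set of Aronhold systems on $C$; this is exactly the assignment whose fibers the lemma describes.

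Next I would count. There are $36$ even theta characteristics on $C$, and each determines a unique octad of eight points by Theorem~\ref{theorem:bijection-Theta}, so $\lvert\mathcal{P}\rvert=36\cdot 8=288$. On the other hand, $C$ carries exactly $288$ Aronhold systems. Since the source and target of $\Phi$ are finite of the same cardinality, it suffices to prove that $\Phi$ is injective: injectivity then forces bijectivity, which yields simultaneously the existence (surjectivity) and the uniqueness (injectivity) asserted in the statement.

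For the injectivity step, suppose $\Phi(\theta,P)=\Phi(\theta',P')=\Xi$. The even theta characteristic can be read off from $\Xi$ alone: formula~\eqref{eq:even-from-7-odd} gives $\theta=-3K_C+\sum_{\psi\in\Xi}\psi$, an expression independent of the chosen preimage, so $\theta=\theta'$. Consequently the two octads coincide by Theorem~\ref{theorem:bijection-Theta}, and $P,P'$ are two points $P_r,P_{r'}$ of one and the same octad with net $\mathcal{L}$. To pin down the distinguished point I would invoke the correspondence $\theta_{ij}(\mathcal{L})\leftrightarrow\{i,j\}$ between the $28$ odd theta characteristics and the $28$ index pairs (this is the displayed remark preceding~\eqref{equation:even}, a bijection by cardinality). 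Under it, $\Xi$ corresponds on the one hand to the seven pairs $\{r,i\}$ with $i\neq r$, and on the other hand to the seven pairs $\{r',i\}$ with $i\neq r'$; the common intersection of the first family is $\{r\}$ and of the second is $\{r'\}$, whence $r=r'$ and $P=P'$. This establishes injectivity, hence bijectivity, and the lemma follows.

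I expect the only genuinely delicate points to be bookkeeping rather than geometry: verifying that $\Phi$ lands among Aronhold systems (which is precisely the content of the first part of Lemma~\ref{lemma:even-from-7-odd}) and correctly invoking the odd-theta-to-pair dictionary to recover $P_8$. The conceptual heart is that the numerology $288=36\cdot 8$ matches perfectly, so no finer rigidity input is needed beyond the formula~\eqref{eq:even-from-7-odd} that reconstructs $\theta$ and the combinatorics that reconstructs the distinguished point.
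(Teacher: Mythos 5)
Your proof is correct and follows essentially the same route as the paper: both reconstruct $\theta$ from $\Xi$ via~\eqref{eq:even-from-7-odd}, recover the distinguished point via the dictionary $\theta_{ij}(\mathcal{L})\leftrightarrow\{i,j\}$, and conclude by the count $288=36\cdot 8$. The only cosmetic difference is that you phrase the conclusion as injectivity of a map between equinumerous finite sets, whereas the paper says the construction produces $36\cdot 8$ pairwise distinct Aronhold systems and therefore exhausts all of them.
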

\begin{proof}
For a given even theta characteristic $\theta$ on $C$, we obtain a regular Cayley octad~\mbox{$P_1,\ldots,P_8$} in~$\mathbb{P}^3$  from Theorem~\ref{theorem:bijection-Theta}. This gives us eight distinct Aronhold systems $\Xi_1,\ldots,\Xi_8$ on $C$ as in~\eqref{eq:Aronhold-from-Aronhold}.
By Lemma~\ref{lemma:even-from-7-odd} one has
$$
\theta=-3K_C+ \sum_{\vartheta\in\Xi_r }\vartheta
$$
for every $r\in\{1,\ldots,8\}$. Hence none of the Aronhold systems $\Xi_r$ can coincide with an Aronhold system
constructed in the same way starting from any other even theta characteristic $\theta'\neq \theta$
on~$C$. Since there are exactly $288$ Aronhold systems and exactly $36$ even theta characteristics on~$C$,
the above construction exhausts all the Aronhold systems on $C$, and the assertion follows.
\end{proof}

Lemmas~\ref{lemma:even-from-7-odd} and~\ref{lemma:all-288}  imply the following.

\begin{corollary}
Let $C$ be a smooth plane quartic, and let $\Xi$ be an Aronhold  system on $C$. Then
\begin{equation}\label{equation:even-theta-from-Aronhold}
\theta=-3K_C+ \sum_{\vartheta\in\Xi }\vartheta
\end{equation}
is an even theta characteristic on $C$. Moreover,
for every even theta characteristic $\theta$ on $C$ there exist exactly eight
Aronhold systems such that $\theta$ is obtained from them as in~\eqref{equation:even-theta-from-Aronhold},
and each of these eight Aronhold systems is in turn obtained from $\theta$ as in Lemma~\ref{lemma:all-288}.
\end{corollary}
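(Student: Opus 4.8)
The plan is to deduce the whole statement from the two preceding lemmas together with the numerical coincidence $288=36\cdot 8$. First I would check that the assignment sending an Aronhold system $\Xi$ to the divisor class given by~\eqref{equation:even-theta-from-Aronhold} is well defined and always lands among the even theta characteristics. By Lemma~\ref{lemma:all-288} any Aronhold system $\Xi$ can be written as $\Xi=\{\theta_{8i}(\mathcal{L})\mid 1\leqslant i\leqslant 7\}$, where $\mathcal{L}$ is the net of quadrics attached to a regular Cayley octad $P_1,\ldots,P_8$ determined by $\Xi$ via Theorem~\ref{theorem:bijection-Theta}. Applying the third assertion of Lemma~\ref{lemma:even-from-7-odd}, that is formula~\eqref{eq:even-from-7-odd} with $r=8$, the right hand side of~\eqref{equation:even-theta-from-Aronhold} equals $\theta(\mathcal{L})$, which is an even theta characteristic. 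This proves the first assertion and, crucially, identifies the class produced by~\eqref{equation:even-theta-from-Aronhold} with the even theta characteristic supplied by Lemma~\ref{lemma:all-288}.

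Next I would run the counting argument. The previous step gives a map $\Psi$ from the set of the $288$ Aronhold systems on $C$ to the set of its $36$ even theta characteristics. Fix an even theta characteristic $\theta$, and let $P_1,\ldots,P_8$ be the regular Cayley octad corresponding to $(C,\theta)$ via Theorem~\ref{theorem:bijection-Theta}, with net of quadrics $\mathcal{L}$. For each $r\in\{1,\ldots,8\}$ the seven odd theta characteristics $\theta_{ri}(\mathcal{L})$ with $i\neq r$ form an Aronhold system $\Xi_r$, and~\eqref{eq:even-from-7-odd} shows that $\Psi(\Xi_r)=\theta$ for every $r$; moreover the eight systems $\Xi_1,\ldots,\Xi_8$ are pairwise distinct, exactly as established in the proof of Lemma~\ref{lemma:all-288}. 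Hence every fibre of $\Psi$ has at least eight elements. Since there are $288=36\cdot 8$ Aronhold systems in total and exactly $36$ even theta characteristics, a pigeonhole count forces each fibre to have cardinality exactly eight.

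Finally, this equality of cardinalities forces $\Psi^{-1}(\theta)$ to consist precisely of the eight Aronhold systems $\Xi_1,\ldots,\Xi_8$ built from the octad of $\theta$, and by construction each $\Xi_r$ is obtained from $\theta$ as in Lemma~\ref{lemma:all-288}; this yields the last clause of the statement. I do not expect any genuine obstacle: the argument is bookkeeping layered on top of Lemmas~\ref{lemma:even-from-7-odd} and~\ref{lemma:all-288}, and the only point deserving care is verifying that the $\theta$ appearing in~\eqref{equation:even-theta-from-Aronhold} and the $\theta$ furnished by Lemma~\ref{lemma:all-288} are literally the same even theta characteristic, so that the fibres of $\Psi$ and the correspondence of Lemma~\ref{lemma:all-288} line up and the final count applies.
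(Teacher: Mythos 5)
Your proof is correct and follows essentially the same route as the paper, which derives the corollary directly from Lemma~\ref{lemma:even-from-7-odd} and Lemma~\ref{lemma:all-288}; the identification of the sum in~\eqref{equation:even-theta-from-Aronhold} with $\theta(\mathcal{L})$ via~\eqref{eq:even-from-7-odd} and the pigeonhole count $288=36\cdot 8$ are exactly the ingredients the paper uses (the counting already appears inside the proof of Lemma~\ref{lemma:all-288}). Your write-up merely makes explicit the bookkeeping that the paper leaves implicit.
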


To proceed we need to define a certain equivalence relation on the set of Aronhold systems.
Let~$\mathcal{T}_a$ be the set that consists of the pairs $(C, \Xi)$, where $C$ is a smooth quartic curve considered up to isomorphism, and $\Xi$ is an Aronhold system on $C$. Two members $(C_1, \Xi_1)$ and~\mbox{$(C_2, \Xi_2)$} in~$\mathcal{T}_a$ are considered to be equivalent if
\begin{itemize}
\item $C_1=C_2$;
\item one has
\[\sum_{\vartheta\in\Xi_1}\vartheta = \sum_{\vartheta\in\Xi_2}\vartheta;\]
\item there is an automorphism $\sigma\colon C_1\to C_2$ such that $\sigma^*(\Xi_2)=\Xi_1$.
\end{itemize}
In other words, the equivalence comes from the action of stabilizers of the
even theta characteristic attached to $(C,\Xi)$ by~\eqref{equation:even-theta-from-Aronhold}
in the automorphism group of $C$.
Let $\mathcal{T}_A$ be the set of the equivalence classes of members in $\mathcal{T}_a$.
Let $\mathcal{A}$ be the set of Aronhold heptads in $\mathbb{P}^3$ up to projective transformations.
 Define a map
\[\bigtriangleup\colon \mathcal{A}\to \mathcal{T}_A\]
by assigning  to a given Aronhold heptad
the Hessian curve of the corresponding net of quadrics together with the Aronhold system constructed as in~\eqref{eq:Aronhold-from-Aronhold}.

\begin{theorem}\label{theorem:diagram-one-to-one}
The map $\bigtriangleup$ is bijective.
\end{theorem}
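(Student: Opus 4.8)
The plan is to factor $\bigtriangleup$ through the bijection $\Theta$ of Theorem~\ref{theorem:bijection-Theta}. Recall that by Lemma~\ref{lemma:CB} an Aronhold heptad determines a unique regular Cayley octad containing it, and conversely removing one point from a regular Cayley octad produces an Aronhold heptad (Definition~\ref{definition:Aronhold-heptad}); thus I would first identify $\mathcal{A}$ with the set of pairs consisting of a regular Cayley octad, taken up to projective transformation, together with a distinguished point of it, namely the complement of the heptad. This gives a forgetful map $\mathcal{A}\to\mathcal{N}$ sending a heptad to its octad. On the other side I would use the forgetful map $\mathcal{T}_A\to\mathcal{T}$ sending the class of $(C,\Xi)$ to $(C,\theta)$, where $\theta$ is the even theta characteristic attached to $\Xi$ by~\eqref{equation:even-theta-from-Aronhold}; this is well defined on equivalence classes precisely because the defining conditions of the equivalence require the attached even theta characteristics to agree. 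By Theorem~\ref{theorem:bijection-Theta} the two base sets $\mathcal{N}$ and $\mathcal{T}$ are identified via $\Theta$, and I would check that $\bigtriangleup$ covers $\Theta$: for a heptad with octad $\{P_1,\dots,P_8\}$ and net $\mathcal{L}$, the attached even theta characteristic equals $\theta(\mathcal{L})$ by~\eqref{eq:even-from-7-odd}, while the Hessian quartic $H(\mathcal{L})$ is smooth by Lemma~\ref{lemma:smooth-quartic}, so the image of $\bigtriangleup$ lies over $\Theta(\mathcal{L})=(H(\mathcal{L}),\theta(\mathcal{L}))$. In particular $\bigtriangleup$ is well defined, since a projective transformation carrying one heptad to another carries nets to nets, Hessians to isomorphic Hessians, and the system~\eqref{eq:Aronhold-from-Aronhold} to the corresponding one.

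Having reduced to $\Theta$, it then suffices to match the fibres of the two forgetful maps. The key input here is Lemma~\ref{lemma:all-288}: for a fixed regular Cayley octad $P_1,\dots,P_8$ with net $\mathcal{L}$ and $(C,\theta)=\Theta(\mathcal{L})$, the eight choices of distinguished point $P_r$ yield exactly the eight Aronhold systems $\Xi_r=\{\theta_{ri}(\mathcal{L})\mid i\neq r\}$ whose attached even theta characteristic is $\theta$, and these exhaust all such systems. This is precisely a bijection between the eight points of the octad (the fibre of $\mathcal{A}\to\mathcal{N}$ over $[\mathcal{L}]$, before quotienting) and the eight Aronhold systems over $\theta$ (the fibre of $\mathcal{T}_A\to\mathcal{T}$ over $(C,\theta)$, before quotienting).

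Surjectivity of $\bigtriangleup$ would then follow directly: given a class in $\mathcal{T}_A$ represented by $(C,\Xi)$, I would produce the octad $\Theta^{-1}(C,\theta)$ with $\theta$ the even theta characteristic attached to $\Xi$, locate by Lemma~\ref{lemma:all-288} the distinguished point $P_r$ for which $\Xi=\Xi_r$, and take the heptad $\{P_1,\dots,P_8\}\setminus\{P_r\}$; by construction $\bigtriangleup$ sends it to $(C,\Xi)$. For injectivity I would argue that if two heptads $\mathcal{H}_1,\mathcal{H}_2$ satisfy $\bigtriangleup(\mathcal{H}_1)=\bigtriangleup(\mathcal{H}_2)$, then their images in $\mathcal{T}$ agree, so by injectivity of $\Theta$ their octads are projectively equivalent; fixing such a transformation identifies the two nets and the two Hessians, and identifies the action of the projective stabilizer of the octad with the action of $\mathrm{Stab}_{\Aut(C)}(\theta)$ on the eight systems $\Xi_r$ (again via Lemma~\ref{lemma:all-288}). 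The equivalence of $\bigtriangleup(\mathcal{H}_1)$ and $\bigtriangleup(\mathcal{H}_2)$ in $\mathcal{T}_A$ then says exactly that the two distinguished points lie in a single orbit of this group, i.e. that $\mathcal{H}_1$ and $\mathcal{H}_2$ are projectively equivalent.

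The hard part will be the bookkeeping of the equivalence relation on $\mathcal{T}_A$ against projective equivalence of heptads. Concretely, I expect the main obstacle to be verifying that the projective transformations of $\mathbb{P}^3$ preserving a regular Cayley octad correspond exactly to the automorphisms of $C$ fixing the attached even theta characteristic, and that under this correspondence the permutation action on the eight octad points is intertwined with the induced action on the eight Aronhold systems $\Xi_r$. It is precisely this compatibility that makes the three defining conditions of the equivalence on $\mathcal{T}_A$ — equality of the curve, equality of the attached even theta characteristic, and relation of the Aronhold systems by an automorphism of $C$ — translate into the existence of a single projective transformation carrying one heptad onto the other. All of the genuinely geometric content is already supplied by Theorem~\ref{theorem:bijection-Theta} and Lemma~\ref{lemma:all-288}, so the remaining work is this group-theoretic matching of fibres.
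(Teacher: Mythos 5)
Your architecture coincides with the paper's own proof: surjectivity is read off from Lemma~\ref{lemma:all-288} exactly as the paper does, and injectivity is reduced, via the bijectivity of $\Theta$ (Theorem~\ref{theorem:bijection-Theta}), to comparing two heptads inside one and the same regular Cayley octad, related by an automorphism $\sigma$ of $C$ preserving the attached even theta characteristic $\theta$. The problem is the step you yourself flag as ``the hard part'' and then leave unproven: that the projective transformations of $\mathbb{P}^3$ preserving the octad correspond exactly to the automorphisms of $C$ fixing $\theta$, equivariantly for the actions on the eight octad points and on the eight Aronhold systems $\Xi_r$. This is a genuine gap, and your closing assertion that all the geometric content is already supplied by Theorem~\ref{theorem:bijection-Theta} and Lemma~\ref{lemma:all-288} is not correct: $\Theta$ is a bijection between sets of isomorphism classes, and by itself it yields no homomorphism between stabilizer groups, let alone a surjective one. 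What injectivity actually requires is the nontrivial direction of your ``matching of fibres'': that every $\sigma\in\Aut(C)$ with $\sigma^*\theta=\theta$ is induced by some projective transformation of $\mathbb{P}^3$ preserving the net and hence its base locus, the octad. Without this, the relation $\sigma^*(\Xi_s)=\Xi_r$ gives you no projective transformation at all, so you cannot conclude that the two distinguished points lie in one orbit of the projective stabilizer, which is exactly the assertion that $\mathcal{H}_1$ and $\mathcal{H}_2$ are projectively equivalent.

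The paper closes this gap with a short geometric argument absent from your proposal. Since $\sigma$ fixes $\theta$, it preserves the complete linear system $|K_C+\theta|$; by Beauville's results this linear system embeds $C$ as the Steinerian sextic $S(\mathcal{L})$ of the net $\mathcal{L}$ corresponding to $(C,\theta)$, so $\sigma$, viewed as an automorphism of $S(\mathcal{L})\subset\mathbb{P}^3$, is induced by a projective transformation $g$ of $\mathbb{P}^3$; and $g$ preserves the net---the net is canonically recovered from $(C,\theta)$ together with the identification $\mathbb{P}^3=\mathbb{P}\bigl(H^0(C,K_C+\theta)^\ast\bigr)$---hence $g$ permutes the points of the Cayley octad. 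Once such a lift $g$ exists, the rest really is bookkeeping: $g$ carries the line through $P_a,P_b$ to the line through the images of these points, so $\sigma^*(\Xi_s)=\Xi_{g^{-1}(s)}$; since the eight systems $\Xi_1,\dots,\Xi_8$ are pairwise distinct by Lemma~\ref{lemma:all-288}, the equality $\sigma^*(\Xi_s)=\Xi_r$ forces $g(P_r)=P_s$, and then $g^{-1}$ maps the heptad omitting $P_s$ onto the heptad omitting $P_r$. So your plan becomes a proof only after this Steinerian-embedding argument (or an equivalent one) is supplied; as written, the central claim of injectivity is assumed rather than proven.
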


\begin{proof}
It is enough to show that for a given smooth plane quartic curve $C$ the map $\Delta$ induces one-to-one correspondence between members of $\mathcal{A}$ with the Hessian curve $C$ and members of $\mathcal{T}_A$ with the first component $C$.

It immediately follows from  Lemma~\ref{lemma:all-288} that the map $\bigtriangleup$ is surjective.

Suppose that $\bigtriangleup (X)=\bigtriangleup(Y) $ for  Aronhold heptads $X$ and $Y$. First of all, the Hessian curves of the nets of quadrics determined by $X$ and $Y$, respectively, are one and the same smooth quartic curve~$C$. Let $\Xi_X$ and $\Xi_Y$ be the Aronhold systems determined  by $X$ and $Y$, respectively.
Then
$$
-3K_C+ \sum_{\vartheta\in\Xi_X }\vartheta=-3K_C+ \sum_{\vartheta\in\Xi_Y }\vartheta
$$
is one and the same even theta characteristic $\theta$ on $C$.
Furthermore, since $\bigtriangleup (X)=\bigtriangleup(Y) $, there is an automorphism $\sigma$ of $C$ such that $\Xi_X=\sigma^*(\Xi_Y)$. Then the automorphism $\sigma$ preserves the linear system~$|K_C+\theta |$ that is the linear system of hyperplane sections of  the Steinerian curve of the net of quadrics $\mathcal{L}$ in $\mathbb{P}^3$
corresponding to~$(C, \theta)$ via Theorem~\ref{theorem:bijection-Theta}.
This means that the automorphism $\sigma$ considered as an automorphism of the Steinerian curve
is induced from a projective transformation of~$\mathbb{P}^3$ preserving the regular Cayley octad defined by $\mathcal{L}$. Thus, we conclude that $X=Y$ in~$\mathcal{A}$.
\end{proof}

\begin{corollary}\label{corollary:Aronhold-sets}
Let $V$ be a $28$-nodal double Veronese cone, and let $C$ be the smooth plane quartic curve
corresponding to $V$ via Theorem~\ref{theorem:one-to-one}. Then
there exists a natural one-to-one correspondence
between the set of
the diagrams~\eqref{equation:Prokhorov-diagram}
considered up to projective transformations of $\mathbb{P}^3$ and  the set of members in $ \mathcal{T}_A$ with the first component in the pair isomorphic to $C$.
\end{corollary}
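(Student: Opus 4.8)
The plan is to reduce the assertion to the bijectivity of $\bigtriangleup$ established in Theorem~\ref{theorem:diagram-one-to-one}, by first identifying the diagrams~\eqref{equation:Prokhorov-diagram} for $V$ with Aronhold heptads, and then cutting out those whose Hessian quartic is isomorphic to $C$.

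First I would make precise the claim that a diagram~\eqref{equation:Prokhorov-diagram} carries exactly the same information as its Aronhold heptad. In such a diagram the morphism $\pi$ is the blow up of $\mathbb{P}^3$ at seven points $P_1,\ldots,P_7$ forming an Aronhold heptad; the morphism $\phi$ is the map given by $|-2K_{\widehat{\mathbb{P}}^3}|$, hence determined by $\pi$ via Proposition~\ref{proposition:Prokhorov-construction}; the map $\kappa$ is the intrinsic half-anticanonical map of $V$; and the lower rational map is the net $\mathcal{L}(P_1,\ldots,P_7)$, determined by the heptad. Thus the only free datum is the heptad, and two diagrams become identified under projective transformations of $\mathbb{P}^3$ precisely when their heptads are projectively equivalent. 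Conversely, by Proposition~\ref{proposition:Prokhorov-construction} every Aronhold heptad whose Prokhorov construction produces a 3-fold isomorphic to $V$ gives rise to such a diagram. Hence the set of diagrams~\eqref{equation:Prokhorov-diagram} for $V$ up to projective transformations of $\mathbb{P}^3$ is in natural bijection with the set of Aronhold heptads, taken up to projective transformation, that yield a cone isomorphic to $V$.

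Next I would characterize the latter heptads through $C$. If an Aronhold heptad yields a cone isomorphic to $V$, then by Lemma~\ref{lemma:V-to-quartic} the Hessian quartic of its net is isomorphic to the quartic $C$ attached to $V$. Conversely, an arbitrary Aronhold heptad determines, via Definition~\ref{definition:Aronhold-heptad} and Theorem~\ref{theorem:bijection-Theta}, a regular Cayley octad corresponding to a pair $(C',\theta)$ whose first member $C'$ is the Hessian quartic; should $C'$ be isomorphic to $C$, Corollary~\ref{corollary:all-V-same} forces the resulting cone to be the unique $28$-nodal double Veronese cone associated to $C$ by Theorem~\ref{theorem:one-to-one}, that is, $V$ itself. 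Therefore the heptads yielding $V$ are exactly those with Hessian quartic isomorphic to $C$. Since $\bigtriangleup$ sends a heptad to a pair whose first component is this Hessian quartic, restricting the bijection of Theorem~\ref{theorem:diagram-one-to-one} to this locus produces a bijection onto the members of $\mathcal{T}_A$ with first component isomorphic to $C$; composing with the identification of the previous paragraph completes the correspondence.

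The step I expect to require the most care is the identification of diagrams with heptads: one must check that a diagram is genuinely reconstructed from its heptad alone, so that the equivalence among diagrams is exactly projective equivalence of heptads with no hidden contribution from $\mathrm{Aut}(V)$, and that the phrase ``for $V$'' is correctly read as ``producing a cone isomorphic to $V$''. Once this bookkeeping is settled, the remaining steps are formal consequences of the results already proved.
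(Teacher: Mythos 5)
Your proposal is correct and takes essentially the same route as the paper: the paper's proof likewise identifies the diagrams~\eqref{equation:Prokhorov-diagram} up to projective transformations of $\mathbb{P}^3$ with the members of $\mathcal{A}$ whose Hessian curve is $C$ (citing Corollary~\ref{corollary:all-V-same} for this step) and then invokes the bijectivity of $\bigtriangleup$ from Theorem~\ref{theorem:diagram-one-to-one}. Your write-up merely spells out, via Lemma~\ref{lemma:V-to-quartic}, Proposition~\ref{proposition:Prokhorov-construction}, and Theorem~\ref{theorem:one-to-one}, the bookkeeping that the paper compresses into the word ``obvious''.
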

\begin{proof}
This  immediately follows from Theorem~\ref{theorem:diagram-one-to-one}, because by Corollary~\ref{corollary:all-V-same} there is an obvious one-to-one correspondence
between the set of
the diagrams~\eqref{equation:Prokhorov-diagram}
considered up to projective transformations of~$\mathbb{P}^3$ and  the set of members in $ \mathcal{A}$ with the Hessian curve $C$.
\end{proof}

\begin{corollary}\label{corollary:288-diagram}
For a general smooth plane quartic curve $C$, there are exactly $288$  diagrams~\eqref{equation:Prokhorov-diagram} associated to  $C$,
up to projective transformations of $\mathbb{P}^3$.
\end{corollary}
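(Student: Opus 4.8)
The plan is to reduce the statement to a count of members of the set $\mathcal{T}_A$ via Corollary~\ref{corollary:Aronhold-sets}, and then to evaluate that count using the fact that a general smooth plane quartic has no nontrivial automorphisms. By Corollary~\ref{corollary:Aronhold-sets}, the diagrams~\eqref{equation:Prokhorov-diagram} attached to $V$ up to projective transformations of $\mathbb{P}^3$ are in natural bijection with the members of $\mathcal{T}_A$ whose first component is isomorphic to the associated quartic $C$. Thus it suffices to show that for general $C$ there are exactly $288$ such members.

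The first step is to recall that a general smooth plane quartic curve $C$ has trivial automorphism group. Indeed, a smooth plane quartic is precisely a non-hyperelliptic curve of genus $3$ in its canonical embedding, and the generic such curve admits no nontrivial automorphisms; equivalently, in the classification of automorphism groups of plane quartics (see~\cite[Theorem~6.5.2]{Dolgachev}, \cite{Henn}, or~\cite{KuribayashiKomiya}) the generic stratum is the one with trivial group. We therefore work under the assumption $\mathrm{Aut}(C)=\{\mathrm{id}\}$.

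Next I would unwind the equivalence relation defining $\mathcal{T}_A$ under this assumption. By definition, two pairs $(C,\Xi_1)$ and $(C,\Xi_2)$ with the same first component $C$ represent the same member of $\mathcal{T}_A$ only if there is an automorphism $\sigma$ of $C$ with $\sigma^*(\Xi_2)=\Xi_1$. Since $\mathrm{Aut}(C)$ is trivial, the only such $\sigma$ is the identity, whence $\Xi_1=\Xi_2$; conversely equal Aronhold systems are trivially equivalent. Consequently the equivalence relation is trivial on pairs with first component $C$, and the members of $\mathcal{T}_A$ lying over $C$ are in bijection with the Aronhold systems on $C$ themselves. Since every smooth plane quartic carries exactly $288$ Aronhold systems (see~\cite[Proposition~IX.3]{DolgachevOrtland}), there are exactly $288$ such members of $\mathcal{T}_A$, hence exactly $288$ diagrams~\eqref{equation:Prokhorov-diagram} up to projective transformations of $\mathbb{P}^3$.

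The only genuinely nontrivial ingredient here is the triviality of $\mathrm{Aut}(C)$ for a general quartic; once this is granted, the rest is purely formal bookkeeping with the equivalence relation and the known count of Aronhold systems. The word \emph{general} in the statement marks precisely the locus where $\mathrm{Aut}(C)$ is trivial: for special quartics with larger automorphism group the $288$ Aronhold systems get grouped into fewer equivalence classes, which is exactly why Corollary~\ref{corollary:Aronhold-sets} (rather than a naive count) is needed in order to treat an arbitrary $V$.
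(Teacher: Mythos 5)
Your proof is correct and follows exactly the route the paper intends: the paper states this corollary as an immediate consequence of Corollary~\ref{corollary:Aronhold-sets}, the implicit reasoning being precisely that a general quartic has trivial automorphism group, so the equivalence relation defining $\mathcal{T}_A$ collapses to equality over $C$ and the count is the number of Aronhold systems, namely $288$ (as also indicated in the introduction, where $288=36\cdot 8$ is identified with the number of Aronhold systems). Your write-up simply makes explicit the bookkeeping the paper leaves to the reader, including the correct interpretation of ``general'' as the locus where $\mathrm{Aut}(C)$ is trivial.
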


If the smooth plane quartic curve $C$ is not general, the number of diagrams~\eqref{equation:Prokhorov-diagram} associated to  $C$
up to projective transformations of $\mathbb{P}^3$
may be smaller than~$288$. Note however that this number is always at least~$36$.

\begin{example}
Let $C$ be the Klein quartic curve given in $\mathbb{P}^2$ by the equation
\[x^3y+y^3z+z^3x=0.\]
Then $\Aut(C)\cong\mathrm{PSL}_2(\mathbb{F}_7)$.
There exists a unique $\Aut(C)$-invariant even theta characteristic~$\theta_0$ on~$C$ (see \cite[Example~2]{Burns} or \cite[Example~2.8]{Dolgachev97}).
The remaining $35$ even theta characteristics split into three $\Aut(C)$-orbits of
lengths $7$, $7$, and $21$, respectively (see \cite[(8.3)]{DolgachevKanev}).
Furthermore, according to \cite[Theorem~10.1]{Jeurissen}
the set of $288$ Aronhold systems on $C$ splits into four $\Aut(C)$-orbits of lengths~$8,\,56,\,56$, and $168$,
respectively. This means that the stabilizer of every even theta characteristic~$\theta$ in~$\Aut(C)$ acts transitively
on the set of eight Aronhold systems corresponding to $\theta$ via~\eqref{equation:even-theta-from-Aronhold}.

Now let $V$ be the $28$-nodal double Veronese cone corresponding to $C$ via Theorem~\ref{theorem:one-to-one}.
Corollary~\ref{corollary:Aronhold-sets} tells us that $V$ admits exactly~$36$
diagrams~\eqref{equation:Prokhorov-diagram}
up to projective transformations of~$\mathbb{P}^3$.
One of them corresponds to the $\Aut(C)$-invariant theta characteristic $\theta_0$.
The group~\mbox{$\Aut(C)$} acts on the corresponding three-dimensional projective space
preserving the regular Cayley octad.
Note however that since the regular Cayley octad
in $\mathbb{P}^3$ is a single $\Aut(C)$-orbit (see for instance \cite[Lemma~3.2]{CheltsovShramov-PSL}),
this diagram is not equivariant with respect to the whole
group~\mbox{$\Aut(C)$}. It is equivariant with respect to the
subgroup~\mbox{$\mumu_7\rtimes\mumu_3\subset\Aut(C)$}.
\end{example}

\medskip
There is another natural
way to recover the smooth plane quartic $C$ corresponding to the $28$-nodal double Veronese cone $V$.

Let $P_1,\ldots, P_7$ be seven points in $\mathbb{P}^3$ that form an Aronhold heptad. Let $P_8$ be the eighth base point of the net $\mathcal{L}(P_1,\ldots, P_7)$. Let $\rho\colon\mathbb{P}^3\dasharrow \mathbb{P}^2$ be the linear projection centered at~$P_8$.

\begin{lemma}\label{lemma:Gale}
No three of the seven points $\rho(P_1),\ldots , \rho(P_7)$ are collinear (so that in particular no two of them coincide),
and no six points among them are contained in a conic.
\end{lemma}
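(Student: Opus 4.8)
The plan is to pull incidence conditions among the planar points $\rho(P_1),\dots,\rho(P_7)$ back to incidence conditions among $P_1,\dots,P_8$ in $\mathbb{P}^3$, and then to obstruct the bad configurations using the defining properties of the regular Cayley octad. The key geometric observation is that the fibres of the projection $\rho$ are exactly the lines through $P_8$, so that the preimage of a line in $\mathbb{P}^2$ is a plane through $P_8$, while the preimage of an irreducible conic is a rank-$3$ quadric cone with vertex $P_8$. Throughout I will use the fact recorded after Definition~\ref{definition:Aronhold-heptad} that any seven points of a regular Cayley octad form an Aronhold heptad; combined with condition~(A$'$) of Lemma~\ref{lemma:Aronhold-condition}, and since every four of the eight points $P_1,\dots,P_8$ lie in some seven-point subset, this shows that no four of $P_1,\dots,P_8$ are coplanar, and in particular no three are collinear.

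First I would treat distinctness and collinearity. Since $P_i,P_j,P_8$ are never collinear for distinct $i,j\in\{1,\dots,7\}$, the lines $\overline{P_8P_i}$ are pairwise distinct, whence $\rho(P_i)\neq\rho(P_j)$. If three of the images, say $\rho(P_i),\rho(P_j),\rho(P_k)$, were collinear on a line $\ell\subset\mathbb{P}^2$, then $P_i,P_j,P_k$ would lie in the plane $\rho^{-1}(\ell)$, which also passes through $P_8$; this produces four coplanar points of the octad, a contradiction.

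For the conic statement, suppose six of the projected points, say those indexed by $S\subset\{1,\dots,7\}$ with $|S|=6$, lie on a conic $D$. If $D$ is reducible (a pair of lines or a double line), then at least three of the six points lie on a single line, contradicting the collinearity part just proved; hence $D$ is a smooth conic. Let $Q_D$ be the quadric cone over $D$ with vertex $P_8$, that is, the closure of $\rho^{-1}(D)$: it is an irreducible quadric surface, singular precisely at $P_8$, containing $P_8$ and every $P_i$ with $i\in S$. These seven points form the Aronhold heptad $\{P_1,\dots,P_8\}\setminus\{P_m\}$, where $\{m\}=\{1,\dots,7\}\setminus S$, and the corresponding net of quadrics $\mathcal{L}$ has the entire octad $\{P_1,\dots,P_8\}$ as its base locus, a reduced scheme of eight distinct points by Definition~\ref{definition:Aronhold-heptad} together with Lemma~\ref{lemma:CB} and B\'ezout. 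Now $Q_D$ is a member of $\mathcal{L}$ singular at the base point $P_8$. Completing $Q_D$ to a basis $Q_D,Q_2,Q_3$ of the net, the base scheme equals $Q_D\cap Q_2\cap Q_3$, which by B\'ezout has degree $8$ while its local multiplicity at $P_8$ is at least $\mathrm{mult}_{P_8}(Q_D)\cdot\mathrm{mult}_{P_8}(Q_2)\cdot\mathrm{mult}_{P_8}(Q_3)\geqslant 2$. This contradicts the base locus being reduced, and completes the proof.

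The main obstacle, and the heart of the argument, is the conic case: one must recognize that a conic through six of the images lifts to a quadric cone which is a genuine member of the relevant net yet is singular at the base point $P_8$, and then convert the reducedness of the Cayley octad (eight distinct base points) into the forbidden local multiplicity. The only remaining points requiring care are the reduction of the degenerate-conic case to the already-established non-collinearity, and the verification that $\{P_i:i\in S\}\cup\{P_8\}$ is indeed one of the Aronhold heptads of the octad, so that its net has the full reduced octad as base locus.
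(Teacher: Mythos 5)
Your proof is correct and follows essentially the same route as the paper's: both cases are handled by pulling the planar configuration back through $\rho$ --- a plane through $P_8$ for the collinearity case, and the quadric cone with vertex $P_8$ for the conic case --- and the decisive contradiction is exactly the paper's observation that a member of the net singular at the base point $P_8$ forces the base locus not to be eight distinct points. The only differences are cosmetic: the paper finishes the collinearity case by noting that otherwise the net would restrict to a net of conics through four coplanar points (forcing a reducible member), and it places the cone in the net $\mathcal{L}(P_1,\ldots,P_7)$ via Lemma~\ref{lemma:CB} rather than via the heptad $\{P_i : i\in S\}\cup\{P_8\}$, while you spell out the final multiplicity count that the paper leaves implicit.
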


\begin{proof}
If three points, say  $\rho(P_1),\rho(P_2)$, and $\rho(P_3)$, lie on a line,
then the four points  $P_1,P_2, P_3$, and~$P_8$ lie on a plane.
This implies that there is an element in~\mbox{$\mathcal{L}(P_1,\ldots, P_7)$} that contains the plane.
Otherwise the restriction of the net $\mathcal{L}(P_1,\ldots, P_7)$ to the plane would define a net of conics passing through the four points
$P_1,P_2, P_3$, and $P_8$.

We now suppose that six points, say  $\rho (P_1),\ldots, \rho (P_6)$, lie on a conic. Then there is a quadric surface
in $\mathbb{P}^3$ passing through the six points $P_1,\ldots, P_6$ together with $P_8$. It follows from Lemma~\ref{lemma:CB}
that the quadric surface should pass through the point $P_7$.
If the base point $P_8$ of the net $\mathcal{L}(P_1,\ldots, P_7)$
is a singular point of a member of the net,
then the base locus of the net~\mbox{$\mathcal{L}(P_1,\ldots, P_7)$}
could not consists of eight distinct points.
\end{proof}

\begin{remark}\label{remark:Gale-recover}
The heptad $\rho(P_1),\ldots , \rho(P_7)$ is the \emph{Gale transform}
of the heptad~\mbox{$P_1,\ldots,P_7$} (see~\cite{EP00}, and in particular the examples following \cite[Corollary~3.2]{EP00}).
Thus, the points~\mbox{$\rho(P_1),\ldots , \rho(P_7)$} uniquely define the Aronhold heptad~\mbox{$P_1,\ldots,P_7$} and
the regular Cayley octad~\mbox{$P_1,\ldots,P_8$} up to projective transformations of~$\mathbb{P}^3$.
\end{remark}

Now let $\alpha\colon S\to \mathbb{P}^2$ be the blow up
of $\mathbb{P}^2$ at the seven points $\rho(P_1),\ldots , \rho(P_7)$.
By Lemma~\ref{lemma:Gale} the surface $S$ is a smooth del Pezzo surface of degree $2$, so that its anticanonical linear system
yields the double cover $\varphi\colon S\to\mathbb{P}^2$ branched along some smooth plane quartic curve $C$.
The quartic curve $C$ coincides with the Hessian curve of the net
of quadrics $\mathcal{L}(P_1,\ldots, P_7)$
(see~\mbox{\cite[Proposition~6.3.11]{Dolgachev}}
or~\mbox{\cite[Proposition~IX.2]{DolgachevOrtland}}).
The above maps are diagramed as follows:
\begin{equation}
\label{equation:non-commutative}
\xymatrix{
V\ar@{-->}[d]_{\kappa}&\widehat{\mathbb{P}}^3\ar@{->}[d]^{\pi}\ar@{->}[l]_{\phi}&&\ S\ar@{->}[dl]_{\alpha}\ar@{->}[d]^{\varphi}\\%
\mathbb{P}^2&\mathbb{P}^3\ar@{-->}[r]_{\rho}&\mathbb{P}^2&\mathbb{P}^2}
\end{equation}
Here $\phi$ is a small resolution of all singular points of the 3-fold $V$,
the morphism $\pi$ is the blow up of~$\mathbb{P}^3$ at the seven distinct points $P_1,\ldots,P_7$,  and the rational map $\kappa$ is given by the half-anticanonical linear system of $V$.

\begin{remark}\label{remark:Aronhold}
The image of the exceptional curve of $\alpha$ over the point $\rho(P_i)$ by the double cover morphism $\varphi$ is the bitangent line of $C$ corresponding to the odd theta characteristic~$\theta_{8i}(\mathcal{L})$
(see~\mbox{\cite[Proposition~IX.2]{DolgachevOrtland}}).
These seven odd theta characteristic form an  Aronhold system.
Moreover, this Aronhold system is the same
as the one given by~\eqref{eq:Aronhold-from-Aronhold}.
\end{remark}

\begin{remark}
Following the diagram \eqref{equation:non-commutative}, one may easily figure out the last statement in Lemma~\ref{lemma:Q1Q2}. Indeed, a fiber of the rational map $\kappa$ is given by the base locus of a pencil contained in the net~$\mathcal{L}(P_1,\ldots, P_7)$. In case of a smooth fiber, the fiber is the proper transform of a smooth intersection  $E$ of  two quadrics in the net. The intersection $E$ is a curve of degree $4$ passing through eight points $P_1,\ldots, P_8$ which are the base points of the net. Thus the projection $\rho$ sends~$E$ to a cubic curve passing through the seven points $\rho(P_1),\ldots, \rho(P_7)$. The proper transform of this cubic curve on the del Pezzo surface $S$ is a smooth member of the anticanonical linear system of $S$. Since the double cover $\varphi$ is given by $|-K_S|$, the latter  proper transform is the pull-back of a line on $\mathbb{P}^2$ by $\varphi$.  It is the double cover of the line branched at the distinct four points at which the line and the quartic curve $C$ intersect. The point of the dual projective plane corresponding to this line is exactly the point over which the original fiber lies.
\end{remark}

\section{$\mathfrak{S}_4$-symmetric $28$-nodal double Veronese cones}
\label{section:S4-Veronese}

In this section
we study the $\mathfrak{S}_4$-equivariant birational geometry of
double Veronese cones introduced in Example~\ref{example:V1-S4}.

Let $C$ and $V$ be  the plane quartic curve and the corresponding $28$-nodal double Veronese cone  in Example~\ref{example:V1-S4}.
We consider the automorphisms of $\mathbb{P}(1,1,1,2,3)$
\[\begin{split}
&\tau\colon [s:t:u:v:w]\mapsto [s:t:u:v:-w],\\
&\sigma_1\colon [s:t:u:v:w]\mapsto [-s:-t:u:v:w],\\
&\sigma_2\colon [s:t:u:v:w]\mapsto [u:s:t:v:w],\\
&\sigma_3\colon [s:t:u:v:w]\mapsto [t:s:u:v:-w].
\end{split}
\]
Since they keep $V$ invariant, they may be regarded as automorphisms of $V$.
Then the involution~$\tau$ is the Galois involution of the double cover of the Veronese cone.
Moreover, by Theorem~\ref{theorem:Aut}, we have
$$
\mathrm{Aut}(V)\cong\mumu_2\times\mathrm{Aut}(C),
$$
where the subgroup $\mumu_2$ is generated by $\tau$.

Let $\mathfrak{G}$ be the subgroup in $\mathrm{Aut}(V)$ that is generated by $\sigma_1$, $\sigma_2$, and $\sigma_3$,
and let $\mathfrak{G}^\prime$ be the subgroup in $\mathrm{Aut}(V)$ generated by $\sigma_1$, $\sigma_2$, and $\tau\circ\sigma_3$.
Then $\mathfrak{G}\cong\mathfrak{G}^\prime\cong\mathfrak{S}_4$, and both of them are projected isomorphically to the same subgroup in $\mathrm{Aut}(C)$.
Meanwhile, the subgroups $\mathfrak{G}$ and $\mathfrak{G}^\prime$ are not conjugate in $\mathrm{Aut}(V)$.

To facilitate the computations, introduce an auxiliary variable
\begin{equation}\label{eq:bar-v-vs-v}
\bar{v}=v-\mu(s^2+t^2+u^2),
\end{equation}
where $\mu=\frac{2\lambda}{3}$.
The defining equation of the 3-fold $V$  may then be rewritten as
\begin{multline}\label{eq:V-bar-v}
w^2=\bar{v}\Big(\bar{v}^2+3\mu\bar{v}(s^2+t^2+u^2)+3\mu^2(s^2+t^2+u^2)^2-g_4(s,t,u)\Big)+\\
+\mu^3(s^2+t^2+u^2)^3-\mu(s^2+t^2+u^2)g_4(s,t,u)+g_6(s,t,u).
\end{multline}
One has
$$
\mu^3(s^2+t^2+u^2)^3-\mu(s^2+t^2+u^2)g_4(s,t,u)+g_6(s,t,u)=4(\lambda-2)^2(\lambda+1)s^2t^2u^2.
$$
Set $\gamma=2(\lambda-2)\sqrt{\lambda+1}$. We see from~\eqref{eq:V-bar-v}
that the 3-fold $V$ contains the surfaces $\Pi_{\pm}$ given by
equations
$$
\bar{v}=w\mp\gamma stu=0,
$$
that is, by equations
\begin{equation}\label{eq:Pi-pm-eqs}
\left\{\aligned
&v=\mu(s^2+t^2+u^2),\\
&w=\pm\gamma stu.
\endaligned
\right.
\end{equation}
Note also that both $\Pi_+$ and $\Pi_-$ are $\mathfrak{G}^\prime$-invariant,
and both of them are not $\mathbb{Q}$-Cartier divisors.
In particular, we see that $\mathrm{rk}\,\mathrm{Cl}(V)^{\mathfrak{G}^\prime}\ne 1$.

Let us introduce a new coordinate
$$
r=\frac{w-\gamma stu}{\bar{v}}.
$$
On the 3-fold $V$, one has
$$
r=\frac{\bar{v}^2+3\mu\bar{v}(s^2+t^2+u^2)+3\mu^2(s^2+t^2+u^2)^2-g_4(s,t,u)}{w+\gamma stu}.
$$
This gives us a birational map of $V$ to the complete intersection in $\mathbb{P}(1,1,1,1,2,3)$ given by
\begin{equation}\label{eq:w-exclusion}
\left\{\aligned
&r\bar{v}=w-\gamma stu,\\
&r(w+\gamma stu)=\bar{v}^2+3\mu\bar{v}(s^2+t^2+u^2)+3\mu^2(s^2+t^2+u^2)^2-g_4(s,t,u).\\
\endaligned
\right.
\end{equation}
Now excluding the variable $w$ using the first equation
in~\eqref{eq:w-exclusion} and expressing $\bar{v}$ in terms of
$s$, $t$, $u$, and~$v$ by~\eqref{eq:bar-v-vs-v},
we obtain a quartic hypersurface in~\mbox{$\mathbb{P}(1,1,1,1,2)$}
with homogeneous coordinates $s$, $t$, $u$, $r$, and $v$
that is given by
$$
r(r\bar{v}+2\gamma stu)=\bar{v}^2+3\mu\bar{v}(s^2+t^2+u^2)+3\mu^2(s^2+t^2+u^2)^2-g_4(s,t,u)
$$
or equivalently by
\begin{equation}\label{eq:double-cover-from-V1}
v^2+v\Big(\mu(s^2+t^2+u^2)-r^2\Big)+\mu^2(s^2+t^2+u^2)^2-g_4(s,t,u)+\mu r^2(s^2+t^2+u^2)-2\gamma rstu=0.
\end{equation}
We denote this 3-fold by $W$.

We have constructed a birational map $V\dasharrow W$ that fits into the following $\mathfrak{G}^\prime$-equivariant diagram
\begin{equation}
\label{equation:unprojection}
\xymatrix{
\widehat{V}\ar@{->}[rr]\ar@{->}[d]&&V\ar@{-->}[lld]\\%
W&&}
\end{equation}
Here the horizontal arrow is the blow up of the
surface $\Pi_+$, and the vertical arrow
is the blow down of the proper transform of the surface $\Pi_-$ to the point
$$
[s:t:u:r:v]=[0:0:0:1:0],
$$
which is a smooth point of the 3-fold $W$.

The 3-fold $W$ is the double cover of $\mathbb{P}^3$ branched over a quartic surface
$$
\Big(\mu(s^2+t^2+u^2)-r^2\Big)^2-4\Big(\mu^2(s^2+t^2+u^2)^2-g_4(s,t,u)+\mu(s^2+t^2+u^2)r^2-2\gamma rstu\Big)=0,
$$
where we consider $s$, $t$, $u$ and $r$ as homogeneous coordinates on $\mathbb{P}^3$.
This quartic surface has exactly~$16$ nodes, which means that it is a Kummer surface, so that $W$ would also have exactly~$16$ nodes.

Let $S$ be the surface in $W$ that is cut out by $r=0$.
Then $S$ is $\mathfrak{G}^\prime$-invariant.
It is the double cover of $\mathbb{P}^2$  branched over the quartic curve
$$
4g_4(s,t,u)-3\mu^2(s^2+t^2+u^2)^2=0,
$$
where we consider $s$, $t$ and $u$ as homogeneous coordinates on $\mathbb{P}^2$.
Keeping in mind that $\mu=\frac{2\lambda}{3}$, we see that this curve is given by
\begin{equation}\label{eq:branch-curve-section}
s^4+t^4+u^4+\lambda(t^2u^2+s^2u^2+s^2t^2)=0.
\end{equation}
Therefore, it is isomorphic to our original curve $C$. In particular, the surface $S$ is a smooth del Pezzo surface.

\begin{lemma}
\label{lemma:S4-reps}
Let $\mathbb{U}=\mathrm{Pic}(S)\otimes\mathbb{C}$.
One has an isomorphism of $\mathfrak{G}^{\prime}$-representations
\begin{equation}
\label{eq:W3W3prime}
\mathbb{U}\cong\mathbb{I}\oplus\mathbb{I}^\prime\oplus\mathbb{W}_3\oplus\mathbb{W}_3^\prime,
\end{equation}
where $\mathbb{I}$ is the trivial representation, $\mathbb{I}^\prime$ is the sign representation,
and $\mathbb{W}_3$ and $\mathbb{W}_3^\prime$ are non-isomorphic irreducible three-dimensional representations.
\end{lemma}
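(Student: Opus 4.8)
The plan is to compute the character of the $8$-dimensional $\mathfrak{G}'$-representation $\mathbb{U}=\mathrm{Pic}(S)\otimes\mathbb{C}$ and to match it, class by class, with the character of $\mathbb{I}\oplus\mathbb{I}'\oplus\mathbb{W}_3\oplus\mathbb{W}_3'$. Since $S$ is a rational del Pezzo surface of degree $2$, one has $H^1(S,\mathbb{C})=H^3(S,\mathbb{C})=0$ and $H^2(S,\mathbb{C})=\mathrm{Pic}(S)\otimes\mathbb{C}=\mathbb{U}$ with $\dim\mathbb{U}=8$. The action of $\mathfrak{G}'$ on $H^0$ and $H^4$ is trivial, so the topological Lefschetz fixed point formula gives, for every $g\in\mathfrak{G}'$,
\[
\chi_{\mathbb{U}}(g)=e\big(\mathrm{Fix}_S(g)\big)-2,
\]
where $e$ denotes the topological Euler characteristic and $\mathrm{Fix}_S(g)$ is the (smooth) fixed locus of $g$ on $S$. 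Thus the whole computation reduces to determining these fixed loci. Note that the character takes integer values that are locally constant in the family, so it suffices to work for general $\lambda$.

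The group $\mathfrak{S}_4$ has five conjugacy classes, represented by the identity, a transposition, a double transposition, a $3$-cycle, and a $4$-cycle, and I would treat them using the explicit model $S=\{y^2=f(s,t,u)\}\subset\mathbb{P}(1,1,1,2)$, where $f$ is the branch quartic~\eqref{eq:branch-curve-section}. Under the isomorphism $\mathfrak{G}'\cong\mathfrak{S}_4$ the normal Klein four-subgroup is the group of even sign changes (for instance $\sigma_1=\mathrm{diag}(-1,-1,1)$, a double transposition), while the coordinate permutations generated by $\sigma_2$ and $\tau\circ\sigma_3$ give the complementary $\mathfrak{S}_3$; so $\mathbb{P}^2$ is the projectivization of an irreducible three-dimensional representation, in agreement with Lemma~\ref{lemma:must-contain-S4}. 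For each representative I would first find the fixed points of the induced element on $\mathbb{P}^2$ (a line plus a point for each involution; three distinct points for the $3$- and $4$-cycles), and then lift to $S$. The essential input is that each generator $\sigma_1,\sigma_2,\tau\circ\sigma_3$ acts trivially on the coordinate $w$ (since $\tau\colon w\mapsto-w$ and $\sigma_3\colon w\mapsto-w$), which determines the relevant lift to $S$ as the one acting trivially on the double-cover coordinate.

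Carrying this out class by class, I expect the following. For either involution the fixed locus is the full (elliptic) double cover of the invariant line, of Euler characteristic $0$, together with the two points of $S$ over the isolated fixed point of $\mathbb{P}^2$; hence $e(\mathrm{Fix}_S)=2$. For the $3$-cycle the three fixed points of $\mathbb{P}^2$ are $[1:1:1]$, which lies off $C$ and contributes two fixed points of $S$, and $[1:\omega:\omega^2]$, $[1:\omega^2:\omega]$, which lie on $C$ and contribute one each, giving $e(\mathrm{Fix}_S)=4$. For the $4$-cycle one of the three fixed points of $\mathbb{P}^2$ lies off $C$ with both preimages fixed, whereas over the remaining two the weighted-projective rescaling forces the sign $-1$ on $y$ and the lift interchanges the two preimages, so these contribute nothing and $e(\mathrm{Fix}_S)=2$. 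Consequently
\[
\chi_{\mathbb{U}}=(8,\,0,\,0,\,2,\,0)
\]
on the classes $(e,\,(12),\,(12)(34),\,(123),\,(1234))$.

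Finally I would compare this with the character table of $\mathfrak{S}_4$: a direct inner-product computation gives multiplicity $1$ for each of $\mathbb{I}$, $\mathbb{I}'$, $\mathbb{W}_3$, $\mathbb{W}_3'$ and multiplicity $0$ for the two-dimensional irreducible representation $\mathbb{W}_2$, which yields the decomposition~\eqref{eq:W3W3prime}. As a consistency check one may evaluate against the Geiser involution $\tau$, whose fixed locus is the genus-$3$ ramification curve $C$ with $e(C)=-4$, so $\chi_{\mathbb{U}}(\tau)=-6=1-7$, as it must be since $\tau$ acts by $+1$ on $\mathbb{C}\,K_S$ and by $-1$ on $K_S^{\perp}\otimes\mathbb{C}$. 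The main obstacle is the honest determination of the fixed loci, and in particular the behaviour of the lift on the double cover: for each fixed point of $\mathbb{P}^2$ one must correctly decide whether its two preimages in $S$ are both fixed or are swapped, and whether an entire invariant curve is fixed. This is exactly where the trivial action on $w$, together with the rescaling rules in $\mathbb{P}(1,1,1,2)$, must be used with care, since a wrong sign would replace a fixed elliptic curve (with $e=0$) by the four branch points (with $e=4$) and spoil the character.
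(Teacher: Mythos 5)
Your proof is correct, and it takes a genuinely different route from the paper's. You compute the full character of $\mathbb{U}$ via the topological Lefschetz fixed point formula $\chi_{\mathbb{U}}(g)=e\big(\mathrm{Fix}_S(g)\big)-2$, working out fixed loci on the explicit double cover in $\mathbb{P}(1,1,1,2)$; your values $(8,0,0,2,0)$ check out class by class, including the delicate points: for either type of involution the rescaling $\mu=-1$ acts on the double-cover coordinate by $\mu^{2}=+1$, so the entire elliptic curve over the fixed line is fixed pointwise; for a $4$-cycle such as $(s,t,u)\mapsto(-t,s,-u)$ the eigenpoints $[1:\pm i:0]$ lie off the branch quartic~\eqref{eq:branch-curve-section} but need $\mu=\mp i$, so $\mu^{2}=-1$ swaps their preimages and only $[0:0:1]$ contributes; and for a $3$-cycle the eigenpoints $[1:\omega:\omega^{2}]$ and $[1:\omega^{2}:\omega]$ lie on the branch quartic and contribute one fixed point each. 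The inner products with the character table of $\mathfrak{S}_4$ then give exactly~\eqref{eq:W3W3prime}, and your bookkeeping is insensitive to the labelling ambiguity (transposition versus $4$-cycle, $\mathbb{W}_3$ versus $\mathbb{W}_3^\prime$) because both the character $(8,0,0,2,0)$ and the asserted decomposition are symmetric under twisting by sign. The paper argues lattice-theoretically instead: it exhibits twelve explicit $(-1)$-curves forming a single orbit under the companion lift $\mathfrak{G}^{\prime\prime}$ (whose odd elements flip the double-cover coordinate), obtains from their incidences a $\mathfrak{G}^{\prime\prime}$-equivariant conic bundle, identifies $\mathfrak{G}^{\prime\prime}$ as the odd lift and hence $\mathfrak{G}^{\prime}$ as the even lift via \cite[Table~7]{DolgachevIskovskikh}, deduces $\mathfrak{G}^{\prime}$-minimality (so a unique trivial summand), excludes two-dimensional summands by faithfulness of the action on $\mathrm{Pic}(S)$ (\cite[Corollary~8.2.40]{Dolgachev}), and finally separates $\mathbb{W}_3\oplus\mathbb{W}_3^\prime$ from $\mathbb{W}_3^{\oplus 2}$ by a trace computation on the basis of $(-1)$-curves. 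Your approach is more elementary and self-contained, avoiding both citations; what the paper's proof buys in exchange is explicit auxiliary geometry --- the even/odd lift dichotomy and, above all, the conic bundle fiber $F$ --- which is reused in \S\ref{section:question} to compute the intersection form on the invariant sublattice $\Lambda_S$, whereas your argument establishes the existence of the two-dimensional summand $\mathbb{I}\oplus\mathbb{I}^\prime$ without producing those generators. The two places where you are terse are harmless: the identification of the $\mathfrak{G}^{\prime}$-action on $S$ as the lift acting trivially on the double-cover coordinate (which follows since $stu$, $v$, $w$, and hence $r=(w-\gamma stu)/\bar{v}$, are invariant under $\sigma_1$, $\sigma_2$, $\tau\circ\sigma_3$, exactly as the paper asserts at the start of its proof), and the validity of the topological Lefschetz formula for finite-order automorphisms, which is standard.
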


\begin{proof}
Recall from~\eqref{eq:double-cover-from-V1} that the surface $S$ is given in $\mathbb{P}(1,1,1,2)$
with weighted homogeneous coordinates $s,t,u$, and $v$ by equation
\begin{equation*}
v^2+\frac{2\lambda}{3}v(s^2+t^2+u^2)+\frac{4\lambda^2}{9}(s^2+t^2+u^2)^2-g_4(s,t,u)=0,
\end{equation*}
where
\[g_4(s,t,u)=\frac{\lambda^2+12}{3}(s^4+t^4+u^4)+\frac{2(\lambda^2+6\lambda)}{3}(t^2u^2+s^2u^2+t^2s^2).\]
As before, the parameter $\lambda$ varies in $\mathbb{C}\setminus\{\pm 2, -1\}$.

Let $\mathrm{v}$ be a new variable defined as
\[
\mathrm{v}=\frac{v}{2}+\frac{\lambda}{6}(s^2+t^2+u^2).
\]
One can verify that, after this change of coordinates, $S$ is given by the following equation in~\mbox{$\mathbb{P}(1,1,1,2)$} with weighted homogeneous coordinates $s,t,u$, and $\mathrm{v}$, cf.~\eqref{eq:branch-curve-section}:
\begin{equation*}
\mathrm{v}^2=s^4+t^4+u^4+\lambda(t^2u^2+s^2u^2+t^2s^2).
\end{equation*}

Note that $\mathfrak{G}^{\prime}$ is the subgroup in $\Aut(S)$ generated by the transformations
\[\begin{split}
&[s:t:u:\mathrm{v}]\mapsto [-s:-t:u:\mathrm{v}],\\
&[s:t:u:\mathrm{v}]\mapsto [u:s:t:\mathrm{v}],\\
&[s:t:u:\mathrm{v}]\mapsto [t:s:u:\mathrm{v}].
\end{split}\]

Let $\mathfrak{G}^{\prime\prime}$
be the subgroup in $\Aut(S)$ generated by the transformations
\[\begin{split}
&[s:t:u:\mathrm{v}]\mapsto [-s:-t:u:\mathrm{v}],\\
&[s:t:u:\mathrm{v}]\mapsto [u:s:t:\mathrm{v}],\\
&[s:t:u:\mathrm{v}]\mapsto [t:s:u:-\mathrm{v}].
\end{split}\]
Then $\mathfrak{G}^{\prime\prime}\cong\mathfrak{G}^{\prime}$, but these subgroups are not conjugate in $\Aut(S)$.
They intersect by the even elements, and every odd element of $\mathfrak{G}^{\prime}$ differs
from the corresponding element of  $\mathfrak{G}^{\prime\prime}$ by the Galois involution of the anticanonical double cover $S\to\mathbb{P}^2$.
This involution acts as a multiplication by~$-1$ on the orthogonal complement
to $K_S$ with respect to the intersection form in~\mbox{$\mathrm{Pic}(S)\otimes\mathbb{C}$}.
Thus, to prove~\eqref{eq:W3W3prime}, it is enough to show that
$$
\mathbb{U}\cong\mathbb{I}^{\oplus 2}\oplus\mathbb{W}_3\oplus\mathbb{W}_3^\prime
$$
as a representation of the group $\mathfrak{G}^{\prime\prime}$.

For ease of notation, take
$\alpha,\beta\in\mathbb{C}$ so that
\[
\alpha^2=\frac{-\lambda+2\sqrt{-\lambda-1}}{\lambda+2}
\quad\text{and}\quad
\beta=\frac{\lambda}{2}(1+\alpha^2).
\]

Consider the following twelve $(-1)$-curves on the surface $S$:
\[\begin{array}{ll}
 L_1 : s+\alpha t=\mathrm{v}-\beta t^2-u^2=0; & L_2 : s-\alpha t=\mathrm{v}-\beta t^2-u^2=0;\\
L_3 : t+\alpha s=\mathrm{v}+\beta s^2+u^2=0; & L_4 : t-\alpha s=\mathrm{v}+\beta s^2+u^2=0; \\
L_5 : t+\alpha u=\mathrm{v}-\beta u^2-s^2=0; & L_6 : t-\alpha u=\mathrm{v}-\beta u^2-s^2=0; \\
L_{7} : s+\alpha u=\mathrm{v}+\beta  u^2+t^2=0;& L_{8}: s-\alpha u=\mathrm{v}+\beta u^2+t^2=0.\\
L_9 : u+\alpha t=\mathrm{v}+\beta t^2+s^2=0; & L_{10} : u-\alpha t=\mathrm{v}+\beta t^2+s^2=0; \\
L_{11} : u+\alpha s=\mathrm{v}-\beta  s^2-t^2=0;& L_{12} : u-\alpha s=\mathrm{v}-\beta s^2-t^2=0.
\end{array}\]
They form a single $\mathfrak{G}^{\prime\prime}$-orbit.
Moreover, these $12$ curves split into a disjoint union of the following pairs of intersecting lines:
$$
L_1\cup L_2, \quad L_3\cup L_4, \quad L_5\cup L_6, \quad L_7\cup L_{8}, \quad L_9\cup L_{10}, \quad L_{11}\cup L_{12}.
$$
Hence, there exists a $\mathfrak{G}^{\prime\prime}$-equivariant conic bundle
$\zeta\colon S\to\mathbb{P}^1$ such that these pairs are exactly its singular fibers.
This implies that $\mathrm{Pic}(S)^{\mathfrak{G}^{\prime\prime}}$ is generated by $K_S$ and a fiber of this conic bundle.
In particular, the surface $S$ is not $\mathfrak{G}^{\prime\prime}$-minimal.
With \cite[Table~7]{DolgachevIskovskikh}, we conclude that $\mathfrak{G}^{\prime\prime}$
is the \emph{odd lift} of $\mathfrak{S}_4$ to $\mathrm{Aut}(S)$
(see \cite[\S6.6]{DolgachevIskovskikh} for the terminology).

Therefore, the subgroup $\mathfrak{G}^\prime$ is the \emph{even lift} of $\mathfrak{S}_4$ to $\mathrm{Aut}(S)$.
Hence, by \cite[Table~7]{DolgachevIskovskikh}, the surface~$S$ is $\mathfrak{G}^{\prime}$-minimal,
so that $\mathbb{U}$ contains a unique trivial $\mathfrak{G}^{\prime}$-representation (generated by $K_S$).
This in turn implies that $\mathbb{U}$ does not contain
summands isomorphic to the sign representation of the group~$\mathfrak{G}^{\prime\prime}$.

As $\mathfrak{G}^{\prime\prime}$-representation,
$\mathbb{U}$ splits as
\begin{equation}\label{eq:U-preliminary-splitting}
\mathbb{U}\cong\mathbb{I}^{\oplus 2}\oplus\overline{\mathbb{U}},
\end{equation}
where $\overline{\mathbb{U}}$ is some six-dimensional representation of $\mathfrak{G}^{\prime\prime}$
that does not contain one-dimensional subrepresentations.
We conclude that $\overline{\mathbb{U}}$ splits into a sum of two- and three-dimensional irreducible~$\mathfrak{G}^{\prime\prime}$-representations.
If there is a two-dimensional summand in $\overline{\mathbb{U}}$, then it must be a sum of three two-dimensional summands.
In this case the action of $\mathfrak{G}^{\prime\prime}$
on $\mathrm{Pic}(S)$ would be not faithful, which contradicts \cite[Corollary~8.2.40]{Dolgachev}.
Therefore, we see that $\overline{\mathbb{U}}$ is a sum of two three-dimensional irreducible representations of $\mathfrak{G}^{\prime\prime}$. Note that in the splitting~\eqref{eq:U-preliminary-splitting}
the summand~$\mathbb{I}^{\oplus 2}$ is generated by $K_S$ and a fiber of the conic bundle $\zeta$,
while $L_1$, $L_3$, $L_5$, $L_7$, $L_9$, and $L_{11}$ form a basis in~$\overline{\mathbb{U}}$.

Let $\sigma$ be the element of the group $\mathfrak{G}^{\prime\prime}$ that acts by
$$
[s:t:u:\mathrm{v}]\mapsto[t:s:u:-\mathrm{v}].
$$
Then
$$
\sigma(L_1)=L_3, \quad \sigma(L_3)=L_1, \quad \sigma(L_5)=L_{7}, \quad \sigma(L_7)=L_5, \quad \sigma(L_9)=L_{11},
\quad \sigma(L_{11})=L_9.
$$
Thus, the trace of $\sigma$ in $\overline{\mathbb{U}}$ equals $0$.
The element $\sigma$ corresponds to a transposition in $\mathfrak{G}^{\prime\prime}\cong\mathfrak{S}_4$.
This means that $\overline{\mathbb{U}}\cong\mathbb{W}_3\oplus\mathbb{W}_3^\prime$ as $\mathfrak{G}^{\prime\prime}$-representation.
Therefore, the isomorphism~\eqref{eq:W3W3prime} of $\mathfrak{G}^{\prime}$-representations holds.
\end{proof}

The commutative diagram~\eqref{equation:unprojection} gives a surjective $\mathfrak{G}^{\prime}$-module homomorphism of
$\mathrm{Cl}(V)$ onto~\mbox{$\mathrm{Cl}(W)$} with one-dimensional kernel generated by the surface $\Pi_-$.
On the other hand, restricting divisors in~$\mathrm{Cl}(W)$ to $S$, we obtain a $\mathfrak{G}^{\prime}$-module
 homomorphism $$
\mathrm{Cl}(W)\to\mathrm{Pic}(S),
$$
which is injective (cf. \cite[Corollary~3.9.3]{Prokhorov}).
Note that $\mathrm{Cl}(W)$ is of rank $7$, while $\mathrm{Pic}(S)$ is of rank~$8$.

As $\mathfrak{G}^{\prime}$-representations,
we have
\begin{equation*}
\mathrm{Cl}(V)\otimes\mathbb{C}\cong\mathbb{I}^{\oplus 2}\oplus\mathbb{V},
\end{equation*}
where $\mathbb{I}$ is a trivial $\mathfrak{G}^{\prime}$-representation, and $\mathbb{V}$ is some six-dimensional $\mathfrak{G}^{\prime}$-representation.
Note that the summand~\mbox{$\mathbb{I}^{\oplus 2}$} is generated by $K_V$ and $\Pi_-$.
We see that $\mathbb{V}$ is a subrepresentation of~\mbox{$\mathrm{Pic}(S)\otimes\mathbb{C}$},
so that~$\mathbb{V}$ is a sum of two irreducible
three-dimensional $\mathfrak{G}^{\prime}$-representations~$\mathbb{W}_3$ and~$\mathbb{W}_3^\prime$ by Lemma~\ref{lemma:S4-reps}.
As a by-product, we obtain

\begin{corollary}
\label{corollary:class-group-V-G-prime}
There is an isomorphism of $\mathfrak{G}^{\prime}$-representations
\begin{equation*}
\Cl(V)\otimes\mathbb{C}\cong
\mathbb{I}^{\oplus 2}\oplus\mathbb{W}_3\oplus\mathbb{W}_3^\prime.
\end{equation*}
In particular,
$\mathrm{rk}\,\mathrm{Cl}(V)^{\mathfrak{G}^\prime}=2$
and $\mathrm{rk}\,\mathrm{Cl}(W)^{\mathfrak{G}^\prime}=1$.
\end{corollary}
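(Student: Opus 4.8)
The plan is to read off the decomposition of $\Cl(V)\otimes\mathbb{C}$ from the two $\mathfrak{G}^\prime$-equivariant homomorphisms already constructed, namely the surjection $\Cl(V)\to\Cl(W)$ coming from diagram~\eqref{equation:unprojection} and the injection $\Cl(W)\hookrightarrow\Pic(S)$, together with the decomposition of $\Pic(S)\otimes\mathbb{C}$ furnished by Lemma~\ref{lemma:S4-reps}. Since the whole argument is representation theory of $\mathfrak{S}_4$ over $\mathbb{C}$, I would work throughout with complexified divisor class groups and invoke Maschke's theorem freely.

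First I would pin down $\Cl(W)\otimes\mathbb{C}$. The injection realizes it as a $7$-dimensional subrepresentation of
\[
\Pic(S)\otimes\mathbb{C}\cong\mathbb{I}\oplus\mathbb{I}^\prime\oplus\mathbb{W}_3\oplus\mathbb{W}_3^\prime.
\]
Because each of the four irreducible summands on the right occurs with multiplicity one, every isotypic component is irreducible, and hence every subrepresentation is the direct sum of a subset of $\{\mathbb{I},\mathbb{I}^\prime,\mathbb{W}_3,\mathbb{W}_3^\prime\}$. A subset of total dimension $7$ must omit exactly one of the two one-dimensional summands. To decide which, I would exhibit a nonzero $\mathfrak{G}^\prime$-invariant class on $W$, for instance $-K_W$, equivalently the pullback of a hyperplane under the double cover $W\to\mathbb{P}^3$; this forces the trivial summand $\mathbb{I}$ to survive, so that $\Cl(W)\otimes\mathbb{C}\cong\mathbb{I}\oplus\mathbb{W}_3\oplus\mathbb{W}_3^\prime$.

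Next I would transport this back to $V$. The surjection $\Cl(V)\to\Cl(W)$ has one-dimensional kernel generated by $\Pi_-$, and $\Pi_-$ is $\mathfrak{G}^\prime$-invariant, so the kernel is a copy of $\mathbb{I}$. This yields a short exact sequence of $\mathfrak{G}^\prime$-representations
\[
0\to\mathbb{I}\to\Cl(V)\otimes\mathbb{C}\to\Cl(W)\otimes\mathbb{C}\to 0,
\]
which splits by Maschke's theorem. Therefore
\[
\Cl(V)\otimes\mathbb{C}\cong\mathbb{I}\oplus\big(\mathbb{I}\oplus\mathbb{W}_3\oplus\mathbb{W}_3^\prime\big)\cong\mathbb{I}^{\oplus 2}\oplus\mathbb{W}_3\oplus\mathbb{W}_3^\prime,
\]
as claimed. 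Finally, the rank of an invariant sublattice equals the dimension of the invariant subspace, i.e.\ the multiplicity of the trivial representation; since neither $\mathbb{W}_3$ nor $\mathbb{W}_3^\prime$ contains a trivial summand, this multiplicity is $2$ for $\Cl(V)$ and $1$ for $\Cl(W)$, giving $\mathrm{rk}\,\mathrm{Cl}(V)^{\mathfrak{G}^\prime}=2$ and $\mathrm{rk}\,\mathrm{Cl}(W)^{\mathfrak{G}^\prime}=1$.

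The only genuinely delicate point is distinguishing $\mathbb{I}$ from $\mathbb{I}^\prime$ when identifying the $7$-dimensional image of $\Cl(W)$ inside $\Pic(S)\otimes\mathbb{C}$: the dimension count alone leaves both $\mathbb{I}\oplus\mathbb{W}_3\oplus\mathbb{W}_3^\prime$ and $\mathbb{I}^\prime\oplus\mathbb{W}_3\oplus\mathbb{W}_3^\prime$ admissible, and these give different invariant ranks. I expect this to be the main obstacle, and I would resolve it exactly as above, by producing a concrete $\mathfrak{G}^\prime$-invariant (non-sign) divisor class on $W$ so that the trivial summand is forced to persist.
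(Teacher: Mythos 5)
Your proof is correct, and it uses the same ingredients as the paper's: the surjection $\Cl(V)\to\Cl(W)$ from diagram~\eqref{equation:unprojection} with kernel spanned by the invariant class $\Pi_-$, the injection $\Cl(W)\hookrightarrow\Pic(S)$, the decomposition of $\Pic(S)\otimes\mathbb{C}$ in Lemma~\ref{lemma:S4-reps}, and semisimplicity plus multiplicity-freeness. The difference is the order of operations, and it is not cosmetic. The paper works top-down: it first splits off the two-dimensional trivial summand of $\Cl(V)\otimes\mathbb{C}$ spanned by $K_V$ and $\Pi_-$, and then injects the six-dimensional complement $\mathbb{V}$ into $\Pic(S)\otimes\mathbb{C}$ (this injectivity holds because $\mathbb{V}$ meets the span of $\Pi_-$ trivially); at that point the dimension count is unambiguous, since the only subset of $\{\mathbb{I},\mathbb{I}^\prime,\mathbb{W}_3,\mathbb{W}_3^\prime\}$ with total dimension six is $\{\mathbb{W}_3,\mathbb{W}_3^\prime\}$. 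You work bottom-up: you first identify $\Cl(W)\otimes\mathbb{C}$ as a seven-dimensional subrepresentation of $\Pic(S)\otimes\mathbb{C}$, which forces you to decide whether it is $\mathbb{I}$ or $\mathbb{I}^\prime$ that survives --- exactly the delicate point you flag --- and you resolve it correctly via the nonzero invariant class $-K_W$, whose image in $\Pic(S)\otimes\mathbb{C}$ is nonzero because the lattice injection stays injective after tensoring with the flat $\mathbb{Z}$-module $\mathbb{C}$; you then lift through the split exact sequence. So the paper's ordering buys freedom from the $\mathbb{I}$-versus-$\mathbb{I}^\prime$ disambiguation altogether, while yours determines the representation structure of $\Cl(W)\otimes\mathbb{C}$ explicitly en route, which is precisely what the claim $\mathrm{rk}\,\mathrm{Cl}(W)^{\mathfrak{G}^\prime}=1$ concerns (the paper instead deduces that a posteriori, as the quotient of $\Cl(V)\otimes\mathbb{C}$ by the span of $\Pi_-$). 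Both arguments are complete and correct.
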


Recall that the subgroups $\mathfrak{G}$ and $\mathfrak{G}^{\prime}$ are both isomorphic to $\mathfrak{S}_4$.
They intersect by the even elements, and every odd element of $\mathfrak{G}$ differs
from the corresponding element of  $\mathfrak{G}^{\prime}$ by the Galois involution $\tau$. This involution acts as a multiplication by $-1$ on the orthogonal complement, with respect to
the intersection form \eqref{equation:E7},  to $K_V$ in $\mathrm{Cl}(V)\otimes\mathbb{C}$.
Thus, Corollary~\ref{corollary:class-group-V-G-prime}
implies that~\mbox{$\mathrm{Cl}(V)\otimes\mathbb{C}$} contains
a unique trivial subrepresentation of the group $\mathfrak{G}$.

\begin{corollary}
\label{corollary:class-group-V-G}
One has $\mathrm{rk}\,\mathrm{Cl}(V)^{\mathfrak{G}}=1$.
\end{corollary}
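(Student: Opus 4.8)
The plan is to deduce the $\mathfrak{G}$-module structure of $\mathrm{Cl}(V)\otimes\mathbb{C}$ from the $\mathfrak{G}^{\prime}$-module structure already recorded in Corollary~\ref{corollary:class-group-V-G-prime}, exploiting the fact that the two subgroups differ only by the central involution $\tau$. First I would split $\mathrm{Cl}(V)\otimes\mathbb{C}=\mathbb{C}\cdot K_V\oplus K_V^{\perp}$, where $K_V^{\perp}$ denotes the orthogonal complement of the canonical class with respect to the intersection form~\eqref{equation:E7}. Since every automorphism of $V$ fixes $K_V$ and preserves the intersection form, this is a decomposition into subrepresentations for both $\mathfrak{G}$ and $\mathfrak{G}^{\prime}$. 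Moreover $\mathbb{C}\cdot K_V$ is the trivial representation for either group, so it contributes exactly one invariant line, and the whole problem reduces to showing that $K_V^{\perp}$ carries no nonzero $\mathfrak{G}$-invariant.

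Next I would identify $\mathfrak{G}$ and $\mathfrak{G}^{\prime}$ with $\mathfrak{S}_4$ through their common image in $\mathrm{Aut}(C)$, so that even (respectively, odd) elements correspond to even (respectively, odd) permutations. On even elements the two subgroups act identically, whereas each odd element of $\mathfrak{G}$ equals the corresponding element of $\mathfrak{G}^{\prime}$ composed with $\tau$. As $\tau$ lies in the center of $\mathrm{Aut}(V)$ and acts as $-\mathrm{id}$ on $K_V^{\perp}$, this yields, on $K_V^{\perp}$, the relation that the action of $g$ through $\mathfrak{G}$ equals $\mathrm{sgn}(g)$ times its action through $\mathfrak{G}^{\prime}$, for every $g$. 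In representation-theoretic terms, the restriction of $K_V^{\perp}$ to $\mathfrak{G}$ is the restriction to $\mathfrak{G}^{\prime}$ tensored with the sign representation $\mathbb{I}^{\prime}$.

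Finally I would invoke Corollary~\ref{corollary:class-group-V-G-prime}: as a $\mathfrak{G}^{\prime}$-representation $K_V^{\perp}\cong\mathbb{I}\oplus\mathbb{W}_3\oplus\mathbb{W}_3^{\prime}$, the single trivial summand being the line spanned by the orthogonal projection of the invariant class $\Pi_-$ to $K_V^{\perp}$. Tensoring with $\mathbb{I}^{\prime}$ sends the trivial representation $\mathbb{I}$ to $\mathbb{I}^{\prime}$ and interchanges the two three-dimensional irreducibles $\mathbb{W}_3$ and $\mathbb{W}_3^{\prime}$, since they are the standard representation of $\mathfrak{S}_4$ and its sign twist. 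Hence $K_V^{\perp}\cong\mathbb{I}^{\prime}\oplus\mathbb{W}_3\oplus\mathbb{W}_3^{\prime}$ as a $\mathfrak{G}$-representation, which contains no trivial constituent. Together with the line $\mathbb{C}\cdot K_V$ this gives $\mathrm{rk}\,\mathrm{Cl}(V)^{\mathfrak{G}}=\dim(\mathrm{Cl}(V)\otimes\mathbb{C})^{\mathfrak{G}}=1$.

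The only points requiring care, and the places where the argument could fail, are the two inputs that make the sign twist precise: that $\tau$ fixes $K_V$ and acts by $-1$ on $K_V^{\perp}$, and that tensoring with the sign character genuinely interchanges $\mathbb{W}_3$ and $\mathbb{W}_3^{\prime}$ rather than fixing each of them. Both are already available: the former is recorded in the discussion preceding the corollary, and the latter follows from the standard character theory of $\mathfrak{S}_4$ together with the fact, established in Lemma~\ref{lemma:S4-reps}, that $\mathbb{W}_3$ and $\mathbb{W}_3^{\prime}$ are exactly the two non-isomorphic three-dimensional irreducible representations of $\mathfrak{S}_4$.
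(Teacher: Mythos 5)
Your proof is correct and follows essentially the same route as the paper: both deduce the claim from Corollary~\ref{corollary:class-group-V-G-prime} by observing that $\mathfrak{G}$ and $\mathfrak{G}'$ agree on even elements and differ by the central involution $\tau$ on odd ones, and that $\tau$ acts as $-1$ on the orthogonal complement of $K_V$ with respect to the form~\eqref{equation:E7}. You merely spell out what the paper leaves implicit, namely that this amounts to tensoring the $\mathfrak{G}'$-representation with the sign character, which destroys the trivial summand inside $K_V^{\perp}$ and leaves only the line $\mathbb{C}\cdot K_V$ invariant.
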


\section{Birational rigidity}
\label{section:rigidity}

In this section we study $G$-birationally
rigidity of nodal double Veronese cones.

Let $V$ be a nodal del Pezzo 3-fold of degree $1$
(not necessarily with 28 nodes).
Then $V$ can be given in $\mathbb{P}(1,1,1,2,3)$ with weighted homogeneous coordinates $s$, $t$, $u$, $v$, and $w$
(of weights $1$, $1$, $1$, $2$, and~$3$, respectively)
by equation
$$
w^2=v^3+vh_4(s,t,u)+h_6(s,t,u)
$$
for some homogeneous polynomials $h_4(s,t,u)$ and $h_6(s,t,u)$ of degree $4$ and $6$, respectively.
Then
$$
-K_V\sim 2H,
$$
where $H$ is an ample Cartier divisor on $V$ such that $H^3=1$.
The linear system $|H|$ has one base point, which we denote by $O$.
Recall that the 3-fold $V$ is smooth at the point~$O$.

The linear system $|2H|$ is free from base points and gives a double cover $V\to\mathbb{P}(1,1,1,2)$.
As before, denote by $\tau$ the Galois involution of this double cover.

Let $\kappa\colon V\dasharrow\mathbb{P}^2$ be the projection given by
$$
[s:t:u:v:w]\mapsto[s:t:u].
$$
Then $\kappa$ is a rational map given by the linear system $|H|$.
In particular, $\kappa$ is $\mathrm{Aut}(V)$-equivariant. Furthermore,
there exists an $\mathrm{Aut}(V)$-equivariant commutative diagram
$$
\xymatrix{
&\widetilde{V}\ar@{->}[ld]_{\psi}\ar@{->}[rd]^{\tilde{\kappa}}&\\%
V\ar@{-->}[rr]_{\kappa}&&\mathbb{P}^2}
$$
where $\psi$ is the blow up at the point $O$,
and $\tilde{\kappa}$ is a morphism whose general fiber is an elliptic curve.
Note that every fiber of the morphism $\tilde{\kappa}$ is irreducible.
Moreover, we have an exact sequence of groups
$$
\xymatrix{1\ar@{->}[r]&\Gamma\ar@{->}[r]&\mathrm{Aut}(V)\ar@{->}[r]&\mathrm{Aut}(\mathbb{P}^2),}
$$
where $\Gamma$ is a finite subgroup in $\mathrm{Aut}(V)$ that contains $\tau$.
If $h_4(x,y,z)$ is not a zero polynomial, then~$\Gamma$ is generated by $\tau$.
If $h_4(x,y,z)$ is a zero polynomial, then $\Gamma$ is a cyclic group of order $6$
that is generated by $\tau$ and a map given by
$$
[s:t:u:v:w]\mapsto[s:t:u:\epsilon_3 v:w],
$$
where $\epsilon_3$ is a primitive cube root
of unity (cf. \cite[\S6.7]{DolgachevIskovskikh}).

Let $G$ be a finite subgroup of $\mathrm{Aut}(V)$.
Denote its image in $\mathrm{Aut}(\mathbb{P}^2)$ by $\overline{G}$.
Suppose that
$$
\mathrm{rk}\,\mathrm{Cl}\big(V\big)^G=1,
$$
so that $V$ is a $G$-Mori fibre space (see \cite[Definition~1.1.5]{CheltsovShramov-V5}).
If $\mathbb{P}^2$ contains a $\overline{G}$-fixed point,
then there exists a $G$-equivariant commutative diagram as follows:
$$
\xymatrix{
&\check{V}\ar@{->}[ld]_{\alpha}\ar@{->}[rrrdd]^{\upsilon}&\\%
V\ar@{-->}[d]_{\kappa}&&&&\\
\mathbb{P}^2\ar@{-->}[rrrr]&&&&\mathbb{P}^1}
$$
Here $\mathbb{P}^2\dasharrow\mathbb{P}^1$ is the projection from the $\overline{G}$-fixed point,
$\alpha$ is the maximal extraction whose center is the fiber of $\kappa$ over the $\overline{G}$-fixed point,
and $\upsilon$ is a fibration into del Pezzo surfaces of degree $1$.
Thus, in this case, the 3-fold $V$ cannot be $G$-birationally rigid.

If $V$ is smooth and $\mathbb{P}^2$ contains  no $\overline{G}$-fixed points,
then $V$ is $G$-birationally super-rigid.
This follows from \cite{Grinenko2003} and \cite{Grinenko2004} (cf. \cite[Remark~1.19]{CheltsovShramov2009}).
It is conjectured that the following two conditions are equivalent
for a nodal double Veronese cone $V$ with an action of
a finite group $G$ such that~\mbox{$\mathrm{rk}\,\mathrm{Cl}\big(V\big)^G=1$}:
\begin{itemize}
\item $\mathbb{P}^2$ does not contain $\overline{G}$-fixed points,
\item the 3-fold $V$ is $G$-birationally rigid.
\end{itemize}
At present, its proof is out of reach.
Instead, we give a very simple proof of a weaker result,
which implies Theorem~\ref{theorem:rigid}.
 To state the result, note that a surface $\Pi$ in the 3-fold $V$ is said to be a \emph{plane} if~$\Pi\cong\mathbb{P}^2$ and $H^2\cdot\Pi=1$.
For instance, by \cite[Theorem~7.2]{Prokhorov}
every $28$-nodal double Veronese cone contains exactly $126$ planes.

\begin{theorem}
\label{theorem:V1-G-rigid}
Let $V$ be a nodal
double Veronese cone  with an action of
a finite group $G$ such that $\mathrm{rk}\,\mathrm{Cl}\big(V\big)^G=1$.
Suppose that the following three conditions are satisfied:
\begin{enumerate}
\item $\mathbb{P}^2$ does not contain $\overline{G}$-fixed points;

\item the singular locus $\mathrm{Sing}(V)$ does not contain $G$-orbits of length $3$;

\item for every $G$-irreducible curve $D$ in $V$ such that $H\cdot D$ is either $2$ or $3$, and $\kappa(D)$ is a curve in $\mathbb{P}^2$ of degree $H\cdot D$,
there is a plane in $V$ that contains~$D$.
\end{enumerate}
Then $V$ is $G$-birationally super-rigid.
\end{theorem}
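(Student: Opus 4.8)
The plan is to argue by the Noether--Fano--Iskovskikh method. Since $\mathrm{rk}\,\mathrm{Cl}(V)^G=1$, the $3$-fold $V$ is a $G$-Mori fibre space over a point, and to prove $G$-birational super-rigidity it suffices to show that there is \emph{no} $G$-invariant mobile linear system $\mathcal{M}$ on $V$ for which the corresponding log pair has non-canonical singularities. So I would assume the contrary: fix a $G$-invariant mobile $\mathcal{M}\sim_{\mathbb{Q}}\mu H$ (so $\mathcal{M}\sim_{\mathbb{Q}}-nK_V$ with $\mu=2n$) and set $\lambda=\frac{2}{\mu}$, so that $\lambda\mathcal{M}\sim_{\mathbb{Q}}-K_V$ and $(V,\lambda\mathcal{M})$ is not canonical. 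Let $Z$ be a $G$-irreducible center of non-canonical singularities; the whole argument consists in ruling out every possibility for $Z$. Throughout I will use that for two general members $S_1,S_2\in\mathcal{M}$ the $1$-cycle $S_1\cdot S_2$ has class $\mu^2H^2$, so that $S_1\cdot S_2\cdot H=\mu^2$, and that $\kappa$ is $\mathrm{Aut}(V)$-equivariant, so $\overline{G}=\check{\Xi}(G)$ acts on $\mathbb{P}^2$ with image-orbit $\kappa(Z)$.

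I would first dispose of the point centers. For the base point $O$ of $|H|$, note that the exceptional divisor of the blow up $\psi$ is $\mathbb{P}(T_{O,V})$, identified $\overline{G}$-equivariantly with the target $\mathbb{P}^2$ of $\tilde\kappa$; by condition (1) this carries no $\overline{G}$-fixed point, hence $T_{O,V}$ has no one-dimensional $G$-subrepresentation and is therefore irreducible. Lemma~\ref{lemma:mult-2} then gives $\mathrm{mult}_O(\mathcal{M})>\frac{2}{\lambda}=\mu$, so the proper transform $\widetilde{S}=\mu\,\tilde\kappa^{*}\mathcal{O}(1)+(\mu-\mathrm{mult}_O S)E$ of a general $S\in\mathcal{M}$ meets a general (hence effective, not contained) fibre $\widetilde{F}$ of $\tilde\kappa$ in $\widetilde{S}\cdot\widetilde{F}=\mu-\mathrm{mult}_O S<0$, a contradiction. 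For an orbit $\Sigma$ of smooth non-canonical centers, the classical $4n^2$-inequality gives $\mathrm{mult}_P(S_1\cdot S_2)>\mu^2$ for each $P\in\Sigma$; its image $\kappa(\Sigma)$ is a $\overline{G}$-orbit with at least two points by condition (1), and pulling back a curve of minimal degree through $\kappa(\Sigma)$ (a line when the image has two points, and a conic through four points no three collinear, furnished by Lemma~\ref{lemma:4-general-points}, when it has at least four) to a surface $\Upsilon\sim(\deg)H$ yields $(\deg)\mu^2=S_1\cdot S_2\cdot\Upsilon>(\#)\,\mu^2$, which is impossible once $\Upsilon$ is chosen to meet $S_1\cdot S_2$ properly at the orbit points. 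Node orbits are treated by the same pull-back scheme using the weaker multiplicity bound available at an ordinary double point: fixed nodes are excluded by (1), large orbits by the curve pull-back, and the borderline orbits of length three in $\mathrm{Sing}(V)$ are exactly the ones removed by hypothesis (2).

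It remains to exclude curve centers $D$. Along a curve in the smooth locus one has $\mathrm{mult}_D(\mathcal{M})>\frac{1}{\lambda}=\frac{\mu}{2}$, and intersecting $S_1\cdot S_2\geqslant(\mathrm{mult}_D\mathcal{M})^2D$ with $H$ gives $\mu^2=S_1\cdot S_2\cdot H>\frac{\mu^2}{4}(H\cdot D)$, whence $H\cdot D\in\{1,2,3\}$. If $\kappa$ contracts $D$, then for $G$-irreducible $D$ the image is a single $\overline{G}$-fixed point, contradicting (1), while the remaining vertical configurations are governed by the elliptic fibration and are again excluded by (1). If instead $\kappa(D)$ is a plane curve of degree $H\cdot D\in\{2,3\}$, condition (3) supplies a plane $\Pi\cong\mathbb{P}^2$ with $D\subset\Pi$ and $H|_\Pi=\mathcal{O}_\Pi(1)$; since $\mathcal{M}$ is mobile, $\Pi$ is not a fixed component, so $\mathcal{M}|_\Pi$ is an effective divisor of degree $\mu$ on $\Pi$ containing $D$ with multiplicity $\geqslant\mathrm{mult}_D(\mathcal{M})>\frac{\mu}{2}$, forcing $\mu\geqslant\frac{\mu}{2}\cdot\deg D>\mu$ (as $\deg D=H\cdot D\geqslant 2$), a contradiction. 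The thin residual cases, namely $H\cdot D=1$ and multiple covers of a line, I would handle by restricting $\mathcal{M}$ to the degree-$1$ del Pezzo surface $\kappa^{*}(\text{line})\in|H|$ that contains $D$ and analysing the induced mobile system there.

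The main obstacle, I expect, is precisely the uniform treatment of the curve centers in their interaction with the half-anticanonical elliptic fibration $\kappa$: the clean plane-restriction of condition (3) only reaches the horizontal curves with $\kappa(D)$ of degree $H\cdot D\in\{2,3\}$, whereas the curves with $H\cdot D=1$ and those contracted by or multiply covering fibres of $\kappa$ escape it and must be excluded through the geometry of the fibration together with condition (1). A second delicate point is the node-orbit analysis, where only the weaker inequality at an ordinary double point is available, so that small orbits must be controlled by hand; this is exactly the juncture at which hypothesis (2) is indispensable, and verifying that no contradiction-free configuration survives outside the three listed exceptional situations is the technical heart of the proof.
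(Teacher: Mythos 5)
Your overall skeleton is the same as the paper's: the Noether--Fano method, exclusion of the base point $O$, then of zero-dimensional orbits, then of curve centres, with condition (3) entering through planes. Two of your steps are essentially the paper's own arguments (the exclusion of $O$ via Lemma~\ref{lemma:mult-2} against a general fibre, and the exclusion of horizontal curves with $\deg\kappa(D)=H\cdot D\in\{2,3\}$ by restricting to the plane supplied by (3)). But two steps, as written, fail. The first is the vertical-curve case. Your claim that a $G$-irreducible curve contracted by $\kappa$ has as image a single $\overline{G}$-fixed point is false: the image is a $\overline{G}$-orbit, and the numerical restriction $r\,(H\cdot Z_i)<4$ leaves open the configuration of three whole fibres lying over a $\overline{G}$-orbit of length $3$. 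Condition (1) does not forbid orbits of length $3$ in $\mathbb{P}^2$, so ``excluded by (1)'' is not an argument here; the paper devotes a separate step to this case, intersecting with the surface $H_{12}\in|H|$ pulled back from the line through two of the three image points, which contains $Z_1$ and $Z_2$ and gives $n=H\cdot M\cdot H_{12}\geqslant\mathrm{mult}_{Z_1}(M)+\mathrm{mult}_{Z_2}(M)>n$. (By contrast, the ``thin residual cases'' you defer to a del Pezzo surface analysis -- $H\cdot D=1$ and multiple covers of a line -- need no such machinery: their images would be $\overline{G}$-invariant points or lines, and an invariant line yields a fixed point by complete reducibility, so (1) kills them outright.)

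The second failure is in the zero-dimensional analysis. The difficulty you acknowledge -- ``once $\Upsilon$ is chosen to meet $S_1\cdot S_2$ properly at the orbit points'' -- cannot be resolved by a choice: \emph{every} surface pulled back from $\mathbb{P}^2$ through the image points contains the entire fibres over those points, and those fibres may be components of $S_1\cdot S_2$ passing through the orbit. The paper handles this by decomposing $M_1\cdot M_2=m(E_1+E_2+E_3)+\Delta$ and intersecting a general surface of the subsystem of $|2H|$ through the three fibres with $\Delta$ rather than with $M_1\cdot M_2$; the resulting inequality closes precisely because $\mathrm{mult}_{O_i}(E_i)\leqslant 2$. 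For node orbits your plan breaks down further. You never state the ``weaker multiplicity bound'' at a node, and the paper in fact abandons the pull-back scheme there: it first shows that each fibre, being irreducible of arithmetic genus $1$, contains at most one singular point of $V$, so the node orbit has the same length as its image orbit and hence length at least $4$ by (1) and (2); it then blows up four nodes, observes that $-K_{\overline{V}}\sim\beta^{*}(2H)-\sum F_i$ is nef since it meets the proper transforms of the fibres trivially, computes $0\leqslant 2n^2-8m^2$, and contradicts the bound $m>n/2$ of \cite[Theorem~3.10]{Corti2000}. Both the genus-one observation (without which you cannot relate the node orbit to its image orbit and so cannot even invoke (2) correctly) and the Corti-type bound at the node are absent from your sketch; if one tries your conic pull-back through four node points instead, the inequality is exactly borderline and only closes after proving a cycle-multiplicity estimate at the node equivalent to Corti's theorem -- which is the technical content the sketch omits.
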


\begin{corollary}
\label{corollary:V1-G-rigid}
Let $V$  and $G$ be as in Theorem~\ref{theorem:V1-G-rigid}. Suppose that $\mathbb{P}^2$ does not contain $\overline{G}$-fixed points.
If  $\mathbb{P}^2$ does not contain  $\overline{G}$-invariant curves of degree $2$ or $3$,
then $V$ is $G$-birationally super-rigid.
\end{corollary}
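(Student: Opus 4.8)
The plan is to deduce the corollary directly from Theorem~\ref{theorem:V1-G-rigid} by verifying its three hypotheses under the standing assumptions. Condition~(1) of Theorem~\ref{theorem:V1-G-rigid} is exactly the hypothesis that $\mathbb{P}^2$ contains no $\overline{G}$-fixed point, so it holds by assumption. It therefore remains to check condition~(2), concerning $G$-orbits of length $3$ in $\mathrm{Sing}(V)$, and condition~(3), concerning $G$-irreducible curves $D$ with $H\cdot D\in\{2,3\}$ whose image $\kappa(D)$ is a curve of degree $H\cdot D$. Throughout I would use that $\kappa$ is $G$-equivariant and is defined on all of $V$ away from the single smooth base point $O$; in particular $\kappa$ is defined at every singular point of $V$.

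First I would dispose of condition~(3), which turns out to be vacuous. Suppose $D\subset V$ is a $G$-irreducible curve with $H\cdot D\in\{2,3\}$ such that $\kappa(D)$ is a curve of degree $H\cdot D$. Since $D$ is $G$-invariant and $\kappa$ is $G$-equivariant, its image $\kappa(D)$ is a $\overline{G}$-invariant curve in $\mathbb{P}^2$ of degree $2$ or $3$. This contradicts the hypothesis that $\mathbb{P}^2$ contains no $\overline{G}$-invariant curve of degree $2$ or $3$. Hence no such curve $D$ exists, and condition~(3) holds vacuously.

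The core of the argument is condition~(2). Let $\Sigma\subset\mathrm{Sing}(V)$ be a $G$-orbit of length $3$; the aim is to derive a contradiction. Its image $\kappa(\Sigma)$ is a single $\overline{G}$-orbit, and since the fibres of the equivariant surjection $\Sigma\to\kappa(\Sigma)$ all have the same cardinality, $|\kappa(\Sigma)|$ divides $3$, so $|\kappa(\Sigma)|\in\{1,3\}$. If $|\kappa(\Sigma)|=1$, then $\kappa(\Sigma)$ is a $\overline{G}$-fixed point of $\mathbb{P}^2$, contradicting condition~(1). If $|\kappa(\Sigma)|=3$, then $\kappa(\Sigma)$ consists of three distinct points forming a $\overline{G}$-orbit, and I split into two cases. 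If these three points are collinear, the unique line through them is $\overline{G}$-invariant; lifting the projective action to $\mathrm{GL}_3$ and applying Maschke's theorem to split off a complementary invariant line in $\mathbb{C}^3$ produces a $\overline{G}$-fixed point (cf. the argument in the proof of Lemma~\ref{lemma:4-general-points}), again contradicting condition~(1). If the three points are not collinear, the three lines joining them pairwise form a $\overline{G}$-invariant reducible cubic curve in $\mathbb{P}^2$, contradicting the hypothesis that no $\overline{G}$-invariant curve of degree $3$ exists. In all cases one reaches a contradiction, so $\mathrm{Sing}(V)$ contains no $G$-orbit of length $3$, establishing condition~(2).

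With conditions~(1)--(3) verified, Theorem~\ref{theorem:V1-G-rigid} applies and yields that $V$ is $G$-birationally super-rigid. The only genuinely geometric step is the verification of condition~(2); the main subtlety there is the non-collinear case, where one must recognise the invariant reducible cubic swept out by the three joining lines in order to invoke the absence of degree-$3$ invariant curves, while the collinear case is handled by the standard complement-representation argument for invariant lines.
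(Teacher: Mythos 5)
Your proof is correct and is essentially the derivation the paper intends: the corollary is stated there without proof as an immediate consequence of Theorem~\ref{theorem:V1-G-rigid}, and your verification supplies exactly the missing details, with condition~(3) vacuous by equivariance of $\kappa$ and condition~(2) ruled out through the image orbit of length $1$ or $3$. Your two sub-cases reuse the paper's own tools (the invariant-line-implies-fixed-point argument of Lemma~\ref{lemma:4-general-points} and Remark~\ref{remark:overline-G-orbits}, plus the invariant triangle of lines as a reducible $\overline{G}$-invariant cubic, consistent with the paper's use of $stu=0$ as an invariant cubic in the proof of Theorem~\ref{theorem:rigid}).
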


\begin{remark}
\label{remark:Z-3-singular-points}
We cannot drop condition (2) in Theorem~\ref{theorem:V1-G-rigid}.
Suppose that $\mathbb{P}^2$ does not contain $\overline{G}$-fixed points,
but $\mathrm{Sing}(V)$ contains a $G$-orbit of length $3$.
Let $\beta\colon \breve{V}\to V$ be the blow up at this $G$-orbit.
Then it follows from
\cite{JahnkeRadloff} and
\cite[Theorem~1.5]{CheltsovShramovPrzyjalkowski}
that there exists a crepant $G$-birational morphism~$\nu\colon\breve{V}\to X$
such that $X$ is a double cover of $\mathbb{P}^3$ branched
in an irreducible sextic surface.
Let $\iota$ be the involution in $\mathrm{Aut}(X)$ of this double cover,
and put
$$
\rho=\beta\circ\nu^{-1}\circ\iota\circ\nu\circ\beta^{-1}.
$$
Then $\rho\in\mathrm{Bir}^{G}(V)$.
Moreover, if $\nu$ is small, then $\rho$ is not biregular.
\end{remark}

Before proving Theorem~\ref{theorem:V1-G-rigid}, let us use it to prove Theorem~\ref{theorem:rigid}.

\begin{proof}[Proof of Theorem~\ref{theorem:rigid}]
Suppose that the $28$-nodal double Veronese cone $V$ is $G$-birationally rigid.
Then the group $\overline{G}$ does not have
fixed points on $\mathbb{P}^2$, and thus also has no
fixed points on its projectively dual plane. Let $C$ be the smooth plane quartic curve
that is constructed as
the projectively dual curve of the discriminant curve $\check{C}$ of the half-anticanonical rational elliptic fibration
$\kappa\colon V\dasharrow \mathbb{P}^2$ (see Theorem~\ref{theorem:one-to-one} or Lemma~\ref{lemma:V-to-quartic}).
Then the group $\overline{G}$ acts faithfully on~$C$.
We see from Lemma~\ref{lemma:must-contain-S4}
that $C$ can be given
by equation~\eqref{eq:S4-quartic-general}, the group~$\overline{G}$ must contain a subgroup isomorphic to~$\mathfrak{S}_4$,
and~$\mathbb{P}^2$ (as well as its projective dual) can be identified
with a projectivization of an irreducible three-dimensional
representation of $\mathfrak{S}_4$.
Using once again Theorem~\ref{theorem:one-to-one}
and formula~\eqref{eq:V1}, we conclude that $V$ is a double Veronese cone from
Example~\ref{example:V1-S4}.
Furthermore, by Theorem~\ref{theorem:Aut} the group $G$ contains a subgroup isomorphic to~$\mathfrak{S}_4$.
Now it follows from Corollary~\ref{corollary:class-group-V-G-prime} that $G$ contains a subgroup
conjugate to~$\mathfrak{G}$.

Now we suppose that $V$ is a double Veronese cone
from Example~\ref{example:V1-S4}, and $G=\mathfrak{G}$.
Then $\mathbb{P}^2$ does not contain $\overline{G}$-fixed points,
and $\mathrm{rk}\,\mathrm{Cl}(V)^G=1$ by Corollary~\ref{corollary:class-group-V-G}.
To complete the proof, we have to show that $V$ is $G$-birationally super-rigid.
Suppose that this is not a case.
Let us seek for a contradiction.

Note that $\overline{G}\cong\mathfrak{S}_4$, and $\mathbb{P}^2$ contains a unique $\overline{G}$-orbit of length $3$.
This orbit consists of the points~$[0:0:1]$,  $[0:1:0]$ and  $[1:0:0]$.
Taking partial derivatives of~\eqref{eq:V1} and using the expressions for $g_4$ and $g_6$
provided in Example~\ref{example:V1-S4},
we see that~\mbox{$\mathrm{Sing}(V)$}
does not contain points that are mapped by $\kappa$ to $[0:0:1]$,  $[0:1:0]$ or  $[1:0:0]$.
In particular, we see that the singular locus $\mathrm{Sing}(V)$ does not contain $\overline{G}$-orbits of length $3$.

Using Theorem~\ref{theorem:V1-G-rigid}, we conclude that $V$ contains a $G$-irreducible curve $D$ on $V$ such
that~\mbox{$H\cdot D$} is either $2$ or $3$,
the curve $D$ is not contained in any plane in $V$,
and $\kappa(D)$ is a curve in $\mathbb{P}^2$ of degree~$H\cdot D$.

Observe that there exist a
unique $\overline{G}$-invariant conic
and a unique $\overline{G}$-invariant
cubic curve in $\mathbb{P}^2$.
The former curve is given by $s^2+t^2+u^2=0$,
and the latter curve is given by $stu=0$.

Suppose that $H\cdot D=3$.
Then $\kappa(D)$ is given by $stu=0$, so that $D$ consists of three irreducible components that are mapped isomorphically by $\kappa$
to the lines $s=0$, $t=0$ and $u=0$ in $\mathbb{P}^2$.

Let $D_s$ be the irreducible component of the curve $D$ that
is mapped by $\kappa$ to the line~\mbox{$s=0$}. Then the stabilizer $G_s$ of $D_s$ in $G\cong \mathfrak{S}_4$
is isomorphic to the dihedral group of order $8$. However,
its action on $D_s$ is not faithful, but the action of its
quotient isomorphic to $\mumu_2\times\mumu_2$ is faithful. Therefore,
a general $G_s$-orbit in $D_s$ has length $4$. Consider the pencil
$\mathcal{R}$ of $G$-invariant
surfaces in~$V$ generated by the surfaces
$v=0$ and $s^2+t^2+u^2=0$. Since a surface from $\mathcal{R}$
has intersection number $2$ with $D_s$,
we conclude that there is a surface $R$ in $\mathcal{R}$
that contains $D_s$. Since~$R$ is $G$-invariant, it contains
the whole curve $D$.

The surface $R$ is given by equation
$$
v=\mu(s^2+t^2+u^2)
$$
for some $\mu\in\mathbb{C}$. Thus, the curve $D$ is contained in the subset in $V$ that is cut out by
$$
\left\{\aligned
&v=\mu(s^2+t^2+u^2),\\
&stu=0.\\
\endaligned
\right.
$$

If $\mu=\frac{2\lambda}{3}$, then
the equation $v=\mu(s^2+t^2+u^2)$ cuts out two planes in $V$, which are given by
$$
\left\{\aligned
&v=\mu(s^2+t^2+u^2),\\
&w=\gamma stu,\\
\endaligned
\right.
$$
where $\gamma=\pm 2(\lambda-2)\sqrt{\lambda+1}$ (see~\eqref{eq:Pi-pm-eqs}).
Thus, in this case, the curve $D$ is contained in a plane, which is impossible by assumption.
Hence, we conclude that $\mu\ne\frac{2\lambda}{3}$.

We see that the irreducible component $D_s$
is contained in the subset in $V$ that is cut out by
$$
\left\{\aligned
&v=\mu(t^2+u^2),\\
&s=0.\\
\endaligned
\right.
$$
If this subset were irreducible and reduced, then it would coincide with $D_s$, which would contradict~${H\cdot D_s=1}$.
Hence, the above subset is either reducible or non-reduced.
Algebraically, this simply means that the polynomial
$$
P(t,u)=\mu^3(t^2+u^2)^3-\mu g_4(0,t,u)(t^2+u^2)+g_6(0,t,u)
$$
must be a complete square (this includes the possibility
for the above polynomial to be zero, which corresponds to the
non-reduced case).
The polynomial $P(t,u)$ simplifies as
\[\left(\mu^3-\mu\frac{\lambda^2+12}{3}-\frac{2\lambda(\lambda-6)(\lambda+6)}{27}\right)(t^2+u^2)^3
-4(\lambda-2)\left(\mu-\frac{2}{3}\lambda\right)(t^2+u^2)t^2u^2.\]
This shows that the polynomial $P(t,u)$ cannot be a zero polynomial because $\lambda\ne 2$ and $\mu\ne \frac{2\lambda}{3}$.
Suppose that~$P(t,u)$ is a complete square. Then it must be of the form
\[
P(t,u)=(a_3t^3+a_2t^2u+a_1tu^2+a_0u^3)^2,
\]
where $a_i$'s are constants depending on $\lambda$ and $\mu$. We can see directly from the simplified~$P(t,u)$ above
 that $P(t,u)$ is invariant under switching variables $t$ and $u$ and that  it contains $t$  and $u$ with only even exponents. We therefore obtain
$$
a_0^2=a_3^2, \quad a_1^2+2a_0a_2=a_2^2+2a_1a_3, \quad a_0a_1=a_2a_3=0,  \quad a_0a_3+a_1a_2=0.
$$
However, these yield $a_0=a_1=a_2=a_3=0$, which is impossible. Consequently, one has~\mbox{$H\cdot D\ne 3$}.

We see that $H\cdot D=2$.
Then $\kappa(D)$ is given by equation $s^2+t^2+u^2=0$ in $\mathbb{P}^2$.
In particular, one has~\mbox{$D\cong \mathbb{P}^1$}.
The action of $G\cong\mathfrak{S}_4$ on $D$ is faithful,
so that any $G$-orbit in $D$ has length at least~$6$. Consider the pencil
$\mathcal{R}$ as above.
Since a surface from $\mathcal{R}$
has intersection number $4$ with~$D_s$,
we conclude that every surface in $\mathcal{R}$
contains $D$.
In particular, $D$ is contained in a surface given by equation
$$
v=\frac{2\lambda}{3}(s^2+t^2+u^2).
$$
As above, we conclude that $D$ is contained in a plane,
which gives a contradiction.
This completes the proof of Theorem~\ref{theorem:rigid}.
\end{proof}

Now we are equipped to prove Theorem~\ref{theorem:V1-G-rigid}.
Recall that $V$ is a nodal
double Veronese cone with an action of
a finite group $G$ such that $\mathrm{rk}\,\mathrm{Cl}\big(V\big)^G=1$,
and assumptions (1)--(3) of Theorem~\ref{theorem:V1-G-rigid} hold.
Namely, we
suppose that $\mathbb{P}^2$ does not contain $\overline{G}$-fixed points;
that the singular locus $\mathrm{Sing}(V)$
does not contain $G$-orbits of length $3$;
 that for any $G$-irreducible curve $D$ in $V$ such that~\mbox{$H\cdot D$} is either $2$ or $3$,
and $\kappa(D)$ is a curve in $\mathbb{P}^2$ of degree $H\cdot D$,
there is a plane in $V$ containing the curve $D$.

\begin{remark}\label{remark:overline-G-orbits}
Since there are no $\overline{G}$-fixed points on
$\mathbb{P}^2$, there are also no $\overline{G}$-invariant lines.
This implies that there are no $\overline{G}$-orbits
contained in a line. In other words, every
$\overline{G}$-orbit contains three non-collinear points,
and in particular no $\overline{G}$-orbits
of length $2$.
Furthermore, by Lemma~\ref{lemma:4-general-points}
every
$\overline{G}$-orbit contains four points such that
no three of them are collinear.
\end{remark}

To~prove Theorem~\ref{theorem:V1-G-rigid}, we have to show that
the 3-fold~$V$ is $G$-birationally super-rigid.
Suppose that it is not.
Then there exists a $G$-invariant mobile linear system $\mathcal{M}$ on the 3-fold~$V$ such that
$$
\mathcal{M}\sim nH,
$$
and the log pair $(V,\frac{2}{n}\mathcal{M})$ is not canonical
(see for instance \cite[Theorem~3.3.1]{CheltsovShramov-V5}).

Let $Z$ be a $G$-center of non-canonical
singularities of the log pair $(V,\frac{2}{n}\mathcal{M})$.
Recall that $O$ denotes the (unique) base point of the linear
system $|H|$ on $V$, and $O$ is a smooth point of $V$.

\begin{lemma}
\label{lemma:Z-not-O}
The $G$-center  $Z$ cannot be the point $O$.
\end{lemma}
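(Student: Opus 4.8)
The plan is to argue by contradiction: assume $Z=O$ and derive a numerical contradiction from a lower bound on $\mathrm{mult}_O(\mathcal{M})$. First I would record that $O$ is the unique base point of the $\mathrm{Aut}(V)$-invariant linear system $|H|$, so $O$ is fixed by the whole group $G$, and $V$ is smooth at $O$. This places us exactly in the setting of Lemma~\ref{lemma:mult-2}, applied with $Y=V$, $P=O$, $\lambda=\frac{2}{n}$ and $\mathcal{M}_Y=\mathcal{M}$ (note that $\frac{2}{n}\mathcal{M}\sim -K_V$, so $\lambda=\frac2n$ is the relevant coefficient). The only hypothesis that is not immediate is that the Zariski tangent space $T_{O,V}$ is an irreducible representation of $G$.

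To verify this I would use the blow up $\psi\colon\widetilde{V}\to V$ of $O$ together with the morphism $\tilde\kappa\colon\widetilde{V}\to\mathbb{P}^2$ from the diagram above. Writing $E\cong\mathbb{P}(T_{O,V})$ for the exceptional divisor and $\tilde H=\psi^{*}H-E=\tfrac12(-K_{\widetilde V})$, one checks that $\tilde H|_E$ is the hyperplane class of $E\cong\mathbb{P}^2$, so that $\tilde\kappa$ restricts to an isomorphism $E\xrightarrow{\sim}\mathbb{P}^2$ onto the target of $\kappa$. This identifies the $G$-action on the projectivized tangent space $\mathbb{P}(T_{O,V})$ with the $\overline{G}$-action on $\mathbb{P}^2$. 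Since $G$ is finite, $T_{O,V}$ is completely reducible; if it were reducible it would contain a one-dimensional subrepresentation, producing a $\overline{G}$-fixed point on $\mathbb{P}^2$ and contradicting assumption~(1). Hence $T_{O,V}$ is irreducible, and Lemma~\ref{lemma:mult-2} gives $\mathrm{mult}_O(\mathcal{M})>\frac{2}{\lambda}=n$.

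The final step is a Noether--Fano type intersection estimate on $\widetilde{V}$. Put $m=\mathrm{mult}_O(\mathcal{M})$ and let $\tilde M_1,\tilde M_2$ be the proper transforms of two general members of $\mathcal{M}$, so that $\tilde M_i\sim n\psi^{*}H-mE$; because $\mathcal{M}$ is mobile, the two general members share no surface component, so the $1$-cycle $\tilde M_1\cdot\tilde M_2$ is effective. Using the intersection numbers $(\psi^{*}H)^3=1$, $(\psi^{*}H)^2\cdot E=(\psi^{*}H)\cdot E^2=0$ and $E^3=1$ on the blow up of a smooth point of a $3$-fold, I would compute
\[
\tilde M_1\cdot\tilde M_2\cdot\tilde H=(n\psi^{*}H-mE)^2\cdot(\psi^{*}H-E)=n^2-m^2 .
\]
Now $\tilde H=\tilde\kappa^{*}\mathcal{O}_{\mathbb{P}^2}(1)$ is nef and $\tilde M_1\cdot\tilde M_2$ is effective, so the left-hand side is nonnegative, whereas $m>n$ forces $n^2-m^2<0$. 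This contradiction shows that $Z\neq O$.

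The main obstacle is the verification that $T_{O,V}$ is an \emph{irreducible} $G$-representation, since everything rests on reducing this to assumption~(1) via the equivariant identification of $E$ with the base $\mathbb{P}^2$ of the elliptic fibration. It is worth emphasizing that the ordinary, non-equivariant bound $\mathrm{mult}_O(\mathcal{M})>\frac{1}{\lambda}=\frac n2$ would be too weak to give $n^2-m^2<0$; the strengthened inequality of Lemma~\ref{lemma:mult-2}, and hence the irreducibility of $T_{O,V}$, is genuinely needed to reach the borderline estimate $m>n$.
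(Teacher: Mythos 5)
Your proof is correct, and at its core it is the same argument as the paper's: both hinge on the equivariant Kawakita-type bound of Lemma~\ref{lemma:mult-2}, applied with $\lambda=\tfrac{2}{n}$, together with the observation that $T_{O,V}$ is an irreducible $G$-representation because $\mathbb{P}^2$ has no $\overline{G}$-fixed points (a fact the paper asserts and you justify in more detail via the equivariant identification of the exceptional divisor $E\cong\mathbb{P}(T_{O,V})$ with the base of the elliptic fibration). The only real difference is how the complementary upper bound $\mathrm{mult}_O(\mathcal{M})\leqslant n$ is obtained: the paper gets it in one line by intersecting a general member $M\in\mathcal{M}$ with a general fiber $E$ of $\kappa$ through $O$, namely $n=M\cdot E\geqslant \mathrm{mult}_O(M)$, whereas you get it from the Noether--Fano style computation $0\leqslant \tilde M_1\cdot\tilde M_2\cdot\tilde H=n^2-m^2$ on the blow up, using that $\tilde H=\tilde\kappa^*\mathcal{O}_{\mathbb{P}^2}(1)$ is nef and $\mathcal{M}$ is mobile; both derivations are valid and the two proofs are otherwise logically equivalent.
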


\begin{proof}
Let $M$ be a general surface in $\mathcal{M}$,
and let $E$ be a general fiber of the map $\kappa$.
Then~\mbox{$E\not\subset M$}, so that
$$
n=M\cdot E\geqslant\mathrm{mult}_{O}\big(E\big)\cdot\mathrm{mult}_{O}\big(M\big)\geqslant\mathrm{mult}_{O}\big(M\big).
$$
On the other hand, the Zariski tangent space $T_{O,V}$ is an irreducible representation of the group~$G$,
because $\mathbb{P}^2$ does not contain $\overline{G}$-fixed points.
Thus, using Lemma~\ref{lemma:mult-2}, we conclude that $O$
is not a center of non-canonical singularities
of the log pair $(V,\frac{2}{n}\mathcal{M})$.
\end{proof}

\begin{lemma}
\label{lemma:Z-not-smooth-point}
If $Z$ is $0$-dimensional, then  $Z\subset\mathrm{Sing}(V)$.
\end{lemma}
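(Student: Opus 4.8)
The plan is to argue by contradiction and rule out the possibility that a $0$-dimensional $G$-center $Z$ meets the smooth locus of $V$. Since $\Sing(V)$ is $G$-invariant and $Z$ is a single $G$-orbit, the orbit $Z$ is either entirely contained in $\Sing(V)$ or entirely contained in the smooth locus of $V$. Thus it suffices to assume that $Z$ consists of smooth points and to derive a contradiction. First I would fix a point $P\in Z$; by Lemma~\ref{lemma:Z-not-O} we have $P\neq O$, so $P$ is a smooth point of $V$ at which $(V,\frac{2}{n}\mathcal{M})$ fails to be canonical.

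Next I would invoke the $4n^2$-inequality for smooth points of threefolds (see, e.g.,~\cite{CheltsovShramov-V5}). Let $M_1$ and $M_2$ be two general members of $\mathcal{M}\sim nH$; since $\mathcal{M}$ is mobile they share no component, so $M_1\cdot M_2$ is a well-defined effective $1$-cycle with $H\cdot M_1\cdot M_2=n^2H^3=n^2$. Because $P$ is a smooth point that is a center of non-canonical singularities of the log pair $(V,\lambda\mathcal{M})$ with $\lambda=\tfrac{2}{n}$, the inequality yields $\mathrm{mult}_{P}(M_1\cdot M_2)>4/\lambda^2=n^2$.

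The heart of the argument is to contradict this with the sharp upper bound $\mathrm{mult}_{P}(M_1\cdot M_2)\le n^2$. To obtain it I would intersect the $1$-cycle $M_1\cdot M_2$ with a surface $S\in|H|$ passing through $P$. Such surfaces are precisely the preimages $\kappa^{-1}(\ell)$ of lines $\ell\subset\mathbb{P}^2$ through $\bar{P}=\kappa(P)$; they form a pencil whose base locus is the fiber $E=\kappa^{-1}(\bar{P})$. For a general $\ell$ the surface $S$ contains $P$ and meets $M_1\cdot M_2$ properly, so that $\mathrm{mult}_{P}(M_1\cdot M_2)\le\big(S\cdot M_1\cdot M_2\big)_P\le S\cdot M_1\cdot M_2=H\cdot M_1\cdot M_2=n^2$, which contradicts the previous inequality and forces $Z\subset\Sing(V)$. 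Note that $S$ need not be smooth at $P$: any proper intersection gives $\big(S\cdot M_1\cdot M_2\big)_P\ge\mathrm{mult}_{P}(S)\,\mathrm{mult}_{P}(M_1\cdot M_2)\ge\mathrm{mult}_{P}(M_1\cdot M_2)$.

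The main obstacle is the degenerate case in which the fiber $E$ is itself a component of $M_1\cdot M_2$, i.e. $E\subset\mathrm{Bs}(\mathcal{M})$: then every member of the pencil shares the component $E$ with the $1$-cycle and the naive bound breaks down, the worst situation being when $P$ is a singular point of $E$ so that $\mathrm{mult}_{P}(E)>1$. I would handle this by splitting off the fixed part, writing $M_1\cdot M_2=\mu E+C'$ with $E\not\subset\mathrm{Supp}(C')$, applying the pencil bound to $C'$ alone (using that $H$ is ample, hence $H\cdot E\ge 1$), and controlling the residual contribution of $\mu E$. When $E$ is smooth at $P$ this recovers $\mathrm{mult}_P(M_1\cdot M_2)\le n^2$ immediately; when $P$ is a singular point of $E$ the contribution $\mu\,\mathrm{mult}_P(E)$ must be accounted for more carefully, either by showing that such a configuration in fact produces a $1$-dimensional center treated in the companion lemmas, or by a direct local estimate at the node of the fiber. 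This bookkeeping around singular fibers of $\kappa$ is the step I expect to be most delicate; everything else is routine.
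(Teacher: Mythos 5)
Your opening moves match the paper's own proof: reduce to the case where the orbit $Z$ lies in the smooth locus, apply Corti's inequality \cite[Corollary~3.4]{Corti2000} to the pair $(V,\tfrac{2}{n}\mathcal{M})$ to get $\mathrm{mult}_{P}(M_1\cdot M_2)>n^2$ at each $P\in Z$, and contradict this by intersecting the cycle $M_1\cdot M_2$ with surfaces pulled back from $\mathbb{P}^2$. In the non-degenerate case (the fiber $E$ through $P$ not a component of $M_1\cdot M_2$, or smooth at $P$) your pencil argument is correct. But the case you flag and leave open --- $E$ a component of $M_1\cdot M_2$ with coefficient $\mu>0$ and $\mathrm{mult}_P(E)=2$ --- is a genuine gap, and neither of your two suggested remedies can close it. A ``direct local estimate at the node'' is impossible in principle: from the pencil through the single fiber $E$ you only get $\mathrm{mult}_P(\Delta)\leqslant H\cdot\Delta=n^2-\mu$ for the residual cycle $\Delta$, hence $\mathrm{mult}_P(M_1\cdot M_2)\leqslant 2\mu+(n^2-\mu)=n^2+\mu$, and since nothing caps $\mu$ below $n^2$ this is perfectly consistent with Corti's lower bound; no bookkeeping at $P$ alone can produce a contradiction. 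The deferral to a one-dimensional center is also unjustified: $E\subset\mathrm{Bs}(\mathcal{M})$ only gives $\mathrm{mult}_E(\mathcal{M})\geqslant 1$, not $>\tfrac{n}{2}$, so $E$ need not be a non-canonical center, and in any case the lemma must rule out the zero-dimensional center itself rather than merely exhibit another center elsewhere.

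The missing idea is equivariant, and it is exactly where the paper's proof does its real work. Since $\overline{G}$ has no fixed points, Remark~\ref{remark:overline-G-orbits} (via Lemma~\ref{lemma:4-general-points}) guarantees that $\kappa(Z)$ contains three non-collinear points $P_1,P_2,P_3$. The paper writes $M_1\cdot M_2=m(E_1+E_2+E_3)+\Delta$ with $E_i$ the fibers over these points, deduces $m\leqslant \tfrac{n^2}{3}$ from $H\cdot M_1\cdot M_2=n^2$, and then intersects $\Delta$ with a general member $B$ of the linear subsystem of $|2H|$ consisting of surfaces containing $E_1\cup E_2\cup E_3$; non-collinearity ensures this system has no other base curves, so $\sum_{i}\mathrm{mult}_{O_i}(\Delta)\leqslant B\cdot\Delta=2n^2-6m$. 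Combined with Corti's bound $\mathrm{mult}_{O_i}(\Delta)>n^2-m\delta$, where $\delta=\mathrm{mult}_{O_i}(E_i)\leqslant 2$, this yields $3m(\delta-2)>n^2>0$, a contradiction: the problematic term $m\delta\leqslant 2m$ is absorbed precisely because three fibers contribute simultaneously. In fact your own pencil argument closes the same way once you decompose along the whole orbit: writing $M_1\cdot M_2=\mu(E_1+\cdots+E_r)+\Delta$ with $r\geqslant 3$, your bound becomes $\mathrm{mult}_P(\Delta)\leqslant H\cdot\Delta=n^2-r\mu$, hence $\mathrm{mult}_P(M_1\cdot M_2)\leqslant n^2+(2-r)\mu\leqslant n^2$, contradicting Corti. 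Without this use of the orbit structure of the fibers, the argument cannot be completed.
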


\begin{proof}
Suppose that the $G$-orbit $Z$ consists of smooth points of the 3-fold $V$.
Let $M_1$ and~$M_2$ be general surfaces in the linear system $\mathcal{M}$.
Then
\begin{equation}
\label{equation:Z-not-smooth-point}
\mathrm{mult}_{P}\big(M_1\cdot M_2\big)>n^2
\end{equation}
for every $P\in Z$ by \cite[Corollary~3.4]{Corti2000}.
Let us show that this leads to a contradiction.

By Lemma~\ref{lemma:Z-not-O}, we have $Z\ne O$, so that $\kappa(Z)$ is a
well-defined
$\overline{G}$-orbit.
Moreover, $\kappa(Z)$ consists of at least $3$ points,
and  contains three non-collinear points
by Remark~\ref{remark:overline-G-orbits}.
Denote them by $P_1$, $P_2$ and~$P_3$.

Let $E_1$, $E_2$ and $E_3$ be fibers of the map $\kappa$ over the points $P_1$, $P_2$ and $P_3$, respectively.
Then
$$
M_1\cdot M_2=m\big(E_1+E_2+E_3\big)+\Delta,
$$
where $m$ is a non-negative integer, and $\Delta$ is an effective one-cycle whose support does not contain the curves $E_1$, $E_2$ and $E_3$.
Then $m\leqslant\frac{n^2}{3}$, since
$$
n^2=H\cdot M_1\cdot M_2=H\cdot\Big(m\big(E_1+E_2+E_3\big)+\Delta\Big)=3m+H\cdot\Delta\geqslant 3m.
$$

Let $O_1$, $O_2$ and $O_3$ be points in $Z$ that are mapped to $P_1$, $P_2$ and $P_3$, respectively.
Let~\mbox{$\delta=\mathrm{mult}_{O_i}\big(E_i\big)$}. Then either $\delta=1$ or $\delta=2$.
Moreover, it follows from \eqref{equation:Z-not-smooth-point} that
$$
\mathrm{mult}_{O_i}\big(\Delta\big)>n^2-m\delta
$$
for each point $O_i$.

Let $\mathcal{B}$ be a linear subsystem in $|2H|$ consisting of all
surfaces that contain the curves $E_1,\,E_2$ and~$E_3$.
Then $\mathcal{B}$ does not have other base curves,
since $P_1$, $P_2$ and $P_3$ are not collinear.
Thus, for a general surface $B$ in $\mathcal{B}$, we have
$$
2n^2-6m=B\cdot\Delta\geqslant\sum_{i=1}^{3}\mathrm{mult}_{O_i}\big(\Delta\big)>3\Big(n^2-m\delta\Big).
$$
This implies
$$
-4m\geqslant m\delta-6m>n^2,
$$
which is a contradiction.
\end{proof}

\begin{lemma}
\label{lemma:Z-not-singular-point}
The center $Z$ is $1$-dimensional.
\end{lemma}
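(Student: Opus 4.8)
The plan is to argue by contradiction, eliminating the last possibility left open by Lemmas~\ref{lemma:Z-not-O} and~\ref{lemma:Z-not-smooth-point}. Since $\mathcal{M}$ is mobile it has no fixed components, so $Z$ cannot be a divisor; hence it suffices to show that $Z$ is not $0$-dimensional. Suppose it is. Then Lemma~\ref{lemma:Z-not-smooth-point} gives $Z\subset\mathrm{Sing}(V)$, so that $Z$ is a $G$-orbit consisting of nodes of $V$.

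First I would observe that $\kappa$ is injective on $\mathrm{Sing}(V)$. Every singular point of $V$ has $w=0$, and over a point $[s_0:t_0:u_0]$ the corresponding fibre of $\kappa$ is the irreducible Weierstrass cubic $w^2=v^3+vh_4(s_0,t_0,u_0)+h_6(s_0,t_0,u_0)$, whose singular points are exactly the singular points of $V$ lying on it. An irreducible curve of arithmetic genus $1$ has at most one singular point, so each fibre carries at most one node, and $\kappa|_Z$ is injective with $\kappa(Z)$ a $\overline{G}$-orbit satisfying $|\kappa(Z)|=|Z|$. By Remark~\ref{remark:overline-G-orbits} we have $|\kappa(Z)|\geqslant 3$, while assumption~(2) forces $|Z|\neq 3$; therefore $|Z|=|\kappa(Z)|\geqslant 4$, and Lemma~\ref{lemma:4-general-points} supplies four points $P_1,P_2,P_3,P_4\in\kappa(Z)$ no three of which are collinear. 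Denote by $O_i\in Z$ the unique node over $P_i$ and by $E_i$ the fibre of $\kappa$ through it; then $E_i$ is singular at $O_i$, so $\mathrm{mult}_{O_i}(E_i)=2$, and $H\cdot E_i=1$ as in Lemma~\ref{lemma:Z-not-smooth-point}.

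The heart of the argument is a local estimate at a node. Blowing up the ordinary double point $O_i$ yields a smooth threefold with exceptional quadric $E\cong\mathbb{P}^1\times\mathbb{P}^1$, discrepancy $1$ and normal bundle $\mathcal{N}_E\cong\mathcal{O}_E(-1,-1)$, so that $E^3=2$. Running the usual computation on this blow-up, the fact that $O_i$ is a center of non-canonical singularities of $(V,\frac{2}{n}\mathcal{M})$ forces $\mathrm{mult}_E(\mathcal{M})>\frac{n}{2}$, and then $E^3=2$ gives the node analogue of the Corti inequality $\mathrm{mult}_{O_i}(M_1\cdot M_2)>\frac{n^2}{2}$ for general $M_1,M_2\in\mathcal{M}$ (compare~\cite[Corollary~3.4]{Corti2000}). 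Writing $M_1\cdot M_2=m(E_1+E_2+E_3+E_4)+\Delta$, where $m$ is the common value of the coefficient of the mutually $G$-conjugate fibres $E_i$ and $\Delta$ is effective with no component among the $E_i$, additivity of $\mathrm{mult}_{O_i}$ gives $\mathrm{mult}_{O_i}(\Delta)>\frac{n^2}{2}-2m$.

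Finally I would globalize using the Cartier class $2H$. Because $P_1,\dots,P_4$ are in general position, the conics through them form a pencil with base locus exactly $\{P_1,\dots,P_4\}$; its pull-back by $\kappa$ is a pencil $\mathcal{B}\subset|2H|$ with base locus $E_1\cup\dots\cup E_4$. A general $B\in\mathcal{B}$ is a Cartier surface through each node $O_i$, so $(B\cdot\Delta)_{O_i}\geqslant\mathrm{mult}_{O_i}(\Delta)$ even at the singular points of $V$, whence $2H\cdot\Delta=B\cdot\Delta\geqslant\sum_{i=1}^{4}\mathrm{mult}_{O_i}(\Delta)>4\big(\frac{n^2}{2}-2m\big)=2n^2-8m$. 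On the other hand $H\cdot\Delta=H\cdot(M_1\cdot M_2)-m\sum_{i=1}^4 H\cdot E_i=n^2-4m$, so $2H\cdot\Delta=2n^2-8m$, which is a contradiction. The delicate point, and the main obstacle, is precisely this node version of the multiplicity inequality: one must check that blowing up the ordinary double point produces exactly the constant $\frac{n^2}{2}$, so that the numerical inequality is sharp and the passage from three general points (as in the smooth case) to four is genuinely forced; and one must use that $B\in|2H|$ is Cartier in order to license $(B\cdot\Delta)_{O_i}\geqslant\mathrm{mult}_{O_i}(\Delta)$ at a singular point of $V$.
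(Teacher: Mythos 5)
Your proof is correct, but it follows a genuinely different route from the paper's. The paper reduces, exactly as you do, to a $G$-orbit $Z\subset\mathrm{Sing}(V)$ of length at least $4$ whose image contains four points with no three collinear (assumption~(2) plus Remark~\ref{remark:overline-G-orbits}), and it introduces the same system $\mathcal{B}\subset|2H|$; but it then blows up the four nodes, $\beta\colon\overline{V}\to V$, observes that $-K_{\overline{V}}\sim\beta^*(2H)-(F_1+F_2+F_3+F_4)$ is nef (it is the proper transform of $\mathcal{B}$ and meets the proper transforms of the curves $E_i$ trivially), computes $\overline{B}\cdot\overline{M}_1\cdot\overline{M}_2=2n^2-8m^2\geqslant 0$ where $m$ is the multiplicity of $\overline{\mathcal{M}}$ along the $F_i$, and concludes $m\leqslant n/2$, contradicting \cite[Theorem~3.10]{Corti2000}. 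You instead stay on $V$: you convert Corti's bound into the pointwise inequality $\mathrm{mult}_{O_i}(M_1\cdot M_2)>n^2/2$ and repeat the degree count of Lemma~\ref{lemma:Z-not-smooth-point} with four fibres instead of three. Your route makes the node case run parallel to the smooth-point case and dispenses with the nefness verification; the paper's route avoids having to define and estimate multiplicities of $1$-cycles at singular points of $V$.

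Two steps in your argument should be stated more carefully. First, $\mathrm{mult}_E(\mathcal{M})>n/2$ is not ``the usual computation on this blow-up'': a priori, non-canonicity at the node could be witnessed only by divisors of discrepancy at least $2$ over $O_i$, and the fact that it forces multiplicity greater than $n/2$ along the discrepancy-one divisor $E$ is precisely \cite[Theorem~3.10]{Corti2000} (Corollary~3.4 of that paper, which you cite, is the smooth-point statement); this is the same nontrivial external input the paper relies on, so invoke it as such. Second, the conversion to $\mathrm{mult}_{O_i}(M_1\cdot M_2)>n^2/2$ deserves its one-line proof: writing $\overline{M}_j=\beta^*M_j-\mu E$ with $\mu>n/2$, the cycle $\overline{M}_1\cdot\overline{M}_2$ is the sum of the proper transforms of the components of $M_1\cdot M_2$ and an effective $1$-cycle $Z_E$ supported on $E$; since $E\cdot\overline{C}=\mathrm{mult}_{O_i}(C)$ for every curve $C$ through the node and $\mathcal{O}_E(E)\cong\mathcal{O}_{\mathbb{P}^1\times\mathbb{P}^1}(-1,-1)$ has negative degree on every curve in $E$, one gets $\mathrm{mult}_{O_i}(M_1\cdot M_2)=E\cdot\overline{M}_1\cdot\overline{M}_2-E\cdot Z_E\geqslant\mu^2E^3=2\mu^2>n^2/2$. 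Finally, two harmless blemishes: the singular points of a fibre are not ``exactly'' the singular points of $V$ lying on it (a fibre over a general point of the discriminant curve is singular where $V$ is smooth); only the inclusion you actually use, that a node of $V$ on a fibre is a singular point of that fibre, holds. And the equality of the four coefficients of $E_1,\ldots,E_4$ in $M_1\cdot M_2$ requires generality of $M_1,M_2$ together with $G$-invariance of $\mathcal{M}$ --- or can simply be dropped, since your count works verbatim with distinct coefficients $m_i$.
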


\begin{proof}
Suppose that $\dim Z=0$, so that $Z$ is a $G$-orbit. Then $Z\subset\mathrm{Sing}(V)$ by Lemma~\ref{lemma:Z-not-smooth-point}.
Let~$r$ be the length of the $\overline{G}$-orbit $\kappa(Z)$.
Note that $r\geqslant 3$ by Remark~\ref{remark:overline-G-orbits}.
Denote the points of $\kappa(Z)$ by $P_1,\ldots,P_r$.
Let $E_i$ be the fiber of the map $\kappa$ over the point $P_i$.
Then $E_i$ is an irreducible curve because~\mbox{$E_i\cdot H=1$}.
Since the arithmetic genus of $E_i$ equals $1$,
we see that $E_i$ has at most one
singular point, and thus
contains at most one
singular point of the 3-fold $V$.
Hence, since~\mbox{$E_i\cap Z\ne\varnothing$}, each curve~$E_i$ contains exactly one singular point of $V$,
which must be a singular point of the curve~$E_i$.
In particular, we see that
the length of the $G$-orbit $Z$ also equals~$r$.

Let $O_i$ be the singular point of the curve $E_i$.
Then $Z$ consists of the points $O_1,\ldots,O_r$.
By assumption, we have $r\geqslant 4$.

The $\overline{G}$-orbit $\kappa(Z)$ contains four points in $\mathbb{P}^2$ such that no three of
them are collinear by Remark~\ref{remark:overline-G-orbits}.
Without loss of generality, we may assume that these points are $P_1$, $P_2$, $P_3$, and~$P_4$.
Let $\mathcal{B}$ be a linear subsystem in $|2H|$ consisting of all
surfaces that contain the curves $E_1$, $E_2$, $E_3$ and $E_4$.
Then $\mathcal{B}$ does not contain other base curves.

Let~$\beta\colon \overline{V}\to V$ be the blow up of the points $O_1$, $O_2$, $O_3$ and $O_4$,
and let $F_1$, $F_2$, $F_3$, $F_4$ be the $\beta$-exceptional surfaces that are mapped to the points $O_1$, $O_2$, $O_3$, $O_4$, respectively.
Denote by~$\overline{\mathcal{B}}$ the proper transforms on $\overline{V}$ of the linear system $\mathcal{B}$.
Then
$$
-K_{\overline{V}}\sim\overline{\mathcal{B}}\sim\beta^*\big(2H\big)-\big(F_1+F_2+F_3+F_4\big),
$$
which implies that the divisor $-K_{\overline{V}}$ is nef, because it intersects the proper transform of each curve $E_i$ trivially.

Denote by $\overline{\mathcal{M}}$ the proper transform of the linear system $\mathcal{M}$ on the 3-fold $\overline{V}$.
Then
$$
\overline{\mathcal{M}}\sim_{\mathbb{Q}}\beta^*\big(nH\big)-m\big(F_1+F_2+F_3+F_4\big)
$$
for some non-negative rational number $m$.
Let $\overline{M}_1$ and $\overline{M}_2$ be general surfaces in $\overline{\mathcal{M}}$.
Then
$$
0\leqslant\overline{B}\cdot\overline{M}_1\cdot\overline{M}_2=2n^2-8m^2,
$$
which gives $m\leqslant\frac{n}{2}$. This is impossible by \cite[Theorem~3.10]{Corti2000}.
\end{proof}

Thus, Lemma~\ref{lemma:Z-not-singular-point}  concludes that $Z$ is a $G$-irreducible curve.
Denote by $Z_1,\ldots,Z_r$ the irreducible components of $Z$.
Thus, if $r=1$, then $Z=Z_1$ is an irreducible curve.
In this case, $\kappa(Z)$ is neither a point nor a line, because $\mathbb{P}^2$ does not contain $\overline{G}$-fixed points.
In particular, we see that $H\cdot Z\ne 1$.

Since $Z$ is a $G$-center of non-canonical singularities of the log pair $(V,\frac{2}{n}\mathcal{M})$, we have
$$
\mathrm{mult}_{Z_i}\big(\mathcal{M}\big)>\frac{n}{2}.
$$
Let $M_1$ and $M_2$ be general surfaces in $\mathcal{M}$.
Then
$$
n^2=H\cdot M_1\cdot M_2\geqslant \sum_{i=1}^{r}\mathrm{mult}_{Z_i}\big(M_1\cdot M_2\big)>\frac{n^2}{4}rH\cdot Z_i,
$$
so that we have the following four possibilities:
\begin{itemize}
\item[($\mathrm{A}$)] $r=1$, $H\cdot Z=2$, and $\kappa(Z)$ is a smooth conic in $\mathbb{P}^2$;
\item[($\mathrm{B}$)] $r=1$, $H\cdot Z=3$, and $\kappa(Z)$ is a smooth cubic in $\mathbb{P}^2$;
\item[($\mathrm{C}$)] $r=3$, $H\cdot Z=3$, and $\kappa(Z)$ is a union of three lines;
\item[($\mathrm{D}$)] $r=3$, $H\cdot Z=3$,
and $\kappa(Z)$ is a $\overline{G}$-orbit of length $3$.
\end{itemize}

We claim that the case ($\mathrm{D}$) is impossible.
Indeed, if $\kappa(Z)$ is a $\overline{G}$-orbit of length $3$,
then the linear system~$|H|$ contains a surface $H_{12}$ passing through
$Z_1$ and $Z_2$. Let $M$ be a general element of
the linear system $\mathcal{M}$. Then
$$
n=H\cdot M\cdot H_{12}\geqslant \mathrm{mult}_{Z_1}\big(M\big)+
\mathrm{mult}_{Z_2}\big(M\big)>n,
$$
which is absurd.

Thus, we see that $H\cdot Z$ is either $2$ or $3$ and that $\kappa(Z)$ is a curve of degree $H\cdot Z$.
Therefore, by assumption, the 3-fold $V$ has a plane $\Pi$ that contains the curve $Z$.
Let $L$ be a general line in this plane, so that $H\cdot L=1$,
the intersection $L\cap Z$ consists of exactly $H\cdot Z$ points,
and $L$ is not contained in the base locus of the linear system $\mathcal{M}$.
Then, for a general surface $M\in\mathcal{M}$, we have
$$
n=L\cdot M\geqslant\mathrm{mult}_Z\big(M\big)\cdot\big| L\cap Z\big|\geqslant2\mathrm{mult}_Z\big(M\big)\geqslant2\mathrm{mult}_{Z}\big(\mathcal{M}\big)>n,
$$
which is a contradiction. This completes the proof of Theorem~\ref{theorem:V1-G-rigid}.

\section{Questions and problems}
\label{section:question}

In this section we discuss several open questions concerning
$28$-nodal double Veronese cones.

\medskip
\textbf{Del Pezzo surfaces of degree $2$.}
Let~$V$ be a $28$-nodal double Veronese cone,
let $C$ be the  plane quartic curve corresponding to $V$
by Theorem~\ref{theorem:one-to-one},
and let~$S$ be the del Pezzo surface of degree $2$ constructed as the double
cover of $\mathbb{P}^2$ branched along $C$.
We know from Corollary~\ref{corollary:Aut-V-vs-S}
that
$$
\mathrm{Aut}(V)\cong\mumu_2\times\mathrm{Aut}(C)\cong\Aut(S),
$$
although there is no obvious choice for a natural isomorphism
between $\Aut(V)$ and $\Aut(S)$.
Also, one has
$$
\Cl(V)\cong\mathbb{Z}^8\cong\Pic(S).
$$
Recall from \cite[Corollary~7.1.4]{Prokhorov}
that the group $\Aut(V)$ acts faithfully on $\Cl(V)$,
while by~\mbox{\cite[Corollary~8.2.40]{Dolgachev}} the group
$\Aut(S)$ acts faithfully on~\mbox{$\Pic(S)$}.
It would be interesting to know
if there exists a natural identification of the corresponding
representations. More precisely, we ask the following:

\begin{question}
\label{question:V1-C4}
Does there exist an isomorphism $\mathrm{Aut}(V)\cong\mathrm{Aut}(S)$
under which $\mathrm{Cl}(V)\otimes\mathbb{C}$
is isomorphic to $\mathrm{Pic}(S)\otimes\mathbb{C}$ as representations
of this group?
\end{question}

Recall that both $\Cl(V)$ and $\Pic(S)$ have a naturally
defined intersection form.
The canonical class~$K_V$ is invariant with respect to the action of
the group $\Aut(V)$ on $\Cl(V)$, and the canonical class $K_S$ is
invariant with respect to the action of $\Aut(S)$ on $\Pic(S)$.
Moreover, in some cases these are the only trivial subrepresentations
in $\mathrm{Cl}(V)\otimes\mathbb{C}$ and
$\mathrm{Pic}(S)\otimes\mathbb{C}$; for instance, this is the case
when $\Aut(C)\cong\mathrm{PSL}_2(\mathbb{F}_7)$.
Therefore, if there exists a natural identification
of automorphism groups as required in Question~\ref{question:V1-C4},
then for any $28$-nodal Veronese double cone $V$ and the corresponding
del Pezzo surface $S$ there must exist an identification
of the seven-dimensional representations
of the group~\mbox{$\Aut(V)\cong\Aut(S)$} in the orthogonal
complement~\mbox{$\mathrm{Cl}(V)^{\perp K_V}\otimes\mathbb{C}$} to~$K_V$ in
$\mathrm{Cl}(V)\otimes\mathbb{C}$ and in
the orthogonal complement $\mathrm{Pic}(S)^{\perp K_S}\otimes\mathbb{C}$
to $K_S$ in~\mbox{$\mathrm{Pic}(S)\otimes\mathbb{C}$}.

Recall that both lattices $\mathrm{Cl}(V)^{\perp K_V}$ and
$\mathrm{Pic}(S)^{\perp K_S}$ are isomorphic to the lattice
$\mathbf{E}_7$ (see~\mbox{\cite[Theorem~1.7]{Prokhorov}}
and \cite[Remark~1.8]{Prokhorov} for the former, and \cite[\S8.2.6]{DolgachevIskovskikh}
for the latter).
We point out that despite this fact in general
one cannot hope for an equivariant
identification of the lattices~\mbox{$\mathrm{Cl}(V)^{\perp K_V}$} and
$\mathrm{Pic}(S)^{\perp K_S}$ that takes into account the intersection
forms, even if the answer to  Question~\ref{question:V1-C4}
is positive.

Indeed, let $C$ be a smooth plane quartic curve with an action of the symmetric group
$\mathfrak{S}_4$ as in Example~\ref{example:V1-S4}.
Let us use the notation of~\S\ref{section:S4-Veronese}.
It follows from Corollary~\ref{corollary:class-group-V-G-prime}
and Lemma~\ref{lemma:S4-reps} that
each of the $\mathfrak{G}^\prime$
representations $\mathrm{Cl}(V)\otimes\mathbb{C}$
and $\mathrm{Pic}(S)\otimes\mathbb{C}$ contains a unique
two-dimensional subrepresentation.
This implies that the lattice $\Cl(V)$ contains a unique
$\mathfrak{G}^\prime$-invariant primitive sublattice $\Lambda_V$ of rank~$2$,
and the lattice $\Pic(S)$ contains a unique
$\mathfrak{G}^\prime$-invariant primitive sublattice~$\Lambda_S$ of rank~$2$.
However, the sublattice $\Lambda_V$ is generated by the half-anticanonical
divisor $H$ and the plane $\Pi_{-}$, so that the intersection
form on $\Lambda_V$ is given by the matrix
$$
\left(
\begin{array}{cc}
H^2 & H\cdot\Pi_{-}\\
H\cdot\Pi_{-} & \Pi_{-}^2
\end{array}
\right)=
\left(
\begin{array}{cc}
1 & 1\\
1 & -1
\end{array}
\right).
$$
On the other hand, it follows from the proof of
Lemma~\ref{lemma:S4-reps} that
the sublattice $\Lambda_S$ is generated by~$-K_S$
and a fiber $F$ of a conic bundle, so that the intersection
form on $\Lambda_S$ is given by the matrix
$$
\left(
\begin{array}{cc}
K_S^2 & -K_S\cdot F\\
-K_S\cdot F & F^2
\end{array}
\right)=
\left(
\begin{array}{cc}
2 & 2\\
2 & 0
\end{array}
\right).
$$
Thus, we see that the lattices $\Lambda_V$
and $\Lambda_S$ are not isomorphic.

One situation when we can easily establish the isomorphism required
in Question~\ref{question:V1-C4} is as follows.
Let $G$ be a subgroup in $\mathrm{Aut}(\mathbb{P}^3)$ such that there exists
a $G$-invariant Aronhold heptad~\mbox{$P_1,\ldots,P_7$}.
Then the diagram~\eqref{equation:non-commutative}
is $G$-equivariant, which implies that
$\mathrm{Cl}(V)\otimes\mathbb{C}$ is isomorphic
to~\mbox{$\mathrm{Pic}(S)\otimes\mathbb{C}$}
as $G$-representations.

\medskip
\textbf{Elliptic fibration.}
The following construction was pointed out to us by Alexander
Kuznetsov.
Let~$C$ be a smooth plane quartic, and let $S$ be the corresponding
del Pezzo surface of degree~$2$ with the anticanonical morphism
$\varphi\colon S\to\mathbb{P}^2$. Let $\check{\mathbb{P}}^2$ be the projectively dual plane of $\mathbb{P}^2$.
Let $p_1$ and $p_2$ be the projections
of the flag variety
$$
\mathcal{F}=\mathrm{Fl}(1,2,3)\subset\mathbb{P}^2\times\check{\mathbb{P}}^2
$$
to the first and the second factor of $\mathbb{P}^2\times\check{\mathbb{P}}^2$,
respectively.
Consider the fiber product
$$
\xymatrix{
 & Y\ar@{->}[ld]\ar@{->}[rd]^{\gamma} & & \\
S\ar@{->}[rd]_{\varphi} & &
\mathcal{F}\ar@{->}[ld]^{p_1}\ar@{->}[rd]_{p_2} & \\
& \mathbb{P}^2 & & \check{\mathbb{P}}^2
}
$$
Note that there is a natural action of the group
$\Aut(S)\cong\mumu_2\times\Aut(C)$ on $Y$.
The morphism
$$
\kappa_Y=p_2\circ\gamma\colon Y\to\check{\mathbb{P}}^2
$$
is an elliptic fibration
whose discriminant curve is projectively dual to~$C$.

Now let $V$ be the $28$-nodal double Veronese cone constructed from the plane quartic~$C$ by Theorem~\ref{theorem:one-to-one}.
Then the half-anticanonical map $\kappa\colon V\dasharrow\check{\mathbb{P}}^2$ is
a rational elliptic fibration. Let~$\tilde{V}$ be the blow up of
$V$ at the unique base point of $\kappa$, and let $\tilde{\kappa}\colon \tilde{V}\to\check{\mathbb{P}}^2$
be the corresponding regular elliptic fibration. Then the discriminant curve of $\tilde{\kappa}$
is projectively dual to~$C$.
Moreover, it follows from Lemma~\ref{lemma:Q1Q2} that over every point in $\check{\mathbb{P}}^2$ the fibers
of the elliptic fibrations $\tilde{\kappa}$
and~$\kappa_Y$ are isomorphic to each other (cf. Lemma~\ref{lemma:V-from-elliptic-fibration}).

Furthermore, it follows from Lemma~\ref{lemma:Q1Q2} that (at least over the complement
to the discriminant curve) the elliptic fibration $\kappa_Y$ is the relative $\mathrm{Pic}^2$ of the elliptic
fibration~$\tilde{\kappa}$.
Indeed, let $E$ be a smooth fiber of $\tilde{\kappa}$, and let $P_1$ and $P_2$ be two points on $E$.
Recall that $E$ is a proper transform of the base locus~$E_{\mathcal{Q}}$ of some pencil $\mathcal{Q}$ of quadrics in $\mathbb{P}^3$.
The two points $P_1, P_2\in E_{\mathcal{Q}}$ define a line~$L$ passing through them.
Thus they define a unique quadric $Q$ in $\mathcal{Q}$ containing the line $L$, and also
the family of lines on $\mathcal{Q}$ which contains $L$. Note that the fiber of
the elliptic fibration~$\kappa_Y$ over the point $\tilde{\kappa}(E)$ can be identified with the double cover
of $\mathcal{Q}\cong\mathbb{P}^1$ branched in the locus of singular quadrics of $\mathcal{Q}$
or, in other words, with the Hilbert scheme of lines
contained in the quadrics of $\mathcal{Q}$. Therefore, the pair of points~$(P_1,P_2)$
defines a point on the fiber of $\kappa_Y$ over~$\tilde{\kappa}(E)$. This point
depends only on the class of the pair~$(P_1,P_2)$ in $\mathrm{Sym}^2(E)$, since every other pair from
the same class determines the same quadric~$Q$ and a line from the same family of lines on $Q$ where $L$ lies.
We conclude that the fiber of~$\kappa_Y$ over $\tilde{\kappa}(E)$ is identified with~$\mathrm{Pic}^2(E)$.
It remains to notice that this construction can be performed in the family, and thus the assertion
about the fibrations follows.

One of the consequences of the above construction is as follows.
Since the elliptic fibration~$\tilde{\kappa}$ is obtained by blowing up
$V$ at a single point, it comes with a distinguished section (which corresponds to the
eighth base point of the net of quadrics in the construction of $V$ from an Aronhold heptad).
Therefore, there exist a natural birational map between $V$ and~$Y$.

\begin{question}
Is there a more explicit geometric relation between the 3-fold $Y$ and the
$28$-nodal double Veronese cone $V$?
\end{question}

\medskip
\textbf{Intersections of quadrics and cubics.}
The following construction was also explained to us by Alexander
Kuznetsov. Let $P_1,\ldots,P_8$ be a regular Cayley octad, and let $\mathcal{L}$ be the corresponding net of quadrics in $\mathbb{P}^3$. Then the lines contained in the quadrics of $\mathcal{L}$ are parameterized by a three-dimensional variety
$Z$ (cf.~\cite[\S1.2]{Reid}). Then $Z$ can be regarded as a subvariety of the Grassmannian~$\mathrm{Gr}(2,4)$.
Let~$\mathcal{U}^\vee$ be the universal quotient bundle on~$\mathrm{Gr}(2,4)$ (or, which is the same, the dual
of the tautological subbundle).
Furthermore, $Z$ can be described as the degeneracy locus
of the morphism of vector bundles
$$
\mathcal{O}_{\mathrm{Gr}(2,4)}\to \mathrm{S}^2\mathcal{U}^\vee
$$
corresponding to~$\mathcal{L}$. Therefore, the 3-fold $Z$ is an intersection of a quadric
and a cubic in~$\mathbb{P}^5$. Note that $Z$ has $28$ singular points corresponding to the lines
passing through pairs of points $P_i$ and $P_j$. In particular, its intermediate Jacobian is trivial.

\begin{question}
Is there a natural geometric relation between the 3-fold $Z$ and the
$28$-nodal double Veronese cone $V$ constructed from the net of quadrics~$\mathcal{L}$?
Is the 3-fold $Z$ rational?
\end{question}

\begin{question}
Does the 3-fold $Z$ depend on the choice of the net of quadrics~$\mathcal{L}$, or only on the
corresponding plane quartic curve?
\end{question}

\medskip
\textbf{Other 3-folds constructed from plane quartics.}
There are other interesting classes of 3-folds related to
plane quartic curves. For instance, a construction due to S.\,Mukai assigns
to a general plane quartic
a smooth Fano 3-fold of genus $12$ (and anticanonical degree $22$),
see~\cite{Mukai92}, \cite{Mukai04}, and~\cite{RanestadSchreyer} for details.

\begin{question}
Is there a natural geometric relation between the
$28$-nodal double Veronese cone~$V$ and the Fano 3-fold $V_{22}$
of genus 12 constructed from the same smooth plane quartic~$C$?
\end{question}

We point out that both of the above varieties are rational, so
there always exists a birational map between them. However,
one cannot choose such a map to be $\Aut(C)$-equivariant. Indeed,
if~\mbox{$\Aut(C)\cong\mathrm{PSL}_2(\mathbb{F}_7)$},
then both $V$ and $V_{22}$
are $\mathrm{PSL}_2(\mathbb{F}_7)$-birationally super-rigid
by Theorem~\ref{theorem:rigid} and~\cite[Theorem~1.10]{CheltsovShramov-PSL}.

\medskip
\textbf{Degenerations.}
The following question was asked by Vyacheslav Shokurov and Yuri Prokhorov.

\begin{question}
Can one extend the one-to-one correspondence between smooth plane quartics and
$28$-nodal double Veronese cones given by Theorem~\ref{theorem:one-to-one}
to the (mildly) singular case
(like, double Veronese cones with $c\mathbf{A}_1$-singularities, or
double Veronese cones with $26$ nodes and one nice $c\mathbf{A}_2$-singularity,
or plane quartics with a single node)?
Furthermore, can one include in this one-to-one correspondence
the plane quartic that is a union of a cuspidal cubic and its tangent line in the cusp
(in other words, the quartic that gives rise to a del Pezzo surface of degree $2$ with an $\mathbf{E}_7$-singularity)?
\end{question}


\bigskip

\appendix
\section{Computer-aided calculations}
\label{section:appendix}

This appendix explains,  from a computational point of view, 
how~\eqref{equation:j-function2} is obtained. 
In particular, a way to extract explicit formulae 
of  the covariants $g_4(s,t,u)$ and $g_6(s,t,u)$ from a given 
plane quartic~\eqref{equation:quartic} is described, 
which enables us to produce concrete examples of $28$-nodal double 
Veronese cones from smooth quartic curves as in Example~\ref{example:V1-S4}.
We point out that there is a method to write down 
the polynomial $g_4(s,t,u)$ using a smaller amount of 
explicit computations. It is 
based on the expression for the cubic invariant in the coefficients of the
plane quartic given in~\cite[\S293]{Salmon} (see 
the end of~\cite[\S1]{Ottaviani}).

Since  the denominator and the numerator of the $j$-function 
in \eqref{equation:j-function} are symmetric polynomials 
in~\mbox{$x_1, x_2, x_3, x_4$}, the $j$-function 
in~\eqref{equation:j-function} may be regarded as a rational function 
in $b_0, b_1, b_2, b_3, b_4$ of~\eqref{equation:quartic2}. 
Using the simple MAGMA (\cite{Magma}) function

\begin{minipage}[m]{.95\linewidth}
\vspace{0.5cm}

\hrule width 15cm

\vspace{0.1cm}

\noindent Computing the numerator (\texttt{N}) and  the denominator (\texttt{D}) in terms of symmetric functions

\vspace{0.1cm}

\hrule width 15cm

\begin{alltt}
Q<x1,x2,x3,x4> := PolynomialRing(RationalField(), 4);
Num:=(x1-x2)^2*(x4-x3)^2-(x1-x3)*(x4-x2)*(x4-x1)*(x3-x2);
Den:=(x1-x2)^2*(x1-x3)^2*(x1-x4)^2*(x2-x3)^2*(x2-x4)^2*(x3-x4)^2;
Q<b1, b2, b3, b4> := PolynomialRing(RationalField(), 4);
I, N := IsSymmetric(Num, Q);
I, D := IsSymmetric(Den, Q);
D;  
N;
\end{alltt}
\hrule width 15cm

\end{minipage}
\vspace{0.5cm}

\noindent we are able to obtain the denominator and the numerator of the $j$-function (respectively \texttt{D}  and  \texttt{N}), which after homogenization with respect to $b_0$ result in

\[\aligned & (x_1-x_2)^2(x_4-x_3)^2-(x_1-x_3)(x_4-x_2)(x_4-x_1)(x_3-x_2)=\frac{1}{b_0^2}\Big(-3b_1b_3 +12 b_0b_4+b_2^2\Big);\\
& (x_1-x_2)^2(x_1-x_3)^2(x_1-x_4)^2(x_2-x_3)^2(x_2-x_4)^2(x_3-x_4)^2 =
 \frac{1}{b_0^6}\Big(-27b_1^4b_4^2 + 18b_1^3b_2b_3b_4 - \\ &- 4b_1^3b_3^3 - 4b_1^2b_2^3b_4 + b_1^2b_2^2b_3^2
    + 144b_0b_1^2b_2b_4^2 - 6b_0b_1^2b_3^2b_4 - 80b_0b_1b_2^2b_3b_4     + 18b_0b_1b_2b_3^3 -\\&-
    192b_0^2b_1b_3b_4^2 +16b_0b_2^4b_4 - 4b_0b_2^3b_3^2 - 128b_0^2b_2^2b_4^2 + 144b_0^2b_2b_3^2b_4 -
    27b_0^2b_3^4 + 256b_0^3b_4^3\Big).\\ \endaligned
\]
Therefore, regarded as a rational function in $b_0, b_1, b_2, b_3, b_4$, the $j$-function in \eqref{equation:j-function} can be expressed as follows:
\[\begin{split}j(b_0, b_1, b_2, b_3, b_4)&=\frac{2^8(-3b_1b_3 +12 b_0b_4+b_2^2)^3}{\left\{
\aligned
&-27b_1^4b_4^2 + 18b_1^3b_2b_3b_4 - 4b_1^3b_3^3 - 4b_1^2b_2^3b_4 + b_1^2b_2^2b_3^2
    + 144b_0b_1^2b_2b_4^2 -\\
    &- 6b_0b_1^2b_3^2b_4 - 80b_0b_1b_2^2b_3b_4 + 18b_0b_1b_2b_3^3 -
    192b_0^2b_1b_3b_4^2 +\\&+ 16b_0b_2^4b_4 - 4b_0b_2^3b_3^2 - 128b_0^2b_2^2b_4^2 + 144b_0^2b_2b_3^2b_4 -
    27b_0^2b_3^4 + 256b_0^3b_4^3\\
\endaligned\right\}}\\
&=1728\frac{4h_2(b_0, b_1, b_2, b_3, b_4)^3}{4h_2(b_0, b_1, b_2, b_3, b_4)^3-27h_{3}(b_0, b_1, b_2, b_3, b_4)^2},\\
\end{split}\]
where
\[h_2(b_0, b_1, b_2, b_3, b_4)=\frac{1}{3}\Big(-3b_1b_3 +12 b_0b_4+b_2^2\Big);\]
\[ h_{3}(b_0, b_1, b_2, b_3, b_4)=\frac{1}{27}\Big(72b_0b_2b_4 - 27b_0b_3^2 - 27b_1^2b_4 + 9b_1b_2b_3 - 2b_2^3\Big).\]
Since $b_0,\ldots, b_4$ are homogeneous polynomials of 
degree $4$ in $s$, $t$, and~$u$, 
we can obtain expressions for~\mbox{$h_2(b_0, b_1, b_2, b_3, b_4)$} and
$h_{3}(b_0, b_1, b_2, b_3, b_4)$ as homogeneous polynomials of degrees $8$ and $12$, respectively, in $s, t, u$. 
Starting with a quartic equation
\[
\sum_{i+j+k=4} a_{ijk}x^iy^jz^k=0, 
\]
where $a_{ijk}$ are independent variables,
we can calculate $b_0,\ldots, b_4$, using the identities 
given after \eqref{equation:quartic2}, as in the code below. 

\begin{minipage}[m]{.95\linewidth}
\vspace{0.5cm}

\hrule width 11.7cm

\vspace{0.1cm}

\noindent Calculating $h_2$ and $h_3$ in terms of $s,t,u$ with general coefficients $a_{ijk}$

\vspace{0.1cm}

\hrule width 11.7cm

\begin{alltt}
P<a400,a310,a301,a220,a202,a211,a130,a103,a121,a112,a040,a031,a022,a013,a004,
s,t,u>:=PolynomialRing(Rationals(),18);
b0:=a004*s^4 - a103*s^3*u + a202*s^2*u^2 - a301*s*u^3 + a400*u^4;
b1:=4*a004*s^3*t - a013*s^3*u - 3*a103*s^2*t*u + a112*s^2*u^2 + 2*a202*s*t*u^2 
-a211*s*u^3 - a301*t*u^3 + a310*u^4;
b2:=6*a004*s^2*t^2 - 3*a013*s^2*t*u + a022*s^2*u^2 - 3*a103*s*t^2*u 
+ 2*a112*s*t*u^2 - a121*s*u^3 + a202*t^2*u^2 - a211*t*u^3 + a220*u^4;
b3:=4*a004*s*t^3 - 3*a013*s*t^2*u + 2*a022*s*t*u^2 - a031*s*u^3 - a103*t^3*u 
+ a112*t^2*u^2 - a121*t*u^3 + a130*u^4;
b4:=a004*t^4 - a013*t^3*u + a022*t^2*u^2 - a031*t*u^3 + a040*u^4;
h2:=-3*b1*b3 + b2^2 + 12*b4*b0;
h3:=8/3*b0*b2*b4-b0*b3^2-b1^2*b4+1/3*b1*b2*b3-2/27*b2^3;
\end{alltt}
\vspace{0.1cm}

\hrule width 11.7cm
\end{minipage}
\vspace{0.5cm}

\noindent It can be checked, for example using the command 
\texttt{Factorisation(h2)}, 
that $h_2$ and $h_3$ are divisible by $u^4$ and~$u^6$, respectively. Therefore, we are able to put
  \[h_2 (b_0, b_1, b_2, b_3, b_4)=u^4g_4(s,t,u)\quad\text{and}\quad
 h_3 (b_0, b_1, b_2, b_3, b_4)=u^6g_6(s,t,u),\]
 where $g_4(s,t,u)$ and $g_6(s,t,u)$ are 
 homogeneous polynomials of degrees $4$ and $6$, respectively, 
 in~\mbox{$s, t, u$}. 
Consequently, the rational function $j(b_0, b_1, b_2, b_3, b_4)$ can be presented as a rational function $j_C$
in~$\check{\mathbb{P}}^2$ with variables \mbox{$s, t, u$} via
 \[j_C(s,t,u)= 1728\frac{4g_4(s,t,u)^3}{4g_4(s,t,u)^3-27g_{6}(s,t,u)^2}\cdot\]


\end{document}